\newtheorem{thm}{\indent \sc Theorem}[section]
\newtheorem{prop}[thm]{\indent \sc Proposition}
\newtheorem{cor}[thm]{\indent \sc Corollary}
\newtheorem{lem}[thm]{\indent \sc Lemma}
\newtheorem{con}[thm]{\indent \sc Conjecture}
\newtheorem{cond}[thm]{\indent \sc Condition}
\newtheorem{defi}[thm]{\indent \sc Definition}
\newtheorem{rem}[thm]{\indent \sc Remark}
\newtheorem{Qu}[thm]{\indent \sc Question}
\newcommand{\spec}{\operatorname{Spec}}
\newcommand{\et}{\mathrm{\acute{e}t}}
\newcommand{\Nis}{\mathrm{Nis}}
\newcommand{\Zar}{\mathrm{Zar}}
\newcommand{\HO}{\operatorname{H}}
\begin{document}
\author{Makoto Sakagaito}
\title{
Gersten-type conjecture for henselian local rings of normal crossing varieties}
\date{}
\maketitle
\begin{center}
Indian Institute of Technology Gandhinagar
\\
\textit{E-mail address}: sakagaito43@gmail.com
\end{center}
\begin{abstract}
Let $n\geq 0$ and $r>0$ be integers. 
Let $\mathcal{O}_{X, x}^{h}$ be the henselization of the local ring $\mathcal{O}_{X, x}$
of a scheme $X$ at a point $x\in X$.
For a normal crossing variety $Y$ over the spectrum of a field $k$ of positive characteristic $p>0$,
K.Sato defined an \'{e}tale logarithmic Hodge-Witt sheaf $\lambda^{n}_{Y, r}$ (cf. \cite[p.726, Definition 3.1.1 (1)]{SaL})
on the \'{e}tale site $Y_{\et}$
which agrees with $W_{r}\Omega^{n}_{Y, \log}$ (cf. \cite{I}) in the case where $Y$ is smooth over
$\spec(k)$. 
In this paper, we prove the Gersten-type conjecture for 
\'{e}tale sheaves which satisfy some properties 
over $\mathcal{O}_{Y, y}^{h}$. 
For example, $\lambda_{Y, r}^{n}$ and $\mu_{l}^{\otimes n}$
satisfy these properties 
where $\mu_{l}$ is the \'{e}tale sheaf of $l$-th roots of unity
for an integer $l$ which is prime to the characteristic of $Y$.
Let $B$ be a discrete valuation ring of mixed characteristic $(0, p)$
and $\mathfrak{X}$ a semistable family over $\spec(B)$.
Suppose that $B$ contains $p$-th roots of unity.
As an application of the Gersten-type conjecture for $\lambda^{n}_{r}$, 
we prove the relative version of the Gersten-type conjecture
for the $p$-adic \'{e}tale Tate twist $\mathfrak{T}_{1}(n)$ (cf. \cite[pp.537--538, Definition 4.2.4]{SaP}) over 
$\mathcal{O}_{\mathfrak{X}, x}^{h}$. 
Moreover, we prove a generalization of Artin's theorem 
(cf. \cite[p.98, Th\'{e}or\`{e}me (3.1)]{Gr}) about the Brauer groups.
\end{abstract}

\section{Introduction}

Let $X$ be an equidimensional scheme and
noetherian of dimension $d$,
$X^{(s)}$ the set of points $x\in X$ of codimension $s$
and $\mathcal{F}^{\bullet}$ a bounded below complex of sheaves of abelian groups
on 
the \'{e}tale site $X_{\et}$. 
Let us denote
\begin{equation*}
\HO^{s+t}_{x}(X_{\et},
\mathcal{F}^{\bullet}
)
:=
\displaystyle\lim_{\substack{
\to \\
x\in U}
}
\HO^{s+t}_{\bar{\{x\}}\cap U}(
U_{\et},
\mathcal{F}^{\bullet}
)
\end{equation*}
where $U$ runs through open neighborhoods of $x$ in $X$.
Then we have a coniveau spectral sequence
\begin{align}\label{Inconisp}
E^{s, t}_{1}
=
\displaystyle
\bigoplus_{x\in X^{(s)}}
\HO^{s+t}_{x}(
X_{\et},
\mathcal{F}^{\bullet}
)
\Rightarrow
E^{s+t}
=
\HO^{s+t}_{\et}(
X,
\mathcal{F}^{\bullet}
)
\end{align}
and it induces the Cousin complex
\begin{align}\label{Gercom}
0
\to 
\HO^{n}_{\et}(
X, \mathcal{F}^{\bullet}
)
\to
\displaystyle\bigoplus_{
x\in X^{(0)}
}
\HO^{n}_{x}(
X_{\et},
\mathcal{F}^{\bullet}
)
\to
\displaystyle\bigoplus_{
x\in X^{(1)}
}
\HO^{n+1}_{x}(
X_{\et},
\mathcal{F}^{\bullet}
)
\to
\cdots
\end{align}
(cf. \cite[Part 1, \S 1]{C-H-K}, see also \cite[Theorem 2.1]{Sak5}).
We call the exactness of the Cousin complex (\ref{Gercom}) 
the \textbf{Gersten-type conjecture}
for \'{e}tale (hyper)cohomology with values in $\mathcal{F}^{\bullet}$.

Let $Y$ be a normal crossing variety over the spectrum of a field $k$
of positive characteristic $p>0$, 
that is,
$Y$ is everywhere \'{e}tale locally isomorphic to
\begin{align*}
\spec(k[T_{0}, T_{1}, \cdots, T_{N}]/(T_{0}T_{1}\cdots T_{a}))    
\end{align*}
for some integer $a$ with $0\leq a\leq N=\operatorname{dim}(Y)$
(cf. \cite[pp.180--181, Definition 2.1]{SaR}).
Then K.Sato defined the logarithmic Hodge-Witt sheaves
\begin{align*}
\lambda^{n}_{Y, r}
:=
\operatorname{Im}\left(
d\log:
(\mathbb{G}_{m, Y})^{\otimes n}
\to
\displaystyle 
\bigoplus_{x\in Y^{(0)}}
i_{y *}W_{r}\Omega^{n}_{y, \log}
\right)
\end{align*}
(cf. \cite[p.726, Definition 3.1.1 (1)]{SaL}) and
\begin{equation*}
\nu^{n}_{Y, r} 
:=
\operatorname{Ker}\left(
\delta:
\displaystyle\bigoplus_{y\in Y^{(0)}}
i_{y *}W_{r}\Omega^{n}_{y, \log}
\to
\displaystyle\bigoplus_{y\in Y^{(1)}}
i_{y *}W_{r}\Omega^{n-1}_{y, \log}
\right)
\end{equation*}
(cf. \cite[p.715, Definition 2.1.1]{SaL}) on $Y_{\et}$ for any integers $n\geq 0$ and $r>0$ which agree with the logarithmic Hodge-Witt sheaves 
$W_{r}\Omega^{n}_{Y, \log}$ (cf. \cite{I}) 
in the case where $Y$
is a smooth scheme over $\spec(k)$.
In general, 
\begin{math}
\lambda^{n}_{Y, r}
\neq
\nu^{n}_{Y, r}
\end{math}
(cf. \cite[p.737, Remark 4.2.3]{SaL}).

In the case where $X$ is the spectrum of the local ring $\mathcal{O}_{Y, y}$ of $Y$ at a point $y\in Y$ and
$\mathcal{F}=\nu^{n}_{X, r}$, 
the Cousin complex (\ref{Gercom}) is exact by \cite[Theorem 1.1]{Sak4}. 
This is an extension of (\cite[Th\'{e}or\`{e}me 1.4]{G-S}, \cite[p.600, Theorem 4.1]{Sh}).

In \S \ref{TGer}, we prove the Gersten-type conjecture for $\lambda_{X, r}^{n}$. Precisely, we prove the following:

\begin{thm}\upshape\label{MaiGCL}(cf. Theorem \ref{Gtcl})
Let $A$ be the henselization of the local ring
$\mathcal{O}_{Y, y}$ of a normal crossing variety $Y$ at a point $y\in Y$. 
Then the sequence
\begin{align*}
0\to \HO^{u}_{\et}(A, \lambda^{n}_{r})
\to
\bigoplus_{x\in \spec(A)^{(0)}}
\HO^{u}_{x}(A_{\et}, \lambda_{r}^{n})
\to
\bigoplus_{x\in \spec(A)^{(1)}}
\HO^{u+1}_{x}(A_{\et}, \lambda_{r}^{n}) 
\to
\cdots
\end{align*}
is exact for any integers $n\geq 0$, $u$ and $r>0$.
\end{thm}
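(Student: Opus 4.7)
The strategy is to reduce the Gersten-type conjecture for $\lambda^{n}_{r}$ to the already-known Gersten-type conjecture for $\nu^{n}_{r}$ (established in \cite[Theorem 1.1]{Sak4}) by exploiting a natural short exact sequence comparing the two sheaves. From the definitions, the $d\log$ image defining $\lambda^{n}_{Y, r}$ lies in the kernel of $\delta$, so there is a canonical injection $\lambda^{n}_{Y, r} \hookrightarrow \nu^{n}_{Y, r}$ on $Y_{\et}$; denote the cokernel by $Q^{n}_{Y, r}$. The first step is therefore to establish, presumably using the machinery of \cite{SaL}, a short exact sequence
\begin{equation*}
0 \to \lambda^{n}_{Y, r} \to \nu^{n}_{Y, r} \to Q^{n}_{Y, r} \to 0
\end{equation*}
and to identify $Q^{n}_{Y, r}$ explicitly. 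Since $\lambda^{n}$ and $\nu^{n}$ agree on the smooth locus, $Q^{n}_{Y, r}$ is supported on $Y_{\mathrm{sing}}$; one expects an explicit description in terms of logarithmic Hodge-Witt sheaves on the (smooth) intersections of the irreducible components of $Y$.

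Next, passing to the henselization $A = \mathcal{O}_{Y, y}^{h}$ and taking the long exact sequence of local \'etale cohomology at each point $x \in \spec(A)$, one obtains, for each $s$, a commutative diagram whose three rows are the degree-$s$ pieces of the Cousin complexes for $\lambda^{n}_{r}$, $\nu^{n}_{r}$, and $Q^{n}_{r}$, and whose columns form the pointwise long exact sequences. The middle row is exact by \cite[Theorem 1.1]{Sak4}. A standard diagram chase (or an application of the comparison of coniveau spectral sequences associated with the short exact sequence above) then shows that exactness of the $\lambda^{n}_{r}$-Cousin complex follows from exactness of the $\nu^{n}_{r}$- and $Q^{n}_{r}$-Cousin complexes.

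Thus the main task, and the principal obstacle, is to prove the Gersten-type conjecture for $Q^{n}_{Y, r}$. The plan here is to use the identification of $Q^{n}_{Y, r}$ on the singular stratification: since each intersection of components of a normal crossing variety is smooth over $k$, the relevant pieces of $Q^{n}_{Y, r}$ should be (up to shift and pushforward) logarithmic Hodge-Witt sheaves on smooth $k$-schemes, for which the Gersten conjecture is due to \cite{G-S, Sh}. An induction on the number of components meeting at $y$, combined with devissage for the support filtration of $Q^{n}_{Y, r}$, would reduce the statement to this smooth case.

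The hardest step will be Step~1, i.e.\ producing the correct short exact sequence and pinning down the cokernel $Q^{n}_{Y, r}$ with enough precision that its Gersten property is accessible. The henselization hypothesis should help substantially, as the local components of $\spec(A)$ and all their intersections are henselizations of smooth local rings in a very explicit way, governed by the \'etale-local model $\spec(k[T_{0}, \ldots, T_{N}]/(T_{0} \cdots T_{a}))$.
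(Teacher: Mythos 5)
Your strategy is genuinely different from the paper's (which never compares $\lambda^{n}_{r}$ with $\nu^{n}_{r}$, but instead decomposes $Y$ into irreducible components and runs a double induction on the number of components and on codimension, using the triangle $j_{!}\lambda^{n}_{r}\to\lambda^{n}_{r}\to i_{*}i^{*}\lambda^{n}_{r}$), and it has two genuine gaps. The first is Step 1 itself: while the inclusion $\lambda^{n}_{Y,r}\subset\nu^{n}_{Y,r}$ does exist, the cokernel $Q^{n}_{Y,r}$ does not admit the clean description you hope for. The resolution of $\lambda^{n}_{Y,r}$ runs \emph{up} the stratification (the \v{C}ech-type complex by the $a_{m*}W_{r}\Omega^{n}_{Y^{\langle m\rangle},\log}$ used in Proposition \ref{hvan}), whereas $\nu^{n}_{Y,r}$ is resolved \emph{down} the stratification by the $W_{r}\Omega^{n-m}_{Y^{\langle m\rangle},\log}$ with decreasing twist; the quotient mixes these two filtrations and is not, up to shift and pushforward, a sum of logarithmic Hodge--Witt sheaves on the smooth strata. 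You would be proving a nontrivial new structural result about $\nu/\lambda$ before even starting, and \cite[p.737, Remark 4.2.3]{SaL} is a warning that this comparison is delicate.

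The second and more serious gap is the claim that a ``standard diagram chase'' deduces exactness of the Cousin complex of $\lambda^{n}_{r}$ from those of $\nu^{n}_{r}$ and $Q^{n}_{r}$. A short exact sequence of sheaves $0\to F\to G\to H\to 0$ does \emph{not} induce a short exact sequence of Cousin complexes: the rows
\begin{equation*}
\bigoplus_{x\in\spec(A)^{(s)}}\HO^{s+t}_{x}(A_{\et},F)
\to
\bigoplus_{x}\HO^{s+t}_{x}(A_{\et},G)
\to
\bigoplus_{x}\HO^{s+t}_{x}(A_{\et},H)
\end{equation*}
are only pieces of long exact sequences in local cohomology, and exactness of the $G$- and $H$-complexes says nothing about the $F$-complex unless you control the connecting homomorphisms $\HO^{s+t}_{x}(H)\to\HO^{s+t+1}_{x}(F)$ at every point. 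Controlling exactly such boundary maps is where all the work in the paper lies: Lemma \ref{DGC} requires precise exactness hypotheses on rows \emph{and} columns plus the condition $\operatorname{Im}(\delta)\subset\operatorname{Ker}(f)$, which is verified there via the anti-commutative purity diagram (\ref{AntAC}) and an induction that interleaves Properties $\operatorname{P}_{1}$ and $\operatorname{P}_{2}$. Your proposal relocates this difficulty into the phrase ``standard diagram chase'' without resolving it, and it also omits the preliminary reduction (via Proposition \ref{hvan} and \cite[Proposition 2.6]{Sak4}) showing $\HO^{s+t}_{x}(A_{\et},\lambda^{n}_{r})=0$ for $t\neq 0,1$, which is what confines the problem to a single row of the coniveau spectral sequence in the first place.
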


The outline of the proof is as follows. If $u<0$, then we have
\begin{align*}
\HO^{u+s}_{x}(A_{\et}, \lambda_{r}^{n})
=0
\end{align*}
for $x\in \spec(A)^{(s)}$
by Proposition \ref{hvan}. So Theorem \ref{MaiGCL} holds
for $u<0$.
In the case where $u\geq 1$,
Theorem \ref{MaiGCL} follows from the following theorem:

\begin{thm}\upshape\label{Ex&Ger}(cf. Proposition \ref{cst}, Theorem \ref{1gerl})
Let $Y$ be a normal crossing variety over 
the spectrum of a field $k$.
Let 
\begin{math}
Y_{1}, \cdots, Y_{a}
\end{math}
be the irreducible components of $Y$.

Let $(\mathcal{F}_{X}, N)$ be a pair of an \'{e}tale sheaf of abelian groups
on an object $X$ in $\mathcal{V}$ and a non-negative integer
which satisfies 
Condition \ref{cdpb}. 
Here $\mathcal{V}$ is the category whose
objects are normal crossing varieties over the spectrum of
a field $k$
and
whose morphisms are morphisms of schemes.
Suppose that
$\operatorname{cd}_{q}(k)<\infty$ for a prime number $q$.
For an abelian group $M$,
$M_{T_{q}}$ denotes the $q$-power torsion subgroup of $M$.
Put 
\begin{align*}
T:=
\left\{
\begin{array}{ll}
0  &  \textrm{if $\mathcal{F}_{Y}$ is a sheaf of torsion groups}  \\ [5pt]
1   &  \textrm{otherwise}
\end{array}
\right.
\end{align*}
and $N_{1}=\operatorname{max}\{N, T\}$.

Then $\HO_{y}^{u}(Y_{\et}, \mathcal{F}_{Y})$ is a torsion group for $y\in Y^{(s)}$
and $u\geq s+T$.
Moreover, we have the followings:

\begin{itemize}
\item Property $\operatorname{P}_{1}(a)$:
Let $s\geq 0$ be an integer and
$i: Z\to Y$ a closed immersion with
\begin{equation*}
Z
=
\displaystyle\bigcup^{a-1}_{m=1} Y_{m}.
\end{equation*}
Then the homomorphism
\begin{equation*}
\HO^{N^{\prime}+s}_{y}(
Z_{\et},
i^{*}\mathcal{F}_{Y}
)_{T_{q}}    
\xrightarrow{\sim}
\HO^{N^{\prime}+s}_{y}(
Z_{\et},
\mathcal{F}_{Z}
)_{T_{q}}
\end{equation*}
is an isomorphism for
$N^{\prime}\geq N_{1}$ and
$y\in Y^{(s)}\cap Z$.
\item Property $\operatorname{P}_{2}(a)$:
Let the notations be the same as above
and $j: U\hookrightarrow Y$ the open complement
$Y\setminus Z$.
Then 
a distinguished triangle
\begin{align}\label{0Indisji}
\cdots\to  
j_{!}\mathcal{F}_{U}
\to
\mathcal{F}_{X}
\to
i_{*}i^{*}\mathcal{F}_{X}
\to\cdots
\end{align}
induces an exact sequence
\begin{equation}\label{0Inindij}
0\to
\HO^{N^{\prime}+s}_{y}(
Y_{\et}, j_{!}\mathcal{F}_{U}
)_{T_{q}}
\to
\HO^{N^{\prime}+s}_{y}(
Y_{\et}, 
\mathcal{F}_{Y}
)_{T_{q}}
\to 
\HO^{N^{\prime}+s}_{y}(Z_{\et},
\mathcal{F}_{Z})_{T_{q}}
\to 0
\end{equation}
for $y\in Y^{(s)}\cap Z$ and $N^{\prime}\geq N_{1}$. 

\item Property $\operatorname{P}_{3}(a)$: Let $A$ be the henselization of the local ring
$\mathcal{O}_{Y, y}$ of $Y$ at a point of $y\in Y$. Then
the sequence
\begin{align*}
0\to \HO^{N^{\prime}}_{\et}(A, \mathcal{F}_{\spec(A)})_{T_{q}}
\to
\bigoplus_{x\in \spec(A)^{(0)}}
\HO^{N^{\prime}}_{x}(A_{\et}, 
\mathcal{F}_{\spec(A)})_{T_{q}}
\\
\to
\bigoplus_{x\in \spec(A)^{(1)}}
\HO^{N^{\prime}+1}_{x}(A_{\et}, 
\mathcal{F}_{\spec(A)})_{T_{q}} 
\to
\cdots
\end{align*}
is exact for $N^{\prime}\geq N_{1}$.
\end{itemize}
\end{thm}

By Remark \ref{ExaCond}, $(\mathcal{F}_{X}, N)=(\lambda^{n}_{X, r}, 1)$
satisfies Condition \ref{cdpb}
and so Theorem \ref{MaiGCL} holds for $u\geq 1$ by Property $\operatorname{P}_{3}(a)$
of Theorem \ref{Ex&Ger}.
Then we are able to prove Theorem \ref{MaiGCL} for $u=0$
by using the coniveau spectral sequence (\ref{Inconisp}). 
This concludes the outline of the proof of Theorem \ref{MaiGCL}.

Let $l$ be an integer which is prime to
$\operatorname{char}(k)$ and
$\mu_{l}$ the \'{e}tale sheaf of $l$-th roots of unity.
Then $(\mathcal{F}_{X}, N)=(\mu_{l}^{\otimes n}, 0)$
satisfies Condition \ref{cdpb} (cf. Remark \ref{ExaCond}). As an application of Theorem \ref{Ex&Ger},
we also have the following:

\begin{cor}\upshape(cf. Corollary \ref{exBGO}) 
Let $A$ be the henselization of the local ring
$\mathcal{O}_{Y, y}$ of a normal crossing variety $Y$ 
over the spectrum of a field $k$
at a point $y\in Y$
and
$l$ an integer which is prime to
$\operatorname{char}(k)$. 
Then the sequence
\begin{align*}
0\to \HO^{u}_{\et}(A, \mu^{\otimes n}_{l})
\to
\bigoplus_{x\in \spec(A)^{(0)}}
\HO^{u}_{x}(A_{\et}, \mu_{l}^{\otimes n})
\to
\bigoplus_{x\in \spec(A)^{(1)}}
\HO^{u+1}_{x}(A_{\et}, \mu_{l}^{\otimes n}) 
\to
\cdots
\end{align*}
is exact for any integers $n\geq 0$ and $u$.    
\end{cor}

This is an extension of the Bloch-Gabber-Ogus theorem (cf. \cite[Corollary 2.2.2]{C-H-K}).

In the proof of Theorem \ref{Ex&Ger}, 
we use Lemma \ref{CAL}.
The proof of Theorem \ref{Ex&Ger} is similar to 
that of Lemma \ref{CAL}
and we prove them by 
downward
induction on $N^{\prime}$
and induction on the dimension of a scheme.
We use Propositions \ref{cdpt} and \ref{exind}
to apply downward induction on $N^{\prime}$ and
we are able to prove them
by 
using Lemma \ref{borTq}.
We use Lemma \ref{DGC} to apply induction on the dimension 
of a scheme.
See \S \ref{OutMai} below for more detailed outline of the proof.

\vspace{2.0mm}

As another application of Theorem \ref{Ex&Ger},
we obtain the relative version of the Gersten-type conjecture in mixed characteristic cases.

Let $B$ be a discrete valuation ring of mixed characteristic $(0, p)$ and 
$K$ the quotient field of $B$. Let $\mathfrak{X}$ be a semistable family
over $\spec(B)$, that is, a regular scheme of pure dimension which is flat
of finite type over $\spec(B)$,
the generic fiber 
$\mathfrak{X}\otimes_{B}K$ is smooth over $\spec(K)$
and the special fiber $Y$ of $\mathfrak{X}$ is a reduced divisor with normal
crossings on $\mathfrak{X}$.
Let $i: Y\to \mathfrak{X}$ be the inclusion of the closed fiber of $\mathfrak{X}$
and
$j: \mathfrak{X}\otimes_{B}K\to \mathfrak{X}$
the inclusion of the generic fiber of $\mathfrak{X}$.
Then $p$-adic \'{e}tale Tate twist is defined as follows:
\begin{defi}\upshape(\textbf{$p$-adic \'{e}tale Tate twist})\label{DefT}
(cf. \cite[pp.537--538, Definition 4.2.4]{SaP}, 
\cite[p.187, Remark 3.7]{SaR})
Let the notations be the same as above and $r$ any positive integer. For $n=0$,
\begin{equation*}
\mathfrak{T}_{r}(0)
:=
\mathbb{Z}/p^{r}.
\end{equation*}
For $n\geq 1$, 
$\mathfrak{T}_{r}(n)$ is defined as a complex which fits into
the distinguished triangle
\begin{align*}
i_{*}\nu_{Y, r}^{n-1}[-n-1]
\to
\mathfrak{T}_{r}(n)
\to
\tau_{\leq n}Rj_{*}\mu_{p^{r}}^{\otimes n}
\xrightarrow{\sigma^{n}_{\mathfrak{X}, r}}
i_{*}\nu_{Y, r}^{n-1}[-n]
\end{align*}
where $\mu_{p^{r}}$ is the sheaf of $p^{r}$-th roots of unity
and $\sigma^{n}_{\mathfrak{X}, r}$ is induced by
the homomorphism
$\sigma: i^{*}R^{n}j_{*}\mu_{p^{r}}^{\otimes n}\to \nu^{n-1}_{Y, r}$ 
which is defined in 
\cite[p.186, Theorem 3.4]{SaR}.
\end{defi}

Let 
$\mathbb{Z}(n)_{\et}$ and $\mathbb{Z}(n)$ be Bloch's cycle
complex for the \'{e}tale and Zariski topology, respectively 
(cf. \cite[p.779]{Ge}). For a positive integer $m$,
$\mathbb{Z}/m(n)_{\et}$ 
(resp. $\mathbb{Z}/m(n)$) denotes
$\mathbb{Z}(n)_{\et}\otimes \mathbb{Z}/m\mathbb{Z}$
(resp. $\mathbb{Z}(n)\otimes \mathbb{Z}/m\mathbb{Z}$).
By \cite[pp.209--210, Remark 7.2]{SaR} and \cite[Proposition 1.5]{Sak4}, 
we have an isomorphism
\begin{equation*}
\tau_{\leq n+1}\left(
\mathbb{Z}/p^{r}(n)_{\et}
\right)
\simeq 
\mathfrak{T}_{r}(n)
\end{equation*}
in
$D^{b}(\mathfrak{X}, \mathbb{Z}/p^{r}\mathbb{Z})$
which is the derived category of bounded complexes
of \'{e}tale $\mathbb{Z}/p^{r}\mathbb{Z}$-sheaves
on $\mathfrak{X}_{\et}$. If $\mathfrak{X}$ is
smooth over $\spec(B)$,
then trancation is unnecessary by \cite[p.786, Corollary 4.4]{Ge}.
By an application of Theorem \ref{MaiGCL}, we have the relative version of the
Gersten-type conjecture for $\mathfrak{T}_{r}(n)$ as follows:

\begin{thm}\upshape(cf. Theorem \ref{RelGer})\label{Intrelsupp}
Let $B$ be a discrete valuation ring of mixed characteristic $(0, p)$
and $\pi$ a prime element of $B$. Let $\mathfrak{X}$ be a semistable
family over $\spec(B)$ and $R$ the henselization of the local ring $\mathcal{O}_{\mathfrak{X}, x}$ of $\mathfrak{X}$ at a 
point $x$ of the closed fiber of $\mathfrak{X}$.
Put $Z=\spec(R/(\pi))$.
Suppose that $B$ contains $p^{r}$-th roots of unity.
Then the sequence
\begin{align*}
0\to 
\HO_{Z}^{q}(R_{\et}, \mathfrak{T}_{r}(n))
\to
\displaystyle\bigoplus_{z\in Z^{(0)}}
\HO_{z}^{q}(R_{\et}, \mathfrak{T}_{r}(n))
\to
\displaystyle\bigoplus_{z\in Z^{(1)}}
\HO_{z}^{q+1}(R_{\et}, \mathfrak{T}_{r}(n))
\to
\cdots
\end{align*}
is exact for $q\geq n+2$.
\end{thm}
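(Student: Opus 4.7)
The plan is to establish Theorem \ref{Intrelsupp} by applying cohomology with supports to the defining distinguished triangle of $\mathfrak{T}_r(n)$ and chasing a resulting ladder of three Cousin complexes, whose two end terms are controlled by the closed-fiber results already in hand.

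Concretely, for each $z\in Z^{(s)}$ I would apply $R\Gamma_z(R_\et,-)$ to the triangle
\begin{equation*}
i_*\nu^{n-1}_{Y,r}[-n-1]\to\mathfrak{T}_r(n)\to\tau_{\leq n}Rj_*\mu^{\otimes n}_{p^r}\to i_*\nu^{n-1}_{Y,r}[-n]
\end{equation*}
to obtain a long exact sequence. Since $z\in Z$, proper base change identifies $\HO^{q-n-1}_z(R_\et,i_*\nu^{n-1}_{Y,r})$ with $\HO^{q-n-1}_z(Z_\et,\nu^{n-1}_{Z,r})$, and analogously for the $[-n]$-shifted term. Letting $z$ range over $Z^{(s)}$ for each $s$ and collecting these long exact sequences produces a commutative ladder linking three Cousin complexes, in the sense of the coniveau spectral sequence filtered by codimension in $Z$.

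For the outer Cousin complex associated with $\nu^{n-1}_{Z,r}$, exactness over the henselian local ring $R/(\pi)$ is precisely \cite[Theorem 1.1]{Sak4}. For the middle Cousin complex associated with $\tau_{\leq n}Rj_*\mu^{\otimes n}_{p^r}$, I would exploit the truncation triangle
\begin{equation*}
\tau_{\leq n}Rj_*\mu^{\otimes n}_{p^r}\to Rj_*\mu^{\otimes n}_{p^r}\to\tau_{\geq n+1}Rj_*\mu^{\otimes n}_{p^r}.
\end{equation*}
The local cohomology with support in $Z$ of the middle term vanishes, since $j^*Rj_*=\mathrm{id}$ and $Z$ is disjoint from the generic fiber, so the middle Cousin complex reduces (up to shift) to that of $\tau_{\geq n+1}Rj_*\mu^{\otimes n}_{p^r}$. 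The cohomology sheaves of the latter are supported on $Y$ and, by \cite[p.186, Theorem 3.4]{SaR}, admit a surjection $\mathcal{H}^n Rj_*\mu^{\otimes n}_{p^r}\twoheadrightarrow\lambda^n_{Y,r}$ whose kernel and the higher $\mathcal{H}^k$ are likewise expressible in terms of $\lambda$- and $\nu$-sheaves on $Y$; the hypothesis that $B$ contains $p^r$-th roots of unity is exactly what makes these identifications compatible across the defining triangle. Gersten-type exactness for the resulting closed-fiber sheaves is then supplied by Theorem \ref{MaiGCL} for $\lambda^n_{Y,r}$ and by \cite[Theorem 1.1]{Sak4} for $\nu^{n-1}_{Y,r}$, so the middle Cousin complex is exact.

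A 5-lemma chase along the ladder then transfers exactness to the Cousin complex for $\mathfrak{T}_r(n)$ in the range $q\geq n+2$; this is precisely where the degree thresholds imposed by the shifts $[-n-1]$, $[-n]$ and by the truncation $\tau_{\geq n+1}$ align, so that the ladder collapses to a short exact sequence of Cousin complexes. The main obstacle will be the careful bookkeeping of the Bloch-Kato filtration on $R^n j_*\mu^{\otimes n}_{p^r}$ and its interaction with the truncation $\tau_{\leq n}$, so that the two closed-fiber Gersten conjectures for $\lambda$ and $\nu$ combine cleanly with the vanishing of $R\Gamma_Z(R_\et,Rj_*\mu^{\otimes n}_{p^r})$ into the desired diagram chase.
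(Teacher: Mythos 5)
Your reduction of the middle column to $\tau_{\geq n+1}Rj_{*}\mu_{p^{r}}^{\otimes n}$ is fine (the vanishing of $R\Gamma_{Z}(R_{\et},Rj_{*}\mu_{p^{r}}^{\otimes n})$ does hold, since $Z$ lies in the closed fibre), but the next step is where the argument breaks down. After the shift you need exactness of the Cousin complexes, with supports in $Z$, for $i^{*}R^{k}j_{*}\mu_{p^{r}}^{\otimes n}$ for \emph{every} $k\geq n+1$, not just $k=n$, and these sheaves are not ``expressible in terms of $\lambda$- and $\nu$-sheaves''. In the Bloch--Kato/Sato structure theory only the quotient of $i^{*}R^{k}j_{*}\mu_{p^{r}}^{\otimes k}$ by the first step $U^{1}$ of the symbol filtration is built from $\lambda^{k}_{Y,r}$ and $\nu^{k-1}_{Y,r}$; the graded pieces of $U^{1}$ itself are quotients of coherent sheaves of (non-logarithmic) differential forms on $Y$. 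Precisely because $B$ contains the $p^{r}$-th roots of unity the absolute ramification index is at least $p^{r-1}(p-1)$, so the filtration has length $\geq p$ and $U^{1}$ is in general nonzero: the hypothesis you invoke as the glue actually guarantees that these extra pieces occur. Neither Theorem \ref{MaiGCL} nor \cite[Theorem 1.1]{Sak4} says anything about Cousin exactness for those coherent graded pieces, so the assertion that ``the middle Cousin complex is exact'' is unsupported; this is a genuine gap, not bookkeeping. (Two smaller slips: \cite[p.186, Theorem 3.4]{SaR} gives the tame-symbol map $i^{*}R^{n}j_{*}\mu_{p^{r}}^{\otimes n}\to\nu^{n-1}_{Y,r}$, not a surjection onto $\lambda^{n}_{Y,r}$; and $\mathcal{H}^{n}Rj_{*}\mu_{p^{r}}^{\otimes n}$ does not occur among the cohomology sheaves of $\tau_{\geq n+1}Rj_{*}\mu_{p^{r}}^{\otimes n}$, so the sheaf you propose to control is not the one that enters after your own truncation step.)

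The paper circumvents exactly this difficulty by using the other triangle $j_{!}\mu_{p^{r}}^{\otimes n}\to\mathfrak{T}_{r}(n)\to i_{*}i^{*}\mathfrak{T}_{r}(n)$ instead of the defining triangle. Proposition \ref{jtxiso} shows that in the degrees $q\geq s+n+2$ relevant to the statement the local cohomology of $\mathfrak{T}_{r}(n)$ agrees with that of $j_{!}\mu_{p^{r}}^{\otimes n}$, which, since $B$ contains the $p^{r}$-th roots of unity, is independent of the weight $n$; the resulting weight-shifting isomorphisms (\ref{Ztmiso}) and (\ref{xtmiso}) let the proof of Theorem \ref{RelGer} run an induction on the codimension $s$ in the coniveau spectral sequence (\ref{ConiST}) whose only closed-fibre inputs are the Gersten conjecture for $\lambda^{n}_{r}$ on $Z$ (Theorem \ref{1gerl}), the identification of $\mathcal{H}^{n+1}(i^{*}\mathfrak{T}_{r}(n))$ with $\lambda^{n}_{r}$ from \cite{Sak4}, and the exact sequences of \cite[Theorem 1.3]{Sak5}; the full structure of $R^{k}j_{*}\mu_{p^{r}}^{\otimes n}$ is never needed. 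To salvage your route you would have to either prove Cousin exactness for the coherent graded pieces of the filtration on $i^{*}R^{k}j_{*}\mu_{p^{r}}^{\otimes n}$ for all $k\geq n+1$ (a substantial separate task) or find a mechanism that discards $U^{1}$ in the degrees at hand, which is in effect what the paper's weight-shifting accomplishes.
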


In \S \ref{TrelGtM}, we observe a relation between the Gersten-type conjecture and
motivic cohomology groups. 
As an application of the Beilinson-Lichtenbaum conjecture (cf. \cite[p.774, Theorem 1.2.2]{Ge} and \cite{V2}), 
we prove the following:

\begin{prop}\upshape\label{InrelGerMot}(cf. Proposition \ref{RGCM})
Let $B$ be a discrete valuation ring 
of mixed characteristic $(0, p)$
and $R$ a local ring 
(resp. a henselian local ring)
of a regular scheme of finite type over $\spec(B)$.
Let $U$ be the generic fiber of $\spec(R)$ and $n$ a non-negative integer.
Suppose that $B$ contains $p$-th roots of unity. 

Then the sequence
\begin{align}\label{InUet}
0\to& 
\HO^{n+1}_{\et}(U, \mu_{p}^{\otimes n})
\to
\displaystyle\bigoplus_{x\in U^{(0)}}
\HO^{n+1}_{\et}(
\kappa(x),
\mu_{p}^{\otimes n}
)
\to
\displaystyle\bigoplus_{x\in U^{(1)}}
\HO^{n}_{\et}(
\kappa(x),
\mu_{p}^{\otimes (n-1)}
) \nonumber
\\
\to&\cdots
\end{align}
is exact 
if and only if
we have isomorphisms
\begin{equation*}
\HO^{n+1}_{\Zar}(U, \mathbb{Z}/p(n))
\simeq 
0
\end{equation*}
and
\begin{equation*}
\HO^{n+1+s}_{\Zar}(
U,
\mathbb{Z}/p(n)
)    
\simeq
\HO^{n+1+s}_{\Zar}(
U,
\mathbb{Z}/p(n+1)
)
\end{equation*}
for any integer $s\geq 1$. 
\end{prop}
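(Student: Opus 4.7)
The plan is to deduce the claim from two independent inputs: the Bloch--Ogus theorem for \'etale cohomology of smooth schemes over a field, and Voevodsky's theorem on Beilinson--Lichtenbaum. Since $U$ is the generic fibre of $\spec(R)$ with $R$ a (henselian) local ring of a regular $B$-scheme of finite type, $U$ is essentially smooth over $K=\operatorname{Frac}(B)$, a field of characteristic zero. The hypothesis $\zeta_{p}\in B$ is used only to produce the isomorphism $\mu_{p}^{\otimes n}\simeq \mu_{p}^{\otimes(n+1)}$ of \'etale sheaves on $U$.

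First, I would apply Bloch--Ogus to reinterpret the exactness of (\ref{InUet}). Let $\mathcal{F}:=\mathcal{H}^{n+1}(\mu_{p}^{\otimes n})$ denote the Zariski sheaf on $U$ associated to $V\mapsto \HO^{n+1}_{\et}(V,\mu_{p}^{\otimes n})$. The part of (\ref{InUet}) starting from the codimension-$0$ sum is the global sections of the Cousin flasque resolution of $\mathcal{F}$, whose $p$-th cohomology computes $\HO^{p}_{\Zar}(U,\mathcal{F})$. Writing $\alpha:\HO^{n+1}_{\et}(U,\mu_{p}^{\otimes n})\to \HO^{0}_{\Zar}(U,\mathcal{F})$ for the Leray edge map, the exactness of (\ref{InUet}) is therefore equivalent to the conjunction that $\alpha$ is an isomorphism and $\HO^{p}_{\Zar}(U,\mathcal{F})=0$ for every $p\geq 1$.

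Next, by Beilinson--Lichtenbaum (Voevodsky), $\mathbb{Z}/p(m)_{\Zar}\simeq \tau_{\leq m}R\pi_{*}\mu_{p}^{\otimes m}$ in $D(U_{\Zar})$ for every $m\geq 0$, where $\pi:U_{\et}\to U_{\Zar}$. Using the identification $\mu_{p}^{\otimes(n+1)}\simeq \mu_{p}^{\otimes n}$ we rewrite $\mathbb{Z}/p(n+1)\simeq \tau_{\leq n+1}R\pi_{*}\mu_{p}^{\otimes n}$, so the truncation map $\tau_{\leq n}\to \tau_{\leq n+1}$ fits $\mathbb{Z}/p(n)$ and $\mathbb{Z}/p(n+1)$ into a distinguished triangle
\[
\mathbb{Z}/p(n)\to \mathbb{Z}/p(n+1)\to \mathcal{F}[-(n+1)]\to \mathbb{Z}/p(n)[1].
\]
Comparing with the triangles $\tau_{\leq k}R\pi_{*}\to R\pi_{*}\to \tau_{>k}R\pi_{*}$ for $k=n,n+1$, whose higher truncations have easily computed low-degree hypercohomology (being concentrated in degrees $\geq k+1$), one identifies $\HO^{n+1}_{\Zar}(U,\mathbb{Z}/p(n))\simeq \ker\alpha$ and $\HO^{n+1}_{\Zar}(U,\mathbb{Z}/p(n+1))\simeq \HO^{n+1}_{\et}(U,\mu_{p}^{\otimes n})$, and checks that the connecting morphism of the triangle above in degree $n+1$ coincides with $\alpha$ under these identifications.

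Finally, setting $A^{i}:=\HO^{i}_{\Zar}(U,\mathbb{Z}/p(n))$, $B^{i}:=\HO^{i}_{\Zar}(U,\mathbb{Z}/p(n+1))$, $C^{j}:=\HO^{j}_{\Zar}(U,\mathcal{F})$, the long exact sequence of the displayed triangle reads $A^{i}\to B^{i}\to C^{i-n-1}\to A^{i+1}$. Since $C^{-1}=0$, the map $A^{n+1}\to B^{n+1}$ is injective, so $A^{n+1}=0$ is equivalent to the injectivity of $\alpha$. Assuming the motivic conditions ($A^{n+1}=0$ and $A^{n+1+s}\simeq B^{n+1+s}$ for $s\geq 1$), a short induction along the LES yields $C^{p}=0$ for $p\geq 1$ and the surjectivity of $\alpha$; conversely, from $\alpha$ iso and $C^{p}=0$ for $p\geq 1$ the two motivic conditions are immediate. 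The main obstacle is not conceptual but bookkeeping: one must match the connecting morphism of the above triangle with the Leray edge map $\alpha$, which amounts to a diagram chase comparing the two truncation triangles for $R\pi_{*}\mu_{p}^{\otimes n}$.
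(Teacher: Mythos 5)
Your proposal is correct and follows essentially the same route as the paper's proof of Proposition \ref{RGCM}: reinterpret the exactness of the Cousin complex via Bloch--Ogus as the vanishing of $\HO^{s}_{\Zar}(U, R^{n+1}\epsilon_{*}\mu_{p}^{\otimes n})$ for $s\geq 1$ together with the edge map being an isomorphism, then use Beilinson--Lichtenbaum and the hypothesis $\mu_{p}\subset B$ to place $\mathbb{Z}/p(n)$ and $\mathbb{Z}/p(n+1)$ in a distinguished triangle with $R^{n+1}\epsilon_{*}\mu_{p}^{\otimes n}[-(n+1)]$ and run the long exact sequence. The only difference is that you make explicit the bookkeeping (identification of the connecting map with the edge map and the induction along the long exact sequence) that the paper compresses into ``so the statement follows.''
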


By Proposition \ref{InrelGerMot}, the sequence (\ref{InUet}) is exact for any integer $n\geq 0$
if and only if
\begin{equation}\label{conGerU}
\HO_{\Zar}^{u}(
U,
\mathbb{Z}/p(n))=0    
\end{equation}
for any integers $n\geq 0$ and $u>n$.
In the case where $R$ is (the henselization of) the local ring $\mathcal{O}_{\mathfrak{X}, x}$
of a semistable family $\mathfrak{X}$ over $\spec(B)$,
the equation (\ref{conGerU}) holds
for any integers $n\geq 0$ and $u>n+1$
if and only if the equation
\begin{equation*}
\HO^{u}_{\Zar}(
R,
\mathbb{Z}/p(n)
)    
=0
\end{equation*}
holds for any integers $n\geq 0$ and $u>n+1$ by Proposition \ref{InrelGerMot} 
and \cite[Proposition 2.1]{Sak4} 
(cf. Remark \ref{revanimot}).
By an application of Theorem \ref{Intrelsupp} and Proposition \ref{InrelGerMot},
we have the following:

\begin{cor}\upshape\label{Int2GerT}(cf. Corollary \ref{2GerT})
Let $B$ be a discrete valuation ring of mixed characteristic $(0, p)$ 
and $R$ the henselization of a semistable family over $\spec(B)$. 
Suppose that $\operatorname{dim}(R)=2$ and $B$ contains $p$-th roots of unity.
Then the sequence
\begin{align*}
0
\to  
&\HO^{s}_{\et}(
R, \mathfrak{T}_{1}(n)
)
\to
\bigoplus_{x\in \spec(R)^{(0)}}
\HO^{s}_{x}(
R_{\et},
\mathfrak{T}_{1}(n)
)
\to
\\
&\bigoplus_{x\in \spec(R)^{(1)}}
\HO^{s+1}_{x}(
R_{\et},
\mathfrak{T}_{1}(n)
)
\to
\bigoplus_{x\in \spec(R)^{(2)}}
\HO^{s+2}_{x}(
R_{\et},
\mathfrak{T}_{1}(n)
)
\to 0
\end{align*}
is exact for any integer $s\geq 0$.
\end{cor}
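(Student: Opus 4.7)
The plan is to combine Theorem~\ref{Intrelsupp} (Gersten with supports on the special fiber $Z=\spec(R/(\pi))$, valid for $q\geq n+2$) with Proposition~\ref{InrelGerMot} (Gersten for $\mu_{p}^{\otimes n}$ on the generic fiber $U=\spec(R[1/\pi])$, contingent on certain motivic cohomology vanishings on $U$). Because $\operatorname{dim}(R)=2$, both $U$ and $Z$ are one-dimensional, so the Cousin complex of $\mathfrak{T}_{1}(n)$ on $\spec(R)$ has four terms and assembles as an extension of the Cousin complexes on $U$ and along $Z$.

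For the generic fiber, $\mathfrak{T}_{1}(n)|_{U}\simeq \mu_{p}^{\otimes n}$, since $p$ is invertible on $U$ and $B$ contains $p$-th roots of unity. To invoke Proposition~\ref{InrelGerMot} we verify that $\HO^{n+1}_{\Zar}(U,\mathbb{Z}/p(n))=0$ and $\HO^{n+1+s}_{\Zar}(U,\mathbb{Z}/p(n))\simeq \HO^{n+1+s}_{\Zar}(U,\mathbb{Z}/p(n+1))$ for $s\geq 1$. Since $U$ is essentially smooth of dimension~$1$ over the characteristic-zero field $K=\operatorname{Frac}(B)$, the Gersten resolution of motivic cohomology on $U$ expresses each $\HO^{u}_{\Zar}(U,\mathbb{Z}/p(m))$ as the cohomology of a two-term complex whose terms are motivic cohomologies $\HO^{\ast}(\kappa,\mathbb{Z}/p(\ast))$ of residue fields of $U^{(0)}$ and $U^{(1)}$. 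The vanishing $\HO^{u}(\kappa,\mathbb{Z}/p(m))=0$ for any field $\kappa$ when $u>m$ (standard for motivic cohomology of fields, via the Nesterenko--Suslin--Totaro identification of top-weight motivic cohomology with Milnor $K$-theory modulo $p$) forces every relevant term to vanish, yielding both required conditions.

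To splice the two pieces, use the open--closed distinguished triangle
\begin{equation*}
j_{!}\mu_{p}^{\otimes n}\to \mathfrak{T}_{1}(n)\to Ri_{*}i^{*}\mathfrak{T}_{1}(n)\to j_{!}\mu_{p}^{\otimes n}[1],
\end{equation*}
which induces a short exact sequence of Cousin (coniveau) complexes on $\spec(R)$: the Gersten complex of $\mathfrak{T}_{1}(n)$ sits as an extension of the Gersten complex for $\mu_{p}^{\otimes n}$ on $U$ by the Cousin complex with supports in $Z$ (whose terms sit at $Z^{(0)}\subset\spec(R)^{(1)}$ and $Z^{(1)}=\{x\}\subset\spec(R)^{(2)}$). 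Given exactness of both outer complexes, the associated long exact sequence delivers the full $4$-term exactness on $\spec(R)$ for every $s\geq 0$.

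The main obstacle is that Theorem~\ref{Intrelsupp} supplies exactness of the $Z$-supported Gersten complex only in the range $q\geq n+2$. To cover $s<n+2$, one invokes Sato's purity for the $p$-adic \'etale Tate twist, identifying $Ri^{!}\mathfrak{T}_{1}(n)$ with a suitable shift of $\nu^{n-1}_{Z,1}$, so that the local cohomology groups $\HO^{s+t}_{z}(R_{\et},\mathfrak{T}_{1}(n))$ at points $z\in Z$ can be computed in terms of cohomology of $\nu$- and $\lambda$-type sheaves on the $1$-dimensional normal crossing variety $Z$. The low-degree portion of the $Z$-supported Gersten complex then reduces to the Gersten-type exactness for $\nu^{n-1}$ on $Z$ (\cite{Sak4}) and for $\lambda^{n}_{1}$ on $Z$ (Theorem~\ref{MaiGCL}), closing the remaining cases and completing the proof.
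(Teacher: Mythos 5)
Your overall route is the paper's: split $\spec(R)$ into the generic fibre $U$ and the special fibre $Z$, treat $U$ via Proposition \ref{InrelGerMot}, treat the $Z$-supported Cousin complex via Theorem \ref{Intrelsupp} for $q\geq n+2$ and via Sato's purity together with the Gersten theorems for $\nu$ and $\lambda$ in the remaining degrees, and splice. The genuine gap is in your verification of the hypotheses of Proposition \ref{InrelGerMot}. The vanishing $\HO^{u}(\kappa,\mathbb{Z}/p(m))=0$ for fields with $u>m$ does \emph{not} force both required conditions. In the niveau presentation of $\HO^{N+1}_{\Zar}(U,\mathbb{Z}/p(N))$ the two relevant $E_{1}$-terms are $\HO^{N}(k(U),\mathbb{Z}/p(N))\simeq \KM_{N}(k(U))/p$ and $\bigoplus_{x\in U^{(1)}}\HO^{N-1}(\kappa(x),\mathbb{Z}/p(N-1))\simeq\bigoplus_{x}\KM_{N-1}(\kappa(x))/p$; both sit in degree equal to the weight and are not killed for degree reasons, so $\HO^{N+1}_{\Zar}(U,\mathbb{Z}/p(N))$ is the cokernel of a residue map between nonzero groups, and its vanishing is the surjectivity of that residue map. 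This is precisely the first hypothesis $\HO^{n+1}_{\Zar}(U,\mathbb{Z}/p(n))=0$, and, granting your (correct) observation that $\HO^{u}_{\Zar}(U,\mathbb{Z}/p(n))=0$ for $u\geq n+2$ does follow for degree reasons, the isomorphism hypothesis for $s=1$ reduces to the same surjectivity in weight $n+1$. Already for $n=1$ the condition amounts to $\operatorname{Pic}(U)/p=0$, which is a statement about $R$ being a henselian regular (hence factorial) local ring, not about its residue fields.

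The paper supplies exactly this missing input by citing \cite[Proposition 4.5]{Sak4} together with \cite[Proposition 2.1]{Sak4} and the localization theorem \cite[p.779, Theorem 3.2]{Ge}: the surjectivity of the residue map is established for the semistable local ring $R$ itself by an explicit symbol computation (elements of the form $\{a_{0},\dots,a_{N-1},\pi\}$, the Bloch--Kato conjecture and \cite[p.113, Theorem (2.1)]{B-K}; compare Proposition \ref{VaniMtM1} for the global analogue), and is then transported to $U=\spec(R)\setminus Z$ by localization. Once you add this input, the rest of your argument proceeds along essentially the same lines as the paper's proof.
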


In \S \ref{Tcommt}, we compute motivic cohomology groups in global cases.
We prove the following:

\begin{prop}\upshape\label{IntVaniMot} 
(cf. Proposition \ref{VaniNGL}, Proposition \ref{VSC} and Proposition \ref{VaniMtM1})
Let $B$ be a discrete valuation ring of mixed characteristic $(0, p)$ and $\pi$ a prime element of $B$. 
Let 
\begin{equation*}
C=B[T_{0}, T_{1}, \cdots, T_{N}]/(T_{0}\cdots T_{a}-\pi)    
\end{equation*}
for $0\leq a\leq N$. Then we have
\begin{equation}\label{IntvanC}
\HO^{q}_{\Zar}(
C,
\mathbb{Z}/m(n)
)    
=0
\end{equation}
for integers $m>0$ and $q\geq n+1$. Moreover, we have
\begin{equation*}
\HO^{q}_{\Zar}(
C/(\pi),
\mathbb{Z}(n)
)=0    
\end{equation*}
for $q\geq n+1$.
\end{prop}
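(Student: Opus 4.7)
We sketch strategies for the two vanishing statements.

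For the second statement, set $X := \spec(C/(\pi)) = \spec(k[T_{0}, \ldots, T_{N}]/(T_{0}\cdots T_{a}))$, which is an SNC variety over $k$ whose irreducible components $X_{i} := V(T_{i})$ and all intersections $X_{I} := \bigcap_{i\in I} X_{i} \cong \mathbb{A}^{N+1-|I|}_{k}$ are smooth affine spaces. The natural approach is induction on $a$ via Mayer--Vietoris for the closed decomposition $X = V(T_{0})\cup V(T_{1}\cdots T_{a})$, whose scheme-theoretic intersection $V(T_{0}, T_{1}\cdots T_{a})$ is again an SNC variety with $a$ irreducible components, handled by the inductive hypothesis; the base case $a=0$ reduces by $\mathbb{A}^{1}$-invariance to the classical vanishing $\HO^{q}_{\Zar}(k, \mathbb{Z}(n)) = 0$ for $q > n$. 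Equivalently, the hypercover spectral sequence
\[
E_{1}^{p,q} = \bigoplus_{|I|=p+1} \HO^{q}_{\Zar}(X_{I}, \mathbb{Z}(n)) \Longrightarrow \HO^{p+q}_{\Zar}(X, \mathbb{Z}(n))
\]
collapses because each $\HO^{q}(X_{I}, \mathbb{Z}(n)) \cong \HO^{q}(k, \mathbb{Z}(n))$ is independent of $I$ by $\mathbb{A}^{1}$-invariance and the horizontal $d_{1}$-complexes are the augmented simplicial cochain complexes of the contractible $a$-simplex; this yields the stronger $\HO^{q}_{\Zar}(X, \mathbb{Z}(n)) \cong \HO^{q}_{\Zar}(k, \mathbb{Z}(n))$.

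For the first statement, observe that $\spec(C) \cong \spec(C_{0}) \times_{B} \mathbb{A}^{N-a}_{B}$ with $C_{0} := B[T_{0}, \ldots, T_{a}]/(T_{0}\cdots T_{a}-\pi)$, so $\mathbb{A}^{1}$-invariance of motivic cohomology on the regular scheme $\spec(C)$ reduces us to $N=a$, and Bockstein long exact sequences further reduce to $m = \ell$ prime. When $\ell \neq p$, regularity of $C$ together with the Bloch--Kato conjecture of Voevodsky--Rost gives $\mathbb{Z}/\ell(n) \simeq \tau_{\leq n} R\alpha_{*}\mu_{\ell}^{\otimes n}$ in the derived category of Zariski sheaves on $C$, so $\HO^{q}_{\Zar}(C, \mathbb{Z}/\ell(n)) = 0$ for $q > n$. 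When $\ell = p$, apply the localization triangle for the closed immersion $i: V(\pi)\hookrightarrow \spec(C)$ with open complement $j: \spec(C_{K}) \hookrightarrow \spec(C)$: the generic fibre $C_{K}$ is smooth over the characteristic-zero field $K$, so Bloch--Kato again gives $\HO^{q}_{\Zar}(C_{K}, \mathbb{Z}/p(n)) = 0$ for $q > n$; for the support cohomology along $V(\pi)$, exploit that $V(\pi)$ is an SNC divisor whose components $V(T_{i})$ are smooth codimension-one closed subschemes of the regular $\spec(C)$, so Gysin purity applies stratum by stratum, and combine with the second statement (via the universal coefficient sequence) to control $\HO^{q}_{V(\pi)}(\spec(C), \mathbb{Z}/p(n))$.

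\textbf{Main obstacle.} The decisive step is the $\ell = p$ case of the first statement. Organizing the stratified Gysin / purity computation for the support cohomology along the singular Cartier divisor $V(\pi)$, and extracting the vanishing in the borderline degree $q = n+1$ of the localization sequence, requires genuine purity input for $\mathbb{Z}/p$-motivic cohomology in mixed characteristic, together with the surjectivity of a tame-symbol boundary reflecting Bloch--Kato at the level of Milnor $\KM$-theory. The passage from the Čech / Mayer--Vietoris calculation over the special fibre to the total space is where the subtleties of semistable mixed characteristic genuinely enter.
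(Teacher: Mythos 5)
Your overall architecture --- induction on $a$, localization, reduction to $m$ prime, and a residue/symbol surjectivity as the decisive input --- matches the shape of the paper's Propositions \ref{VaniNGL}, \ref{VSC} and \ref{VaniMtM1}, but the proposal breaks down precisely at the points that carry the proof.

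For the second statement, the hypercover spectral sequence with the \emph{constant} weight $\mathbb{Z}(n)$ on every stratum $X_{I}$ is not available for Bloch's cycle complex on a singular scheme: the Mayer--Vietoris sequence attached to a closed cover of higher Chow groups is of Gysin type, so the intersection contributes $\HO^{q-2}_{\Zar}(X_{1}\cap X_{2}, \mathbb{Z}(n-1))$ rather than $\HO^{q}_{\Zar}(X_{1}\cap X_{2}, \mathbb{Z}(n))$. Your claimed collapse $\HO^{q}_{\Zar}(X,\mathbb{Z}(n))\cong \HO^{q}_{\Zar}(k,\mathbb{Z}(n))$ is already false for two lines in the plane with $n=q=1$: the localization sequence for the origin gives $\HO^{1}_{\Zar}(X,\mathbb{Z}(1))\cong k^{*}\times k^{*}\times\mathbb{Z}$, not $k^{*}$. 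The vanishing in degrees $q\geq n+1$ does hold, but it must be extracted from the localization theorem with the weight drop to $\mathbb{Z}(n-1)$ on the deeper stratum, together with the surjectivity of the restriction to the open stratum --- which is exactly how Proposition \ref{VaniNGL} argues by induction on $a$.

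For the first statement, the case $\ell\neq p$ is not a formal consequence of Beilinson--Lichtenbaum. Over a Dedekind base the comparison only identifies the truncations $\tau_{\leq n+1}$ (Proposition \ref{SBL}), and even granting $\mathcal{H}^{q}(\mathbb{Z}/\ell(n))=0$ for $q>n$ locally, the global group $\HO^{n+1}_{\Zar}(C,\mathbb{Z}/\ell(n))$ still receives a contribution from $\HO^{1}_{\Zar}(C,\mathcal{H}^{n})$, which does not vanish for free on a non-proper scheme; this is why the paper proves $\HO^{q}_{\Zar}(C,\mathbb{Z}(n))=0$ only for $q>n+1$ integrally (Proposition \ref{VSC}) and then treats the borderline degree $q=n+1$ separately and uniformly in all primes $m$ (Proposition \ref{VaniMtM1}). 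That borderline case is exactly the step you defer as the ``main obstacle,'' and the paper closes it with two devices absent from your sketch: the substitution $T^{\prime}_{a-1}\mapsto T_{a-1}T_{a}$, which identifies $C_{T_{a}}$ with a localization of the analogous ring with one fewer factor and makes the induction on $a$ run (note that $C/(T_{a})\cong B/(\pi)[T_{0},\dots,T_{a-1}]$ is a polynomial ring over the residue field, hence smooth, so there is no need to stratify the full singular divisor $V(\pi)$ as you propose); and the explicit symbol $\{a_{0},\dots,a_{n-1},T_{a}\}$ with residue $\{\bar{a}_{0},\dots,\bar{a}_{n-1}\}$, combined with the norm residue theorem and \cite[Theorem (2.1)]{B-K}, which shows the residue map $\HO^{n}_{\Zar}(C_{T_{a}},\mathbb{Z}/m(n))\to \HO^{n-1}_{\Zar}(B/(\pi)[T_{0},\dots,T_{a-1}],\mathbb{Z}/m(n-1))$ is surjective. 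Without these the argument does not close.
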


By using the localization theorem (\cite[p.779, Theorem 3.2.a)]{Ge}),
we are able to prove Proposition \ref{IntVaniMot} by induction on $a$.
The outline of the proof of the isomorphism (\ref{IntvanC}) is as follows:
We have a homomorphism of polynomial rings over $B$
\begin{equation*}
B[T^{\prime}_{0}, \cdots, T^{\prime}_{N}]
\to
B[T_{0}, \cdots, T_{N}]
\end{equation*}
which sends $T^{\prime}_{a-1}$ to $T_{a-1}T_{a}$ and
sends $T^{\prime}_{r}$ to $T_{r}$ for $r\neq a-1$.
Then this homomorphism induces an isomorphism
\begin{align*}
&\left(
B[T^{\prime}_{0}, \cdots, T^{\prime}_{N}]/
(T^{\prime}_{0}\cdots T^{\prime}_{a-1}-\pi)
\right)_{T^{\prime}_{a}}
\\
\simeq
&\left(
B[T_{0}, \cdots, T_{N}]/
(T_{0}\cdots T_{a}-\pi)
\right)_{T_{a}}
\end{align*}
for $a>0$. So, if the homomorphism
\begin{equation}\label{IntredfC}
\HO^{n}_{\Zar}(
C_{T_{a}},
\mathbb{Z}/m(n)
)    
\to
\HO^{n-1}_{\Zar}(
C/(T_{a}),
\mathbb{Z}/m(n-1)
)
\end{equation}
is surjective, the isomorphism (\ref{IntvanC}) follows from the localization theorem and an inductive argument
on $a$.
Moreover, the surjectivity of the homomorphism (\ref{IntredfC}) is proved by using the following:

\begin{prop}\upshape\label{IntGenBL}
(cf. Proposition \ref{SBL} and Proposition \ref{cset})
Let $\mathfrak{X}$ be a regular scheme which is essentially of finite type
over the spectrum of a discrete valuation ring $B$ 
in mixed characteristic $(0, p)$.

\begin{enumerate}
\item Let $i: Z\to \mathfrak{X}$ be a closed immersion of codimension $1$ and 
$Z$ 
an irreducible closed scheme of $\mathfrak{X}$ with $\operatorname{char}(Z)=p$.
Then the canonical map induces a quasi-isomorphism
\begin{equation*}
\tau_{\leq n+2}\left(
\mathbb{Z}(n-1)_{\et}^{Z}[-2]
\right)
\xrightarrow{\simeq}
\tau_{\leq n+2}
Ri^{!}\mathbb{Z}(n)_{\et}^{\mathfrak{X}}.
\end{equation*}
\item
The canonical map induces a quasi-isomorphism
\begin{equation*}
\tau_{\leq n+1}\left(
\mathbb{Z}(n)^{\mathfrak{X}}
\right)
\xrightarrow{\simeq}
\tau_{\leq n+1}\left(
R\epsilon_{*}
\mathbb{Z}(n)_{\et}^{\mathfrak{X}}
\right)
\end{equation*}
where
$\epsilon: \mathfrak{X}_{\et}\to\mathfrak{X}_{\Zar}$
is the canonical map of sites and
$\epsilon_{*}$ is the forgetful functor.
\end{enumerate}
\end{prop}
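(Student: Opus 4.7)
The plan is to treat parts (1) and (2) by the same structural recipe: decompose the motivic complex into rational and torsion pieces, then handle the $\ell \neq p$ and $\ell = p$ parts separately using the Beilinson--Lichtenbaum comparison and K.~Sato's \'{e}tale Tate twist $\mathfrak{T}_{r}(n)$, respectively.

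For part (2), the strategy is as follows. Rationally, $\mathbb{Z}(n)^{\mathfrak{X}}\otimes \mathbb{Q}$ agrees with $R\epsilon_{*}\mathbb{Z}(n)^{\mathfrak{X}}_{\et}\otimes \mathbb{Q}$, so via Bockstein triangles it suffices to establish the truncated quasi-isomorphism with each coefficient $\mathbb{Z}/\ell^{r}$. For $\ell \neq p$, this is precisely the Beilinson--Lichtenbaum comparison $\tau_{\leq n}\mathbb{Z}/\ell^{r}(n)^{\mathfrak{X}} \xrightarrow{\simeq}\tau_{\leq n}R\epsilon_{*}\mu_{\ell^{r}}^{\otimes n}$ of \cite[p.774, Theorem 1.2.2]{Ge}. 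For $\ell=p$, use the identification $\tau_{\leq n+1}\mathbb{Z}/p^{r}(n)^{\mathfrak{X}}_{\et}\simeq \mathfrak{T}_{r}(n)$ recalled above and compute $R\epsilon_{*}\mathfrak{T}_{r}(n)$ in low degrees from the defining distinguished triangle of Definition \ref{DefT}, applying \cite[p.774, Theorem 1.2.2]{Ge} to the generic part $\tau_{\leq n}Rj_{*}\mu_{p^{r}}^{\otimes n}$ and the Zariski-acyclicity of the logarithmic Hodge--Witt sheaves $\nu_{Y,r}^{n-1}$ pushed forward from the special fiber.

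For part (1), begin with Geisser's localization theorem (\cite[p.779, Theorem 3.2]{Ge}), which provides a distinguished triangle
\[
i_{*}\mathbb{Z}(n-1)^{Z}[-2]\to \mathbb{Z}(n)^{\mathfrak{X}}\to Rj_{*}\mathbb{Z}(n)^{U},
\]
where $U=\mathfrak{X}\setminus Z$ and $j: U\hookrightarrow \mathfrak{X}$. The \'{e}tale analogue furnishes a canonical map $\mathbb{Z}(n-1)_{\et}^{Z}[-2]\to Ri^{!}\mathbb{Z}(n)_{\et}^{\mathfrak{X}}$, and the task is to check it is a quasi-isomorphism after $\tau_{\leq n+2}$. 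Again reduce to torsion coefficients. The prime-to-$p$ part is Gabber's absolute cohomological purity, which identifies $Ri^{!}\mu_{\ell^{r}}^{\otimes n}$ with $\mu_{\ell^{r}}^{\otimes (n-1)}[-2]$. For the $p$-primary part, apply $Ri^{!}$ to the defining triangle for $\mathfrak{T}_{r}(n)$: the term $Ri^{!}i_{*}\nu^{n-1}_{Y,r}[-n-1]$ concentrates in degree $n+1$, while $Ri^{!}\tau_{\leq n}Rj_{*}\mu_{p^{r}}^{\otimes n}$ is controlled using $Ri^{!}Rj_{*}=0$ together with careful bookkeeping of the inner truncation. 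Combining these computations with the identification from part (2) applied to $Z$ (which, as a prime divisor in the regular scheme $\mathfrak{X}$, is itself regular and admits the analogous comparison in characteristic $p$ from Geisser--Levine) yields the desired identification with $\mathbb{Z}(n-1)_{\et}^{Z}[-2]$ in degrees $\leq n+2$.

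The main obstacle is the $p$-primary analysis in part (1). The truncations $\tau_{\leq n+2}$ on both sides reflect exactly the limited range in which $\mathfrak{T}_{r}(n)$ coincides with $\mathbb{Z}/p^{r}(n)_{\et}$, and one must carefully track the interaction between the inner truncation $\tau_{\leq n}$ inside Sato's defining triangle and the outer target truncation $\tau_{\leq n+2}$; this interplay is what forces the statement to stop one degree short of the range afforded by absolute purity in the $\ell\neq p$ case, and is the source of the precise bound $n+2$ in the statement.
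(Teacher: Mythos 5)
Your proposal has genuine gaps, and the most damaging one is the parenthetical claim in part (1) that $Z$, ``as a prime divisor in the regular scheme $\mathfrak{X}$, is itself regular.'' This is false: an irreducible codimension-one closed subscheme with $\operatorname{char}(Z)=p$ is an irreducible component of the special fibre, and such a component need not be regular (for $\mathfrak{X}=\spec(\mathbb{Z}_{p}[x,y]/(y^{2}-x^{3}-p))$, which is regular, the special fibre is the cuspidal cubic). This breaks your argument in two places at once: Gabber's absolute purity \cite{G} identifies $Ri^{!}\mu_{\ell^{r}}^{\otimes n}$ with $\mu_{\ell^{r}}^{\otimes(n-1)}[-2]$ only for a regular pair, so even your prime-to-$p$ step fails for singular $Z$; and the comparison of $\mathbb{Z}(n-1)^{Z}$ with its \'{e}tale counterpart cannot be quoted from Geisser--Levine for smooth varieties but needs a version valid for singular schemes over a field. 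A second structural gap is that your entire $p$-primary analysis rests on $\mathfrak{T}_{r}(n)$ and the sheaves $\nu^{n-1}_{Y,r}$ of Definition \ref{DefT}, which the paper only has at its disposal when the special fibre is a reduced normal crossing divisor, i.e.\ for semistable $\mathfrak{X}$; the proposition is stated for an arbitrary regular scheme essentially of finite type over $B$, so these objects (and the identification $\tau_{\leq n+1}\mathbb{Z}/p^{r}(n)_{\et}\simeq\mathfrak{T}_{r}(n)$) are simply not available in the stated generality. Finally, the step you defer to ``careful bookkeeping,'' namely computing $Ri^{!}$ of the inner truncation $\tau_{\leq n}Rj_{*}\mu_{p^{r}}^{\otimes n}$ (note $Ri^{!}Rj_{*}=0$ says nothing about $Ri^{!}\tau_{\leq n}Rj_{*}$), is exactly the hard $p$-adic content and is nowhere supplied.

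The paper's proofs of Propositions \ref{SBL} and \ref{cset} avoid the rational/$\ell$-primary/$p$-primary decomposition altogether. The key preliminary is Proposition \ref{pBL}: a Beilinson--Lichtenbaum-type comparison $\tau_{\leq n+1}\mathbb{Z}(n)_{\Zar}\simeq\tau_{\leq n+1}R\epsilon_{*}\mathbb{Z}(n)_{\et}$ for arbitrary, possibly singular, schemes essentially of finite type over a field, proved by Noetherian induction via the localization triangle, Quillen's limit trick and the five lemma. Part (2) then follows by comparing the Zariski purity triangle for the closed fibre $i\colon Y\to\mathfrak{X}$ (cf.\ \cite[p.780, (6)]{Ge}) with its \'{e}tale analogue, where the truncated purity quasi-isomorphism $\tau_{\leq n+2}\left(R\epsilon_{*}\mathbb{Z}(n-1)_{\et}^{Y}[-2]\right)\simeq\tau_{\leq n+2}\,i_{*}Ri^{!}R\epsilon_{*}\mathbb{Z}(n)_{\et}^{\mathfrak{X}}$ is imported as a black box from \cite[p.33, Proposition 2.1]{SM}, and the five lemma concludes; part (1) is deduced from the same inputs by running the localization sequences of the triple $Z\subset Y\subset\mathfrak{X}$ (with $W=Y\setminus Z$) and applying Proposition \ref{pBL} to $Y\setminus Z$, again with the five lemma. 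No case distinction on the residue characteristic of the coefficients is ever made. To salvage your route you would at minimum have to restrict to semistable $\mathfrak{X}$ and regular $Z$, which is strictly weaker than what is claimed.
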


In the case where $\mathfrak{X}$ is smooth over $\spec(B)$,
\begin{math}
\mathcal{H}^{q}(\mathbb{Z}(n))=0    
\end{math}
for $q\geq n+1$ by \cite[p.786, Corollary 4.4]{Ge}. 
So Proposition \ref{IntGenBL} is an extension of 
\cite[p.774, Theorem 1.2.1 and Theorem 1.2.2]{Ge}.
As an application of Proposition \ref{IntVaniMot}, we obtain the following: 

\begin{prop}\upshape(cf. Corollary \ref{injproEx} and Remark \ref{comia})
Let $B$ be a henselian discrete valuation ring of mixed characteristic $(0, p)$
and $\pi$ a prime element of $B$. Let
\begin{equation*}
\mathfrak{X}
=
\operatorname{Proj}\left(
B[T_{0}, \cdots, T_{N+1}]/
(T_{0}\cdots
T_{a}-\pi T_{N+1}^{a+1}
)
\right)
\end{equation*}
for $0\leq a\leq N$ and
$i: Y\to \mathfrak{X}$ the inclusion of 
the closed fiber of $\mathfrak{X}$. 
Then the homomorphism
\begin{equation*}
\Gamma\left(
\mathfrak{X},
R^{n+1}\alpha_{*}\mu_{l}^{\otimes n}
\right)
\to
\Gamma\left(
Y,
i^{*}R^{n+1}\alpha_{*}\mu_{l}^{\otimes n}
\right)
\end{equation*}
is injective for any integers $n\geq 0$ and $l>0$ with
$(l, p)=1$. Here 
$\alpha: \mathfrak{X}_{\et}\to \mathfrak{X}_{\Nis}$
is the canonical map of sites.    
\end{prop}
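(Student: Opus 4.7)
The plan is to combine proper base change with Proposition \ref{IntVaniMot} and the extended Beilinson-Lichtenbaum quasi-isomorphism of Proposition \ref{IntGenBL}(2).

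First, since $\mathfrak{X}\to\spec(B)$ is projective and $B$ is henselian, for $(l,p)=1$ proper base change gives isomorphisms $\HO^{q}_{\et}(\mathfrak{X},\mu_{l}^{\otimes n})\xrightarrow{\simeq}\HO^{q}_{\et}(Y,\mu_{l}^{\otimes n})$ for each $q$. Moreover, by rigidity for henselian pairs, the sheaves $i^{*}R^{n+1}\alpha_{*}\mu_{l}^{\otimes n}$ and $R^{n+1}\alpha^{Y}_{*}\mu_{l}^{\otimes n}$ on $Y_{\Nis}$ are canonically isomorphic. Combined with the Leray spectral sequence $\HO^{p}_{\Nis}(\mathfrak{X},R^{q}\alpha_{*}\mu_{l}^{\otimes n})\Rightarrow\HO^{p+q}_{\et}(\mathfrak{X},\mu_{l}^{\otimes n})$ and its analog for $Y$, this yields the commutative diagram
\[
\begin{CD}
\HO^{n+1}_{\et}(\mathfrak{X},\mu_{l}^{\otimes n}) @>{\phi}>> \Gamma(\mathfrak{X},R^{n+1}\alpha_{*}\mu_{l}^{\otimes n}) \\
@V{\simeq}VV @VVV \\
\HO^{n+1}_{\et}(Y,\mu_{l}^{\otimes n}) @>{\psi}>> \Gamma(Y,R^{n+1}\alpha^{Y}_{*}\mu_{l}^{\otimes n})
\end{CD}
\]
in which the horizontal arrows are the Leray edge maps.

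Second, I would show $\phi$ is surjective and $\psi$ is injective. The scheme $\mathfrak{X}$ is covered by the standard affines $D_{+}(T_{i})$, whose coordinate rings fall within the scope of Proposition \ref{IntVaniMot}, so $\HO^{q}_{\Zar}(D_{+}(T_{i}),\mathbb{Z}/l(n))=0$ for $q\geq n+1$. Using Proposition \ref{IntGenBL}(2) and the Geisser-Levine identification $\mathbb{Z}/l(n)_{\et}\simeq\mu_{l}^{\otimes n}$, this translates into the vanishing of $\HO^{p}_{\Nis}(\mathfrak{X},R^{q}\alpha_{*}\mu_{l}^{\otimes n})$ for $p\geq 1$, $p+q=n+1$, which kills all outgoing differentials from $E_{2}^{0,n+1}$ in the Leray spectral sequence and hence makes $\phi$ surjective. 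The analogous argument on $Y$, now using the motivic vanishing of $\HO^{q}_{\Zar}(C/(\pi),\mathbb{Z}(n))$ from the second half of Proposition \ref{IntVaniMot} together with the Gersten-type conjecture for normal crossing varieties supplied by Theorem \ref{MaiGCL} (adapted to prime-to-$p$ coefficients by standard comparison with Bloch-Ogus), yields the injectivity of $\psi$.

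A diagram chase then finishes the proof: any $s\in\Gamma(\mathfrak{X},R^{n+1}\alpha_{*}\mu_{l}^{\otimes n})$ whose restriction to $Y$ vanishes lifts via $\phi$ to some $\tilde{s}\in\HO^{n+1}_{\et}(\mathfrak{X},\mu_{l}^{\otimes n})$; proper base change sends $\tilde{s}$ to $\tilde{t}\in\HO^{n+1}_{\et}(Y,\mu_{l}^{\otimes n})$ with $\psi(\tilde{t})=0$, so $\tilde{t}=0$, $\tilde{s}=0$, and $s=0$. The main obstacle is the injectivity of $\psi$: since $Y$ has normal-crossing singularities, classical Bloch-Ogus does not directly apply, and one must carefully propagate the motivic vanishing information through the singular locus using the Gersten-type results developed earlier in the paper.
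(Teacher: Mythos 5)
Your strategy has a genuine gap, and it sits exactly where you flag "the main obstacle": the two Leray edge maps $\phi$ and $\psi$ do not have the properties your diagram chase requires, and the affine-chart vanishing of Proposition \ref{IntVaniMot} cannot be promoted to the global Nisnevich vanishing you invoke. Concretely, the differentials out of $E_{2}^{0,n+1}$ land in $E_{r}^{r,\,n+2-r}$ (total degree $n+2$, not $n+1$ as you wrote), and the groups $\HO^{p}_{\Nis}(\mathfrak{X},R^{q}\alpha_{*}\mu_{l}^{\otimes n})$ with $p\geq 1$, $p+q=n+1$ or $n+2$ are not zero for the projective scheme $\mathfrak{X}$: already for $n=1$ one has $\HO^{1}_{\Nis}(\mathfrak{X},R^{1}\alpha_{*}\mu_{l})\supset\operatorname{Pic}(\mathfrak{X})/l\neq 0$. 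Worse, $\psi$ is simply not injective in general. Take $B$ with separably closed residue field $k$, $N=1$, $a=0$, so $\mathfrak{X}\simeq\mathbb{P}^{1}_{B}$ and $Y\simeq\mathbb{P}^{1}_{k}$, and $n=1$: then $\HO^{2}_{\et}(Y,\mu_{l})\simeq\mathbb{Z}/l$ (the cycle class of a point) while $\Gamma(Y,R^{2}\alpha_{*}\mu_{l})=0$ because the henselian local rings of a curve over a separably closed field have $l$-cohomological dimension $\leq 1$. The whole positively-filtered ("geometric") part of $\HO^{n+1}_{\et}(Y,\mu_{l}^{\otimes n})$ dies in $\Gamma(Y,R^{n+1}\alpha_{*}\mu_{l}^{\otimes n})$, so no amount of Gersten-type input will make $\psi$ injective, and the chase collapses. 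Proper base change on the étale hypercohomology of the proper scheme therefore does not transfer to the sheaf-level statement you want; in this paper that transfer is only carried out in \S\ref{TPrb} (Theorem \ref{Nisproper}) under the extra hypothesis $\dim(\mathfrak{X})=2$, where the Gersten resolutions control the full Leray spectral sequence.

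The paper's actual proof avoids the projective scheme's étale cohomology entirely. Since $R^{n+1}\alpha_{*}\mu_{l}^{\otimes n}\to j_{*}j^{*}R^{n+1}\alpha_{*}\mu_{l}^{\otimes n}$ is injective for the dense open chart $j: D(T_{N+1})=\spec(C)\hookrightarrow\mathfrak{X}$ with $C=B[S_{0},\dots,S_{N}]/(S_{0}\cdots S_{a}-\pi)$ (by the Gersten resolution of \cite[p.35, Proposition 5]{SM19}), one reduces to the affine statement that $\Gamma(\spec(C),R^{n+1}\alpha_{*}\mathbb{Z}/l(n))\to\Gamma(\spec(C/(\pi)),R^{n+1}\alpha_{*}\mathbb{Z}/l(n))$ is injective (Corollary \ref{inclaffmt}). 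On the affine charts the identifications $\HO^{n+1}_{\et}(C,\mathbb{Z}/l(n))\simeq\Gamma(\spec(C),R^{n+1}\alpha_{*}\mathbb{Z}/l(n))$ and its analogue for $C/(\pi)$ \emph{do} hold, precisely because of the motivic vanishing of Propositions \ref{VSC}, \ref{VaniMtM1} and \ref{VaniNGL}; this is where your intended use of Proposition \ref{IntVaniMot} legitimately enters. The remaining content is Proposition \ref{fscH}, the injectivity of $\HO^{n+1}_{\et}(C,\mathbb{Z}/l(n))\to\HO^{n+1}_{\et}(C/(\pi),\mathbb{Z}/l(n))$, which is proved by an induction on $a$ through explicit coordinate changes reducing to localized polynomial rings over $B$, where absolute purity and the henselian property of $B$ (rigidity for $B\to B/(\pi)$, not proper base change) give the result. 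To repair your argument you would need to replace the projective-scheme comparison by this restriction-to-a-chart step and then supply the affine injectivity, which is the real work.
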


In \S \ref{TPrb}, we prove the following as an application of 
the proper base change theorem (\cite[p.224, VI, Corollary 2.7]{M})
and the local-global principle (\cite[Theorem 1.7]{Sak4}):

\begin{thm}\upshape\label{IntNisproper}(cf. Theorem \ref{Nisproper})
Let $B$ be a henselian excellent discrete valuation ring of mixed characteristic 
$(0, p)$
and $\mathfrak{X}$ a semistable family and proper over $\spec(B)$.
Let $i: Y\to \mathfrak{X}$ be the inclusion of 
the closed fiber of $\mathfrak{X}$.
Suppose that
$\operatorname{dim}(\mathfrak{X})=2$
and
$B$ contains $p$-th roots of unity.
Then the homomorphism
\begin{equation*}
\HO^{s}_{\Nis}(\mathfrak{X}, 
R^{t}\alpha_{*}\mathfrak{T}_{1}(n)
)
\xrightarrow{\simeq}
\HO^{s}_{\Nis}(Y, 
i^{*}R^{t}\alpha_{*}\mathfrak{T}_{1}(n)
)
\end{equation*}
is an isomorphism for integers $s\geq 0$ and $t\geq 2$. 
Moreover, we have an isomorphism
\begin{equation*}
\HO^{s}_{\Nis}(\mathfrak{X}, 
R^{n+1}\alpha_{*}\mathfrak{T}_{1}(n)
)
\xrightarrow{\simeq}
\HO^{s}_{\Nis}
(Y, R^{1}\alpha_{*}\lambda_{1}^{n}
)
\end{equation*}
for integers $s\geq 0$ and $n\geq 1$. Thus, the sequence
\begin{align*}
0\to  
\displaystyle\bigoplus_{x\in \mathfrak{X}^{(0)}}
\HO_{x}^{n+r}(\mathfrak{X}_{\et}, \mathfrak{T}_{1}(n))
\to
\displaystyle\bigoplus_{x\in \mathfrak{X}^{(1)}}
\HO_{x}^{n+r+1}(\mathfrak{X}_{\et}, \mathfrak{T}_{1}(n))
\nonumber
\\
\to
\displaystyle\bigoplus_{x\in \mathfrak{X}^{(2)}}
\HO_{x}^{n+r+2}(\mathfrak{X}_{\et}, \mathfrak{T}_{1}(n))
\to
0
\end{align*}
is exact for integers $n\geq 1$ and $r\geq 2$.
\end{thm}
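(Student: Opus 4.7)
The plan is to combine the proper base change theorem \cite[pp.223--224, VI, Corollary 2.3]{M} applied to the proper morphism $\mathfrak{X}\to\spec(B)$, which uses the henselianness of $B$, with the relative Gersten-type exactness of Theorem \ref{Intrelsupp} and the normal-crossing Gersten theorem (Theorem \ref{MaiGCL}). Proper base change gives $\HO^{q}_{\et}(\mathfrak{X}, \mathfrak{T}_{1}(n))\simeq \HO^{q}_{\et}(Y, i^{*}\mathfrak{T}_{1}(n))$ for every $q$, and the task is to upgrade this \'{e}tale isomorphism to the Nisnevich setting by analyzing the Leray spectral sequences of $\alpha: \mathfrak{X}_{\et}\to\mathfrak{X}_{\Nis}$ on the two sides.

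For the first isomorphism, I would consider the short exact sequence
\begin{equation*}
0\to j_{!}j^{*}R^{t}\alpha_{*}\mathfrak{T}_{1}(n)\to R^{t}\alpha_{*}\mathfrak{T}_{1}(n)\to i_{*}i^{*}R^{t}\alpha_{*}\mathfrak{T}_{1}(n)\to 0
\end{equation*}
of Nisnevich sheaves on $\mathfrak{X}$, where $j: U=\mathfrak{X}\otimes_{B}K\hookrightarrow\mathfrak{X}$ is the inclusion of the generic fibre. Since $i_{*}$ is Nisnevich-exact, the iso is equivalent to the vanishing of $\HO^{s}_{\Nis}(\mathfrak{X}, j_{!}j^{*}R^{t}\alpha_{*}\mathfrak{T}_{1}(n))$ for $t\geq 2$ and all $s$. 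The roots-of-unity hypothesis gives $\mathfrak{T}_{1}(n)|_{U}\simeq \mu_{p}^{\otimes n}$, so $j^{*}R^{t}\alpha_{*}\mathfrak{T}_{1}(n)\simeq R^{t}\alpha_{U*}\mu_{p}^{\otimes n}$; since $U$ is smooth of dimension one over $K$, Beilinson-Lichtenbaum (\cite[p.774, Theorem 1.2.2]{Ge}) controls these higher direct images, and the vanishing is then forced by propagating the proper base change identification of abutments through the Leray spectral sequence on $\mathfrak{X}$.

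For the identification $i^{*}R^{n+1}\alpha_{*}\mathfrak{T}_{1}(n)\simeq R^{1}\alpha_{*}\lambda_{1}^{n}$, I would apply $R\alpha_{*}$ and then $i^{*}$ to the defining distinguished triangle of $\mathfrak{T}_{1}(n)$ in Definition \ref{DefT}. The component $i_{*}\nu^{n-1}_{Y,1}[-n-1]$ sits in cohomological degree $n+1$ after the shift, and Theorem \ref{MaiGCL} converts the residue contribution on the normal crossing variety $Y$ into the Nisnevich sheaf $R^{1}\alpha_{*}\lambda_{1}^{n}$ via the logarithmic Bloch-Kato-Gabber-type identification of the top piece. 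Combined with the first isomorphism for $t=n+1$, this produces the second iso.

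The Cousin complex exactness follows from the Nisnevich coniveau spectral sequence on $\mathfrak{X}$ combined with Theorem \ref{Intrelsupp}, which identifies the $E_{2}^{s,t}$-page with $\HO^{s}_{\Nis}(\mathfrak{X}, R^{t}\alpha_{*}\mathfrak{T}_{1}(n))$. At the row $t=n+r$ with $r\geq 2$, the first isomorphism and the bound $\dim Y=1$ give $E_{2}^{2, n+r}=0$, while the vanishing of $E_{2}^{0, n+r}$ and $E_{2}^{1, n+r}$ is reduced, via the second isomorphism and Theorem \ref{MaiGCL}, to Nisnevich cohomology vanishing for the $R^{1}\alpha_{*}\lambda_{1}^{n}$-type sheaves on the one-dimensional $Y$. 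The principal obstacle will be the intermediate range $2\leq t\leq n$ in the first part, where Beilinson-Lichtenbaum does not annihilate $R^{t}\alpha_{U*}\mu_{p}^{\otimes n}$ as a sheaf on $U$, and one must exploit the full interplay between proper base change on $\mathfrak{X}$ and the Leray spectral sequences on $\mathfrak{X}$ and $Y$ to extract the vanishing of the $j_{!}$-contribution.
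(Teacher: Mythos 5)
Your skeleton matches the paper's: the same short exact sequence $0\to j_{!}j^{*}R^{t}\alpha_{*}\mathfrak{T}_{1}(n)\to R^{t}\alpha_{*}\mathfrak{T}_{1}(n)\to i_{*}i^{*}R^{t}\alpha_{*}\mathfrak{T}_{1}(n)\to 0$, the reduction to the vanishing of $\HO^{s}_{\Nis}(\mathfrak{X}, j_{!}R^{t}\alpha_{*}\mu_{p}^{\otimes n})$ for $t\geq 2$, and the Leray spectral sequence $E_{2}^{s,t}=\HO^{s}_{\Nis}(\mathfrak{X}, j_{!}R^{t}\alpha_{*}\mu_{p}^{\otimes n})\Rightarrow \HO^{s+t}_{\et}(\mathfrak{X}, j_{!}j^{*}\mathfrak{T}_{1}(n))$ whose abutment vanishes by proper base change (Lemma \ref{PTj}). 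But the step you describe as ``the vanishing is then forced by propagating the proper base change identification of abutments through the Leray spectral sequence'' does not go through. Since $\operatorname{dim}(\mathfrak{X})=2$, the $E_{2}$-page is concentrated in columns $s=0,1,2$; a vanishing abutment then only forces $E_{2}^{1,t}=0$ and forces $d_{2}\colon E_{2}^{0,t+1}\to E_{2}^{2,t}$ to be an isomorphism. It is perfectly consistent with the spectral sequence that both of these columns are nonzero, so an independent argument is needed to kill the column $s=0$ for $t\geq 2$. This is exactly where the second named ingredient of the theorem enters, and your proposal never invokes it: the local-global principle \cite[Theorem 1.7]{Sak4}.

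Concretely, the paper proves $E_{2}^{0,t}=0$ for $t\geq 2$ by showing that $\HO^{0}_{\Nis}(\mathfrak{X}, R^{n+1}\alpha_{*}\mathfrak{T}_{1}(n))\to\HO^{0}_{\Nis}(Y, R^{1}\alpha_{*}\lambda_{1}^{n})$ is injective; this uses Lemma \ref{HrG} (which rewrites both sides as Cousin cohomology, and itself rests on Corollary \ref{2GerT} and Theorem \ref{Gtcl}), the exact sequence $0\to\HO^{1}_{\et}(\kappa(x),\lambda_{1}^{n})\to\HO^{n+1}_{\et}(k(\mathcal{O}^{h}_{\mathfrak{X},x}),\mathfrak{T}_{1}(n))\to\HO^{n+2}_{x}(\mathfrak{X},\mathfrak{T}_{1}(n))$ at the codimension-one points $x$ of the special fiber, and the local-global principle to detect a class over $k(\mathfrak{X})$ by its restrictions to the henselizations along $Y$. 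Your closing paragraph correctly senses that something is missing but locates it in the range $2\leq t\leq n$ where Beilinson--Lichtenbaum does not annihilate $R^{t}\alpha_{U*}\mu_{p}^{\otimes n}$; the actual gap is the $s=0$ column for every $t\geq 2$, and no amount of spectral-sequence bookkeeping on $\mathfrak{X}$ and $Y$ replaces the arithmetic input of the local-global principle there. (Minor: the identification $i^{*}R^{n+1}\alpha_{*}\mathfrak{T}_{1}(n)\simeq R^{1}\alpha_{*}\lambda_{1}^{n}$ is quoted from \cite[Theorem 1.2 and Theorem 1.4]{Sak4} rather than rederived from the defining triangle, but your sketch of that step is in the right spirit.)
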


Let the notation be the same as above. Then Theorem \ref{IntNisproper}
relates to a generalization of Artin's theorem that
the Brauer group of $\mathfrak{X}$
is isomorphic to that of $Y$ (cf. Remark \ref{Expgl}).

At the end of the paper, we raise questions 
(see Question \ref{Qpr} and Question \ref{QG})
which relates to Kato conjecture 
(cf. \cite{K}, \cite[p.125, Conjecture 0.2 and Conjecture 0.3]{K-S}).

\subsection*{Notations}
For a scheme $X$,
$X^{(s)}$ denotes the set of points
of codimension $s$ and
$k(X)$ denotes the ring of rational functions on $X$.
For a ring $A$, $k(A)$ denotes $k(\spec(A))$.
For a point $x\in X$, $\kappa(x)$ denotes the residue field
of $x$, $\mathcal{O}_{X, x}$ denotes the local ring of $X$ at $x\in X$
and
$\mathcal{O}^{h}_{X, x}$ denotes the henselization of $\mathcal{O}_{X, x}$.

$X_{\et}$, $X_{\Nis}$ and $X_{\Zar}$
denote the category of \'{e}tale schemes over a scheme $X$
equipped with
the \'{e}tale, Nisnevich and Zariski topology, respectively.
The category of sheaves on $X_{\et}$ is denoted by $\mathbb{S}_{X_{\et}}$.
Let $\epsilon: X_{\et}\to X_{\Zar}$ (resp. 
$\alpha: X_{\et}\to X_{\Nis}$)
be the canonical map of sites.
Then $\epsilon_{*}$ (resp. $\alpha_{*}$) denotes the forgetful functor.

$\mathbb{Z}(n)_{\et}$ (resp. $\mathbb{Z}(n)$, $\mathbb{Z}(n)_{\Nis}$)
denotes Bloch’s cycle complex for
\'{e}tale (resp. Zariski, Nisnevich) topology and 
$\mathbb{Z}/m(n)_{\et}=\mathbb{Z}(n)_{\et}\otimes\mathbb{Z}/m\mathbb{Z}$
(resp. $\mathbb{Z}/m(n)=\mathbb{Z}(n)\otimes\mathbb{Z}/m\mathbb{Z}$,
$\mathbb{Z}/m(n)_{\Nis}=\mathbb{Z}(n)_{\Nis}\otimes\mathbb{Z}/m\mathbb{Z}$)
for a positive integer $m$.
For an integer $m>0$, $\mu_{m}$ denotes the sheaf of $m$-th
roots of unity.
For a smooth scheme $Y$ over the spectrum of a field of positive characteristic
$p>0$, $\nu^{n}_{Y, r}=W_{r}\Omega_{Y, \log}^{n}$ denotes
the logarithmic de Rham-Witt sheaf
(cf. \cite{I}, \cite[p.575, Definition 2.6]{Sh}).
\section{Gersten-type conjecture}\label{TGer}
Let $Y$ be a normal crossing variety over the spectrum of a field $k$ of positive characteristic $p>0$,
that is,
$Y$ is everywhere \'{e}tale locally isomorphic to
\begin{align*}
\spec(k[T_{0}, T_{1}, \cdots, T_{M}]/(T_{0}T_{1}\cdots T_{a}))    
\end{align*}
for some integer $a$ with $0\leq a\leq M=\operatorname{dim}(Y)$
(cf. \cite[pp.180--181, Definition 2.1]{SaR}).
In this section, 
we prove the Gersten-type conjecture 
for a pair of a sheaf $\mathcal{F}$ on $Y_{\et}$ 
and a non-negative integer $N$
which satisfies
Condition \ref{cdpb}
over the henselization ring of the local ring $\mathcal{O}_{Y, y}$ of $Y$ at a point $y\in Y$
( Theorem \ref{Ex&Ger}, Theorem \ref{1gerl}).
Let
\begin{align*}
\lambda^{n}_{Y, r}
:=
\operatorname{Im}\left(
d\log:
(\mathbb{G}_{m, Y})^{\otimes n}
\to
\displaystyle 
\bigoplus_{y\in Y^{(0)}}
i_{x *}W_{r}\Omega^{n}_{y, \log}
\right)
\end{align*}
be the logarithmic Hodge-Witt sheaves (cf. \cite[p.726, Definition 3.1.1 (1)]{SaL}). 
For example, a pair $(\mathcal{F}, N)=(\lambda^{n}_{Y, r}, 1)$ satisfies Condition \ref{cdpb}
(see Remark \ref{ExaCond}.2. (ii)).
  
Let $\mathfrak{X}$ be a semistable family
over the spectrum of a discrete valuation ring $B$ of mixed characteristic $(0, p)$.
Suppose that $B$ contains $p$-th roots of unity.
As an application of Theorem \ref{Ex&Ger}, 
we prove the relative version of
Gersten-type conjecture for $\mathfrak{T}_{1}(n)$ over the henselization of
the local ring $\mathcal{O}_{\mathfrak{X}, x}$ 
at a point $x\in \mathfrak{X}$ ( Theorem \ref{Gtcl}).

\subsection{The outline of the proof of Theorem \ref{Ex&Ger}}\label{OutMai}

The outline of the proof of Theorem \ref{Ex&Ger}
is as follows:
By \cite[p.93, III, Corollary 1.28]{M},
it suffices to prove
$\operatorname{P}_{1}(a)$ and
$\operatorname{P}_{2}(a)$
in the case where
$Y$ is replaced by the spectrum of 
the henselization
$A=\mathcal{O}^{h}_{Y, y}$
of the local ring 
$\mathcal{O}_{Y, y}$
of $Y$ at a point $y\in Y$
and a point $y\in Y$ is replaced by the maximal ideal of $A$.
We put 
$\spec(C)=\spec(A)\times_{Y}Z$
and prove Theorem \ref{Ex&Ger} by induction on $a=\#(\spec(A)^{(0)})$.

In the case where $a=1$,
then $\spec(C)=\emptyset$ and so $\operatorname{P}_{1}(1)$
and $\operatorname{P}_{2}(1)$ hold. Since $Y$
is a smooth scheme over the spectrum of a field,
$\operatorname{P}_{3}(1)$ holds by Condition \ref{cdpb}.

Assume that $\operatorname{P}_{1}(u)$, $\operatorname{P}_{2}(u)$ and
$\operatorname{P}_{3}(u)$ hold for $u\leq a$, 
then we prove 
$\operatorname{P}_{1}(a+1)$, $\operatorname{P}_{2}(a+1)$
and $\operatorname{P}_{3}(a+1)$.

First, we prove $\operatorname{P}_{1}(a+1)$. Since $\#(\spec(C)^{(0)})=a$, $\operatorname{P}_{1}(a+1)$ follows from
$\operatorname{P}_{3}(a)$
and Lemma \ref{sredis}.

Next we prove $\operatorname{P}_{2}(a+1)$
and $\operatorname{P}_{3}(a+1)$ by downward induction on $N^{\prime}(\geq N_{1})$.
Let $\mathcal{G}$ be a sheaf of abelian groups on $\spec(A)_{\et}$. Then
$\HO^{N^{\prime}+s}_{x}(A_{\et}, \mathcal{G})_{T_{q}}$
vanishes for $x\in \spec(A)^{(s)}$ and sufficiently large $N^{\prime}$
by Proposition \ref{cdpt} and Remark \ref{cdsp}. 
Since $\operatorname{cd}_{q}(k)$ and $\operatorname{tr.deg}_{k}\kappa(y)$
are finite,
$\HO^{N^{\prime}}_{\et}(A, \mathcal{F}_{\spec(A)})_{T_{q}}$
vanishes for sufficiently large $N^{\prime}$ by the isomorphism (\ref{isoAI}). 
So $\operatorname{P}_{2}(a+1)$ and $\operatorname{P}_{3}(a+1)$ hold for
sufficiently large $N^{\prime}$.

Assume that $\operatorname{P}_{2}(a+1)$ and
$\operatorname{P}_{3}(a+1)$ hold for
$N^{\prime}>w (\geq N_{1})$. By the definition, 
$\spec(A)\times_{Y}Y_{a+1}$ is the spectrum of the henselization $A_{a+1}$
of the local ring $\mathcal{O}_{Y_{a+1}, y}$
of a smooth scheme $Y_{a+1}$ at a point $y\in Y_{a+1}$.
Moreover, 
$\spec(C)\times_{Y}Y_{a+1}$
is the spectrum of the henselization $C_{a+1}$
of the local ring
$\mathcal{O}_{Z\times_{Y}Y_{a+1}}, y$ of
a normal crossing variety $Z\times_{Y}Y_{a+1}$ at a point $y\in Z\times_{Y}Y_{a+1}$.
Since we have
\begin{equation*}
U^{\prime}
=
\spec(A)\setminus
\spec(C)
=
\spec(A_{a+1})\setminus
\spec(C_{a+1}),
\end{equation*}
there is an open immersion
$j^{\prime}: U^{\prime}\to \spec(A_{a+1})$.
Let $i_{a+1}: \spec(A_{a+1})\to \spec(A)$
be a closed immersion.
Then $j=i_{a+1}\circ j^{\prime}$ and so we have an
isomorphism
\begin{align*}
\HO^{u}_{x}(
A_{\et},
j_{!}\mathcal{F}_{U^{\prime}}
)
=
\left\{
\begin{array}{ll}
\HO^{u}_{x}(
(A_{a+1})_{\et},
(j^{\prime})_{!}\mathcal{F}_{U^{\prime}}
),  
&  x\in\spec(A)\cap\spec(A_{a+1})  
\\ [5pt]
0  &  x\in\spec(A)\setminus \spec(A_{a+1})
\end{array}
\right.
\end{align*}
for any $u\geq 0$ by Lemma \ref{ijdex} and \cite[Lemma 3.7]{Sak5}. 
Moreover, the sequence
\begin{align}\label{0AexC}
0
&\to
\HO^{N^{\prime}}_{\et}(A_{a+1}, (j^{\prime})_{!}\mathcal{F}_{U^{\prime}})_{T_{q}}
\to 
\displaystyle\bigoplus_{x\in\spec(A_{a+1})^{(0)}}
\HO^{N^{\prime}}_{x}(
(A_{a+1})_{\et},
(j^{\prime})_{!}\mathcal{F}_{U^{\prime}}
)_{T_{q}}  \nonumber
\\
&\to
\displaystyle\bigoplus_{x\in\spec(A_{a+1})^{(1)}}
\HO^{N^{\prime}+1}_{x}(
(A_{a+1})_{\et},
(j^{\prime})_{!}\mathcal{F}_{U^{\prime}}
)_{T_{q}}
\to
\cdots
\end{align}
is exact by Lemma \ref{CAL} and $\operatorname{P}_{3}(a)$.
Since we have the exact sequence (\ref{0AexC})
and $\operatorname{P}_{3}(a)$ holds
by the assumption,
we are able to prove that $\operatorname{P}_{2}(a+1)$
holds for $N^{\prime}=w$
by Proposition \ref{exind}, Lemma \ref{DGC}
and induction on $s=\operatorname{dim}(A)$.
Moreover, $\operatorname{P}_{3}(a+1)$
follows from the exact sequence (\ref{0AexC}),
$\operatorname{P}_{3}(a)$
and $\operatorname{P}_{2}(a+1)$.
This concludes the outline of the proof.
\subsection{Equi-characteristic cases}
\begin{lem}\upshape\label{Spbor}
Let $X$ be an equidimensional scheme and
noetherian of dimension $d\geq 1$.
Let $\mathcal{F}^{\bullet}$ be a complex of sheaves of abelian groups on $X_{\et}$.
Consider the coniveau spectral sequence
\begin{align}\label{cosp}
E^{u, v}_{1} 
=
\bigoplus_{x\in X^{(u)}}
\HO_{x}^{u+v}(
X_{\et},
\mathcal{F}^{\bullet}
)
\Rightarrow
E^{u+v}
=
\HO^{u+v}_{\et}(
X,
\mathcal{F}^{\bullet}
)
\end{align}
(cf. \cite[Part 1, \S 1]{C-H-K}, \cite[Theorem 2.1]{Sak5}). 
Let $n$ be an integer. Let  $e$ be a positive integer such that $d-1$ or $d$. 
Moreover, we assume the following two conditions:
\begin{itemize}
\item[(a)] The homomorphism
\begin{equation*}
E^{n+e}
\to
E_{2}^{0, n+e}
\end{equation*}
is injective.
\item[(b)] If $e\neq 1$, we have
\begin{align*}
E^{u, v}_{2}
=
\begin{cases}
E^{u+v}
& (\textrm{if}~~u=0) \\
\\
0
& (\textrm{if}~~e\geq 3~~\textrm{and}~~0<u\leq e-2)
\end{cases}
\end{align*}
for $u+v=n+e-1$.
\end{itemize}
Then we have
\begin{equation*}
E_{2}^{e, n}
=0.    
\end{equation*}
\end{lem}
\begin{proof}\upshape
We remark that there exists the spectral sequence (\ref{cosp})
because the proof of \cite[Theorem 2.1]{Sak5}
is valid without assuming that
$\mathcal{F}^{\bullet}$ is a bounded below complex.

By the definition of the spectral sequence, there is a filtration
\begin{align*}
E^{m}=E^{m}_{0}\supset 
E^{m}_{1}
\supset 
\cdots
\supset 
E^{m}_{m}
\supset 0
\end{align*}
such that 
\begin{align*}
E^{m}_{u}/E^{m}_{u+1}
=
E^{u, m-u}_{\infty}
\end{align*}
for $0\leq u\leq d$. By the assumption (a),
the homomorphism
\begin{equation*}
E^{n+e}
\to
E_{\infty}^{0, n+e}
\end{equation*}
is an isomorphism and so we have
\begin{equation*}
E_{\infty}^{e, n}
=0,
\end{equation*}
that is, we have 
\begin{equation*}
E_{r}^{e, n}
=0    
\end{equation*}
for sufficiently large $r$.
So it suffices to prove that
\begin{align}\label{spind}
E_{r+1}^{e, n}
=
\operatorname{Coker}\left(
E_{r}^{e-r, n+r-1}
\to
E_{r}^{e, n}
\right) 
=
E_{r}^{e, n}
\end{align}
for $r\geq 2$. If $r>e$,
then $e-r<0$ and so the isomorphism (\ref{spind})
holds for $r>e$. By the assumption (b),
we have isomorphisms
\begin{align*}
E^{n+e-1}
\xrightarrow{\sim}
E^{0, n+e-1}_{\infty}
\xrightarrow{\sim}
E^{0, n+e-1}_{r}
\xrightarrow{\sim}
E^{0, n+e+1}_{2}
\end{align*}
for $r\geq 2$. So we have
\begin{align*}
\operatorname{Ker}
\left(
E^{0, n+e-1}_{e}
\to 
E^{e, n}_{e}
\right)
=
E^{0, n+e-1}_{e+1}  
=
E^{0, n+e-1}_{e}
\end{align*}
and the isomorphism (\ref{spind}) holds for $r=e$.
If $2\leq r\leq e-1$, then
\begin{equation*}
E_{r}^{e-r, n+r-1}
=0    
\end{equation*}
by the assumption (b). 
So the isomorphism (\ref{spind}) holds for $2\leq r\leq e-1$.
Hence, the isomorphism (\ref{spind}) holds for $r\geq 2$. 
This completes the proof.
\end{proof}
\begin{lem}\upshape\label{borTq}
Let the notations be the same as in Lemma \ref{Spbor}. 
Let $q$ be a prime number.
For an abelian group $M$,
$M_{T_{q}}$
denotes the $q$-power torsion 
subgroup of $M$. 
We assume the following three conditions:
\begin{itemize}
\item[(a)] The homomorphism
\begin{equation*}
(E^{n+e})_{T_{q}}
\to
(E_{2}^{0, n+e})_{T_{q}}
\end{equation*}
is injective.
\item[(b)] If $u+v=n+e-1$ and $u\leq e-2$, then
$E_{2}^{u, v}$ is a torsion group. 
Moreover, $E^{n+e}$ is also a torsion group.
\item[(c)] If $e\neq 1$, we have
\begin{align*}
(E^{u, v}_{2})_{T_{q}}
=
\begin{cases}
(E^{u+v})_{T_{q}}
& (\textrm{if}~~u=0) \\
\\
0
& (\textrm{if}~~e\geq 3~~\textrm{and}~~0<u\leq e-2)
\end{cases}
\end{align*}
for $u+v=n+e-1$.
\end{itemize}
Then we have
\begin{equation*}
(E_{2}^{e, n})_{T_{q}}=0.    
\end{equation*}
\end{lem}
\begin{proof}\upshape
Let the notations be the same as in the proof of Lemma \ref{Spbor}. 
By the assumption (b), $E^{n+e}_{r}$ is a torsion group for any $r$ 
and
so $E^{n+e}_{r}/E^{n+e}_{r+1}=E_{\infty}^{r, n+e-r}$ is a torsion group for any $r$.
Especially,
$E_{\infty}^{e, n}$ is a torsion group.
Moreover, we have
\begin{equation*}
(E_{\infty}^{e, n})_{T_{q}}=0    
\end{equation*}
by the assumption (a).
By the assumption (b), 
$E_{r}^{e-r, n+r-1}$
is a torsion group for $r\geq 2$
and so we have
\begin{align*}
(E_{r+1}^{e, n})_{T_{q}}
=
\operatorname{Coker}\left(
(E_{r}^{e-r, n+r-1})_{T_{q}}
\to
(E_{r}^{e, n})_{T_{q}}
\right) 
\end{align*}
for $r\geq 2$ by (\ref{spind}).
In the same way as in the proof of Lemma \ref{Spbor},
we are able to prove that
\begin{equation*}
(E_{r+1}^{e, n})_{T_{q}}
=
(E_{r}^{e, n})_{T_{q}}
\end{equation*}
for $r\geq 2$ by the assumption (c). Hence the statement follows.
\end{proof}

In this section, we frequently assume that a ring $A$ is a formally
equidimensional noetherian local ring (cf. \cite[p.251, Definition]{Ma}).
For example, an equidimensional local ring which is essentially of finite
type over a regular local ring is formally equidimensional by \cite[p.251, Theorem 31.6 (iii)]{Ma}.

\begin{rem}\upshape\label{Rfe}
Let $A$ be a formally equidimensional noetherian local ring. 
By \cite[p.251, Theorem 31.6 (iv)]{Ma} and
\cite[p.250, Theorem 31.5]{Ma}, $A$ is equidimensional.
By \cite[p.251, Theorem 31.6 (i)]{Ma},
$A_{\mathfrak{p}}$
is also formally equidimensional for any $\mathfrak{p}\in \spec(A)$.
Moreover, the henselization
$A_{\mathfrak{p}}^{h}$
of $A_{\mathfrak{p}}$ is  formally equidimensional by the definition.
\end{rem}
\begin{prop}\upshape\label{cst}
Let $A$ be a formally equidimensional noetherian local ring,
$\mathcal{F}$ a sheaf of groups on $\spec(A)_{\et}$
and $x\in \spec(A)^{(s)}$. Put 
\begin{align*}
T:=
\left\{
\begin{array}{ll}
0  &  \textrm{if $\mathcal{F}$ is a sheaf of torsion groups}  \\ [5pt]
1   &  \textrm{otherwise}.
\end{array}
\right.
\end{align*}
Then
$\HO^{t}_{x}(A_{\et}, \mathcal{F})$
is a torsion group
for $t\geq s+T$.
\end{prop}
\begin{proof}\upshape
By Remark \ref{Rfe} and \cite[p.93. III, Corollary 1.28]{M},  
it suffices to prove the statement in the case where $A$
is a formally equidimensional henselian local ring and
$x$ is the maximal ideal of $\spec(A)$. 
We prove the statement
by induction on $\operatorname{dim}(A)$.

In the case where $\operatorname{dim}(A)=0$,
the statement follows from \cite[p.180, Theorem 6.5.8]{We}.

Suppose that the statement holds in the case where
$\operatorname{dim}(A)\leq s$.
Suppose that $\operatorname{dim}(A)=s+1$.
Consider the spectral sequence
\begin{equation}\label{spt}
E^{u, v}_{1}
=
\bigoplus_{x\in (\spec(A))^{(u)}}
\HO_{x}^{u+v}
(A_{\et},
\mathcal{F})
\Rightarrow
E^{u+v}=
\HO_{\et}^{u+v}(A,
\mathcal{F}
).
\end{equation}
By \cite[p.180, Theorem 6.5.8]{We},
$E^{u+v}$
is a torsion group for $u+v\geq T$ and so 
$E^{s+1, v}_{\infty}$ is also a torsion group for $v\geq 1$.
By the assumption,
$E^{u, v}_{r}$ is a torsion group for $u\leq s$,
$v\geq T$
and $r\geq 1$. By the definition of the spectral sequence, the sequence
\begin{align}\label{sqrt}
E_{r}^{s-r+1, v+r-1}
\to
E^{s+1, v}_{r}
\to
E^{s+1, v}_{r+1}
\to 0
\end{align}
is exact for $v\geq T$ and $r\geq 1$.
Hence $E^{s+1, v}_{1}$ is a torsion group for $v\geq T$.
This completes the proof.
\end{proof}
\begin{cor}\upshape\label{cdt}
Let the notations $A$, $\mathcal{F}$ and $T$  be the same as in
Proposition \ref{cst}. Suppose that
$\operatorname{dim}(A)=s$. Then
$\HO^{t}_{\et}(A, \mathcal{F})$
is a torsion group for $t\geq s+T$.
\end{cor}
\begin{proof}\upshape
By the spectral sequence (\ref{spt}),
the statement follows from Proposition \ref{cst}
and \cite[p.93, III, Corollary 1.28]{M}.
\end{proof}
\begin{rem}\upshape\label{HenTor}
Let $A$ be a henselian local ring,
$I$ an ideal of $A$
and $\mathcal{F}$ a sheaf of groups on
$\spec(A)_{\et}$. Then we have an isomorphism
\begin{equation}\label{isoAI}
\HO^{t}_{\et}(A, \mathcal{F})
\xrightarrow{\simeq}
\HO^{t}_{\et}(A/I, i^{*}\mathcal{F})
\end{equation}
for $t\geq 0$ where
$i: \spec(A/I)\to \spec(A)$
is the closed immersion
by 
\cite[p.777, The proof of Proposition 2.2.b)]{Ge}.
So 
$\HO^{t}_{\et}(A, \mathcal{F})$
is a torsion group for $t>0$.
Moreover, we have an isomorphism
\begin{equation*}
\HO^{t}_{\et}(
A,
j_{!}\mathcal{F}_{U}
)    
\simeq 
0
\end{equation*}
for any $t\geq 0$ by (\ref{isoAI})
where
$j: U\to\spec(A)$ is the immersion of the open
complement of $\spec(A/I)$ in
$\spec(A)$.
\end{rem}
\begin{prop}\upshape\label{cdpt}
Let $A$ be a formally  equidimentional noetherian local ring over a field $k$,
$l$ a prime number which is prime to $\operatorname{char}(k)$ and 
$\mathcal{F}$ a sheaf of groups on $\spec(A)_{\et}$.
Let the notation $T$ be the same as in Proposition \ref{cst}.
Suppose that $\operatorname{cd}_{l}(k)<\infty$
and
$\operatorname{tr.deg}_{k}A_{x}<\infty$ for any $x\in \spec(A)^{(0)}$.
Put 
\begin{equation*}
N:=\operatorname{cd}_{l}(k)
+
\max\{\operatorname{tr.deg}_{k}A_{x} | x\in\spec(A)^{(0)}\}.
\end{equation*}
For a group $M$, 
$M_{T_{l}}$ denotes the $l$-power torsion subgroup of $M$.
Then we have
\begin{equation*}
\HO^{m}_{x}(A,
\mathcal{F}
)_{T_{l}}
=0
\end{equation*}
for $x\in \spec(A)^{(s)}$ and $m\geq s+N+T+1$.
\end{prop}
\begin{proof}\upshape
It suffices to prove the statement in the case where $A$ is a formally
equidimensional henselian local ring and 
$x$ is the maximal ideal of $\spec(A)$. We prove the statement by induction on
$\operatorname{dim}(A)$.

In the case where $\operatorname{dim}(A)=0$,
the statement follows from
\cite[(3.3.3) Proposition]{N} and
\cite[(6.5.14) Theorem]{N}.

Suppose that the statement holds in the case where
$\operatorname{dim}(A)\leq s$. 
Suppose that 
$\operatorname{dim}(A)=s+1$. Consider the spectral sequence (\ref{spt}).
By the assumption, it suffices to prove that the homomorphism
\begin{equation*}
(E_{1}^{s, v})_{T_{l}}
\to
(E_{1}^{s+1, v})_{T_{l}}
\end{equation*}
is surjective for $v\geq N+T+1$, that is,
\begin{equation}\label{cdspl}
(E_{2}^{s+1, v})_{T_{l}}
=0
\end{equation}
for $v\geq N+T+1$.
In order to prove the statement, we use Lemma \ref{borTq}.
Since $A$ is henselian,
we have an isomorphism
\begin{equation}\label{locprp2}
\HO^{m}_{\et}(A, \mathcal{F})
\simeq 
\HO^{m}_{\et}
(\kappa(x),
i^{*}\mathcal{F})
\end{equation}
for $m\geq 0$
where 
$i: \spec(\kappa(x))\to\spec(A)$
is the closed immersion.
Since $A$ is universally catenary by \cite[p.251, Theorem 31.6 (iv)]{Ma},
we have
\begin{equation*}
\operatorname{tr.deg}_{k}\kappa(x)
=(N-\operatorname{cd}_{l}(k))-(s+1)
\end{equation*}
by \cite[p.119, Theorem 15.6]{Ma}. So we have
\begin{align}\label{VaniEl}
(E^{m})_{T_{l}}=
\HO^{m}_{\et}(A, \mathcal{F})_{T_{l}}
=0    
\end{align}
for $m\geq N+T-s$ by
\cite[(6.5.14) Theorem]{N} and \cite[(3.3.3) Proposition]{N}.
Especially, we have
\begin{equation*}
(E^{m})_{T_{l}}=0    
\end{equation*}
for $m\geq s+N+T+2$
and so the assumption (a) of Lemma \ref{borTq} is satisfied.
If $u+v=s+N+T+1$ and $u\leq s-1$,
then $v\geq N+T+2$ and so $E^{u, v}_{2}$
is a torsion group by Proposition \ref{cst}.
Moreover, $E^{m}$ is a torsion group for $m\geq s+N+T+2$
by (\ref{locprp2}) and \cite[p.180, Theorem 6.5.8]{We}. 
So the assumption (b) of Lemma \ref{borTq} is satisfied.
By the assumption, we have
\begin{align*}
(E^{u+v})_{T_{q}}
=
(E^{u, v}_{2})_{T_{q}}
=0
\end{align*}
for $u+v\geq s+N+T+1$ and
$u\leq s-1$. So the assumption (c) of Lemma \ref{borTq}
is satisfied. Hence, the isomorphism (\ref{cdspl}) holds.
This completes the proof.

\end{proof}
\begin{rem}\upshape\label{cdsp}
Let $A$ be a formally equidimensional local ring of positive characteristic $p>0$
and $\mathcal{F}$ a sheaf of groups on $\spec(A)_{\et}$.
Since $\operatorname{cd}_{p}(A)=1$,
we have
\begin{equation*}
\HO^{t}_{\et}(A,
\mathcal{F})_{T_{p}}
=0
\end{equation*}
for $t\geq T+2$ by a similar argument
as in the proof of \cite[Chapter III, \S 3, (3.3.3) Proposition]{N}.
Moreover, we have
\begin{align*}
\HO^{t}_{x}(
A_{\et},
\mathcal{F}
)_{T_{p}}
=0
\end{align*}
for $x\in \spec(A)^{(s)}$
and
$t\geq T+s+2$ by using a similar argument as in the proof 
of Proposition \ref{cdpt} (or \cite[Chapter III, \S 3, (3.3.3) Proposition]{N}).
\end{rem}
\begin{prop}\upshape\label{exind}
Let $A$ be a formally equidimensional local ring and
$\mathcal{F}$ a sheaf on the \'{e}tale site
$\spec(A)_{\et}$.
Let $N$ be a non-negative integer and $l$ a prime number.
Suppose that 
$\operatorname{dim}(A)=s>1$ and $N\geq T$ where
the notation $T$ is the same as 
in Proposition \ref{cst}.

If 
$\HO^{N+s-1}_{\et}(A, \mathcal{F})$
is a torsion group and 
the sequence
\begin{align*}
0\to& 
\HO^{N^{\prime}}_{\et}(
A, \mathcal{F}
)_{T_{l}}
\to
\bigoplus_{x\in \spec(A)^{(0)}}
\HO^{N^{\prime}}_{x}
(A_{\et}, \mathcal{F})_{T_{l}}
\to 
\cdots 
\to
\bigoplus_{x\in \spec(A)^{(s-2)}}
\HO^{N^{\prime}+s-2}_{x}
(A_{\et}, \mathcal{F})_{T_{l}}
\\
\to&
\bigoplus_{x\in \spec(A)^{(s-1)}}
\HO^{N^{\prime}+s-1}_{x}
(A_{\et}, \mathcal{F})_{T_{l}}
\end{align*}
is exact for 
$N^{\prime}\geq N+1$,
then the sequence
\begin{align*}
&\bigoplus_{x\in \spec(A)^{(s-2)}} 
\HO^{N+s-2}_{x}(A_{\et}, \mathcal{F})_{T_{l}}
\to
\bigoplus_{x\in \spec(A)^{(s-1)}} 
\HO^{N+s-1}_{x}(A_{\et}, \mathcal{F})_{T_{l}} 
\\
\to&
\bigoplus_{x\in \spec(A)^{(s)}} 
\HO^{N+s}_{x}(A_{\et}, \mathcal{F})_{T_{l}}
\to
0
\end{align*}
is exact.
\end{prop}
\begin{proof}\upshape
Consider the spectral sequence
\begin{align*}
E^{u, v}_{1}
=
\displaystyle
\bigoplus_{x\in \spec(A)^{(u)}}
\HO^{u+v}_{x}(A_{\et}, \mathcal{F})
\Rightarrow
E^{u+v}=
\HO^{u+v}_{\et}(A, \mathcal{F}).
\end{align*}
Then it suffices to prove that
\begin{equation}\label{indsptoex}
(E^{e, N}_{2})_{T_{l}}=0    
\end{equation}
for $s-1\leq e\leq s$.
We prove the isomorphism (\ref{indsptoex}) by using Lemma \ref{borTq}. 

By the assumption, 
the homomorphism
\begin{equation*}
(E^{e+N})_{T_{l}}
\to
(E^{0, e+N}_{2})_{T_{l}}
\end{equation*}
is injective.
If 
$u+v=N+e-1$ and $u\leq e-2$,
then $v\geq N+1$. If $e\neq 1$,
then $N+e-1\geq N+1$. So we have
\begin{align*}
(E^{u, v}_{2})_{T_{q}}
=
\begin{cases}
(E^{u+v})_{T_{q}}
& (\textrm{if}~~u=0) \\
\\
0
& (\textrm{if}~~e\geq 3~~\textrm{and}~~0<u\leq e-2)
\end{cases}
\end{align*}
for $e\neq 1$ and $u+v=N+e-1$ by the assumption.
By Proposition \ref{cst}, $E^{u, v}_{2}$ is a torsion group in the case where
$u+v=N+e-1$ and $u\geq e-2$. Moreover,
$E^{N+e}$ is a torsion group by
Corollary \ref{cdt} and the assumption.
Hence, the assumptions of Lemma \ref{borTq} are satisfied 
and the isomorphism (\ref{indsptoex}) holds.
This completes the proof.
\end{proof}
\begin{cor}\upshape
Let the notations $A$ and $\mathcal{F}$ be the same as 
in Proposition \ref{exind}. 
Let the notation $T$ be the same as in Proposition \ref{cst}.
Suppose that $\operatorname{char}(A)=p>0$. Then the sequence 
\begin{align*}
&\bigoplus_{x\in \spec(A)^{(s-2)}} 
\HO^{T+s-1}_{x}(A_{\et}, \mathcal{F})_{T_{p}}
\to
\bigoplus_{x\in \spec(A)^{(s-1)}} 
\HO^{T+s}_{x}(A_{\et}, \mathcal{F})_{T_{p}} 
\\
\to&
\bigoplus_{x\in \spec(A)^{(s)}} 
\HO^{T+s+1}_{x}(A_{\et}, \mathcal{F})_{T_{p}}
\to
0
\end{align*}
is exact.
\end{cor}
\begin{proof}\upshape
By Corollary \ref{cdt}, 
$\HO_{\et}^{m}(A, \mathcal{F})$ is a torsion group for
$m\geq 1+T+(s-1)=s+T$.
So the statement follows from Proposition \ref{exind}
and Remark \ref{cdsp}.
\end{proof}

\begin{cond}\upshape\label{cdpb}
Let $\mathcal{V}$ be the category whose
objects are normal crossing varieties over the spectrum of
a field $k$
and
whose morphisms are morphisms of schemes.
Let $X$ be an object in $\mathcal{V}$, 
$\mathcal{F}_{X}$ an \'{e}tale sheaf 
of abelian groups
on $X$
and
$f: X\to Y$
a morphism in $\mathcal{V}$. Let $N$ be a non-negative integer.
Then $(\mathcal{F}_{X}, N)\in \mathbb{S}_{X_{\et}}\times \mathbb{N}_{\geq 0}$
satisfies the followings:
\begin{enumerate}
\item We have a pull-back map
\begin{equation*}
f^{*}: \mathcal{F}_{Y}
\to
f_{*}\mathcal{F}_{X}
\end{equation*}
which satisfies the following properties:
\begin{enumerate}
\item[(i)] For two morphisms $f: X\to Y$
and $g: Y\to Z$ in $\mathcal{V}$,
the composite map
\begin{align*}
\mathcal{F}_{Z}
\xrightarrow[g^{*}]{}
g_{*}\mathcal{F}_{Y}
\xrightarrow[g_{*}(f^{*})]{}
(g\circ f)_{*}\mathcal{F}_{X}
\end{align*}
agrees with the pull-back map
$(g\circ f)^{*}$.
\item[(ii)] Let $X$ be an object in $\mathcal{V}$
and $j: U\to X$ an open immersion. Then the homomorphism
\begin{equation*}
j^{*}\mathcal{F}_{X}
\to 
\mathcal{F}_{U}
\end{equation*}
is an isomorphism where the homomorphism is induced by the pull-back map $j^{*}$.
\item[(iii)] Let 
$R=\mathcal{O}_{X, x}^{h}$
be the henselization of the local ring of a normal crossing variety $X$
at a point $x\in X$ and a closed immersion
$i: Z\to\spec(R)$ be a morphism in $\mathcal{V}$.
Then the homomorphism
\begin{equation*}
\HO^{N^{\prime}}_{\et}(Z,
i^{*}\mathcal{F}_{\spec(R)})
\to
\HO^{N^{\prime}}_{\et}(Z,
\mathcal{F}_{Z})
\end{equation*}
is an isomorphism for $N^{\prime}\geq N$ where
the homomorphism is induced by the pull-back map
$i^{*}$.
\end{enumerate}
\item
Let $X$ be a smooth scheme over $\spec(k)$ and 
$R$ the henselization of 
the local ring $\mathcal{O}_{X, x}$ at a point $x\in X$.
Then the sequence
\begin{align*}
0\to   
\HO^{N^{\prime}}_{\et}(R,
\mathcal{F}_{\spec(R)})
\to 
\bigoplus_{x\in\spec(R)^{(0)}}
\HO^{N^{\prime}}_{x}(
R_{\et},
\mathcal{F}_{\spec(R)}
)
\\
\to
\bigoplus_{x\in\spec(R)^{(1)}}
\HO^{N^{\prime}+1}_{x}(
R_{\et},
\mathcal{F}_{\spec(R)}
)
\to\cdots
\end{align*}
is exact for $N^{\prime}\geq N$.
\end{enumerate}
\end{cond}
\begin{rem}\upshape\label{ExaCond}
Let $\mathcal{V}$ be the same as in the above
and $X$ an object in $\mathcal{V}$.
\begin{enumerate}
\item 
Let $R$ be the henselization of the local ring of
$X$
at a point $x\in X$ and
$i: Z\to\spec(R)$
a closed immersion which is 
a morphism in $\mathcal{V}$.
If $(\mathcal{F}_{X}, N)\in\mathbb{S}_{X_{\et}}\times \mathbb{N}_{\geq 0}$
satisfies Condition \ref{cdpb}.1, 
then
we have an isomorphism
\begin{equation*}
\HO^{N^{\prime}}_{\et}(
R, \mathcal{F}_{\spec(R)}
)
\xrightarrow{\sim}
\HO^{N^{\prime}}_{\et}(
Z,
\mathcal{F}_{Z}
)
\end{equation*}
for $N^{\prime}\geq N$ by the isomorphism (\ref{isoAI}).
\item
In the following cases, Condition \ref{cdpb} is satisfied:
\begin{enumerate}
\item[(i)] 
Let $l$ be an integer which is prime to $\operatorname{char}(k)$
and $n$ a non-negative integer. 
If
$\mathcal{F}_{X}=\mu_{l}^{\otimes n}$ and  $N=0$, then $(\mathcal{F}_{X}, N)\in \mathbb{S}_{X_{\et}}\times \mathbb{N}_{\geq 0}$ 
satisfies Condition \ref{cdpb}
by \cite[Corollary 2.2.2]{C-H-K}.
\item[(ii)]
Suppose that $\operatorname{char}(k)=p>0$.
If $A^{\prime}$ is a commutative ring over $k$,
then $\operatorname{cd}_{p}(A^{\prime})=1$.

Let $r$, $n$ be non-negative integers. 
If $\mathcal{F}_{X}=\lambda_{X, r}^{n}$ and $N=1$, then
$(\mathcal{F}_{X}, N)\in \mathbb{S}_{X_{\et}}\times \mathbb{N}_{\geq 0}$ 
satisfies Condition \ref{cdpb}.1 
by \cite[p.732, Theorem 3.5.1]{SaL},
\cite[p.734, Corollary 3.5.3]{SaL} and \cite[p.30, Theorem 2.8]{Sak4}.
Moreover,
$(\mathcal{F}_{X}, N)\in \mathbb{S}_{X_{\et}}\times \mathbb{N}_{\geq 0}$ 
satisfies Condition \ref{cdpb}.2 
by \cite[p.600, Theorem 4.1]{Sh}.
\end{enumerate}
\end{enumerate}
\end{rem}

In order to give another example which satisfies
Condition \ref{cdpb}.2, 
we recall the followings:

\begin{lem}\upshape\label{Gyscom}
Let $X$ be essentially of finite type over the spectrum of a regular ring of
dimension at most $1$,
$n$ an integer
and
$\mathbb{Z}(n)$ Bloch's cycle complex for the Zariski topology.   
Let $i: Z\to X$ be a closed immersion of codimension $c$
with open complement $j: U\to X$,
$i_{1}: W\to X$
a closed immersion of codimension $c^{\prime}$
and
\begin{equation*}
\xymatrix{
Z_{1}\ar[r]^{i^{\prime}}\ar[d]_{i_{2}}
&
W\ar[d]_{i_{1}}
&
U_{1}\ar[l]_{j^{\prime}}\ar[d]^{i_{3}}
\\
Z\ar[r]_{i}
&
X
&
U \ar[l]^{j}
}    
\end{equation*}
Cartesian products.

Then the natural inclusion map
\begin{equation*}
i_{*}: i_{*}\mathbb{Z}(n-c)^{Z}[-2c]
\to 
\mathbb{Z}(n)^{X}
\end{equation*}
induces a quasi-isomorphism
\begin{equation*}
\operatorname{Gys}_{i}: 
\mathbb{Z}(n-c)^{Z}[-2c]
\to
Ri^{!}\mathbb{Z}(n)^{X}.
\end{equation*}
Moreover,
\begin{equation*}
\left(
(i^{\prime})_{*}(\operatorname{Gys}_{i\circ i_{2}}),
\operatorname{Gys}_{i_{1}},
R(j^{\prime})_{*}(\operatorname{Gys}_{i_{3}})
\right)
\end{equation*}
is a morphism of triangles,
that is,
we have a commutative diagram
\begin{equation*}
\xymatrix@C=6pt{
\cdots\ar[r]
&
(i^{\prime})_{*}\mathbb{Z}(n-c^{\prime\prime})^{Z_{1}}[-2c^{\prime\prime}]
\ar[r]^-{(i^{\prime})_{*}}
\ar[d]_{(i^{\prime})_{*}(\operatorname{Gys}_{i\circ i_{2}})}
&
\mathbb{Z}(n-c^{\prime})^{W}[-2c^{\prime}]
\ar[r]
\ar[d]_{\operatorname{Gys}_{i_{1}}}
&
R(j^{\prime})_{*}\mathbb{Z}(n-c^{\prime})^{U_{1}}[-2c^{\prime}]\ar[r]
\ar[d]^{R(j^{\prime})_{*}(\operatorname{Gys}_{i_{3}})}
& \cdots
\\
\cdots\ar[r]
&
(i^{\prime})_{*}R(i\circ i_{2})^{!}\mathbb{Z}(n)^{X}
\ar[r]
&
R(i_{1})^{!}\mathbb{Z}(n)^{X}
\ar[r]
&
R(j^{\prime})_{*}R(i_{3})^{!}\mathbb{Z}(n)^{U}
\ar[r]
&
\cdots
}    
\end{equation*}
where $c^{\prime\prime}=\operatorname{codim}(Z_{1}, X)$.
\end{lem}
\begin{proof}\upshape
By \cite[p.780, Corollary 3.3.a)]{Ge} and 
$i^{!}j_{*}=0$ (cf.\cite[p.76, II, Proposition 3.14 (c)]{M}),
$\operatorname{Gys}_{i}$ is an isomorphism.
Since we have a commutative diagram
\begin{equation*}
\xymatrix@C=40pt{
(i\circ i_{2})_{*}\mathbb{Z}(n-c^{\prime\prime})^{Z_{1}}[-2c^{\prime\prime}]
\ar[r]^-{(i_{1})_{*}((i^{\prime})_{*})}
\ar[d]_{i_{*}((i_{2})_{*})}
&
(i_{1})_{*}\mathbb{Z}(n-c^{\prime})^{W}[-2c^{\prime}]
\ar[d]^{(i_{1})_{*}}
\\
i_{*}\mathbb{Z}(n-c)^{Z}[-2c]
\ar[r]_{i_{*}}
&
\mathbb{Z}(n)^{X}
}    
\end{equation*}
by the functoriality of the natural inclusion map,
there exists a morphism
\begin{equation*}
h: Rj_{*}(i_{3})_{*}\mathbb{Z}(n-c^{\prime})^{U_{1}}
\to
Rj_{*}\mathbb{Z}(n)^{U}
\end{equation*}
so that
\begin{math}
\left(
i_{*}((i_{2})_{*}),
(i_{1})_{*},
h
\right)
\end{math}
is a morphism of triangles
by \cite[p.780, Corollary 3.3.a)]{Ge}. 
Moreover, we have
\begin{align*}
j^{*}(h)=j^{*}((i_{1})_{*})
&&
\textrm{and}
&&
h=Rj_{*}(i_{3})_{*}
\end{align*}
by $j^{*}i_{*}=0$ (cf.\cite[p.76, II, Proposition 3.14 (c)]{M}). 
So
\begin{math}
\left( 
i_{*}((i_{2})_{*}),
(i_{1})_{*},
Rj_{*}(i_{3})_{*}
\right)
\end{math}
is a morphism of triangles. Moreover, we have
\begin{align*}
R(i_{1})^{!}i_{*} 
=(i^{\prime})_{*}R(i_{2})^{!}
&&
\textrm{and}
&&
R(i_{1})^{!}Rj_{*}
=R(j^{\prime})_{*}R(i_{3})^{!}.
\end{align*}
So 
\begin{math}
\left( 
(i^{\prime})_{*}(\operatorname{Gys}_{i_{2}}),
\operatorname{Gys}_{i_{1}},
R(j^{\prime})_{*}(\operatorname{Gys}_{i_{3}})
\right)
\end{math}
is a morphism of triangles.
Since $i_{*}$ is a left adjoint functor
to $Ri^{!}$, we have a commutative diagram
\begin{equation*}
\xymatrix{
i_{*}\mathbb{Z}(n-c)^{Z}[-2c]
\ar[r]^-{i_{*}}
\ar[d]_{i_{*}(\operatorname{Gys}_{i})}
&
\mathbb{Z}(n)^{X} 
\ar@{=}[d]
\\
i_{*}Ri^{!}\mathbb{Z}(n)^{X}
\ar[r]
&
\mathbb{Z}(n)^{X}
}    
\end{equation*}
and so we are able to prove that
\begin{math}
\left( 
i_{*}(\operatorname{Gys}_{i}),
\operatorname{id}_{\mathbb{Z}(n)},
\operatorname{id}_{Rj_{*}\mathbb{Z}(n)}
\right)
\end{math}
is a morphism of triangles as above.
Moreover, we have
\begin{align*}
\operatorname{Gys}_{i\circ i_{2}} 
=
R(i_{2})^{!}(\operatorname{Gys}_{i})
\circ 
\operatorname{Gys}_{i_{2}}
\end{align*}
by the functoriality of the natural 
inclusion map.
Hence 
\begin{equation*}
\left( 
(i^{\prime})_{*}(\operatorname{Gys}_{i\circ i_{2}}),
\operatorname{Gys}_{i_{1}},
R(j^{\prime})_{*}(\operatorname{Gys}_{i_{3}})
\right)
\end{equation*}
is a morphism of triangles. This completes the proof.
\end{proof}
\begin{prop}\upshape\label{EtmotGer}
Let $X$ be a smooth scheme over the spectrum of a field $k$,
$A$ the local ring 
$\mathcal{O}_{X, y}$ of $X$ at a point $y\in X$
and $x\in \spec(A)^{(s)}$. 
Let $n$ be an integer.
Let $\mathbb{Z}(n)$
(resp. $\mathbb{Z}(n)_{\et}$)
be
Bloch's cycle complex for
the Zariski (resp. \'{e}tale) topology
and 
$\mathbb{Q}/\mathbb{Z}(n)_{\et}=\mathbb{Z}(n)_{\et}\otimes_{\mathbb{Z}}\mathbb{Q}/\mathbb{Z}$.
Then we have an isomorphism
\begin{align}\label{SupZ}
\HO^{i}_{x}(
A_{\et},
\mathbb{Z}(n)_{\et}
)
=
\left\{
\begin{array}{ll}
\HO^{i}_{x}(A_{\Zar},
\mathbb{Z}(n))
&  \textrm{if $i\leq s+n+1$}  \\ [5pt]
\HO^{i-1}_{x}(
A_{\et},
\mathbb{Q}/\mathbb{Z}(n)_{\et}
)  
&  \textrm{if $i> s+n+1$}.
\end{array}
\right.
\end{align}
Moreover, the sequence
\begin{align*}
0
\to 
\HO^{i}_{\et}(
A, \mathbb{Z}(n)_{\et}
)
\to 
\displaystyle\bigoplus_{x\in \spec(A)^{(0)}}
\HO^{i}_{x}(
A_{\et},
\mathbb{Z}(n)_{\et}
)
\\
\to
\displaystyle\bigoplus_{x\in \spec(A)^{(1)}}
\HO^{i+1}_{x}(
A_{\et},
\mathbb{Z}(n)_{\et}
)
\to
\cdots
\end{align*}
is exact for any integer $i$.
\end{prop}
\begin{proof}\upshape
By \cite[p.299, Theorem (10.1)]{B}, 
\cite[p.779, Theorem 3.2.b)]{Ge} and Lemma \ref{Gyscom},
the sequence
\begin{align}\label{GZarZ}
0
\to
\HO^{i}_{\Zar}(A, \mathbb{Z}(n))
\to
\displaystyle\bigoplus_{x\in \spec(A)^{(0)}}
\HO^{i}_{x}(A_{\Zar},
\mathbb{Z}(n)) \nonumber
\\
\to
\displaystyle\bigoplus_{x\in \spec(A)^{(1)}}
\HO^{i+1}_{x}(
A_{\Zar},
\mathbb{Z}(n)
) 
\to \cdots
\end{align}
is exact for any integer $i$.
If $\operatorname{char}(k)=p\geq 0$,
then we have a quasi-isomorphism
\begin{align*}
\mathbb{Z}/m\mathbb{Z}(n)_{\et}
\simeq
\left\{
\begin{array}{ll}
\mu_{m}^{\otimes n}
&  \textrm{if $(m, p)=1$}  \\ [5pt]
\nu_{r}^{n}
&  \textrm{if $m=p^{r}$, $r>0$}
\end{array}
\right.
\end{align*}
by \cite[p.774, Theorem 1.2.4]{Ge} and
\cite[p.491, Theorem 8.3]{Ge-L}. 
So the sequence
\begin{align}\label{GetQZ}
0
\to
\HO^{i}_{\et}(A, \mathbb{Q}/\mathbb{Z}(n)_{\et}
)
\to
\displaystyle\bigoplus_{x\in \spec(A)^{(0)}}
\HO^{i}_{x}(A_{\et},
\mathbb{Q}/\mathbb{Z}(n)_{\et}
)  \nonumber
\\
\to 
\displaystyle\bigoplus_{x\in \spec(A)^{(1)}}
\HO^{i+1}_{x}(
A_{\et},
\mathbb{Q}/\mathbb{Z}(n)_{\et}
)  
\to \cdots
\end{align}
is exact for any integer $i$
by
\cite[Corollary 2.2.2]{C-H-K} and
\cite[p.600, Theorem 4.1]{Sh}.
Since we have
a distinguished triangle
\begin{equation*}
\cdots 
\to
\mathbb{Z}(n)
\xrightarrow{\times m}
\mathbb{Z}(n) 
\to
\mathbb{Z}/m(n) 
\to \cdots
\end{equation*}
and an isomorphism
\begin{equation*}
\mathbb{Z}/m(n)
\simeq 
\frac{1}{m}\mathbb{Z}/\mathbb{Z}(n)
\end{equation*}
for any positive integer $m$, 
we have
\begin{align*}
\HO^{i}_{\et}(A, \mathbb{Q}(n)_{\et})
=
\HO^{i}_{\Zar}(A, \mathbb{Q}(n))
=
\HO^{i}_{\Zar}(A, \mathbb{Q}/\mathbb{Z}(n))
=0
\end{align*}
for $i\geq n+1$
by \cite[p.781, Proposition 3.6]{Ge} 
and \cite[p.786, Corollary 4.4]{Ge}. Hence 
we have an isomorphism
\begin{align}\label{emcvs}
\HO^{i}_{\et}(
A,
\mathbb{Z}(n)_{\et}
)
=
\left\{
\begin{array}{ll}
\HO^{i}_{\Zar}(A,
\mathbb{Z}(n))
&  \textrm{if $i\leq n+1$}  \\ [5pt]
\HO^{i-1}_{\et}(
A,
\mathbb{Q}/\mathbb{Z}(n)_{\et}
)  
&  \textrm{if $i> n+1$}
\end{array}
\right.
\end{align}
by \cite[p.774, Theorem 1.2.2]{Ge} and \cite{V2}. In order to prove the statement, it suffices to prove the isomorphism (\ref{SupZ})
in the case where $\operatorname{dim}(A)=s$.
We prove the isomorphism (\ref{SupZ}) by induction on $\operatorname{dim}(A)=s$.

In the case where 
$\operatorname{dim}(A)=s=0$,
the statement follows from (\ref{emcvs}).

Assume that $\operatorname{dim}(A)=s=1$.
Since the statement holds in the case where
$\operatorname{dim}(A)=s=0$, 
the homomorphism
\begin{equation*}
\HO^{i}_{\et}(
A, \mathbb{Z}(n)_{\et}
)
\to
\displaystyle\bigoplus_{x\in \spec(A)^{(0)}}
\HO^{i}_{x}(
A_{\et},
\mathbb{Z}(n)_{\et}
)   
\end{equation*}
is injective by (\ref{GZarZ}), 
(\ref{GetQZ}) and (\ref{emcvs}).
So the homomorphism
\begin{align*}
\displaystyle\bigoplus_{x\in \spec(A)^{(0)}}
\HO^{i}_{x}(
A_{\et},
\mathbb{Z}(n)_{\et}
)
\to
\displaystyle\bigoplus_{x\in \spec(A)^{(1)}}
\HO^{i+1}_{x}(
A_{\et},
\mathbb{Z}(n)_{\et}
)
\end{align*}
is surjective and the sequence
\begin{align*}
\HO^{i}_{\et}(
A, \mathbb{Z}(n)_{\et}
)
\to
\displaystyle\bigoplus_{x\in \spec(A)^{(0)}}
\HO^{i}_{x}(
A_{\et},
\mathbb{Z}(n)_{\et}
)
\to
\displaystyle\bigoplus_{x\in \spec(A)^{(1)}}
\HO^{i+1}_{x}(
A_{\et},
\mathbb{Z}(n)_{\et}
)
\to 0
\end{align*}
is exact. 
Hence the statement holds in the case where 
$\operatorname{dim}(A)=s=1$
by (\ref{GZarZ}), 
(\ref{GetQZ}) and (\ref{emcvs}).

Assume that the statement holds in the case where 
$\operatorname{dim}(A)=s\leq t$ and $t\geq 1$. Suppose that 
$\operatorname{dim}(A)=s=t+1$. Consider the spectral sequence
\begin{equation*}
E_{1}^{u, v}
=\displaystyle\bigoplus_{x\in \spec(A)^{(u)}}
\HO_{x}^{u+v}(
A_{\et},
\mathbb{Z}(n)_{\et}
)
\Rightarrow
E^{u+v}
=\HO^{u+v}_{\et}(A, \mathbb{Z}(n)_{\et}).
\end{equation*}
By (\ref{GZarZ}), (\ref{GetQZ})
and the assumption,
it suffices to prove that the sequence
\begin{equation*}
E_{1}^{t-1, i}
\to 
E_{1}^{t, i}
\to 
E_{1}^{t+1, i}
\to 0
\end{equation*}
is exact for any integer $i$,
that is,
\begin{equation}\label{E2eivan}
E^{e, i}_{2}=0    
\end{equation}
for any integer $i$ and $t\leq e\leq t+1$.
By (\ref{GZarZ}), (\ref{GetQZ}), (\ref{emcvs})
and the assumption,
we have
\begin{align*}
E^{u, i}_{2}
=
\begin{cases}
E^{i}
& (\textrm{if}~~u=0) \\
\\
0
& (\textrm{if}~~e\geq 3~~\textrm{and}~~0<u\leq e-2)
\end{cases}
\end{align*}
for any integer $i$. So the isomorphism (\ref{E2eivan}) follows from Lemma \ref{Spbor}.
This completes the proof.
\end{proof}
\begin{rem}\upshape\label{ExaCond2}
If $\mathcal{F}_{X}=\mathbb{G}_{m, X}$ and $N=1$, then
$(\mathcal{F}_{X}, N)\in \mathbb{S}_{X_{\et}}\times \mathbb{N}_{\geq 0}$ 
satisfies Condition \ref{cdpb} by \cite[p.116, III, Remark 3.11 (a)]{M}
and Proposition \ref{EtmotGer}.    
\end{rem}

Moreover, let us remark the following:
\begin{lem}\upshape\label{adfun}
Let $f: X\to Y$ and $g: Y\to Z$ be morphisms of schemes. Let
$\mathcal{H}\in \mathbb{S}(Z_{\et})$,
$\mathcal{G}\in \mathbb{S}(Y_{\et})$
and
$\mathcal{F}\in \mathbb{S}(X_{\et})$.
Let
\begin{align*}
\alpha\in \operatorname{Hom}_{\mathbb{S}(Y_{\et})}(g^{*}(\mathcal{H}), \mathcal{G}),
&&
\beta\in \operatorname{Hom}_{\mathbb{S}(X_{\et})}(f^{*}(\mathcal{G}), \mathcal{F})
\end{align*}
and
\begin{equation*}
\gamma\in \operatorname{Hom}_{\mathbb{S}(X_{\et})}((g\circ f)^{*}(\mathcal{H}), \mathcal{F}).
\end{equation*}

Let 
\begin{align*}
\alpha^{\prime}\in \operatorname{Hom}_{\mathbb{S}(Z_{\et})}(\mathcal{H}, g_{*}(\mathcal{G})),
&&
\beta^{\prime}\in \operatorname{Hom}_{\mathbb{S}(Y_{\et})}(\mathcal{G}, f_{*}\mathcal{F})
\end{align*}
and 
\begin{equation*}
\gamma^{\prime}\in\operatorname{Hom}_{\mathbb{S}(Z_{\et})}(\mathcal{H}, (g\circ f)_{*}\mathcal{F})
\end{equation*}
be the right adjoints of $\alpha$, $\beta$ and $\gamma$. Then the following are equivalent:
\begin{enumerate}
\item[(i)] 
$\beta\circ f^{*}(\alpha)$
agrees with $\gamma$.
\item[(ii)]
$g_{*}(\beta^{\prime})\circ \alpha^{\prime}$
agrees with $\gamma^{\prime}$.
\end{enumerate}
\end{lem}
\begin{proof}\upshape
Since the proof of the implication (ii) $\Rightarrow$ (i)
is similar to that of the implication (i) $\Rightarrow$ (ii), 
it suffices to show that (i) implies (ii).
We have the commutative diagram
\begin{equation*}
\xymatrix{
\mathcal{H}
\ar[r]
&
g_{*}g^{*}\mathcal{H}
\ar[r]\ar[d]
&
g_{*}\mathcal{G} 
\ar[d]
\\
&
(g\circ f)_{*}(g\circ f)^{*}
\mathcal{H}
\ar[r]
&
g_{*}(f_{*}f^{*})\mathcal{G}
\ar[r]
&
(g\circ f)_{*}\mathcal{F}
}    
\end{equation*}
and the composition of the lower sequence agrees with $(g\circ f)_{*}\gamma$ by (i).
So (ii) holds and the statement follows.
\end{proof}
\begin{lem}\upshape\label{heninj}
Let $Y$ be a normal crossing variety over the spectrum of a field $k$, 
$A$ the henselization of the local ring $\mathcal{O}_{Y, y}$
at a point $y\in Y$
and
$\mathcal{F}_{X}$
an \'{e}tale sheaf on an object $X$ in $\mathcal{V}$
which satisfies Condition \ref{cdpb}.
Then the homomorphism
\begin{equation*}
\HO^{N^{\prime}}_{\et}(
A, 
\mathcal{F}_{\spec(A)}
)    
\to
\HO^{N^{\prime}}_{\et}(
\kappa(x), 
\mathcal{F}_{\spec(\kappa(x))}
)   
\end{equation*}
is injective for
$x\in \spec(A)^{(0)}$
and
$N^{\prime}\geq N$.
\end{lem}
\begin{proof}\upshape
By Condition \ref{cdpb},
we have a commutative diagram
\begin{equation*}
\xymatrix{
\HO^{N^{\prime}}_{\et}(A,
\mathcal{F}_{\spec(A)})
\ar[r]
\ar[d]
&
\HO^{N^{\prime}}_{\et}(
\kappa(x),
\mathcal{F}_{\spec(\kappa(x))}
)
\ar@{=}[d]
\\
\HO^{N^{\prime}}_{\et}(
A/x,
\mathcal{F}_{\spec(A/x)}
)
\ar[r]
&
\HO^{N^{\prime}}_{\et}(
\kappa(x),
\mathcal{F}_{\spec(\kappa(x))}
).
}    
\end{equation*}
Then the left map is an isomorphism by Condition \ref{cdpb}. Moreover, 
the lower map is injective by Condition \ref{cdpb} and Lemma \ref{adfun}. So the statement follows.
\end{proof}
\begin{lem}\upshape\label{sredis}
Let $Y$ be a normal crossing variety over the spectrum of a field $k$,
$i: Z\to Y$  a closed immersion with
a normal crossing variety $Z$ over the spectrum of a field $k$
and $C$ the henselization of the local ring
$\mathcal{O}_{Z, z}$
at a point $z\in Z$.

Let $(\mathcal{F}_{X}, N)$ be a pair of an \'{e}tale sheaf on an object
in $\mathcal{V}$
and a non-negative integer which satisfies Condition \ref{cdpb}.
Let $q$ be a prime number. Put
\begin{align*}
T:=
\left\{
\begin{array}{ll}
0  &  \textrm{if $\mathcal{F}_{Y}$ is a sheaf of torsion groups}  \\ [5pt]
1   &  \textrm{otherwise}
\end{array}
\right.
\end{align*}
and $N_{1}=\operatorname{max}\{N, T\}$. Suppose that the sequence
\begin{align}\label{CTex}
0\to 
\HO^{N^{\prime}}_{\et}(
C,
\mathcal{F}_{\spec(C)}
)_{T_{q}}
\to
\bigoplus_{x\in\spec(C)^{(0)}}
\HO^{N^{\prime}}_{x}(
C_{\et},
\mathcal{F}_{\spec(C)}
)_{T_{q}} 
\nonumber
\\
\to
\bigoplus_{x\in \spec(C)^{(1)}}
\HO^{N^{\prime}+1}_{x}(
C_{\et}, \mathcal{F}_{\spec(C)}
)_{T_{q}}
\to \cdots
\end{align}
is exact for any point $z\in Z$ and
$N^{\prime}\geq N_{1}+1$. Then we have an isomorphism
\begin{equation}\label{sisofit}
\HO^{N^{\prime}+s}_{x}(
Z_{\et},
i^{*}\mathcal{F}_{Y}
)_{T_{q}}   
\xrightarrow{\simeq}
\HO^{N^{\prime}+s}_{x}(
Z_{\et},
\mathcal{F}_{Z}
)_{T_{q}}
\end{equation}
for $N^{\prime}\geq N_{1}$ and
$x\in Y^{(s)}\cap Z$.
\end{lem}
\begin{proof}\upshape
Let $B$ be the henselization of the local ring
$\mathcal{O}_{Y, z}$ at a point $z\in Y\cap Z$.
By \cite[p.93, III, Corollary 1.28]{M}, we may prove the statement in the case where $Z$ is replaced
by $\spec(C)$ with $\operatorname{dim}(C)=s$
and $Y$ is replaced by $\spec(B)$.
We prove the statement by induction on $s$. 

In the case where $s=0$, we have isomorphisms
\begin{equation*}
\HO^{N^{\prime}}_{x}(
C_{\et},
i^{*}\mathcal{F}_{\spec(B)}
)
\simeq 
\HO^{N^{\prime}}_{x}(
C_{\et},
\mathcal{F}_{\spec(C)}
)
\simeq 
\HO^{N^{\prime}}_{\et}(
\kappa(x),
\mathcal{F}_{\spec(\kappa(x))}
)
\end{equation*}
for $x\in \spec(C)^{(0)}$ and $N^{\prime}\geq N$. 
So the statement holds in this case.

In the case where $s=1$, 
if $\mathcal{F}^{\prime}$ is $i^{*}\mathcal{F}_{\spec(B)}$ or
$\mathcal{F}_{\spec(C)}$,
then the sequence
\begin{align*}
0\to 
\HO^{N^{\prime}}_{\et}(C, \mathcal{F}^{\prime})
\to 
\bigoplus_{x\in \spec(C)^{(0)}}
\HO^{N^{\prime}}_{x}(
C_{\et},
\mathcal{F}^{\prime}
)
\\
\to
\bigoplus_{x\in \spec(C)^{(1)}}
\HO^{N^{\prime}+1}_{x}(
C_{\et},
\mathcal{F}^{\prime}
)
\to 0
\end{align*}
is exact for $N^{\prime}\geq N$ 
by Condition \ref{cdpb} and Lemma \ref{heninj}. 
Since the statement holds in the case where 
$s=0$, 
the statement follows from Condition \ref{cdpb}. 

Assume that the statement holds in the case where $s\leq s^{\prime}$ and $s^{\prime}\geq 1$. 
Suppose that
$\operatorname{dim}(C)=s^{\prime}+1$.
Let $\mathcal{F}^{\prime}$ be
$i^{*}\mathcal{F}_{\spec(B)}$ or
$\mathcal{F}_{\spec(C)}$. Let $N^{\prime}\geq N_{1}$.
By the assumption, we have the isomorphism (\ref{sisofit})
for $x\in \spec(C)^{(s)}$ and $s\leq s^{\prime}$.
So it suffices to show that
the sequence
\begin{align*}
\bigoplus_{x\in\spec(C)^{(s^{\prime}-1)}}
\HO^{N^{\prime}+s^{\prime}-1}_{x}
(C_{\et}, \mathcal{F}^{\prime})_{T_{q}}
\to
\bigoplus_{x\in\spec(C)^{(s^{\prime})}}
\HO^{N^{\prime}+s^{\prime}}_{x}
(C_{\et}, \mathcal{F}^{\prime})_{T_{q}}
\\
\to
\bigoplus_{x\in\spec(C)^{(s^{\prime}+1)}}
\HO^{N^{\prime}+s^{\prime}+1}_{x}
(C_{\et}, \mathcal{F}^{\prime})_{T_{q}}
\to 0
\end{align*}
is exact.
Consider the spectral sequence
\begin{equation*}
E^{u, v}_{1}(\mathcal{G})
=
\bigoplus_{x\in \spec(C)^{(u)}}
\HO^{u+v}_{x}(C_{\et}, \mathcal{G})
\Rightarrow
E^{u+v}(\mathcal{G})
=
\HO^{u+v}_{\et}
(C, \mathcal{G})
\end{equation*}
for $\mathcal{G}\in\mathbb{S}(\spec(C)_{\et})$.
Then we have isomorphisms
\begin{align}\label{casevani}
E^{u, N^{\prime}+1}_{2}(i^{*}\mathcal{F}_{\spec(B)})_{T_{q}}
\simeq 
E^{u, N^{\prime}+1}_{2}(\mathcal{F}_{\spec(C)})_{T_{q}}
\simeq 
\left\{
\begin{array}{ll}
\HO^{N^{\prime}+1}_{\et}(
C,
\mathcal{F}^{\prime}
)_{T_{q}} 
& (u=0)
\\ [5pt]
0 & (u\leq  s^{\prime}-1)
\end{array}
\right.
\end{align}
by (\ref{CTex}) and the assumption. 
Moreover, we have an isomorphism
\begin{equation*}
\HO^{m}_{\et}(C, \mathcal{F}_{\spec(C)})
\simeq 
\HO^{m}_{\et}(\kappa(z),
i^{*}(\mathcal{F}_{\spec(C)})
)
\end{equation*}
for $m\geq 0$. 
So 
$\HO_{\et}^{N^{\prime}+s^{\prime}}(C, \mathcal{F}_{\spec(C)})$
is a torsion group
because
$N^{\prime}+s^{\prime}\geq 1$.
Hence the statement follows from Proposition \ref{exind}.
\end{proof}

Moreover,
let us remark the following facts:
\begin{lem}\upshape\label{DGC}
Let $\mathcal{A}$ be a small abelian category. Then we have the followings:
\begin{enumerate}
\item
Consider the following commutative diagram in $\mathcal{A}$:
\begin{equation*}
\xymatrix@=10pt{
&&0\ar[d]&& \\
& B_{1}\ar[r]\ar[d]& C_{1}\ar[r]\ar[d]
&D_{1}\ar[r]\ar[d]^{f}&0
\\
A_{2}\ar[r]\ar[d]& B_{2}\ar[r]\ar[d] & C_{2}\ar[r]\ar[d] & D_{2} \\
A_{3}\ar[r]\ar[d]& B_{3}\ar[r]
& C_{3}  \\
0& 
}
\end{equation*}
where all columns are exact and all rows are complex. Assume that the sequences
\begin{align*}
&C_{1}\to D_{1}\to 0   \\
&B_{2}\to C_{2}\to D_{2} \\
&A_{3}\to B_{3}\to C_{3}
\end{align*}
are exact. Then the morphism $f: D_{1}\to D_{2}$ is injective.
\item Consider the following diagram in $\mathcal{A}$:
\begin{equation*}
\xymatrix@=10pt{
&0 \\
&A_{2}\ar[u]\ar[r]^{g}
&B_{2}\ar[r]
&C_{2} \\
0\ar[r]
&A_{1}\ar[r]\ar[u]^{f}
&B_{1}\ar[r]\ar[u]
&C_{1}\ar[u]  \\
&
&B\ar[u]\ar@{=}[r]
&B\ar[u]_{h}
}    
\end{equation*}
where the sequences are exact. Then the morphism
\begin{equation*}
\delta: \operatorname{Ker}(h)
\to A_{1}
\end{equation*}
is defined by the commutative diagram.
Moreover, assume that 
\begin{equation*}
\operatorname{Im}(\delta)
\subset
\operatorname{Ker}(f).
\end{equation*}
Then the morphism $g: A_{2}\to B_{2}$ is injective.
\end{enumerate}
\end{lem}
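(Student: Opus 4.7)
Both parts of the lemma are pure diagram chases in the small abelian category $\mathcal{A}$, and the simplest strategy is to invoke the Freyd--Mitchell embedding (or argue with generalized elements) so that we may chase elements as in a category of modules.

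For part (1), I would start with an element $d_{1}\in\operatorname{Ker}(f)\subset D_{1}$ and track it through the diagram. By the surjectivity of $C_{1}\to D_{1}$, lift $d_{1}$ to some $c_{1}\in C_{1}$, and let $c_{2}$ denote its image in $C_{2}$. Commutativity of the right square gives $c_{2}\mapsto f(d_{1})=0$ in $D_{2}$, so exactness of row 2 at $C_{2}$ produces $b_{2}\in B_{2}$ with $b_{2}\mapsto c_{2}$. Let $b_{3}$ be the image of $b_{2}$ in $B_{3}$; its further image in $C_{3}$ agrees with that of $c_{2}$, which vanishes by exactness of the $C$-column at $C_{2}$. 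Exactness of row 3 at $B_{3}$ then lifts $b_{3}$ to some $a_{3}\in A_{3}$, and surjectivity of $A_{2}\to A_{3}$ (exactness of the $A$-column at $A_{3}$) lifts further to $a_{2}\in A_{2}$. Setting $b_{2}'$ to be the image of $a_{2}$ in $B_{2}$, the difference $b_{2}-b_{2}'$ lies in $\operatorname{Ker}(B_{2}\to B_{3})=\operatorname{Im}(B_{1}\to B_{2})$ by the $B$-column exactness, so we obtain $b_{1}\in B_{1}$ mapping to $b_{2}-b_{2}'$.

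The decisive step is to track $b_{1}$ back into row 1 and exploit the top of the $C$-column. Let $c_{1}'\in C_{1}$ be the image of $b_{1}$; the image of $c_{1}'$ in $C_{2}$ equals that of $b_{2}-b_{2}'$, and the $b_{2}'$ contribution vanishes because $A_{2}\to B_{2}\to C_{2}$ is a complex. Hence $c_{1}$ and $c_{1}'$ have the same image in $C_{2}$. The $0$ above $C_{1}$ in the diagram says $C_{1}\hookrightarrow C_{2}$, forcing $c_{1}=c_{1}'$. Finally, $d_{1}$ is the image of $c_{1}=c_{1}'$ in $D_{1}$, which vanishes because the top row is a complex through $B_{1}\to C_{1}\to D_{1}$; thus $f$ is injective.

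For part (2), I would first unpack the definition of $\delta$. For $b\in\operatorname{Ker}(h)$, push $b$ along $B\to B_{1}$; by commutativity $h$ factors as the composite $B\to B_{1}\to C_{1}$, so the image of $b$ in $B_{1}$ lies in $\operatorname{Ker}(B_{1}\to C_{1})=\operatorname{Im}(A_{1}\to B_{1})$ by row 3 exactness at $B_{1}$, and the injectivity of $A_{1}\to B_{1}$ (from $0\to A_{1}$ on the left of row 3) yields a unique lift $\delta(b)\in A_{1}$. To show $g$ is injective, take $a_{2}\in A_{2}$ with $g(a_{2})=0$; use the surjectivity of $f$ (exactness of the $A$-column) to write $a_{2}=f(a_{1})$ for some $a_{1}\in A_{1}$. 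The image $b_{1}$ of $a_{1}$ in $B_{1}$ maps to $g(a_{2})=0$ in $B_{2}$, so exactness of the $B$-column at $B_{1}$ gives $b\in B$ with $b\mapsto b_{1}$. Row 3 being a complex at $B_{1}$ shows $h(b)=0$, so $b\in\operatorname{Ker}(h)$. By uniqueness of the lift, $\delta(b)=a_{1}$, and the hypothesis $\operatorname{Im}(\delta)\subset\operatorname{Ker}(f)$ yields $a_{2}=f(a_{1})=f(\delta(b))=0$.

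The main obstacle I expect is purely bookkeeping, since the chase in part (1) traverses six objects of a rather wide diagram before closing. The crucial conceptual hinge is the injectivity of $C_{1}\to C_{2}$ in part (1), read off from the $0$ above $C_{1}$; without this the identification $c_{1}=c_{1}'$ fails and the argument collapses at its last step. In part (2) the analogous subtleties are the injectivity of $A_{1}\to B_{1}$, which makes $\delta$ single-valued, and the surjectivity of $f$, which enables the initial lift.
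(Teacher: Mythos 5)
Your chase is correct, and it is exactly the argument the paper intends: the paper's own proof consists of the single sentence ``By the Freyd--Mitchell Embedding Theorem, the statement follows by diagram chasing,'' and your proposal simply carries out that chase in full, with each lifting step justified by the correct exactness hypothesis (in particular the injectivity of $C_{1}\to C_{2}$ in part (1) and of $A_{1}\to B_{1}$ in part (2), which are indeed the hinges). Nothing to correct.
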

\begin{proof}\upshape
By the Freyd-Mitchell Embedding Theorem, the statement follows by diagram chasing.
\end{proof}
\begin{lem}\upshape\label{ijdex}
Let
\begin{equation*}
\xymatrix{
Z\cap U 
\ar[r]^{i^{\prime}}
\ar[d]_{j^{\prime}}
&
U 
\ar[d]^{j} \\
Z \ar[r]_{i}
&
X
}
\end{equation*}
be Cartesian. 
If $j$ is an open immersion 
and $i$ is a closed immersion, 
then we have
\begin{align*}
i_{*}(j^{\prime})_{!}
&=
j_{!}(i^{\prime})_{*}.
\end{align*}
In particular, we have
\begin{equation*}
i_{*}(j^{\prime})_{!}
=
j_{!}
\end{equation*}
in the case where $U=Z\cap U\subset Z$.
\end{lem}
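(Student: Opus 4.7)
The plan is to verify the claimed identity $i_{*}(j')_{!}\mathcal{F}\simeq j_{!}(i')_{*}\mathcal{F}$ of sheaves on $X_{\et}$ by checking it stalkwise at each geometric point $\bar{x}$ of $X$. Both operations $j_{!}$ (along an open immersion) and $i_{*}$ (along a closed immersion) are exact on \'{e}tale sheaves and admit standard stalk formulas: $(j_{!}\mathcal{G})_{\bar{x}}=\mathcal{G}_{\bar{x}}$ if $\bar{x}$ factors through $U$ and $0$ otherwise; $(i_{*}\mathcal{H})_{\bar{x}}=\mathcal{H}_{\bar{x}}$ if $\bar{x}$ factors through $Z$ and $0$ otherwise. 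The natural comparison map between the two sides can be built from the adjunctions $(j_{!}\dashv j^{*})$ and $((i')^{*}\dashv (i')_{*})$ together with the base-change isomorphism $j^{*}i_{*}\simeq (i')_{*}(j')^{*}$ for an open-closed Cartesian square, which is elementary because $j^{*}$ is just restriction to $U$.

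The stalkwise comparison then splits into three cases. First, if $\bar{x}$ does not factor through $Z$, the left-hand side vanishes by the stalk formula for $i_{*}$; for the right-hand side, either $\bar{x}$ factors through $U$ (but then $\bar{x}$ lies outside $Z\cap U$, so the stalk of $(i')_{*}\mathcal{F}$ at $\bar{x}$ vanishes) or $\bar{x}$ does not factor through $U$ (and $j_{!}$ makes the stalk vanish). Second, if $\bar{x}$ factors through $Z$ but not through $Z\cap U$, then on the left $((j')_{!}\mathcal{F})_{\bar{x}}=0$ by the stalk formula for $(j')_{!}$ viewed on $Z_{\et}$, while on the right $\bar{x}$ does not factor through $U$, so again the $j_{!}$-stalk vanishes. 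Finally, if $\bar{x}$ factors through $Z\cap U$, both sides identify naturally with $\mathcal{F}_{\bar{x}}$. Thus the comparison map is an isomorphism on every stalk, and the global isomorphism follows.

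The ``in particular'' assertion follows by specialization: when $U=Z\cap U\subset Z$, the map $i'$ is the identity of $U$, so $(i')_{*}$ is the identity functor on sheaves on $U$, and the general identity collapses to $i_{*}(j')_{!}=j_{!}$. I do not anticipate any serious obstacle; the only subtlety is keeping track of on which site each stalk is being computed and of the identifications of stalks of sheaves pushed forward along $i'$ or $j'$ with the appropriate stalks of $\mathcal{F}$, which the Cartesian square handles uniformly.
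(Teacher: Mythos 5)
Your proof is correct and follows essentially the same route as the paper, which simply invokes the stalk description of $i_{*}$ for a closed immersion and the definition of the extension by zero $j_{!}$ (citing Milne, II, Corollary 3.5 and p.~78) to reduce the identity to the same three-case stalkwise check you carry out explicitly. No further comment is needed.
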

\begin{proof}\upshape
The statement follows from \cite[p.71, II, Corollary 3.5]{M} 
and the definition of the extension by zero (cf. \cite[p.78]{M}).
\end{proof}
\begin{lem}\upshape\label{CAL}
Let $Y$ be a smooth scheme over the spectrum of a field $k$
and $Z$ a normal crossing variety over $\spec(k)$
which is a closed scheme of $Y$ of codimension $1$.

Let $A$ be the henselization of the local ring 
$\mathcal{O}_{Y, y}$ of $Y$ at a point $y$ of $Y$,
$C$ the henselization of the local ring $\mathcal{O}_{Z, y}$
and
$i: \spec (C)\to \spec (A)$ the corresponding closed immersion.

Let $U=\spec(A)\setminus\spec(C)$ and 
$j: U\to\spec(A)$  the corresponding open immersion. 

Let $(\mathcal{F}_{X}, N)$ be a pair of
an \'{e}tale sheaf on an object
$X$ in $\mathcal{V}$ and a non-negative integer
which satisfies Condition \ref{cdpb}
and $q$ a prime number.
Suppose that 
$N^{\prime}\geq \operatorname{max}\{N, T\}=N_{1}$ and
$\operatorname{cd}_{q}(k)<\infty$.

If the sequence
\begin{align}\label{AexA}
0\to 
\HO^{t}_{\et}(C, \mathcal{F}_{\spec(C)})_{T_{q }}
\to
\bigoplus_{x\in \spec(C)^{(0)}}
\HO^{t}_{x}(C_{\et}, \mathcal{F}_{\spec(C)})_{T_{q}}
\to  \nonumber
\\
\bigoplus_{x\in \spec(C)^{(1)}}
\HO^{t+1}_{x}(C_{\et}, \mathcal{F}_{\spec(C)})_{T_{q}}
\to\cdots
\end{align}
is exact for any point $y\in Y$ and $t\geq N^{\prime}$, then a distinguished triangle
\begin{equation}\label{dtji}
\cdots
\to
j_{!}\mathcal{F}_{U}
\to
\mathcal{F}_{\spec(A)}
\to
i_{*}i^{*}\mathcal{F}_{\spec(A)}
\to
\cdots
\end{equation}
induces an exact sequence
\begin{equation}\label{CA}
0\to  
\HO_{x}^{t+s-1}(C_{\et}, 
\mathcal{F}_{\spec(C)}
)_{T_{q}}
\to
\HO_{x}^{t+s}(A_{\et}, 
j_{!}\mathcal{F}_{U}
)_{T_{q}}
\to
\HO_{x}^{t+s}(A_{\et}, 
\mathcal{F}_{\spec(A)}
)_{T_{q}}
\to
0
\end{equation}
for $x\in (\spec(A))^{(s)}\cap \spec(C)$ and the sequence
\begin{align}\label{AexC}
0\to
\HO^{t}_{\et}(A, j_{!}\mathcal{F}_{U})_{T_{q}}
\to
\bigoplus_{x\in\spec(A)^{(0)}}
\HO^{t}_{x}(
A_{\et},
j_{!}\mathcal{F}_{U}
)_{T_{q}}
\to  
\nonumber
\\
\bigoplus_{x\in\spec(A)^{(1)}}
\HO^{t+1}_{x}(A_{\et},
j_{!}\mathcal{F}_{U})_{T_{q}}
\to\cdots
\end{align}
is exact for $t\geq N^{\prime}$.
\end{lem}
\begin{proof}\upshape
We prove the exactness of the sequences (\ref{CA}) and (\ref{AexC}) by downward induction on $N^{\prime}$.
By \cite[Lemma 3.7]{Sak5} and Lemma \ref{sredis}, we have an isomorphism 
\begin{equation*}
\HO^{u}_{x}(
A_{\et},
i_{*}i^{*}
\mathcal{F}_{\spec(A)}
)_{T_{q}}
\simeq
\left\{
\begin{array}{ll}
\HO_{x}^{u}(C_{\et}, 
\mathcal{F}_{\spec(C)}
)_{T_{q}}, 
& x\in
(\spec(A))^{(s)}\cap \spec(C)
\\ [5pt]
0, 
& x\in
(\spec(A))^{(s)}\setminus \spec(C)
\end{array}
\right.
\end{equation*}
for $u\geq N^{\prime}+s$.
So the distinguished triangle (\ref{dtji}) induces
an exact sequence
\begin{align*}
&\HO^{N^{\prime}+s-1}_{x}(
C_{\et},
\mathcal{F}_{\spec(C)}
)_{T_{q}}
\to
\HO^{N^{\prime}+s}_{x}(
A_{\et},
j_{!}\mathcal{F}_{U}
)_{T_{q}}
\to
\HO^{N^{\prime}+s}_{x}(
A_{\et},
\mathcal{F}_{\spec(A)}
)_{T_{q}}
\\
\to
&\HO^{N^{\prime}+s}_{x}(
C_{\et},
\mathcal{F}_{\spec(C)}
)_{T_{q}}
\to\cdots
\end{align*}
for $x\in (\spec(A))^{(s)}$. 
Let $\mathcal{G}$ be a sheaf of abelian groups on 
$\spec(A)_{\et}$ and $x\in\spec(A)^{(s)}$.
Then
\begin{equation*}
\HO^{t+s}_{x}(A_{\et},
\mathcal{G})_{T_{q}}
=0
\end{equation*}
for sufficiently large $t$
by Proposition \ref{cdpt} and Remark \ref{cdsp}.
Moreover, we have
\begin{equation*}
\HO^{t}
(A_{\et}, j_{!}\mathcal{F}_{U})
=0
\end{equation*}
for any $t\geq 0$ by the isomorphism (\ref{isoAI}).
So we have the exact sequences (\ref{CA}) and
(\ref{AexC}) for sufficiently large $N^{\prime}$.

Assume that the sequences (\ref{CA}) and (\ref{AexC}) are exact for $t> N^{\prime} (\geq N_{1})$.
If the first map in the sequence (\ref{CA})
is injective for $t=N^{\prime}+1$,
the second map in the sequence (\ref{CA})
is surjective for $t=N^{\prime}$.
So, in order to prove that the sequence (\ref{CA}) is exact for $t=N^{\prime}$, 
it suffices to show that the first map in the sequence
(\ref{CA}), that is,
the homomorphism
\begin{equation}\label{ijinj}
\HO^{t+s-1}_{x}(C_{\et}, \mathcal{F}_{\spec(C)})_{T_{q}}
\to 
\HO^{t+s}_{x}(
A_{\et},
j_{!}\mathcal{F}_{U}
)_{T_{q}}
\end{equation}
is injective for $t=N^{\prime}$.

We prove the injectivity of the homomorphism (\ref{ijinj}) for $t=N^{\prime}$ by induction on $s$.
By \cite[p.93, III, Corollary 1.28]{M}, it suffices to prove the injectivity of
the homomorphism (\ref{ijinj}) in the case where $A$ is a henselian regular local ring 
with $\operatorname{dim}(A)=s$.

In the case where $s=0$, $(\spec(A))^{(0)}\cap \spec(C)=\emptyset$.
So the homomorphism (\ref{ijinj}) is injective.

In the case where $s=1$, we prove the injectivity of the homomorphism (\ref{ijinj}).
Since $A$ is a henselian valuation ring, 
we have an isomorphism
\begin{equation*}
\HO^{u}_{\et}\left(
A,
j_{!}\mathcal{F}_{\spec(k(A))}
\right)
\simeq 0
\end{equation*}
for any integer $u\geq 0$ by the isomorphism (\ref{isoAI})
and so we have an isomorphism
\begin{equation*}
\HO^{N^{\prime}}_{\et}(
k(A),
\mathcal{F}_{\spec(k(A))}
)    
\xrightarrow{\simeq}
\HO^{N^{\prime}+1}_{x}(
A, 
j_{!}\mathcal{F}_{\spec(k(A))}
)
\end{equation*}
for $x\in (\spec(A))^{(1)}\cap \spec(C)$.
Since the diagram
\begin{equation}\label{AntAC}
\xymatrix{
\HO^{N^{\prime}}_{\et}(
A,
\mathcal{F}_{\spec(A)}
)
\ar[r]\ar[d]_{\simeq}
&
\HO^{N^{\prime}}_{\et}(
k(A),
\mathcal{F}_{\spec(k(A))}
)
\ar[d]^{\simeq}
\\
\HO^{N^{\prime}}_{x}(
C_{\et},
\mathcal{F}_{\spec(C)}
)
\ar[r]
&
\HO^{N^{\prime}+1}_{x}(
A_{\et},
j_{!}
\mathcal{F}_{\spec(k(A))}
)
}    
\end{equation}
is anti-commutative by \cite[Lemma 3.11]{Sak5}, the homomorphism (\ref{ijinj}) is injective by
Condition \ref{cdpb} and Lemma \ref{heninj}.

Next, 
we prove the injectivity of the homomorphism (\ref{ijinj})
in the case where $s=2$. 
Put
\begin{equation*}
E^{u, v}_{1}(D, \mathcal{G})
=
\displaystyle\bigoplus_{x\in \spec(D)^{(u)}}
\HO^{u+v}_{x}(
D_{\et},
\mathcal{G}
)
\end{equation*}
for an equidimensional ring $D$ and 
a sheaf $\mathcal{G}$ on $\spec(D)_{\et}$.
Then we have a commutative diagram
\footnotesize
\begin{equation*}
\xymatrix{
&0 \\
&
E^{1, N^{\prime}}_{1}\left(
C,
\mathcal{F}_{\spec(C)}
\right)_{T_{q}}
\ar[u]\ar[r]
&
E^{2, N^{\prime}}_{1}\left(
A,
j_{!}\mathcal{F}_{U}
\right)_{T_{q}}
\ar[r]
&
E^{2, N^{\prime}}_{1}\left(
A,
\mathcal{F}_{\spec(A)}
\right)_{T_{q}}
\\
0\ar[r]
&
E^{0, N^{\prime}}_{1}\left(
C,
\mathcal{F}_{\spec(C)}
\right)_{T_{q}}
\ar[r]\ar[u]
&
E^{1, N^{\prime}}_{1}\left(
A,
j_{!}\mathcal{F}_{U}
\right)_{T_{q}}
\ar[r]\ar[u]
&
E^{1, N^{\prime}}_{1}\left(
A,
\mathcal{F}_{\spec(A)}
\right)_{T_{q}}
\ar[u]  \\
&
&
E^{0, N^{\prime}}_{1}\left(
A,
j_{!}\mathcal{F}_{U}
\right)_{T_{q}}
\ar[u]\ar@{=}[r]
&
E^{0, N^{\prime}}_{1}\left(
A,
\mathcal{F}_{\spec(A)}
\right)_{T_{q}}
\ar[u]
}    
\end{equation*}
\normalsize
where the columns are exact by 
Remark \ref{HenTor}, the assumption and Proposition \ref{exind}.
Moreover, the middle row is exact by the above.
So the homomorphism (\ref{ijinj}) is injective by the anti-commutative diagram (\ref{AntAC}) 
and Lemma 
\ref{DGC}.2.

Assume that the homomorphism (\ref{ijinj}) is injective for $s\leq u$ and $u\geq 2$.
Then we prove the injectivity of the homomorphism (\ref{ijinj}) for $s=u+1$. Consider
the following diagram:
%
\fontsize{7.7pt}{7.7pt}\selectfont
\begin{equation*}
\xymatrix@C=8pt{
&
& 0
\ar[d]
&
& 
\\
&
E^{u-2, N^{\prime}}_{1}(
C,
\mathcal{F}_{\spec(C)}
)_{T_{q}}
\ar[r]
\ar[d]
&
E^{u-1, N^{\prime}}_{1}(
C,
\mathcal{F}_{\spec(C)}
)_{T_{q}}
\ar[r]
\ar[d]
&
E_{1}^{u, N^{\prime}}(
C,
\mathcal{F}_{\spec(C)}
)_{T_{q}}
\ar[r]
\ar[d]
& 0 \\
E_{1}^{u-2, N^{\prime}}(
A,
j_{!}\mathcal{F}_{U}
)_{T_{q}}
\ar[r]
\ar[d]
&
E_{1}^{u-1, N^{\prime}}(
A,
j_{!}\mathcal{F}_{U}
)_{T_{q}}
\ar[r]
\ar[d]
&
E_{1}^{u, N^{\prime}}(
A,
j_{!}\mathcal{F}_{U}
)_{T_{q}}
\ar[r]
\ar[d]
& 
E_{1}^{u+1, N^{\prime}}(
A,
j_{!}\mathcal{F}_{U}
)_{T_{q}}
\\
E_{1}^{u-2, N^{\prime}}(
A,
\mathcal{F}_{\spec(A)}
)_{T_{q}}
\ar[r]
\ar[d]
&
E_{1}^{u-1, N^{\prime}}(
A,
\mathcal{F}_{\spec(A)}
)_{T_{q}}
\ar[r]
&
E_{1}^{u, N^{\prime}}(
A,
\mathcal{F}_{\spec(A)}
)_{T_{q}}
&
\\
0
}    
\end{equation*}
\normalsize
where 
the upper row and
the bottom row are exact by
Condition \ref{cdpb} and the assumption.
Moreover, the sequence
\begin{align*}
E^{u-1, N^{\prime}}(
A,
j_{!}\mathcal{F}_{U}
)_{T_{q}}    
\to
E^{u, N^{\prime}}(
A,
j_{!}\mathcal{F}_{U}
)_{T_{q}}    
\to
E^{u+1, N^{\prime}}(
A,
j_{!}\mathcal{F}_{U}
)_{T_{q}}    
\end{align*}
is exact by Remark \ref{HenTor} and Proposition \ref{exind}. 
By the assumption, the columns except for the right column are also exact.
So the homomorphism (\ref{ijinj}) is injective for $s=u+1$ by Lemma \ref{DGC}.1.
Hence the sequence (\ref{CA}) is exact for $x\in (\spec(A))^{(s)}\cap \spec(C)$.
Finally, we prove the exactness of the sequence (\ref{AexC}).
Since we have an anti-commutative diagram
\begin{equation*}
\xymatrix{
\displaystyle\bigoplus_{x\in\spec(C)^{(0)}}\HO_{x}^{N^{\prime}}(C_{\et}, \mathcal{F}_{\spec(C)})
\ar[r]
&
\displaystyle\bigoplus_{x\in\spec(A)^{(1)}}\HO_{x}^{N^{\prime}+1}
(A_{\et}, j_{!}\mathcal{F}_{U})
\\
\HO_{\et}^{N^{\prime}}(C, \mathcal{F}_{\spec(C)})
\ar[u]
&
\displaystyle\bigoplus_{x\in\spec(A)^{(0)}}\HO_{x}^{N^{\prime}}
(A_{\et}, j_{!}\mathcal{F}_{U})
\ar[u]
\\
&
\HO^{N^{\prime}}_{\et}(A, \mathcal{F}_{\spec(A)})
\ar[u]\ar[lu]
}
\end{equation*}
for $N^{\prime}\geq N$ by the anti-commutative diagram (\ref{AntAC}),
the exactness of the sequence (\ref{AexC}) follows from Condition \ref{cdpb}
and the exact sequences (\ref{AexA}) and (\ref{CA}). 
This completes the proof.
\end{proof}

We prove Theorem \ref{Ex&Ger}. The proof of the following is similar to the proof of
\cite[Theorem 1.3]{Sak5}.

\begin{thm}\upshape\label{1gerl}
Let $Y$ be a normal crossing variety over 
the spectrum of a field $k$.
Let 
\begin{math}
Y_{1}, \cdots, Y_{a}
\end{math}
be the irreducible components of $Y$.

Let $(\mathcal{F}_{X}, N)$ be a pair of
an \'{e}tale sheaf of abelian groups
on an object $X$ in $\mathcal{V}$ 
and a non-negative integer
which satisfies 
Condition \ref{cdpb}. 
Let the notation $T$ be the same as in Proposition \ref{cst}
and $N_{1}=\operatorname{max}\{N, T\}$.
Suppose that
$\operatorname{cd}_{q}(k)<\infty$ for a prime number $q$.

Then we have the followings:

\begin{itemize}
\item Property $\operatorname{P}_{1}(a)$:
Let $s\geq 0$ be an integer and
$i: Z\to Y$ a closed immersion with
\begin{equation*}
Z
=
\displaystyle\bigcup^{a-1}_{m=1} Y_{m}.
\end{equation*}
Then the homomorphism
\begin{equation*}
\HO^{N^{\prime}+s}_{y}(
Z_{\et},
i^{*}\mathcal{F}_{Y}
)_{T_{q}}    
\xrightarrow{\sim}
\HO^{N^{\prime}+s}_{y}(
Z_{\et},
\mathcal{F}_{Z}
)_{T_{q}}
\end{equation*}
is an isomorphism for
$N^{\prime}\geq N_{1}$ and
$y\in Y^{(s)}\cap Z$.
\item Property $\operatorname{P}_{2}(a)$:
Let the notations be the same as above
and $j: U\hookrightarrow Y$ the immersion
of the open complement
of $Z$ in $Y$.
Then 
a distinguished triangle
\begin{align}\label{Indisji}
\cdots\to  
j_{!}\mathcal{F}_{U}
\to
\mathcal{F}_{X}
\to
i_{*}i^{*}\mathcal{F}_{X}
\to\cdots.
\end{align}
induces an exact sequence
\begin{equation}\label{Inindij}
0\to
\HO^{N^{\prime}+s}_{y}(
Y_{\et}, j_{!}\mathcal{F}_{U}
)_{T_{q}}
\to
\HO^{N^{\prime}+s}_{y}(
Y_{\et}, 
\mathcal{F}_{Y}
)_{T_{q}}
\to 
\HO^{N^{\prime}+s}_{y}(Z_{\et},
\mathcal{F}_{Z})_{T_{q}}
\to 0
\end{equation}
for $y\in Y^{(s)}\cap Z$ and $N^{\prime}\geq N_{1}$. 

\item Property $\operatorname{P}_{3}(a)$: Let $A$ be the henselization of the local ring
$\mathcal{O}_{Y, y}$ of $Y$ at a point of $y\in Y$. Then
the sequence
\begin{align*}
0\to \HO^{N^{\prime}}_{\et}(A, \mathcal{F}_{\spec(A)})_{T_{q}}
\to
\bigoplus_{x\in \spec(A)^{(0)}}
\HO^{N^{\prime}}_{x}(A_{\et}, 
\mathcal{F}_{\spec(A)})_{T_{q}}
\\
\to
\bigoplus_{x\in \spec(A)^{(1)}}
\HO^{N^{\prime}+1}_{x}(A_{\et}, 
\mathcal{F}_{\spec(A)})_{T_{q}} 
\to
\cdots
\end{align*}
is exact for $N^{\prime}\geq N_{1}$.
\end{itemize}
\end{thm}
\begin{proof}\upshape
We prove the statement by induction on $a$.

In the case where $a=1$,
then $Z=\phi$ and so $\operatorname{P}_{1}(1)$ and $\operatorname{P}_{2}(1)$ hold. 
Since 
$Y$ is a smooth scheme over the spectrum of a field,
$\operatorname{P}_{3}(1)$ holds by Condition
\ref{cdpb}.

Assume that $\operatorname{P}_{u}(v)$ holds for
$u=1, 2, 3$ and
$v\leq a$. Then we prove
$\operatorname{P}_{u}(a+1)$ for $u=1, 2, 3$.

First, we prove $\operatorname{P}_{1}(a+1)$.  
By \cite[p.93, III, Corollary 1.28]{M},
it suffices to prove $\operatorname{P}_{1}(a+1)$
in the case where $Y$ is the spectrum of a henselian local
ring $A$ and $\operatorname{dim}(Y)=s$.
Since $\#(Z^{(0)})=a$, $\operatorname{P}_{1}(a+1)$ follows from
$\operatorname{P}_{3}(a)$
and Lemma \ref{sredis}.

Next we prove $\operatorname{P}_{2}(a+1)$
and $\operatorname{P}_{3}(a+1)$
by downward induction on $N^{\prime}$.
Since $\operatorname{P}_{1}(a+1)$ holds, 
the distinguished triangle (\ref{Indisji})
induces an exact sequence
\begin{align}\label{FN1ex}
&\HO_{y}^{N_{1}+s}(Y_{\et}, j_{!}\mathcal{F}_{U})_{T_{q}}
\to 
\HO_{y}^{N_{1}+s}(Y_{\et}, \mathcal{F}_{Y})_{T_{q}}
\to
\HO_{y}^{N_{1}+s}(Z_{\et}, \mathcal{F}_{Z})_{T_{q}}
\nonumber
\\
\to 
&
\HO_{y}^{N_{1}+s+1}(Y_{\et}, j_{!}\mathcal{F}_{U})_{T_{q}}
\to \cdots
\end{align}
for $y\in Y^{(s)}\cap Z$.

Let
$x\in Y^{(s)}$ and $\mathcal{G}$ be a sheaf of abelian groups 
on $Y_{\et}$.
Since $\operatorname{cd}_{q}(k)$
and $\operatorname{tr.deg}_{k}\mathcal{O}_{Y, y}$ are finite
for any $y\in Y^{(0)}$,
we have
\begin{equation*}
\HO^{N^{\prime}+s}_{x}(Y_{\et}, 
\mathcal{G})_{T_{q}}
=0
\end{equation*}
for sufficiently large $N^{\prime}$ 
by Proposition \ref{cdpt} and Remark \ref{cdsp}. 
Moreover, we have
\begin{equation*}
\HO^{N^{\prime}}_{\et}(A, \mathcal{F}_{\spec(A)})_{T_{q}}=0    
\end{equation*}
for sufficiently large $N^{\prime}$
by (\ref{VaniEl}) and Remark \ref{cdsp}. So
the sequence (\ref{Inindij}) is exact
and $\operatorname{P}_{3}(a+1)$ holds
for sufficiently large $N^{\prime}$.

Assume that the sequence (\ref{Inindij}) is exact 
and $\operatorname{P}_{3}(a+1)$ holds
for $N^{\prime}> N^{\prime\prime} ( \geq N_{1})$. 
Then the sequence
\begin{align*}
\HO^{N^{\prime\prime}+s}_{y}(
Y_{\et},
j_{!}\mathcal{F}_{U}
)_{T_{q}}    
\to 
\HO^{N^{\prime\prime}+s}_{y}(
Y_{\et},
\mathcal{F}_{Y}
)_{T_{q}} 
\to 
\HO^{N^{\prime\prime}+s}_{y}(
Z_{\et},
\mathcal{F}_{Z}
)_{T_{q}} 
\to 0
\end{align*}
is exact for $y\in Y^{(s)}\cap Z$
by the exact sequence (\ref{FN1ex}).
In order to prove $\operatorname{P}_{2}(a+1)$,
it suffices to show that the homomorphism
\begin{equation}\label{injsupj2}
\HO^{N^{\prime\prime}+s}_{y}(
A_{\et},
j_{!}\mathcal{F}_{U}
)_{T_{q}}    
\to
\HO^{N^{\prime\prime}+s}_{y}(
A_{\et},
\mathcal{F}_{\spec(A)}
)_{T_{q}}
\end{equation}
is injective in the case where $A$ is the henselization at a point
$y$ of $Y^{(s)}\cap Z$ by \cite[p.93, III, Corollary 1.28]{M}.
Since we have
\begin{equation*}
U=
Y\setminus Z
=
Y_{a+1}\setminus
Z\cap Y_{a+1},
\end{equation*}
there is an open immersion 
$j^{\prime}: Y\setminus Z\to Y_{a+1}$. 
Let $i_{a+1}: Y_{a+1}\to Y$ 
be a closed immersion.
Then we have
\begin{equation*}
j=i_{a+1}\circ j^{\prime}.    
\end{equation*}
Let $A_{a+1}$ be the henselization of $\mathcal{O}_{Y_{a+1}, y}$
and 
$U^{\prime}=\spec(A_{a+1})\times_{Y_{a+1}}U$. 
For notational simplicity, 
a base change of $j$ (resp. $j^{\prime}$, $i$)
is also denoted by
$j$ (resp. $j^{\prime}$, $i$),
unless confusion arises.
Then we have an isomorphism
\begin{align}\label{eqj12}
&
\HO_{x}^{u}(
A_{\et},
j_{!}\mathcal{F}_{U^{\prime}}
)
=
\left\{
\begin{array}{ll}
\HO^{u}_{x}((A_{a+1})_{\et}, (j^{\prime})_{!}\mathcal{F}_{U^{\prime}})
& \textrm{for}~~x\in (Y_{a+1})^{(s)} \\ [5pt]
0
& \textrm{for}~~x\in Y^{(s)}\setminus Y_{a+1}
\end{array}
\right.
\end{align}
for any $u\geq 0$
by Lemma \ref{ijdex} (or \cite[p.169, V, Proposition 1.13]{M}) and \cite[Lemma 3.7]{Sak5}. 
By Condition \ref{cdpb} and 
$\operatorname{P}_{3}(a)$,
the sequences
\begin{align*}
0\to 
&\HO_{\et}^{N^{\prime}}(A_{a+1}, \mathcal{F}_{\spec(A_{a+1})})_{T_{q}}
\to
\bigoplus_{x\in\spec(A_{a+1})^{(0)}}
\HO^{N^{\prime}}_{x}(
(A_{a+1})_{\et},
\mathcal{F}_{\spec(A_{a+1})}
)_{T_{q}}  \\
\to
&\bigoplus_{x\in\spec(A_{a+1})^{(1)}}
\HO^{N^{\prime}+1}_{x}((A_{a+1})_{\et}, \mathcal{F}_{\spec(A_{a+1})})_{T_{q}}
\to\cdots
\end{align*}
and
\begin{align*}
0\to 
&\HO_{\et}^{N^{\prime}}(C_{a+1}, \mathcal{F}_{\spec(C_{a+1})})_{T_{q}}
\to
\bigoplus_{x\in\spec(C_{a+1})^{(0)}}
\HO^{N^{\prime}}_{x}(
(C_{a+1})_{\et},
\mathcal{F}_{\spec(C_{a+1})}
)_{T_{q}}  \\
\to
&\bigoplus_{x\in\spec(C_{a+1})^{(1)}}
\HO^{N^{\prime}+1}_{x}((C_{a+1})_{\et}, \mathcal{F}_{\spec(C_{a+1})})_{T_{q}}
\to\cdots   
\end{align*}
are exact for $N^{\prime}\geq N_{1}$
where $\spec(C_{a+1})=\spec(A_{a+1})\times_{Y}Z$. So the sequence
\begin{align*}
0\to 
&
\HO_{\et}^{N^{\prime}}(A_{a+1}, (j^{\prime})_{!}\mathcal{F}_{U^{\prime}})_{T_{q}}
\to
\bigoplus_{x\in\spec(A_{a+1})^{(0)}}
\HO^{N^{\prime}}_{x}(
(A_{a+1})_{\et},
(j^{\prime})_{!}\mathcal{F}_{U^{\prime}}
)_{T_{q}}  
\\
\to
&
\bigoplus_{x\in\spec(A_{a+1})^{(1)}}
\HO^{N^{\prime}+1}_{x}(
(A_{a+1})_{\et}, 
(j^{\prime})_{!}\mathcal{F}_{U^{\prime}})_{T_{q}}
\to\cdots
\end{align*}
is exact for $y\in (Y_{a+1})^{(s)}$
and $N^{\prime}\geq N_{1}$
by Lemma \ref{CAL}.
Hence the sequence
\begin{align}\label{exj1}
0\to 
&
\HO_{\et}^{N^{\prime}}(A, j_{!}\mathcal{F}_{U^{\prime}})_{T_{q}}
\to
\bigoplus_{x\in\spec(A)^{(0)}}
\HO^{N^{\prime}}_{x}(
A_{\et},
j_{!}\mathcal{F}_{U^{\prime}}
)_{T_{q}}  \nonumber 
\\
\to
&
\bigoplus_{x\in\spec(A)^{(1)}}
\HO^{N^{\prime}+1}_{x}(
A_{\et}, 
j_{!}\mathcal{F}_{U^{\prime}})_{T_{q}}
\to\cdots
\end{align}
is exact for $N^{\prime}\geq N_{1}$ by the isomorphism (\ref{eqj12}).

Then we prove that the homomorphism (\ref{injsupj2}) is injective
by induction on $s$.
Put 
\begin{math}
\spec(C)=
\spec(A)\times_{Y}Z
\end{math}
and $U^{\prime}=\spec(A)\times_{Y}U$.
Remark that we have an isomorphism
\begin{equation*}
\HO^{u}_{x}(
A_{\et}, j_{!}\mathcal{F}_{U^{\prime}}
)
\xrightarrow{\sim}
\HO^{u}_{x}(
A_{\et}, 
\mathcal{F}_{\spec(A)}
)
\end{equation*}
for $x\in U=Y\setminus Z$ and any integer $u\geq 0$.

In the case where $s=0$, then we have an isomorphism
\begin{equation*}
\HO_{y}^{u}(A_{\et}, 
\mathcal{F}_{\spec(A)}
)
\xrightarrow{\simeq}   
\HO_{y}^{u}(C_{\et}, i^{*}\mathcal{F}_{\spec(A)}
)
\end{equation*}
for $y\in Y^{(0)}\cap Z$ and $u\geq 0$.
So the homomorphism (\ref{injsupj2}) is injective for $s=0$.

In the case where $s=1$, we have the commutative diagram
\fontsize{7.8pt}{7.8pt}\selectfont
\begin{equation*}
\xymatrix@C=10pt{
&
&
0\ar[d]
&
&
\\
&
\HO^{N^{\prime\prime}}_{\et}(A, j_{!}\mathcal{F}_{U^{\prime}})_{T_{q}}
\ar[r]\ar[d]
&
\displaystyle\bigoplus_{x\in \spec(A)^{(0)}}
\HO^{N^{\prime\prime}}_{x}(A_{\et}, 
j_{!}\mathcal{F}_{U^{\prime}})_{T_{q}}
\ar[r]\ar[d]
&
\displaystyle\bigoplus_{x\in \spec(A)^{(1)}}
\HO^{N^{\prime\prime}+1}_{x}(A_{\et}, 
j_{!}\mathcal{F}_{U^{\prime}})_{T_{q}}
\ar[r]\ar[d]
&
0 \\
&
\HO^{N^{\prime\prime}}_{\et}(A, \mathcal{F}_{\spec(A)})_{T_{q}}
\ar[r]\ar[d]
&
\displaystyle\bigoplus_{x\in \spec(A)^{(0)}}
\HO^{N^{\prime\prime}}_{x}(A_{\et}, \mathcal{F}_{\spec(A)})_{T_{q}}
\ar[r]\ar[d]
&
\displaystyle\bigoplus_{x\in \spec(A)^{(1)}}
\HO^{N^{\prime\prime}+1}_{x}(A_{\et}, \mathcal{F}_{\spec(A)})_{T_{q}}
\\
0\ar[r]
&
\HO^{N^{\prime\prime}}_{\et}(C, 
\mathcal{F}_{\spec(C)})_{T_{q}}
\ar[r]
&
\displaystyle\bigoplus_{x\in \spec(C)^{(0)}}
\HO^{N^{\prime\prime}}_{x}(C_{\et}, \mathcal{F}_{\spec(C)})_{T_{q}}
}    
\end{equation*}
\normalsize
where the upper and the middle rows are exact by
the exact
sequence (\ref{exj1}).
Since $Y_{1}, \cdots, Y_{a}$ are the irreducible components of $Z$,
the lower row is exact by
$\operatorname{P}_{3}(a)$.
Moreover, the left and the middle columns are exact
by the distinguished triangle (\ref{Indisji}) and the assumption.
Hence the morphism (\ref{injsupj2}) is injective for $s=1$ 
by Lemma \ref{DGC}.

Let $t$ be an integer and $t\geq 1$.
Assume that $\operatorname{P}_{2}(a+1)$ holds for $s\leq t$
and $\operatorname{dim}(A)=t+1$.
Put
\begin{equation*}
E^{u, v}_{1}(D, \mathcal{G})
=
\displaystyle\bigoplus_{x\in \spec(D)^{(u)}}
\HO^{u+v}_{x}(
D_{\et},
\mathcal{G}
)
\end{equation*}
for an equidimensional ring $D$ and 
a sheaf $\mathcal{G}$ on $\spec(D)_{\et}$.
Then we have the commutative
diagram
\begin{equation}
\label{comBD}
\fontsize{9.6pt}{9.6pt}\selectfont
\xymatrix@C=10pt{
&
&
0\ar[d]
&
&
\\
&
E_{1}^{t-1, N^{\prime\prime}}(A, j_{!}\mathcal{F}_{U^{\prime}})_{T_{q}}
\ar[r]\ar[d]
&
E_{1}^{t, N^{\prime\prime}}(A, j_{!}\mathcal{F}_{U^{\prime}})_{T_{q}}
\ar[r]\ar[d]
&
E_{1}^{t+1, N^{\prime\prime}}(A, j_{!}\mathcal{F}_{U^{\prime}})_{T_{q}}
\ar[r]\ar[d]
&
0 \\
B_{t}
\ar[r]\ar[d]
&
E_{1}^{t-1, N^{\prime\prime}}(A, \mathcal{F}_{\spec(A)})_{T_{q}}
\ar[r]\ar[d]
&
E_{1}^{t, N^{\prime\prime}}(A, \mathcal{F}_{\spec(A)})_{T_{q}}
\ar[r]\ar[d]
&
E_{1}^{t+1, N^{\prime\prime}}(A, \mathcal{F}_{\spec(A)})_{T_{q}}
\\
D_{t}
\ar[r]\ar[d]
&
E_{1}^{t-1, N^{\prime\prime}}(C, \mathcal{F}_{\spec(C)})_{T_{q}}
\ar[r]
&
E_{1}^{t, N^{\prime\prime}}(C, \mathcal{F}_{\spec(C)})_{T_{q}}  \\
0
&
}    
\end{equation}
\normalsize
where 
%
\begin{align*}
B_{t}=
\begin{cases}
\HO^{N^{\prime\prime}}_{\et}(A, 
\mathcal{F}_{\spec(A)})_{T_{q}}
& (\textrm{if}~~t=1) \\
\\
E_{1}^{t-2, N^{\prime\prime}}(A, 
\mathcal{F}_{\spec(A)})_{T_{q}}
& (\textrm{if}~~t\geq 2)
\end{cases}
\end{align*}
and
\begin{align*}
D_{t}=
\begin{cases}
\HO^{N^{\prime\prime}}_{\et}(C, 
\mathcal{F}_{\spec(C)})_{T_{q}}
& (\textrm{if}~~t=1) \\
\\
E_{1}^{t-2, N^{\prime\prime}}(C, 
\mathcal{F}_{\spec(C)}
)_{T_{q}}
& (\textrm{if}~~t\geq 2).
\end{cases}
\end{align*}
Then the columns except for the right map are exact by
Condition \ref{cdpb} 
and $\operatorname{P}_{2}(a+1)$ for $s\leq t$.
If $\mathcal{G}=j_{!}\mathcal{F}_{U^{\prime}}$ or $\mathcal{G}=\mathcal{F}_{\spec(A)}$, 
then the sequence
%
\begin{equation*}
E_{1}^{t-1, N^{\prime\prime}}(A, 
\mathcal{G}
)_{T_{q}}
\to
E_{1}^{t, N^{\prime\prime}}(A, 
\mathcal{G}
)_{T_{q}}
\to
E_{1}^{t+1, N^{\prime\prime}}(A, 
\mathcal{G}
)_{T_{q}}
\to
0
\end{equation*}
%
is exact by 
the exact sequence (\ref{exj1}),
Proposition \ref{exind}
and
$\operatorname{P}_{3}(a+1)$
for
$N^{\prime}\geq N^{\prime\prime}+1$.
Since $Y_{1}, \cdots, Y_{a}$ are the irreducible components of $Z$, 
the lower row in the diagram (\ref{comBD}) is exact by $\operatorname{P}_{3}(a)$.
So the homomorphism (\ref{injsupj2}) is injective for $s=t+1$ by Lemma \ref{DGC}.
Hence $\operatorname{P}_{2}(a+1)$ holds.

Finally, we prove $\operatorname{P}_{3}(a+1)$.
Let the notations $A$, $C$ and $U^{\prime}$ be the same as above.
By (\ref{exj1}) and $\operatorname{P}_{3}(a)$, the sequences
\begin{align*}
0\to 
&
\HO_{\et}^{N^{\prime}}(A, j_{!}\mathcal{F}_{U^{\prime}})_{T_{q}}
\to
\bigoplus_{x\in\spec(A)^{(0)}}
\HO^{N^{\prime}}_{x}(
A_{\et},
j_{!}\mathcal{F}_{U^{\prime}}
)_{T_{q}}   
\\
\to
&
\bigoplus_{x\in\spec(A)^{(1)}}
\HO^{N^{\prime}+1}_{x}(
A_{\et}, 
j_{!}\mathcal{F}_{U^{\prime}})_{T_{q}}
\to\cdots  
\end{align*}
and
\begin{align*}
0\to 
&
\HO_{\et}^{N^{\prime}}(C, 
\mathcal{F}_{\spec(C)})_{T_{q}}
\to
\bigoplus_{x\in\spec(C)^{(0)}}
\HO^{N^{\prime}}_{x}(
C_{\et},
\mathcal{F}_{\spec(C)}
)_{T_{q}}   
\\
\to
&
\bigoplus_{x\in\spec(C)^{(1)}}
\HO^{N^{\prime}+1}_{x}(
C_{\et}, 
\mathcal{F}_{\spec(C)})_{T_{q}}
\to\cdots
\end{align*}
are exact for $N^{\prime}\geq N_{1}$.
Since $A$ is henselian, we have isomorphisms
\begin{equation*}
\HO^{u}_{\et}(
A,
j_{!}\mathcal{F}_{U^{\prime}}
)    
=0
\end{equation*}
for any $u\geq 0$ and
\begin{equation*}
\HO^{N^{\prime}}_{\et}(A,
\mathcal{F}_{\spec(A)})
\xrightarrow{\sim}
\HO^{N^{\prime}}_{\et}(C,
\mathcal{F}_{\spec(C)}
)
\end{equation*}
for $N^{\prime}\geq N$
by Condition \ref{cdpb} and the isomorphism (\ref{isoAI}).
Hence $\operatorname{P}_{3}(a+1)$ follows from
$\operatorname{P}_{2}(a+1)$. This completes the proof.
\end{proof}
\begin{cor}\upshape\label{exBGO}
Let the notations be the same as in Theorem \ref{1gerl}. Suppose that $N_{1}=0$. Then the sequence 
\begin{align*}
0\to \HO^{t}_{\et}(A, \mathcal{F}_{\spec(A)})_{T_{q}}
\to
\bigoplus_{x\in \spec(A)^{(0)}}
\HO^{t}_{x}(A_{\et}, 
\mathcal{F}_{\spec(A)})_{T_{q}}
\\
\to
\bigoplus_{x\in \spec(A)^{(1)}}
\HO^{t+1}_{x}(A_{\et}, 
\mathcal{F}_{\spec(A)})_{T_{q}} 
\to
\cdots
\end{align*}
is exact for any integer $t$.
Especially, we have 
\begin{equation*}
\HO_{x}^{s+u}(A_{\et}, \mathcal{F})_{T_{q}}
=0
\end{equation*}
for $x\in \spec(A)^{(s)}$ and
$u<0$.
\end{cor}
\begin{proof}\upshape
Consider the spectral sequence
\begin{align*}
E^{u, v}_{1}
= 
\bigoplus_{x\in \spec(A)^{(u)}}
\HO_{x}^{u+v}(A_{\et}, \mathcal{F}_{\spec(A)})
\Rightarrow
E^{u+v}
=
\HO_{\et}^{u+v}(A, \mathcal{F}_{\spec(A)}).
\end{align*}
By Theorem \ref{1gerl} and \cite[p.93, III, Corollary 1.28]{M}, 
it suffices to prove that
\begin{equation}\label{negvan}
(E^{s, t}_{1})_{T_{q}}=0    
\end{equation}
for $t<0$ and $\operatorname{dim}(A)=s$. We prove the statement by induction on 
$s$.

If $s=1$, then the isomorphism (\ref{negvan}) for $t=-1$ follows from Lemma \ref{heninj}.
So the isomorphism (\ref{negvan}) holds for $s=1$ and $t<0$.

Assume that we have the isomorphism (\ref{negvan}) for $s\leq s^{\prime}$ and $s^{\prime}\geq 1$.
Suppose that $\operatorname{dim}(A)=s^{\prime}+1$. Then it suffices to prove that
\begin{equation*}
(E_{2}^{s^{\prime}+1, t})_{T_{q}}=0    
\end{equation*}
for $t<0$ by the assumption. 

By Lemma \ref{heninj}, the homomorphism
\begin{equation*}
E^{s^{\prime}+t+1}
\to 
E^{0, s^{\prime}+t+1}_{1}
\end{equation*}
is injective. Moreover, we have
\begin{align*}
(E^{u, v}_{2})_{T_{q}}
=
\begin{cases}
(E^{u+v})_{T_{q}}
& (\textrm{if}~~u=0) \\
\\
0
& (\textrm{if}~~s^{\prime}\geq 2~~\textrm{and}~~0<u\leq s^{\prime}-1)
\end{cases}
\end{align*}
for $u+v=s^{\prime}+t$. Hence, the statement follows from Lemma \ref{borTq}. 
This completes the proof.

\end{proof}
\begin{lem}\upshape\label{limnc}
Let $A$ be the strict henselian of
a local ring of a normal crossing variety over the spectrum of a field 
of positive characteristic. Then $A$
is a direct limit of a family of 
the strict henselian rings $A_{i}$ ($i\in I$) of a local ring of a normal crossing variety over 
the spectrum of the prime field
which satisfies the following:

Let $A/(a_{s})$ $(s\in S)$ be the irreducible components of $A$. Then
\begin{equation*}
\operatorname{Im}(
A_{i}\to 
A\to
A/(a_{s})
)    
\end{equation*}
$(s\in S)$ are regular for any $i\in I$.
\end{lem}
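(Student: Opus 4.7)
The plan is to exhibit $A$ as a filtered colimit of strict henselizations coming from local rings of normal crossing varieties over the prime field $\mathbb{F}_p$, by writing the base field $k$ as the union of its smooth finitely generated $\mathbb{F}_p$-subalgebras.

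First, since the strict henselization depends only on an \'etale neighborhood of $y$, the \'etale-local description of a normal crossing variety allows us to assume that $A$ is the strict henselization of $\mathcal{O}_{Y', y'}$ with $Y' = \spec(k[T_0, T_1, \ldots, T_N]/(T_0 T_1 \cdots T_a))$ and $y' \in Y'$. The irreducible components of $Y'$ through $y'$ are the smooth $k$-schemes $V(T_i)$ for those indices $i$ with $T_i$ in the prime ideal $y'$; each quotient $A/(T_i)$ is the strict henselization of a local ring of such a smooth scheme and is therefore a regular local domain. Consequently the irreducible components of $\spec(A)$ are exactly these $A/(T_i)$, so after relabeling $a_s = T_{i_s}$ for the relevant indices.

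Next, write $k = \varinjlim_{B \in \mathcal{I}} B$, where $\mathcal{I}$ is the directed set of finitely generated $\mathbb{F}_p$-subalgebras $B \subset k$ that are smooth over $\mathbb{F}_p$; this family is cofinal among all finitely generated $\mathbb{F}_p$-subalgebras by generic smoothness. For each $B \in \mathcal{I}$, set $Y_B = \spec(B[T_0, T_1, \ldots, T_N]/(T_0 T_1 \cdots T_a))$, let $y_B \in Y_B$ be the contraction of $y'$, and let $A_B$ denote the strict henselization of $\mathcal{O}_{Y_B, y_B}$ with respect to a compatible choice of separable closure of the residue field. Since $B$ is smooth over $\mathbb{F}_p$, \'etale-locally $B \cong \mathbb{F}_p[s_1, \ldots, s_r]$, so \'etale-locally
\begin{equation*}
Y_B \cong \spec(\mathbb{F}_p[s_1, \ldots, s_r, T_0, \ldots, T_N]/(T_0 T_1 \cdots T_a)),
\end{equation*}
which exhibits $Y_B$ as a normal crossing variety over $\mathbb{F}_p$. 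Since strict henselization commutes with filtered colimits of local rings with compatible residue-field data, $A = \varinjlim_{B \in \mathcal{I}} A_B$.

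Finally I would verify the regularity of the images. The map $A_B \to A/(T_i)$ factors through $A_B/T_i A_B$, and the latter is the strict henselization of a local ring of $\spec(B[T_0, \ldots, \widehat{T_i}, \ldots, T_N])$, an affine space over $B$; since $B$ is smooth over $\mathbb{F}_p$, this is smooth over $\mathbb{F}_p$ and so $A_B/T_i A_B$ is a regular local ring. To identify the image with $A_B/T_i A_B$, I would show that $A_B/T_i A_B \to A/T_i A$ is injective: since $B \hookrightarrow k$ is flat (as $B$ is a subring of a field), $\mathcal{O}_{Y_B, y_B} \to \mathcal{O}_{Y', y'}$ is flat, and combined with the faithful flatness of strict henselization one obtains that $A_B \to A$ is flat. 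The base-change map $A_B/T_i A_B \to A/T_i A$ is then a flat local homomorphism of local rings, hence faithfully flat, in particular injective. The image of $A_B$ in $A/(T_i)$ is therefore $A_B/T_i A_B$, which is regular. The main technical obstacle is the joint control of strict henselization under base change and filtered colimits, namely the isomorphism $A \cong \varinjlim_B A_B$ and the injectivity of the induced maps on the irreducible-component quotients; both are resolved by the standard compatibility of strict henselization with filtered colimits of local rings and the faithful flatness of flat local homomorphisms between local rings.
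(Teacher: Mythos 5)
Your proof is correct, but it takes a genuinely different route from the paper's. You first replace $A$ by the strict henselization of the standard affine model $\spec(k[T_{0},\dots,T_{N}]/(T_{0}\cdots T_{a}))$ (legitimate, since strict henselization only sees the \'{e}tale-local structure) and then descend the coefficients through the directed system of smooth finitely generated $\mathbb{F}_{p}$-subalgebras $B\subset k$, so that each approximating ring is visibly the strict henselization of a local ring of a normal crossing variety over $\mathbb{F}_{p}$ and each component quotient $A_{B}/T_{i}A_{B}$ is the strict henselization of a local ring of a polynomial ring over the smooth base $B$, hence regular; the identification of $\operatorname{Im}(A_{B}\to A/(T_{i}))$ with $A_{B}/T_{i}A_{B}$ via flatness of $A_{B}\to A$ and faithful flatness of the induced local map on quotients is a nice touch that the paper does not make explicit. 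The paper instead stays with the \'{e}tale neighbourhood $A'$ realized as a quotient of a local ring $C$ of a smooth algebra, applies Quillen's method to write $C=k\otimes_{k'}C'$ with $k'$ finitely generated over the prime field, descends through intermediate subfields $k_{i}$, invokes Matsumura's theorem to get regularity of the $C_{i}$, and has to pass to a cofinal subsystem on which the equations $c'_{s}$ of the components and the cotangent-space condition are already defined; it then sets $A_{i}$ to be the strict henselization of $\operatorname{Im}(C_{i}\to A')$. Your version buys concreteness: regularity of the images is automatic from the explicit coordinates rather than from a base-change theorem, and you avoid the auxiliary choice of the directed subset $J$. The paper's version buys generality of bookkeeping: by working with images of the regular ambient rings $C_{i}$ inside $A'$ it keeps control of which specific elements of $A'$ are already defined at a finite level, which is the form in which the lemma is applied (to descend the exact sequence of \cite[p.727, Proposition 3.2.1]{SaL}). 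Both arguments are instances of Noetherian approximation and both are sound.
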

\begin{proof}\upshape
By the definition of a normal crossing variety $Y$, there exists 
an \'{e}tale map of local rings 
$\mathcal{O}_{Y, y}\to A^{\prime}$ such that $A^{\prime}$ is also
a quotient of a local ring $C$ of a smooth algebra over a field at a point
and
$\operatorname{dim}(C)=\operatorname{dim}(A^{\prime})+1$.
By Quillen's method 
(cf.\cite[\S 7, The proof of Theorem 5.11]{Q}),
there exists a local ring $C^{\prime}$ of a regular algebra
of finite type over a field $k^{\prime}$ such that $k^{\prime}$
is finitely generated over the prime field and
\begin{math}
C=k\otimes_{k^{\prime}}C^{\prime}.    
\end{math}
Then we have isomorphism
\begin{equation*}
C=\displaystyle\lim_{
\substack{\to\\ i\in I}
}
k_{i}\otimes_{k^{\prime}}C^{\prime}
\end{equation*}
where $k_{i}$ run over  subfields of $k$ which contain
$k^{\prime}$ and are finitely generated over the prime field. Let
$\mathfrak{m}$ be the maximal ideal of $C$,
$f_{i}: k_{i}\otimes_{k^{\prime}}C^{\prime}\to C$ 
the base change
and
\begin{math}
\mathfrak{m}_{i}
=(f_{i})^{-1}(\mathfrak{m}).
\end{math}
Put
\begin{math}
C_{i}=\left(
k_{i}\otimes_{k^{\prime}}C^{\prime}
\right)_{\mathfrak{m}_{i}}.
\end{math}
Then $C_{i}$ is regular by \cite[p.182, Theorem 23.7 (i)]{Ma}. 
Let $c^{\prime}_{s}$ ($s\in S$) be elements of $C$
such that
\begin{math}
C/(c^{\prime}_{s})
\simeq    
A^{\prime}/(a^{\prime}_{s})
\end{math}
where
$A^{\prime}/(a^{\prime}_{s})$ ($s\in S$) are the irreducible components 
of $A^{\prime}$.
Since we are able to choose a directed subset
$J$ of $I$ such that $c_{s}^{\prime}\in \mathfrak{m}_{j}$ and
\begin{equation*}
\mathfrak{m}_{j}/\mathfrak{m}_{j}^{2}
\otimes_{c_{j}/\mathfrak{m}_{j}}C/\mathfrak{m}
\simeq 
\mathfrak{m}/\mathfrak{m}^{2}
\end{equation*}
for any $s\in S$ and $j\in J$, we may assume that $I=J$
and
\begin{math}
\operatorname{dim}(C)
=
\operatorname{dim}(C_{i})
\end{math}
for any $i\in I$.
Put
\begin{equation*}
A_{i}^{\prime}
=
\operatorname{Im}\left(
C_{i}\to
C\to
A^{\prime}
\right)
\end{equation*}
for any $i\in I$ and $A$ (resp. $A_{i}$) denotes the strict henselian of $A^{\prime}$ (resp. $A^{\prime}_{i}$). 
Then 
$A$ is also the strict henselian of $\mathcal{O}_{Y, y}$
and
$A_{i}$ satisfies the property of the statement.
This completes the proof.
\end{proof}
\begin{prop}\upshape\label{hvan}
Let $Y$ be a normal crossing variety over 
the spectrum of
a field 
of positive characteristic $p>0$ and $y\in Y^{(s)}$.
Then we have
\begin{equation*}
\HO_{y}^{t}(Y_{\et}, \lambda_{r}^{n})=0    
\end{equation*}
for $t<s$.
\end{prop}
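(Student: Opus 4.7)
The plan is to reduce to the case of a smooth variety, where the statement is Shiho's theorem \cite[p.600, Theorem 4.1]{Sh}, via a limit argument together with induction on the number of irreducible components of $Y$.

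First I would replace $Y$ by the strict henselization $\spec(A)$ of $\mathcal{O}_{Y,y}$ (local \'etale cohomology with torsion coefficients depends only on this) and, using Lemma \ref{limnc}, write $A = \varinjlim_{i} A_{i}$ as a filtered colimit of strict henselian local rings $A_{i}$ of normal crossing varieties essentially of finite type over $\mathbb{F}_{p}$ whose irreducible components match up regularly with those of $\spec(A)$. Since the formation of $\lambda^{n}_{r}$ is compatible with these transition maps and \'etale cohomology commutes with filtered colimits of affine schemes with affine transition maps, the vanishing for $A$ follows from that for every $A_{i}$. Thus I may assume $Y$ is essentially of finite type over $\mathbb{F}_{p}$.

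Next I would induct on the number $a$ of irreducible components of $Y$. The base case $a = 1$ is exactly Shiho's theorem, since $\lambda^{n}_{Y,r} = W_{r}\Omega^{n}_{Y,\log}$ on a smooth $Y$. For the inductive step, pick one component $Y_{1}$ and put $Y' = Y_{2} \cup \cdots \cup Y_{a}$, which is a normal crossing variety with $a-1$ components. Let $i^{Y'}: Y' \hookrightarrow Y$ be the closed immersion and $j: U := Y \setminus Y' \hookrightarrow Y$ the open complement; note that $U$ is an open subscheme of the smooth component $Y_{1}$. The distinguished triangle
$$ j_{!} j^{*} \lambda^{n}_{Y,r} \to \lambda^{n}_{Y,r} \to (i^{Y'})_{*}(i^{Y'})^{*}\lambda^{n}_{Y,r} \to j_{!} j^{*} \lambda^{n}_{Y,r}[1] $$
gives a long exact sequence of local cohomology at $y$. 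On the open piece, $j^{*}\lambda^{n}_{Y,r} = W_{r}\Omega^{n}_{U,\log}$ and Shiho's theorem handles the vanishing; on the closed piece, after identifying $(i^{Y'})^{*}\lambda^{n}_{Y,r}$ with $\lambda^{n}_{Y',r}$, the induction hypothesis on $Y'$ (which has $a-1$ components) gives the vanishing. Combining the two via the long exact sequence, and tracking the codimension of $y$ in $U$, in $Y'$, and in $Y$, finishes the proof.

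The main obstacle is the comparison of sheaves $(i^{Y'})^{*}\lambda^{n}_{Y,r} \simeq \lambda^{n}_{Y',r}$ in a form strong enough for \emph{all} cohomological degrees below $s$, not only for the single degree $s+1$ handled by Proposition \ref{isol}. This likely requires appealing directly to Sato's description of $\lambda^{n}_{r}$ as the image of $d\log$, together with the natural inclusion $\lambda^{n}_{Y,r} \hookrightarrow \bigoplus_{m}(i_{m})_{*}W_{r}\Omega^{n}_{Y_{m},\log}$ coming from restriction to generic points of components. A secondary subtlety is that when $y$ lies on several components simultaneously, its codimension in each component differs from $s$, so the induction on $a$ must be organized so that the inductive hypothesis applies at the correct codimension for each subvariety appearing in the triangle.
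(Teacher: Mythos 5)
Your overall strategy (strict henselization, limit via Lemma \ref{limnc}, then induction on the number of components using the triangle $j_{!}j^{*}\lambda^{n}_{r}\to\lambda^{n}_{r}\to i_{*}i^{*}\lambda^{n}_{r}$) is genuinely different from the paper's, but the inductive step has two gaps that you name as ``obstacles'' and in my view underestimate. First, the comparison $(i^{Y'})^{*}\lambda^{n}_{Y,r}\simeq\lambda^{n}_{Y',r}$ is not available at the sheaf level, and in this paper it is only established cohomologically in the single degree $s+1$ (Proposition \ref{isol}); moreover the proof of that proposition is itself an induction that runs through the Gersten machinery (Lemma \ref{L2ex} and \cite[Theorem 2.8]{Sak4}), so you cannot quote anything of this shape in all degrees $t<s$ without essentially redoing the hard work. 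Without it, the inductive hypothesis for $Y'$ says nothing about $\HO^{t}_{y}(Y'_{\et},(i^{Y'})^{*}\lambda^{n}_{Y,r})$. Second, for the interesting points $y\in Y^{(s)}\cap Y'$ the group $\HO^{t}_{y}(Y_{\et},j_{!}j^{*}\lambda^{n}_{r})$ is \emph{not} a cohomology group on $U$, so Shiho's theorem does not ``handle the vanishing'' directly: computing it forces you to control $i^{*}Rj_{*}j^{*}\lambda^{n}_{r}$, i.e.\ a nearby-cycles type object whose higher cohomology sheaves do not vanish. This is exactly the content of Proposition \ref{CAL} in the paper, which is proved only in the top degree $s+1$ and by a delicate induction using Lemma \ref{DGC} and an anti-commutativity statement; nothing analogous is available in degrees $t<s$ for free.

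The paper's proof avoids the open-closed decomposition entirely. It invokes Sato's resolution \cite[p.727, Proposition 3.2.1]{SaL} (whose applicability is what Lemma \ref{limnc} is for),
\begin{equation*}
0\to \lambda^{n}_{Y,r}\to a_{1*}W_{r}\Omega^{n}_{Y^{\langle 1\rangle},\log}\to a_{2*}W_{r}\Omega^{n}_{Y^{\langle 2\rangle},\log}\to\cdots\to a_{N+1-n*}W_{r}\Omega^{n}_{Y^{\langle N+1-n\rangle},\log}\to 0,
\end{equation*}
where $Y^{\langle m\rangle}$ is the disjoint union of the $m$-fold intersections of components, hence regular. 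A point $y\in Y^{(s)}$ has codimension at least $s-m+1$ in $Y^{\langle m\rangle}$, so Shiho's semi-purity \cite[p.584, Corollary 3.4]{Sh} (note: Corollary 3.4, not the Gersten theorem 4.1 you cite) gives $\HO^{t}_{y}(Y_{\et},a_{m*}W_{r}\Omega^{n}_{Y^{\langle m\rangle},\log})=0$ for $t<s-m+1$, and splitting the resolution into short exact sequences and dimension-shifting yields the claim. If you want to salvage your induction on components, you would essentially have to prove the degree-by-degree analogues of Propositions \ref{isol} and \ref{CAL} below the top degree, at which point the resolution argument is both shorter and strictly easier.
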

\begin{proof}\upshape
Let $\{Y_{i}\}_{i\in I}$
be the irreducible components of $Y$. For an 
integer $m$, we define
\begin{align*}
Y^{
\langle m\rangle
}
:=
\bigsqcup
_{
\{i_{1}, i_{2}, \cdots, i_{m}
\}
\subset
I
}
Y_{i_{1}}\cap
Y_{i_{2}}\cap
\cdots
\cap
Y_{i_{m}}
\end{align*}
where 
the indices are pairwise distinct
for each subset
$\{i_{1}, \cdots, i_{m}\}\subset I$.
Then
there is an exact sequence on $Y_{\et}$
\begin{align*}
0\to   
\lambda^{n}_{Y, r}
\to
a_{1*}W_{r}\Omega^{n}_{Y^{\langle 1\rangle}, \log}
\to
a_{2*}W_{r}\Omega^{n}_{Y^{\langle 2\rangle}, \log}
\to
\cdots
\to
a_{N+1-n*}W_{r}\Omega^{n}_{Y^{\langle N+1-n\rangle}, \log}
\to 0
\end{align*}
by \cite[p.727, Proposition 3.2.1]{SaL} and Lemma \ref{limnc}.
Here 
$N=\operatorname{dim}(Y)$ and
$a_{m}$ is the canonical finite morphism
$Y^{\langle m\rangle}\to Y$. 
Put
\begin{align*}
\Lambda^{n, m}_{r}
=
\operatorname{Im}\left(
a_{m*}W_{r}\Omega^{n}_{Y^{\langle m\rangle}, \log}
\to
a_{m+1*}W_{r}\Omega^{n}_{Y^{\langle m+1\rangle}, \log}
\right).
\end{align*}
Then there is an exact sequence
\begin{equation*}
0\to
\Lambda^{n, m-1}_{r}
\to
a_{m*}W_{r}\Omega^{n}_{Y^{\langle m\rangle}, \log}
\to
\Lambda^{n, m}_{r}
\to 
0
\end{equation*}
by the definition.
Since $Y^{\langle m\rangle}$ is a regular scheme of positive characteristic, 
we have isomorphisms
\begin{align*}
\HO^{t}_{y}(
Y_{\et}, 
a_{m*}W_{r}\Omega^{n}_{X^{\langle m\rangle}, \log}
)
\simeq
\HO^{t}_{y}(
(Y^{\langle m\rangle})_{\et},
W_{r}\Omega^{n}_{X^{\langle m\rangle}, \log}
)
\simeq
0
\end{align*}
for $y\in Y^{(s)}$ and $t<s-m+1$
by
\cite[Lemma 3.7]{Sak5} and
\cite[p.584, Corollary 3.4]{Sh}.
Hence we have isomorphisms
\begin{align*}
&\HO^{t}_{y}(
Y_{\et},
\lambda^{n}_{r}
)
\simeq
\HO^{t-1}_{y}(
Y_{\et},
\Lambda^{n, 1}_{r}
)
\simeq
\cdots
\simeq
\HO^{t-N+n}_{y}(
Y_{\et},
\Lambda^{n, N-n}_{r}
)
\\
\simeq
&\HO^{t-N+n}_{y}(
Y_{\et},
a_{N+1-n *}W_{r}\Omega_{X^{\langle N+1-n\rangle}, \log}^{n}
)
\simeq 
0
\end{align*}
for $y\in Y^{(s)}$ and $t<s$.
This completes the proof.
\end{proof}
\begin{thm}\upshape\label{Gtcl}
Let $A$ be the henselization  
of the local ring
$\mathcal{O}_{Y, y}$ of a normal crossing variety $Y$ 
over  the spectrum of
a field of positive characteristic $p>0$
at a point $y\in Y$. 
Then the sequence
\begin{align}\label{agerl}
0\to \HO^{s}_{\et}(A, \lambda^{n}_{r})
\to
\bigoplus_{x\in \spec(A)^{(0)}}
\HO^{s}_{x}(A_{\et}, \lambda_{r}^{n})
\to
\bigoplus_{x\in \spec(A)^{(1)}}
\HO^{s+1}_{x}(A_{\et}, \lambda_{r}^{n}) 
\to
\cdots
\end{align}
is exact for any integers $n\geq 0$, $s$ and $r>0$.
\end{thm}
\begin{proof}\upshape
By \cite[Part 1, (1.2)]{C-H-K},
we have a spectral sequence
\begin{equation*}
E_{1}^{u, v}
=
\bigoplus_{x\in\spec (A)^{(u)}}
\HO^{u+v}_{x}(
A_{\et}, \lambda^{n}_{r})
\Rightarrow
E^{u+v}
=\HO^{u+v}_{\et}(A, \lambda^{n}_{r}).
\end{equation*}
If $s<0$, the sequence (\ref{agerl}) is exact by Proposition \ref{hvan}.
If $s\geq 1$, the sequence (\ref{agerl}) is exact by Remark \ref{ExaCond} and Theorem \ref{1gerl}.
So it suffices to show that 
the sequence (\ref{agerl}) is exact for $s=0$, that is,
\begin{align*}
&
E_{2}^{0, 0}=E^{0} 
&& \textrm{and} 
&E^{u, 0}_{2}=0 ~~(u>0).
\end{align*}
By \cite[Expos\'{e} X, Th\'{e}or\`{e}me 5.1]{SGA4}, 
we have
\begin{equation}\label{VaniEp}
E^{u, 0}_{\infty}=E^{u}=0    
\end{equation}
for $u\geq 2$. By Proposition \ref{hvan}, we have
\begin{equation}\label{E2v}
E^{u, v}_{r}=0    
\end{equation}
for $v<0$ and $r\geq 2$. 
If $u>2$, we have
\begin{equation*}
E^{u-r, r-1}_{r}=0    
\end{equation*}
for $r\geq 2$ by Remark \ref{ExaCond} and
Theorem \ref{1gerl}.
So we have
\begin{equation*}
E^{u, 0}_{2}=0    
\end{equation*}
for $u>2$.
By 
(\ref{E2v}), the sequence
\begin{align*}
0\to
E^{1, 0}_{2}\to
E^{1}\to
E^{0, 1}_{2}\to
E^{2, 0}_{2}
\end{align*}
is exact.
By Theorem \ref{1gerl}, we have an isomorphism
\begin{equation*}
E^{1}
\xrightarrow{\simeq}
E^{0, 1}_{2}
\end{equation*}
and so we have isomorphisms
\begin{align*}
& E^{1, 0}_{2}\simeq 0
&& \textrm{and}
& \operatorname{Im}(E^{0, 1}_{2}\to E^{2, 0}_{2})
\simeq 0.
\end{align*}
Hence we have isomorphisms
\begin{align*}
& E^{0}\xrightarrow{\simeq}
E^{0, 0}_{2}
&& \textrm{and}
& E^{u, 0}_{2}\simeq 0~~(u=1, 2)
\end{align*}
by (\ref{VaniEp}) and (\ref{E2v}). This completes the proof.
\end{proof}

\subsection{Mixed characteristic case}
Let $B$ be a discrete valuation ring  of mixed characteristic $(0, p)$ and 
$K$ the quotient field of $B$. Let $\mathfrak{X}$ be a semistable family
over $\spec(B)$, that is, a regular scheme of pure dimension which is flat
of finite type over $\spec(B)$,
the generic fiber 
$\mathfrak{X}\otimes_{B}K$ is smooth over $\spec(K)$
and the special fiber $Y$ of $\mathfrak{X}$ is a reduced divisor with normal
crossings on $\mathfrak{X}$.

As an application of Theorem \ref{Gtcl}, we prove the relative version of the Gersten-type conjecture for
the $p$-adic \'{e}tale $\mathfrak{T}_{r}(n)$ (cf. Definition \ref{DefT}) 
over the henselization of the local ring $\mathcal{O}_{\mathfrak{X}, x}$
of a semistable family $\mathfrak{X}$ over the spectrum of a discrete valuation ring
of mixed characteristic $(0, p)$
in the case where
$B$ contains $p^{r}$-th roots of unity
(cf. Theorem \ref{RelGer}).  

\begin{prop}\upshape\label{jtxiso}
Let $B$ be a discrete valuation ring of mixed characteristic $(0, p)$
and $\pi$ a prime element of $B$. Let $\mathfrak{X}$ be a semistable
family over $\spec(B)$ and $R$ the local ring of $\mathfrak{X}$ at a 
point $x$ of the closed fiber $Z$ of $\mathfrak{X}$.
Let $j: U\to \mathfrak{X}$ be the inclusion of
the generic fiber of $\mathfrak{X}$.
Then the homomorphism
\begin{equation*}
\HO^{q}_{x}(X_{\et}, j_{!}\mu_{p^{r}}^{\otimes n})
\xrightarrow{\simeq}
\HO^{q}_{x}(X_{\et}, \mathfrak{T}_{r}(n))
\end{equation*}
is an isomorphism
for $x\in X^{(s)}\cap Z$ and $q\geq s+n+2$.
Moreover,
the homomorphism
\begin{equation*}
\HO^{q}_{R/(\pi)}(R_{\et}, j_{!}\mu_{p^{r}}^{\otimes n})
\xrightarrow{\simeq}
\HO^{q}_{R/(\pi)}(R_{\et}, \mathfrak{T}_{r}(n))
\end{equation*}
is an isomorphism for $q\geq n+3$.
\end{prop}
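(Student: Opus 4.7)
The plan is to reduce both statements to vanishing via the adjunction distinguished triangle on $\mathfrak{X}_{\et}$,
\[
j_{!}\mu_{p^{r}}^{\otimes n} \to \mathfrak{T}_{r}(n) \to i_{*}i^{*}\mathfrak{T}_{r}(n) \xrightarrow{+1},
\]
which is valid because $j^{*}\mathfrak{T}_{r}(n) \simeq \mu_{p^{r}}^{\otimes n}$ on the characteristic zero generic fibre. Comparing the long exact sequences of local cohomology, the first claim will follow once $\HO^{q'}_{x}(\mathfrak{X}_{\et}, i_{*}i^{*}\mathfrak{T}_{r}(n)) = 0$ for $q' \geq s + n + 1$, and the second once $\HO^{q'}_{R/(\pi)}(R_{\et}, i_{*}i^{*}\mathfrak{T}_{r}(n)) = 0$ for $q' \geq n + 2$. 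By the closed-immersion identification these agree with $\HO^{q'}_{x}(Z_{\et}, i^{*}\mathfrak{T}_{r}(n))$ and $\HO^{q'}(\spec(R/(\pi))_{\et}, i^{*}\mathfrak{T}_{r}(n))$, respectively.

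The structural heart of the argument is pinning down the cohomology sheaves of $i^{*}\mathfrak{T}_{r}(n)$. I would apply $Ri^{*}$ to the defining triangle of $\mathfrak{T}_{r}(n)$ and invoke Sato's fundamental exact sequence (\cite[p.186, Theorem 3.4]{SaR}), namely the surjectivity of $\sigma \colon i^{*}R^{n}j_{*}\mu_{p^{r}}^{\otimes n} \to \nu_{Z,r}^{n-1}$ with kernel $\lambda_{Z,r}^{n}$, to verify that $i^{*}\mathfrak{T}_{r}(n)$ lives in degrees $[0, n]$ with
\[
\mathcal{H}^{q}(i^{*}\mathfrak{T}_{r}(n)) \simeq
\begin{cases}
i^{*}R^{q}j_{*}\mu_{p^{r}}^{\otimes n} & (0 \leq q < n), \\
\lambda_{Z,r}^{n} & (q = n),
\end{cases}
\]
each of which is a $p$-torsion sheaf on the characteristic $p$ scheme $Z$.

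For the first statement I would feed this into the hypercohomology spectral sequence $E_{2}^{p,q} = \HO^{p}_{x}(Z_{\et}, \mathcal{H}^{q}(i^{*}\mathfrak{T}_{r}(n))) \Rightarrow \HO^{p+q}_{x}(Z_{\et}, i^{*}\mathfrak{T}_{r}(n))$. Since $\operatorname{codim}_{Z}(x) = s - 1$ and every $\mathcal{H}^{q}$ is $p$-torsion on the characteristic $p$ scheme $Z$, the bound \cite[Proposition 2.6]{Sak4} yields $\HO^{p}_{x}(Z_{\et}, \mathcal{H}^{q}) = 0$ for $p > s$; combined with $q \leq n$ and the total-degree condition $p + q \geq s + n + 1$ forcing $p \geq s + 1$, all relevant $E_{2}$-entries vanish. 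For the second statement I would run the analogous spectral sequence
\[
E_{2}^{p,q} = \HO^{p}(\spec(R/(\pi))_{\et}, \mathcal{H}^{q}(i^{*}\mathfrak{T}_{r}(n))) \Rightarrow \HO^{p+q}(\spec(R/(\pi))_{\et}, i^{*}\mathfrak{T}_{r}(n)),
\]
using that $R/(\pi)$ has residue characteristic $p$ so that the Artin--Schreier exact sequence gives $\HO^{p}(\spec(R/(\pi))_{\et}, \mathcal{G}) = 0$ for $p \geq 2$ on every $p$-torsion sheaf $\mathcal{G}$; with $p + q \geq n + 2$ and $q \leq n$ forcing $p \geq 2$, the required vanishing follows.

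The main obstacle is the structural step: verifying the amplitude bound that $i^{*}\mathfrak{T}_{r}(n)$ is concentrated in degrees $[0, n]$ with $\mathcal{H}^{n} \simeq \lambda_{Z,r}^{n}$, which depends on Sato's surjectivity of $\sigma$. Once that is established, the rest is spectral-sequence bookkeeping together with the codimension bound \cite[Proposition 2.6]{Sak4} and the Artin--Schreier vanishing.
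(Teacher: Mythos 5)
Your proof is correct and follows essentially the same route as the paper's: the distinguished triangle $j_{!}\mu_{p^{r}}^{\otimes n}\to\mathfrak{T}_{r}(n)\to i_{*}i^{*}\mathfrak{T}_{r}(n)$, reduction to the vanishing of $\HO^{*}_{x}(Z_{\et}, i^{*}\mathfrak{T}_{r}(n))$ and $\HO^{*}_{\et}(R/(\pi), i^{*}\mathfrak{T}_{r}(n))$ via the hypercohomology spectral sequence, the codimension bound of \cite[Proposition 2.6]{Sak4}, and the affine characteristic-$p$ vanishing of \cite[Expos\'{e} X, Th\'{e}or\`{e}me 5.1]{SGA4}. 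The only (harmless) difference is that the paper uses just the concentration of $i^{*}\mathfrak{T}_{r}(n)$ in degrees $\leq n$ with $p$-torsion cohomology sheaves, so your explicit identification of the individual $\mathcal{H}^{q}$ via Sato's $\sigma$ is more than is actually needed.
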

\begin{proof}\upshape
Let $i: Z\to \mathfrak{X}$ be the inclusion of
the closed fiber of $\mathfrak{X}$.
Since we have a spectral sequence
\begin{equation*}
E_{2}^{s, t}
=
\HO_{\et}^{s}(
R/(\pi), \mathcal{H}^{t}(i^{*}\mathfrak{T}_{r}(n))
)
\Rightarrow
E^{s+t}
=
\HO^{s+t}_{\et}(
R/(\pi), i^{*}\mathfrak{T}_{r}(n)
),
\end{equation*}
we have an isomorphism
\begin{equation}\label{itvan}
\HO_{\et}^{q}(R/(\pi), i^{*}\mathfrak{T}_{r}(n))  
\simeq  
0
\end{equation}
for $q\geq n+2$
by \cite[Expos\'{e} X, Th\'{e}or\`{e}me 5.1]{SGA4}.
By the similar argument as above,
we have an isomorphism
\begin{equation}\label{itxvan}
\HO^{q}_{x}\left(
(R/(\pi))_{\et},
i^{*}\mathfrak{T}_{r}(n)
\right)
\simeq 0
\end{equation}
for $x\in (\spec(R))^{(s)}\cap Z$
and $q\geq n+s+1$ by
\cite[Proposition 2.6]{Sak4}.
Since we have a distinguished triangle
\begin{align*}
\cdots\to  
j_{!}\mu_{p^{r}}^{\otimes n}
\to \mathfrak{T}_{r}(n)
\to
i_{*}i^{*}\mathfrak{T}_{r}(n)
\to\cdots,
\end{align*}
the statement follows from
(\ref{itvan}) and (\ref{itxvan}).
\end{proof}
\begin{thm}\upshape\label{RelGer}
Let $B$ be a discrete valuation ring of mixed characteristic $(0, p)$
and $\pi$ a prime element of $B$. Let $\mathfrak{X}$ be a semistable
family over $\spec(B)$ and $R$ the henselization of the local ring $\mathcal{O}_{\mathfrak{X}, x}$ of 
$\mathfrak{X}$ at a 
point $x$ of the closed fiber of $\mathfrak{X}$.
Put $Z=\spec(R/(\pi))$.
Suppose that $B$ contains $p^{r}$-th roots of unity.
Then the sequence
\begin{align*}
0\to 
\HO_{Z}^{q}(R_{\et}, \mathfrak{T}_{r}(n))
\to
\displaystyle\bigoplus_{z\in Z^{(0)}}
\HO_{z}^{q}(R_{\et}, \mathfrak{T}_{r}(n))
\to
\displaystyle\bigoplus_{z\in Z^{(1)}}
\HO_{z}^{q+1}(R_{\et}, \mathfrak{T}_{r}(n))
\to
\cdots
\end{align*}
is exact for $q\geq n+2$.
\end{thm}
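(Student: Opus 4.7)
The plan is to use Proposition \ref{jtxiso} to reduce to a statement about the sheaf $j_{!}\mu_{p^{r}}^{\otimes n}$, and then to deduce exactness by combining the Bloch--Kato theorem on the generic fibre with Theorem \ref{Gtcl} on the special fibre. Since $Z$ is cut out of $\spec(R)$ by the single regular element $\pi$, a codimension-$s$ point $z\in Z^{(s)}$ has codimension $s+1$ in $\spec(R)$. Proposition \ref{jtxiso} then gives an isomorphism
\begin{equation*}
\HO^{q+s}_{z}(R_{\et},\, j_{!}\mu_{p^{r}}^{\otimes n})
\xrightarrow{\simeq}
\HO^{q+s}_{z}(R_{\et},\, \mathfrak{T}_{r}(n))
\end{equation*}
whenever $q+s\geq(s+1)+n+2$, i.e.\ $q\geq n+3$, and likewise for the $\HO^{q}_{Z}$-term by the second part of Proposition \ref{jtxiso}. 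Hence in the range $q\geq n+3$ the Cousin sequence for $\mathfrak{T}_{r}(n)$ with supports along $Z$ is identified termwise with the analogous sequence for $j_{!}\mu_{p^{r}}^{\otimes n}$.

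Next I would prove exactness of the $j_{!}\mu_{p^{r}}^{\otimes n}$-sequence with supports along $Z$ by splitting via the localization triangle
\begin{equation*}
j_{!}\mu_{p^{r}}^{\otimes n}
\to Rj_{*}\mu_{p^{r}}^{\otimes n}
\to i_{*}i^{*}Rj_{*}\mu_{p^{r}}^{\otimes n},
\end{equation*}
whose cone lives on $Z$. Its cohomology sheaves $i^{*}R^{m}j_{*}\mu_{p^{r}}^{\otimes n}$ are built, by Sato's comparison map (cf.\ \cite[p.186, Theorem 3.4]{SaR}), out of the logarithmic Hodge--Witt sheaves $\lambda^{m}_{Y, r}$ and $\nu^{m-1}_{Y, r}$ on $Y$. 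The Gersten-type conjecture for $\lambda^{n}_{Y, r}$ over the henselization of $Y$ is supplied by Theorem \ref{Gtcl}, while Gersten for $\mu_{p^{r}}^{\otimes n}$ on the regular generic fibre $U=\spec(R[\pi^{-1}])$ is a consequence of the Bloch--Kato--Voevodsky--Rost theorem. A diagram chase along the lines of the proof of Theorem \ref{1gerl} then yields exactness of the sequence in the range $q\geq n+3$.

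For the boundary case $q=n+2$, Proposition \ref{jtxiso} no longer delivers an isomorphism; the defect between the $j_{!}\mu_{p^{r}}^{\otimes n}$-sequence and the $\mathfrak{T}_{r}(n)$-sequence at position $s$ is controlled by the group $\HO^{n+1+s}_{z}((R/(\pi))_{\et},\, i^{*}\mathfrak{T}_{r}(n))$ for $z\in Z^{(s)}$. Applying $i^{*}$ to the defining distinguished triangle of $\mathfrak{T}_{r}(n)$ expresses this group in terms of the cohomology of $\nu^{n-1}_{Y, r}$ and of the top cohomology sheaf of $\tau_{\leq n}i^{*}Rj_{*}\mu_{p^{r}}^{\otimes n}$, which is $\lambda^{n}_{Y, r}$ by \cite[p.186, Theorem 3.4]{SaR}. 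Once more Theorem \ref{Gtcl} furnishes the Gersten exactness needed to close the argument via a diagram chase of the type set up in Lemma \ref{DGC}. The main obstacle will be precisely this boundary case: one must carefully identify $i^{*}\mathfrak{T}_{r}(n)$ from the defining triangle and match the residual error term with the Gersten exact sequence for $\lambda^{n}_{Y, r}$ provided by Theorem \ref{Gtcl}, in a diagram-chasing argument analogous to Proposition \ref{CAL}.
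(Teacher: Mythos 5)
Your opening reduction is sound: for $z\in Z^{(s)}\subset\spec(R)^{(s+1)}$ Proposition \ref{jtxiso} does identify the Cousin terms for $\mathfrak{T}_{r}(n)$ with those for $j_{!}\mu_{p^{r}}^{\otimes n}$ once $q\geq n+3$, and the localization triangle does convert $\HO^{*}_{z}(R_{\et},j_{!}\mu_{p^{r}}^{\otimes n})$ into $\HO^{*-1}_{z}(Z_{\et},i^{*}Rj_{*}\mu_{p^{r}}^{\otimes n})$ since cohomology with supports in a closed subset of $Z$ kills $Rj_{*}$. The gap is in what you do next. The sheaves $i^{*}R^{m}j_{*}\mu_{p^{r}}^{\otimes n}$ are \emph{not} built only out of $\lambda^{m}_{Y,r}$ and $\nu^{m-1}_{Y,r}$: by the Bloch--Kato--Hyodo structure theory (which is what underlies \cite[Theorem 3.4]{SaR}) they carry a filtration $U^{\bullet}$ whose quotient $M^{m}_{r}/U^{1}$ is of the form $\lambda^{m}_{r}\oplus\lambda^{m-1}_{r}$ but whose higher graded pieces are quotients of sheaves of differential forms. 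No Gersten-type theorem for those coherent pieces on a normal crossing variety is available in the paper, and Theorem \ref{Gtcl} does not supply one; so the asserted ``diagram chase along the lines of Theorem \ref{1gerl}'' in the range $q\geq n+3$ has nothing to chase through. A symptom of the problem is that your argument for $q\geq n+3$ never uses the hypothesis that $B$ contains the $p^{r}$-th roots of unity, whereas in the paper that hypothesis is the engine of exactly this step: the isomorphisms (\ref{Ztmiso}) and (\ref{xtmiso}) shift the twist, $\HO^{v}_{z}(R_{\et},\mathfrak{T}_{r}(n))\simeq\HO^{v}_{z}(R_{\et},\mathfrak{T}_{r}(v-s-3))$, so that every degree $v\geq n+3$ is traded for a near-critical degree of a higher Tate twist, where only $\lambda$ (never the differential-form pieces) intervenes.

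Your treatment of the boundary case $q=n+2$ names the right ingredients (the defining triangle of $\mathfrak{T}_{r}(n)$, Sato's $\sigma$, $\lambda^{n}_{Y,r}$, a Lemma \ref{DGC}-type chase), and this is indeed close to the paper's diagram (\ref{snZ}); but the paper also needs the injectivity input \cite[Theorem 3.10]{Sak4} for the $\mathfrak{T}_{r}(n)$-column and, crucially, a second induction (on the codimension $s$) to kill $E^{s,q}_{2}$ for $s\geq 2$, which compares the weight-$n$ and weight-$(n+1)$ Cousin complexes through the short exact sequence involving $\bigoplus_{z}\HO^{u}_{z}(Z_{\et},\lambda^{n+1}_{r})$ and again invokes the roots-of-unity hypothesis via \cite[Theorem 1.3]{Sak5}. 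That descending induction has no counterpart in your proposal. To repair the argument you should replace the direct attack on $i^{*}Rj_{*}\mu_{p^{r}}^{\otimes n}$ by the twist-shifting isomorphisms of Proposition \ref{jtxiso} combined with $\mu_{p^{r}}^{\otimes n}\simeq\mu_{p^{r}}^{\otimes n'}$, and then run the coniveau spectral sequence with supports in $Z$ as the paper does.
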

\begin{proof}\upshape
Let $i: Z\to\spec(R)$ be the inclusion of the closed fiber of $\spec(R)$
and
$j: U\to\spec(R)$ the inclusion of the generic fiber of $\spec(R)$.
Since
\begin{align*}
\HO^{s}_{z}(
R_{\et}, i_{*}Ri^{!}\mathfrak{T}_{r}(n)
)
=
\begin{cases}
\HO^{s}_{z}(R_{\et}, \mathfrak{T}_{r}(n))
& (z\in Z) \\
\\
0
& (z\in U),
\end{cases}
\end{align*}
we have a spectral sequence
\begin{equation}\label{ConiST}
E^{s, q}_{1}(n)
=
\displaystyle\bigoplus_{z\in\spec(R)^{(s)}\cap Z}
\HO^{s+q}_{z}(
R_{\et},
\mathfrak{T}_{r}(n)
)
\Rightarrow   
E^{s+q}(n)
=
\HO_{Z}^{s+q}
(R_{\et}, \mathfrak{T}_{r}(n))
\end{equation}
by \cite[Proposition 3.8]{Sak4}.
Moreover, we have a commutative diagram
\scriptsize
\begin{equation}\label{snZ}
\xymatrix{
&
\HO_{\et}^{n+1}(R, i_{*}i^{*}\mathfrak{T}_{r}(n))
\ar[r]\ar[d]
&
\HO_{Z}^{n+2}(R_{\et}, j_{!}\mu_{p^{r}}^{\otimes n})
\ar[r]\ar[d]
&
\HO_{Z}^{n+2}(R_{\et}, \mathfrak{T}_{r}(n))
\ar[d]\ar[r]
&
0
\\
0\ar[r]
&
\displaystyle\bigoplus_{z\in Z^{(0)}}
\HO_{z}^{n+1}(R_{\et}, i_{*}i^{*}\mathfrak{T}_{r}(n))
\ar[r]
&
\displaystyle\bigoplus_{z\in Z^{(0)}}
\HO_{z}^{n+2}(R_{\et}, j_{!}\mu_{p^{r}}^{\otimes n})
\ar[r]
&
\displaystyle\bigoplus_{z\in Z^{(0)}}
\HO_{z}^{n+2}(R_{\et}, \mathfrak{T}_{r}(n))
}    
\end{equation}
\normalsize
where the sequences are exact 
by \cite[Theorem 1.3]{Sak5} and (\ref{itvan}).
Since we have an isomorphism
\begin{equation*}
\HO^{n+1}_{\et}(
R,
\mathfrak{T}_{r}(n)
)    
\simeq 
\HO^{n+1}_{\et}(
R,
i_{*}i^{*}
\mathfrak{T}_{r}(n)
)
\end{equation*}
by \cite[p.777, The proof of Proposition 2.2.b)]{Ge},
we have an isomorphism
\begin{equation}\label{iso1iTl}
\HO^{n+1}_{\et}(
R,
i_{*}i^{*}\mathfrak{T}_{r}(n)
)    
\simeq
\HO^{1}_{\et}(
Z,
\lambda_{r}^{n}
).
\end{equation}
by \cite[Theorem 1.2]{Sak4} and \cite[Theorem 1.4]{Sak4}.
Moreover, 
we have an isomorphism
\begin{equation*}
\HO^{n+1}_{z}(
R_{\et},
i_{*}i^{*}\mathfrak{T}_{r}(n)
) 
\simeq 
\HO^{1}_{z}(
Z_{\et},
\lambda_{r}^{n}
)
\end{equation*}
for $z\in Z^{(0)}$
by \cite[Theorem 3.5]{Sak5} and \cite[Lemma 3.7]{Sak5}.
So the left map in the diagram (\ref{snZ})
corresponds to
the canonical map
\begin{equation*}
\HO^{1}_{\et}(Z, \lambda^{n}_{r})
\to
\displaystyle\bigoplus_{z\in Z^{(0)}}
\HO^{1}_{z}(Z, \lambda^{n}_{r})
\end{equation*}
and the left map in the diagram (\ref{snZ}) is injective 
by Theorem \ref{Gtcl}.
Moreover, the right map in the diagram (\ref{snZ}) is injective 
by \cite[Theorem 3.10]{Sak4}.
Hence the middle map in the diagram (\ref{snZ}) is injective.
Since $B$ contains $\mu_{p^{r}}$ by the assumption,
we have isomorphisms
\begin{equation}\label{Ztmiso}
\HO_{Z}^{v}(R, \mathfrak{T}_{r}(n))
\simeq
\HO_{Z}^{v}(R, j_{!}\mu_{p^{r}}^{\otimes (v-3)}
)
\xrightarrow{\simeq}
\HO_{Z}^{v}(R, \mathfrak{T}_{r}(v-3))
\end{equation}
for $v\geq n+3$ by Proposition \ref{jtxiso}. 
Moreover, we have isomorphisms
\begin{equation}\label{xtmiso}
\HO_{z}^{v}(R_{\et}, \mathfrak{T}_{r}(n)) 
\simeq
\HO_{z}^{v}(R_{\et}, j_{!}\mu_{p^{r}}^{\otimes(v-s-3)}
)
\xrightarrow{\simeq}
\HO_{z}^{v}(R_{\et}, \mathfrak{T}_{r}(v-s-3))
\end{equation}
for 
$z\in\spec(R)^{(s+1)}\cap Z$ and $v\geq n+s+3$
by Proposition \ref{jtxiso} and the assumption.
So the homomorphism
\begin{equation}\label{EtInf}
E^{v}(n)
\to
E_{\infty}^{1, v-1}(n)
\end{equation}
is injective for $v\geq n+2$. Hence
we have an isomorphism
\begin{equation}\label{2inf0}
E^{s, q}_{\infty}(n)=0    
\end{equation}
for $s>1$ and $s+q\geq n+2$ by the spectral sequence (\ref{ConiST}). 
Then we prove that
\begin{align}\label{E2n}
E^{s, q}_{2}(n)
=
\begin{cases}
E^{q+1}(n)
& (s=1) \\
\\
0
& (s\geq 2)
\end{cases}
\end{align}
for any integers $n\geq 0$ and $q\geq n+1$ by induction on $s$.

In the case where $s=1$, we have
\begin{equation}\label{Supinj}
E_{2}^{1, n+1}(n)=E^{n+2}(n)    
\end{equation}
by \cite[Theorem 3.10]{Sak4}.
Consider a commutative diagram
\scriptsize
\begin{equation*}
\xymatrix
{
&
\displaystyle\bigoplus_{z\in Z^{(0)}}
\HO_{z}^{n+1}(R_{\et}, i_{*}i^{*}\mathfrak{T}_{r}(n))
\ar[r]\ar[d]
&
\displaystyle\bigoplus_{z\in Z^{(0)}}
\HO_{z}^{n+2}(R_{\et}, j_{!}\mu_{p^{r}}^{\otimes n})
\ar[r]\ar[d]
&
\displaystyle\bigoplus_{z\in Z^{(0)}}
\HO_{z}^{n+2}(R_{\et}, \mathfrak{T}_{r}(n))
\ar[r]\ar[d]
&
0
\\
0
\ar[r]
&
\displaystyle\bigoplus_{z\in Z^{(1)}}
\HO_{z}^{n+2}(R_{\et}, i_{*}i^{*}\mathfrak{T}_{r}(n))
\ar[r]
&
\displaystyle\bigoplus_{z\in Z^{(1)}}
\HO_{z}^{n+3}(R_{\et}, j_{!}\mu_{p^{r}}^{\otimes n})
\ar[r]
&
\displaystyle\bigoplus_{z\in Z^{(1)}}
\HO_{z}^{n+3}(R_{\et}, \mathfrak{T}_{r}(n))
}    
\end{equation*}
\normalsize
where the sequences are exact by \cite[Theorem 1.3]{Sak5}. 
Then the kernel of the left map is
$\HO_{\et}^{n+1}(R, i_{*}i^{*}\mathfrak{T}_{r}(n))$ 
by Theorem \ref{1gerl},
the isomorphism (\ref{iso1iTl})
and \cite[Theorem 1.2]{Sak5}. 
Moreover, the kernel of the right map is 
\begin{math}
\HO_{Z}^{n+2}(
R_{\et},
\mathfrak{T}_{r}(n)
)    
\end{math}
by the isomorphism (\ref{Supinj}).
By the upper sequence in the diagram (\ref{snZ}), the sequence
\begin{equation*}
0\to  
\HO^{n+2}_{Z}
(R_{\et}, j_{!}\mu_{p^{r}}^{\otimes n})
\to
\displaystyle\bigoplus_{z\in Z^{(0)}}
\HO^{n+2}_{z}
(R_{\et}, j_{!}\mu_{p^{r}}^{\otimes n})
\to
\displaystyle\bigoplus_{z\in Z^{(1)}}
\HO^{n+3}_{z}
(R_{\et}, j_{!}\mu_{p^{r}}^{\otimes n})
\end{equation*}
is exact and so we have
\begin{equation}\label{E1eqE}
E^{1, q-1}_{2}(n)
=
E^{q}(n)
\end{equation}
for $q\geq n+3$ by (\ref{Ztmiso}) and (\ref{xtmiso}).
Hence we have the equation (\ref{E2n}) for 
any integer $n\geq 0$ and $s=1$. Moreover, we have
\begin{equation*}
E^{1, q-1}_{\infty}(n)
=
E^{1, q-1}_{2}(n)
\end{equation*}
for $q\geq n+3$ by the injectivity of the homomorphism (\ref{EtInf})
and the isomorphism (\ref{E1eqE}).
So
we have
\begin{equation}\label{1d0}
\operatorname{Im}
\left(
d^{1, u+n-1}_{u-1}:
E^{1, u+n-1}_{u-1}(n)
\to
E^{u, n+1}_{u-1}(n)
\right)
=0
\end{equation}
for $u\geq 3$.

Suppose that the equation (\ref{E2n}) holds for any integers $n\geq 0$ and $s\leq t$.
By the assumption and (\ref{1d0}), we have
\begin{equation*}
\operatorname{Im}\left(
E^{t+1-r, n+r}_{r}(n)
\to
E^{t+1, n+1}_{r}(n)
\right)
=0
\end{equation*}
for $r\geq 2$. Moreover, we have
\begin{equation*}
E^{t+r+1, n+2-r}_{r}(n)=0    
\end{equation*}
for $r\geq 2$ by \cite[p.540, Theorem 4.4.7]{SaP} 
and \cite[Theorem 2.7]{Sak4}. 
So we have
\begin{equation}\label{2n10}
E_{2}^{t+1, n+1}(n)
=
E_{\infty}^{t+1, n+1}(n)
=0
\end{equation}
by (\ref{2inf0}). 
By \cite[Theorem 1.3]{Sak5} and Proposition \ref{jtxiso}, 
the sequence
\begin{align*}
0\to 
\displaystyle\bigoplus_{z\in\spec(R)^{(u)}\cap Z}
\HO_{z}^{u}(Z_{\et}, \lambda_{r}^{n+1})
\to
E_{1}^{u, n+2}(n)
\to
E_{1}^{u, (n+1)+1}(n+1)
\to
0
\end{align*}
is exact for any integer $u\geq 1$. So we have
\begin{equation*}
E^{t+1, n+2}_{2}(n)=0    
\end{equation*}
by Theorem \ref{Gtcl}  and (\ref{2n10}). 
Hence the equation (\ref{E2n}) holds 
for any integer $n\geq 0$ and $s=t+1$.
This completes the proof.
\end{proof}

\section{A relation between the Gersten-type conjecture and motivic cohomology groups}\label{TrelGtM}

Let $\mathbb{Z}(n)$ be Bloch's cycle complex for Zariski topology and
$\mathbb{Z}/m(n):=\mathbb{Z}(n)\otimes \mathbb{Z}/m\mathbb{Z}$
for any positive integer $m$. 
As
an application of the Beilinson-Lichtenbaum conjecture (cf. \cite[p.774, Theorem 1.2.2]{Ge} and \cite{V2}),
we obtain the following:

\begin{prop}\upshape\label{RGCM}
Let $B$ be a discrete valuation ring 
of mixed characteristic $(0, p)$
and $R$ a local ring 
(resp. a henselian local ring)
of a regular scheme of finite type over $\spec(B)$.
Let $U$ be the generic fiber of $\spec(R)$ and $n$ a non-negative integer.
Suppose that $B$ contains $p$-th roots of unity. 

Then the sequence
\begin{align}\label{Uex}
0\to& 
\HO^{n+1}_{\et}(U, \mu_{p}^{\otimes n})
\to
\displaystyle\bigoplus_{x\in U^{(0)}}
\HO^{n+1}_{\et}(
\kappa(x),
\mu_{p}^{\otimes n}
)
\to
\displaystyle\bigoplus_{x\in U^{(1)}}
\HO^{n}_{\et}(
\kappa(x),
\mu_{p}^{\otimes (n-1)}
) \nonumber
\\
\to&\cdots
\end{align}
is exact 
if and only if
we have isomorphisms
\begin{equation*}
\HO^{n+1}_{\Zar}(U, \mathbb{Z}/p(n))
\simeq 
0
\end{equation*}
and
\begin{equation*}
\HO^{n+1+s}_{\Zar}(
U,
\mathbb{Z}/p(n)
)    
\simeq
\HO^{n+1+s}_{\Zar}(
U,
\mathbb{Z}/p(n+1)
)
\end{equation*}
for any integer $s\geq 1$. Here $\mu_{p}$ is the sheaf of $p$-th roots of unity.
\end{prop}
\begin{proof}\upshape
The sequence (\ref{Uex}) is exact
if and only if
\begin{align*}
\HO^{s}_{\Zar}(
U,
R^{n+1}\epsilon_{*}
\mu_{p}^{\otimes n}
)
=
\begin{cases}
\HO^{n+1}_{\et}(
U, 
\mu_{p}^{\otimes n}
)
& (\textrm{if}~~s=0) \\
\\
0
& (\textrm{if}~~s\geq 1)
\end{cases}
\end{align*}
(cf. The proof of \cite[Lemma 5.3 (i)]{Sak5}). 
Here $\epsilon: U_{\et}\to U_{\Zar}$ is the canonical map
of sites and $\epsilon_{*}$ is the forgetful functor.
Moreover, 
we have a distinguished triangle
\begin{align*}
\cdots
\to  
\tau_{\leq n}R\epsilon_{*}\mu_{p}^{\otimes n}
\to
\tau_{\leq (n+1)}R\epsilon_{*}\mu_{p}^{\otimes n}
\to
R^{n+1}\epsilon_{*}\mu_{p}^{\otimes n}[-(n+1)]
\to\cdots
\end{align*}
and a quasi-isomorphism
\begin{equation*}
\mathbb{Z}/p(N)_{\Zar}^{U}
\simeq
\tau_{\leq N}R\epsilon_{*}
\mu_{p}^{\otimes N}
\end{equation*}
for $N=n, n+1$ by \cite[p.774, Theorem 1.2]{Ge} and \cite{V2}. 
Since $B$ contains $p$-th roots of unity, we have an exact sequence
\begin{align*}
\cdots\to
&\HO^{n+1+s}_{\Zar}(
U, \mathbb{Z}/p(n)
)
\to
\HO^{n+1+s}_{\Zar}(
U, \mathbb{Z}/p(n+1)
)
\to
\HO^{s}_{\Zar}(
U, 
R^{n+1}\epsilon_{*}\mu_{p}^{\otimes n}
)
\\
\to
&\HO^{n+2+s}_{\Zar}(
U,
\mathbb{Z}/p(n)
)
\to\cdots
\end{align*}
for any integer $s\geq 0$ by the above.
So the statement follows.
\end{proof}
\begin{rem}\upshape\label{revanimot}
Let $R$ be a local ring of a semistable family over the spectrum of a discrete valuation ring
and $U$ the generic fiber of $\spec(R)$.
Let $m$ be a positive integer. Then we have an isomorphism
\begin{equation*}
\HO^{q}_{\Zar}(
R,
\mathbb{Z}/m(n)
)
\xrightarrow{\simeq}
\HO^{q}_{\Zar}(
U,
\mathbb{Z}/m(n)
)
\end{equation*}
for $q\geq n+2$ by the localization theorem (\cite[p.779, Theorem 3.2]{Ge}) and \cite[Proposition 2.1]{Sak4}.
So we have 
\begin{math}
\HO^{q}_{\Zar}(
R, \mathbb{Z}/m(n)
)    
=0
\end{math}
for $q\geq n+2$
if and only if
\begin{math}
\HO^{q}_{\Zar}(
U, \mathbb{Z}/m(n)
)    
=0
\end{math}
for $q\geq n+2$.
\end{rem}

The following is an affirmative answer to \cite[p.524, Conjecture 1.4.1]{SaP} in a special case.

\begin{cor}\upshape
Let $B$ be a discrete valuation ring of mixed characteristic $(0, p)$, 
$k$ the residue field of $B$ and $[k:k^{p}]\leq p^{t}$. 
Let $\mathfrak{X}$ be a semistable family over
$\spec(B)$ and $n=\operatorname{dim}(\mathfrak{X})+t$. Suppose that 
$B$ contains $p$-th roots of unity. Then we have a quasi-isomorphism
\begin{equation*}
\mathbb{Z}/p^{r}(n)_{\et}^{\mathfrak{X}}
\simeq 
\mathfrak{T}_{r}(n)
\end{equation*}
for any integer $r\geq 1$.
\end{cor}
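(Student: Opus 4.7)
The plan is to upgrade the isomorphism $\tau_{\leq n+1}(\mathbb{Z}/p^{r}(n)_{\et}^{\mathfrak{X}})\simeq \mathfrak{T}_{r}(n)$, recorded in the introduction via \cite[p.209, Remark 7.2]{SaR} and \cite[Proposition 1.5]{Sak4}, into a genuine quasi-isomorphism. Equivalently, I need to show that $\mathcal{H}^{q}(\mathbb{Z}/p^{r}(n)_{\et}^{\mathfrak{X}})=0$ for every $q\geq n+2$, so that the truncation functor acts trivially on $\mathbb{Z}/p^{r}(n)_{\et}^{\mathfrak{X}}$ in this range.

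First I would reduce to the case $r=1$: the Bockstein triangle associated with $0\to\mathbb{Z}/p\to\mathbb{Z}/p^{r}\to\mathbb{Z}/p^{r-1}\to 0$ yields a long exact sequence of cohomology sheaves, and the desired vanishing for general $r$ follows by induction. Next I would check the claim stalkwise: for any geometric point $\bar{x}$ of $\mathfrak{X}$ and $R=\mathcal{O}^{\mathrm{sh}}_{\mathfrak{X},\bar{x}}$, the statement becomes $\HO^{q}_{\et}(R,\mathbb{Z}/p(n))=0$ for $q\geq n+2$. When $\bar{x}$ lies on the generic fiber $U=\mathfrak{X}\otimes_{B}K$, this is immediate: $\mathbb{Z}/p(n)_{\et}|_{U}$ is quasi-isomorphic to $\mu_{p}^{\otimes n}[0]$ because $U$ is smooth over $\spec(K)$ of characteristic zero (cf.\ \cite[p.786, Corollary 4.4]{Ge}).

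The core of the argument concerns stalks at closed-fiber points. Here I would combine Proposition \ref{IntVaniMot} (via Remark \ref{revanimot}, which extends the vanishing $\HO^{q}_{\Zar}(R,\mathbb{Z}/p(n'))=0$ for $q\geq n'+2$ from the ring $C$ of Proposition \ref{IntVaniMot} to arbitrary local rings of semistable families) with the étale-to-Zariski comparison of Proposition \ref{IntGenBL}.2, which identifies $\tau_{\leq n+1}\mathbb{Z}(n)^{\mathfrak{X}}$ with $\tau_{\leq n+1}R\epsilon_{*}\mathbb{Z}(n)_{\et}^{\mathfrak{X}}$. To push the comparison beyond degree $n+1$, I would invoke the hypothesis $[k:k^{p}]\leq p^{t}$ together with $n=\operatorname{dim}(\mathfrak{X})+t$: this bounds the $p$-cohomological dimension of the residue fields of the closed fiber so that $R^{q}j_{*}\mu_{p}^{\otimes n}=0$ and $\HO^{q}(\kappa(x),\lambda_{1}^{n-1})=0$ for the relevant range of $q$. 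Feeding these bounds into the localization triangle $i_{*}Ri^{!}\mathbb{Z}/p(n)_{\et}^{\mathfrak{X}}\to\mathbb{Z}/p(n)_{\et}^{\mathfrak{X}}\to Rj_{*}\mu_{p}^{\otimes n}$, and using the Gysin identification $\tau_{\leq n+2}Ri^{!}\mathbb{Z}(n)_{\et}^{\mathfrak{X}}\simeq\tau_{\leq n+2}(\mathbb{Z}(n-1)_{\et}^{Y}[-2])$ of Proposition \ref{IntGenBL}.1, kills all cohomology sheaves in degrees $\geq n+2$.

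The main obstacle will be making the cohomological dimension argument watertight at points where $\mathfrak{X}$ has semistable singularities: the residue field of a generic point of $Y$ has $p$-cohomological dimension controlled by $t$, but along components of higher codimension in $Y$ one must carefully account for both the dimension drop and the imperfectness parameter, and combine the vanishing for $Rj_{*}\mu_{p}^{\otimes n}$ with that for the logarithmic Hodge--Witt contribution supported on $Y$. This is where the precise bound $n=\operatorname{dim}(\mathfrak{X})+t$ becomes essential, as it ensures that the cohomological dimension cut-off lines up exactly with the truncation level $n+1$ controlled by Proposition \ref{IntGenBL}, allowing no residual cohomology in higher degrees.
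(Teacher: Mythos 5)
Your reduction to $r=1$ and to stalks at strictly henselian local rings $R=\mathcal{O}^{sh}_{\mathfrak{X},x}$ of closed-fiber points matches the paper, and you have correctly located where the hypothesis $n=\operatorname{dim}(\mathfrak{X})+t$ must enter. But the core of your argument at closed-fiber points has a genuine gap. The decisive input in the paper is not a cohomological-dimension bound on residue fields of the closed fiber (those are characteristic-$p$ fields with $\operatorname{cd}_{p}\leq 1$ regardless of $t$); it is the bound $\operatorname{cd}_{p}(k(R))\leq \operatorname{dim}(\mathfrak{X})+t=n$ on the \emph{characteristic-zero function field}, obtained from Gabber--Orgogozo ($\operatorname{cd}_{p}(k(B))=1+t$) plus a transcendence-degree count, \emph{combined with} the injectivity of $\HO^{u}_{\et}(U,\mu_{p}^{\otimes n})\to\HO^{u}_{\et}(k(R),\mu_{p}^{\otimes n})$ for $u\geq n+1$ (\cite[Proposition 4.2, Theorem 3.10]{Sak4}), where $U$ is the generic fiber of $\spec(R)$. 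Without that injectivity your claim ``$R^{q}j_{*}\mu_{p}^{\otimes n}=0$'' is unsupported: a termwise $\operatorname{cd}$ bound fed into the coniveau spectral sequence only kills $\HO^{u}_{\et}(U,\mu_{p}^{\otimes n})$ for $u>n+\operatorname{dim}(U)$, not for $u\geq n+1$. The same vanishing of $\operatorname{cd}_{p}(\kappa(x))$ for $x\in U$ is then what makes the Gersten complex (\ref{InUet}) exact termwise, and the paper passes through Proposition \ref{RGCM} to convert that exactness into the chain $\HO^{n+1+s}_{\Zar}(U,\mathbb{Z}/p(n))\simeq\HO^{n+1+s}_{\Zar}(U,\mathbb{Z}/p(n+s))\simeq 0$ --- a weight-shifting step your outline omits entirely, yet it is the only mechanism available above the Beilinson--Lichtenbaum range.

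Two further points would make your route fail as written. First, your localization triangle $i_{*}Ri^{!}\mathbb{Z}/p(n)_{\et}\to\mathbb{Z}/p(n)_{\et}\to Rj_{*}\mu_{p}^{\otimes n}$ requires knowing $Ri^{!}\mathbb{Z}/p(n)_{\et}$ in degrees $\geq n+2$, but Proposition \ref{cset} identifies it only up to degree $n+1$ for torsion coefficients; identifying it beyond that range is essentially equivalent to the statement you are proving, so the argument is circular there. The paper avoids this by working with the Zariski-topology localization sequence for Bloch's complex (where the purity $\mathbb{Z}(n-1)[-2]\simeq Ri^{!}\mathbb{Z}(n)$ holds in all degrees) and \cite[Proposition 2.1]{Sak4} to pass from $R$ to $U$. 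Second, your appeal to Proposition \ref{IntVaniMot} via Remark \ref{revanimot} is a misreading: Proposition \ref{IntVaniMot} concerns only the explicit global ring $C=B[T_{0},\dots,T_{N}]/(T_{0}\cdots T_{a}-\pi)$, and Remark \ref{revanimot} merely transfers vanishing between $R$ and its generic fiber; neither produces the vanishing $\HO^{u}_{\Zar}(R,\mathbb{Z}/p(n))=0$ for an arbitrary semistable family, which is exactly what must be proved.
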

\begin{proof}\upshape
Let $R$ be the strict Henselization $\mathcal{O}_{\mathfrak{X}, x}^{sh}$
of $\mathfrak{X}$ at a point $x$ of the closed fiber of $\mathfrak{X}$.
By \cite[p.209, Remark 7.2]{SaR}, we have a quasi-isomorphism
\begin{equation*}
\tau_{\leq n}\mathbb{Z}/p^{r}(n)_{\et}^{\mathfrak{X}}  
\simeq
\mathfrak{T}_{r}(n)
\end{equation*}
for any positive integer $r$. Moreover, the sequence
\begin{align*}
\cdots\to  
\HO^{u}_{\Zar}(R, \mathbb{Z}/p^{r}(n))
\to
\HO^{u}_{\Zar}(R, \mathbb{Z}/p^{r+r^{\prime}}(n))
\to
\HO^{u}_{\Zar}(R, \mathbb{Z}/p^{r^{\prime}}(n))
\to
\cdots
\end{align*}
is exact for positive integers $r$ and $r^{\prime}$. So it suffices to show that we have an isomorphism
\begin{equation*}
\HO^{u}_{\Zar}(
R, \mathbb{Z}/p(n)
)    
\simeq 0
\end{equation*}
for $u\geq n+1$. Let $U$ be the generic fiber of $\spec(R)$. Then it suffices to show that
we have an isomorphism
\begin{equation*}
\HO^{u}_{\Zar}(
U, \mathbb{Z}/p(n)
)    
\simeq 
0
\end{equation*}
for $u\geq n+2$ by 
\cite[Proposition 1.5]{Sak4}, \cite[Proposition 2.1]{Sak4} 
and the localization theorem (\cite[p.779, Theorem 3.2]{Ge}).

Let $x^{\prime}$ be an inverse image of a point $x$ under the morphism
\begin{math}
\mathfrak{X}\times B^{sh}
\to
\mathfrak{X}.
\end{math}
Then we have an isomorphism
\begin{equation*}
\mathcal{O}^{sh}_{\mathfrak{X}, x}
\simeq 
\mathcal{O}^{sh}_{\mathfrak{X}\times_{B}B^{sh}, x^{\prime}}
\end{equation*}
and an equation
\begin{equation*}
[k_{s}: k^{p}_{s}]=[k: k^{p}]\leq p^{t}    
\end{equation*}
where $k_{s}$ is a separable closed field of $k$. So we are able to assume that 
$k$ is a separable closed field.

We consider the cohomological $p$-dimension
$\operatorname{cd}_{p}(k(\mathcal{O}_{\mathfrak{X}, x}^{sh}))$
of $k(\mathcal{O}_{\mathfrak{X}, x}^{sh})$. We have an equation
\begin{equation*}
\operatorname{cd}_{p}(k(B))=1+t    
\end{equation*}
by \cite[p.48, Th\'{e}or\`{e}me 1.2]{G-O}.
Since $k(\mathcal{O}_{\mathfrak{X}, x}^{sh})$
is an algebraic extension of $k(\mathcal{O}_{\mathfrak{X}, x})$,
we have
\begin{equation*}
\operatorname{tr.deg}_{k(B)}k(\mathcal{O}^{sh}_{\mathfrak{X}, x})
=
\operatorname{dim}(\mathfrak{X})-1
\end{equation*}
by \cite[pp.118--119, Theorem 15.5]{Ma} and \cite[p.119, Theorem 15.6]{Ma}.
Here 
$\operatorname{tr.deg}_{k(B)}k(\mathcal{O}^{sh}_{\mathfrak{X}, x})$
is the transcendence degree of $k(\mathcal{O}^{sh}_{\mathfrak{X}, x})$
over $k(B)$. So we have
\begin{equation*}
\operatorname{cd}_{p}(k(\mathcal{O}^{sh}_{\mathfrak{X}, x}))
\leq
\operatorname{dim}(\mathfrak{X})+t=n
\end{equation*}
by \cite[p.57, Corollaire 2.3.2]{G-O}.

Since the homomorphism
\begin{equation*}
\HO^{u}_{\et}(U, \mu_{p}^{\otimes n})
\to
\HO^{u}_{\et}(k(R), \mu_{p}^{\otimes n})
\end{equation*}
is injective for $u\geq n+1$ by \cite[Proposition 4.2]{Sak4} and \cite[Theorem 3.10]{Sak4}, 
we have an isomorphism
\begin{equation*}
\HO^{u}_{\et}(
U, \mu_{p}^{\otimes n}
)    
\simeq 
0
\end{equation*}
for $u\geq n+1$. Here $\mu_{p}$ is the sheaf of $p$-th roots of unity.
Moreover, we have
\begin{equation*}
\operatorname{cd}_{p}(\kappa(x))
\leq 
n-\operatorname{dim}(
\mathcal{O}_{\mathfrak{X}, x}
)
\end{equation*}
for $x\in U$
by \cite[p.57, Corollaire 2.3.2]{G-O} and \cite[pp.118--119, Theorem 15.5]{Ma}. 
Hence we have isomorphisms
\begin{align*}
\HO^{n+1+s}_{\Zar}(
U,
\mathbb{Z}/p(n)
)
\simeq    
\HO^{n+1+s}_{\Zar}(
U,
\mathbb{Z}/p(n+1)
)
\simeq 
\cdots
\simeq
\HO^{n+1+s}_{\Zar}(
U,
\mathbb{Z}/p(n+s)
)
\simeq
0
\end{align*}
for any integer $s\geq 1$ by Proposition \ref{RGCM}. This completes the proof.
\end{proof}
\begin{cor}\upshape\label{2GerT}
Let $B$ be a discrete valuation ring of mixed characteristic $(0, p)$ 
and $R$ the henselization of the local ring $\mathcal{O}_{\mathfrak{X}, x}$ of a semistable family $\mathfrak{X}$ over $\spec(B)$
at a point $x\in \mathfrak{X}$. 
Suppose that $\operatorname{dim}(R)=2$ and $B$ contains $p$-th roots of unity.
Then the sequence
\begin{align*}
0
\to  
&\HO^{s}_{\et}(
R, \mathfrak{T}_{1}(n)
)
\to
\bigoplus_{x\in \spec(R)^{(0)}}
\HO^{s}_{x}(
R_{\et},
\mathfrak{T}_{1}(n)
)
\to
\\
&\bigoplus_{x\in \spec(R)^{(1)}}
\HO^{s+1}_{x}(
R_{\et},
\mathfrak{T}_{1}(n)
)
\to
\bigoplus_{x\in \spec(R)^{(2)}}
\HO^{s+2}_{x}(
R_{\et},
\mathfrak{T}_{1}(n)
)
\to 0
\end{align*}
is exact for any integer $s\geq 0$.
\end{cor}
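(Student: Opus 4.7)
The plan is to combine the supported-on-$Z$ Gersten exactness of Theorem \ref{RelGer} with a Bloch-Ogus-type exactness on the generic fiber $U = \spec(R[1/\pi])$, which I deduce from Proposition \ref{RGCM}, gluing the two by the localization long exact sequence for the pair $(U, Z)$ with $Z = \spec(R/(\pi))$.

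I would start from the coniveau spectral sequence
$$E_1^{u,v} = \bigoplus_{x\in \spec(R)^{(u)}} \HO^{u+v}_x(R_{\et}, \mathfrak{T}_1(n)) \Rightarrow \HO^{u+v}_{\et}(R, \mathfrak{T}_1(n))$$
of \cite[Part 1, (1.2)]{C-H-K}. Because $\operatorname{dim}(R) = 2$, $E_1^{u,v}$ vanishes for $u \geq 3$, and the stratification
$$\spec(R)^{(0)} = U^{(0)}, \quad \spec(R)^{(1)} = U^{(1)}\sqcup Z^{(0)}, \quad \spec(R)^{(2)} = Z^{(1)}$$
($U$ one-dimensional regular semi-local, $Z$ one-dimensional) splits the row $E_1^{\bullet, s}$ as a short exact sequence of three-term complexes $0 \to C^\bullet_Z \to E_1^{\bullet, s} \to C^\bullet_U \to 0$. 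The assertion of the corollary is then equivalent to $E_2^{u, s} = 0$ for $u \geq 1$ together with the edge identification $\HO^s_{\et}(R, \mathfrak{T}_1(n)) \simeq E_2^{0, s}$.

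Exactness of $C^\bullet_U$ is Gersten for $\mu_p^{\otimes n}$ on $U$ (using $\mathfrak{T}_1(n)|_U \simeq \mu_p^{\otimes n}$, since $B$ contains $\mu_p$); Proposition \ref{RGCM} reduces this to motivic-cohomology vanishings of the form $\HO^{u}_{\Zar}(U, \mathbb{Z}/p(m))=0$ for $u > m$, which, together with Remark \ref{revanimot}, follow from $\operatorname{dim}(U) = 1$ and the localization theorem \cite[p.779, Theorem 3.2]{Ge}. Exactness of $C^\bullet_Z$ in the range $s+1 \geq n+2$ is Theorem \ref{RelGer} applied to $R$ and $Z$. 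I would then splice the two strands via the localization sequence
$$\cdots \to \HO^s_Z(R_{\et}, \mathfrak{T}_1(n)) \to \HO^s_{\et}(R, \mathfrak{T}_1(n)) \to \HO^s_{\et}(U, \mu_p^{\otimes n}) \to \HO^{s+1}_Z \to \cdots,$$
the injectivity $\HO^{n+2}_Z \hookrightarrow \bigoplus_{z \in Z^{(0)}} \HO^{n+2}_z$ from \cite[Theorem 3.10]{Sak4}, the anti-commutative compatibility of \cite[Lemma 3.11]{Sak5}, and Lemma \ref{DGC} for the required diagram chasing.

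The main obstacle is the low-degree range $s \leq n$, where Theorem \ref{RelGer} does not apply directly. There I would exchange $\mathfrak{T}_1(n)$ for $\tau_{\leq n+1}\mathbb{Z}/p(n)_{\et}$ via \cite[p.209, Remark 7.2]{SaR} and \cite[Proposition 1.5]{Sak4}, and then use Remark \ref{revanimot} to translate the surviving vanishings into motivic cohomology vanishings on $R$, which reduce to the one-dimensional case on $U$ already handled. Verifying that the $Z$-supported and $U$-supported pieces remain compatible across the boundary $s = n+1$, where the two ranges meet, is where the most care will be needed.
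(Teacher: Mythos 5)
Your overall skeleton --- splitting the Cousin complex of $\mathfrak{T}_{1}(n)$ on $\spec(R)$ into a piece supported on $Z=\spec(R/(\pi))$ and a piece living on the one-dimensional generic fiber $U$, handling the $Z$-piece in high degrees by Theorem \ref{RelGer} and the $U$-piece by Proposition \ref{RGCM} together with $\operatorname{dim}(U)=1$ and the localization theorem --- is exactly how the paper disposes of the range $s\geq n+1$. The gap is in the range $s\leq n$, which you correctly flag as the main obstacle but do not actually close. Remark \ref{revanimot} identifies $\HO^{q}_{\Zar}(R,\mathbb{Z}/p(n))$ with $\HO^{q}_{\Zar}(U,\mathbb{Z}/p(n))$ \emph{only} for $q\geq n+2$, so it cannot be used to ``translate the surviving vanishings'' in low degrees; and identifying the $E_{2}$-terms of the coniveau spectral sequence with Zariski motivic cohomology of $R$ in the first place already presupposes a Bloch--Ogus/Gersten resolution for $R$ in mixed characteristic, which is what is being proved.

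More concretely, for $s\leq n$ the contributions from points lying on the special fiber do not reduce to anything on $U$: by Sato's purity theorem for $p$-adic \'{e}tale Tate twists (\cite[p.540, Theorem 4.4.7]{SaP}), the groups $\HO^{s+u}_{x}(R_{\et},\mathfrak{T}_{1}(n))$ for $x\in\spec(R)^{(u)}\cap Z$ with $s+u\leq n+1$ are computed by the logarithmic Hodge--Witt sheaf $\nu^{n-1}_{1}$ on the characteristic-$p$ normal crossing scheme $Z$, and the exactness of the resulting subcomplex is precisely the Gersten-type conjecture for $\nu^{n-1}_{1}$ over henselian local rings of normal crossing varieties, i.e. \cite[Theorem 1.1]{Sak4} (resting on Gros--Suwa and Shiho). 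This characteristic-$p$ input is what the paper invokes for $s\leq n$ (``the statement follows from \cite[p.540, Theorem 4.4.7]{SaP}, \cite[Theorem 1.1]{Sak4} and Proposition \ref{RGCM}''), and it is absent from your argument; without it the column of your diagram supported on the special fiber cannot be shown to be exact in low degrees, and exchanging $\mathfrak{T}_{1}(n)$ for $\tau_{\leq n+1}\mathbb{Z}/p(n)_{\et}$ does not supply it.
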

\begin{proof}\upshape
Let $U$ be the generic fiber of $\spec(R)$ and 
$N$ an integer larger than or equal to $n$. 
Then we have an isomorphism
\begin{equation*}
\HO^{N+1}_{\Zar}(U, \mathbb{Z}/p(N))
\simeq 0
\end{equation*}
by \cite[Proposition 4.5]{Sak4}, \cite[Proposition 2.1]{Sak4}
and
the localization theorem (\cite[p.779, Theorem 3.2]{Ge}).
Moreover, we have  an isomorphism
\begin{equation*}
\HO^{N+t}_{\Zar}(
U,
\mathbb{Z}/p(N)
) 
\simeq 
0
\end{equation*}
for any integer $t\geq 2$
by the proof of \cite[Corollary 4.6]{Sak4}.

In the case where $s\leq n$, the statement follows from \cite[p.540, Theorem 4.4.7]{SaP},
\cite[Theorem 1.1]{Sak4} and Proposition \ref{RGCM}.
In the case where $s\geq n+1$, the statement follows from Theorem \ref{RelGer} and Proposition \ref{RGCM}.
This completes the proof.
\end{proof}
\begin{cor}\upshape
Let $B$ be a discrete valuation ring of mixed characteristic $(0, p)$ 
and $R$ the henselization of the local ring $\mathcal{O}_{\mathfrak{X}, x}$
of a semistable family $\mathfrak{X}$ over $\spec(B)$ at a point $x\in \mathfrak{X}$. 
Suppose that $B$ contains $p$-th roots of unity. Then the sequence
\begin{align*}
0
&\to    
\HO^{1}_{\et}(
R, 
\mathbb{Z}/p\mathbb{Z}
)
\to
\bigoplus_{x\in \spec(R)^{(0)}}
\HO^{1}_{x}(
R_{\et},
\mathbb{Z}/p\mathbb{Z}
)
\to
\bigoplus_{x\in \spec(R)^{(1)}}
\HO^{2}_{x}(
R_{\et},
\mathbb{Z}/p\mathbb{Z}
)
\\
&\to\cdots
\end{align*}
is exact.
\end{cor}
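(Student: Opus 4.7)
The plan is to adapt the argument of Corollary~\ref{2GerT} to the special case $n=0$, in which the dimension hypothesis on $R$ becomes unnecessary because the motivic-cohomology side of Proposition~\ref{RGCM} collapses. First, identify the constant sheaf $\mathbb{Z}/p\mathbb{Z}$ with the $p$-adic \'{e}tale Tate twist $\mathfrak{T}_{1}(0)$ by Definition~\ref{DefT}, so the task is to establish the Gersten-type exactness at row $1$ for $\mathfrak{T}_{1}(0)$ on $\spec(R)$.

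For the generic fiber $U=\spec(R[1/\pi])$, apply Proposition~\ref{RGCM} with $n=0$. The required conditions become $\HO^{1+s}_{\Zar}(U,\mathbb{Z}/p(0))=0$ for $s\geq 0$ and $\HO^{1+s}_{\Zar}(U,\mathbb{Z}/p(1))=0$ for $s\geq 1$. The first family vanishes because $\mathbb{Z}/p(0)=\mathbb{Z}/p\mathbb{Z}$ is a constant \'{e}tale sheaf on the irreducible scheme $U$, whose higher Zariski cohomology is zero. The second family vanishes by Proposition~\ref{IntVaniMot} combined with Remark~\ref{revanimot} and Bloch's localization theorem, following exactly the argument used at the opening of the proof of Corollary~\ref{2GerT}. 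This yields the Gersten-type sequence for $\mathbb{Z}/p$ on $U$ at degree~$1$.

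For the supports on the closed fiber $Z=\spec(R/(\pi))$, apply Theorem~\ref{RelGer} with $n=0$ and $r=1$, which gives exactness of
\begin{equation*}
0\to \HO^{q}_{Z}(R_{\et},\mathbb{Z}/p)\to \bigoplus_{z\in Z^{(0)}}\HO^{q}_{z}(R_{\et},\mathbb{Z}/p)\to \bigoplus_{z\in Z^{(1)}}\HO^{q+1}_{z}(R_{\et},\mathbb{Z}/p)\to\cdots
\end{equation*}
for every $q\geq 2$. The two inputs are then assembled via the localization long exact sequence connecting $\HO^{s}_{Z}(R_{\et},\mathbb{Z}/p)$, $\HO^{s}_{\et}(R,\mathbb{Z}/p)$ and $\HO^{s}_{\et}(U,\mathbb{Z}/p)$, together with the decomposition $\spec(R)^{(s)}=U^{(s)}\sqcup Z^{(s-1)}$ for $s\geq 1$.

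The main obstacle is this final gluing step, which amounts to a diagram chase in the spirit of Lemma~\ref{DGC} that carefully tracks the codimension shift arising when a point of $Z$ is viewed inside $\spec(R)$ and matches the connecting maps of the localization sequence against the residue maps in the two coniveau complexes coming from $U$ and from the $Z$-supports.
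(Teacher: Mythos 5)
Your overall strategy coincides with the paper's: identify $\mathbb{Z}/p\mathbb{Z}$ with $\mathfrak{T}_{1}(0)$ via Definition \ref{DefT}, verify the motivic-cohomology hypotheses of Proposition \ref{RGCM} for $n=0$ on the generic fiber $U$, invoke Theorem \ref{RelGer} for the part supported on $Z$, and splice the two Cousin complexes along $\spec(R)^{(s)}=U^{(s)}\sqcup Z^{(s-1)}$. The weight-$0$ vanishing and the final gluing step are fine; the gluing is exactly the short exact sequence of Cousin complexes that the paper leaves implicit here and in Corollary \ref{2GerT}.

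The gap is in your justification of $\HO^{1+s}_{\Zar}(U,\mathbb{Z}/p(1))=0$ for $s\geq 1$. Proposition \ref{IntVaniMot} (that is, Propositions \ref{VSC} and \ref{VaniMtM1}) computes motivic cohomology only for the particular global rings $C=B[T_{0},\dots,T_{N}]/(T_{0}\cdots T_{a}-\pi)$ and their special fibers; the ring $R$ in the corollary is the henselization of a local ring of an \emph{arbitrary} semistable family, which is only \'etale-locally of that shape, and Zariski motivic cohomology does not descend along \'etale covers, so the vanishing cannot be transferred from $C$ to $R$ or to $U$. Your fallback, ``the argument at the opening of the proof of Corollary \ref{2GerT},'' is also unavailable as stated: that argument is carried out under the hypothesis $\operatorname{dim}(R)=2$ and rests on results of \cite{Sak4} invoked only in that setting, whereas the present corollary has no dimension restriction. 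The paper closes this step differently and more cheaply: using $\mathbb{Z}(0)\simeq\mathbb{Z}$ and Levine's identification $\mathbb{Z}(1)\simeq\mathbb{G}_{m}[-1]$, the groups $\HO^{1+s}_{\Zar}(U,\mathbb{Z}/p(N))$ for $N=0,1$ and $s\geq 1$ are controlled by the Zariski cohomology of $\mathbb{Z}$ and of $\mathbb{G}_{m}$ on $U$ and on the regular local ring $R$, which vanish in the relevant degrees (via \cite[Proposition 2.1]{Sak4} and the localization theorem). Substituting this computation for your citation repairs the argument; everything else in your proposal then goes through.
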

\begin{proof}\upshape
By the definition, we have a quasi-isomorphism
\begin{equation*}
\mathbb{Z}(0)_{\Zar}^{\spec(R)}
\simeq 
\mathbb{Z}.
\end{equation*}
Moreover, we have a quasi-isomorphism
\begin{equation*}
\mathbb{Z}(1)^{\spec(R)}_{\Zar} 
\simeq 
\mathbb{G}_{m}[-1]
\end{equation*}
by \cite[Lemma 11.2]{L}. So we have an isomorphism
\begin{equation*}
\HO_{\Zar}^{N+t}(
U, \mathbb{Z}/p(N)
)    
\simeq 0
\end{equation*}
for $N=0, 1$ and an integer $t\geq 1$ by \cite[Proposition 2.1]{Sak4} 
and the localization theorem (\cite[p.779, Theorem 3.2]{Ge}). 
Here $U$ is the generic fiber of $\spec(R)$.
Hence the statement follows from Theorem \ref{RelGer} and Proposition \ref{RGCM}.
\end{proof}
\begin{rem}\upshape
If $\mathfrak{X}$ is a smooth scheme over the spectrum of a discrete valuation ring, 
then we have
$\mathcal{H}^{s}(\mathbb{Z}(n)^{\mathfrak{X}})=0$ for $s>n$ 
by \cite[p.786, Corollary 4.4]{Ge}. Hence
\cite[Theorem 1.4]{Sak5} follows from Theorem \ref{RelGer} and Proposition \ref{RGCM}.
\end{rem}
\section{Computations of motivic cohomology groups on global cases}\label{Tcommt}

\subsection{Equi-characteristic cases}

We compute motivic cohomology groups in equi-characteristic cases and
prepare to compute motivic cohomology groups in mixed characteristic cases.

\begin{prop}\upshape\label{VaniNGL}
Let $k$ be a field and 
\begin{equation*}
D=k[T_{0}, \cdots, T_{N}]/(T_{0}\cdots T_{a})    
\end{equation*}
for $0\leq a\leq N$. Then we have
\begin{equation*}
\HO^{q}_{\Zar}(
D,
\mathbb{Z}(n)
)    
=0
\end{equation*}
for any integer $n\geq 0$ and $q\geq n+1$.
\end{prop}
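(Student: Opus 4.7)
I would prove the vanishing directly, without induction on $a$, by applying the localization theorem of Geisser \cite[p.~779, Theorem 3.2]{Ge} to the pure codimension-$1$ closed immersion
\begin{equation*}
\spec(D)\hookrightarrow \mathbb{A}^{N+1}_{k}=\spec(k[T_{0},\ldots,T_{N}]).
\end{equation*}
This is legitimate because $(T_{0}\cdots T_{a}) = (T_{0})\cap\cdots\cap(T_{a})$ is a radical principal ideal in the UFD $k[T_{0},\ldots,T_{N}]$, so $\spec(D)$ is a reduced equidimensional hypersurface of dimension $N$, and its minimal primes all have height one. The open complement is
\begin{equation*}
U = D(T_{0}T_{1}\cdots T_{a}) \simeq \mathbb{G}_{m}^{a+1}\times_{\spec(k)}\mathbb{A}^{N-a}_{k}.
\end{equation*}

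The distinguished triangle
\begin{equation*}
\mathbb{Z}(n)^{\spec(D)}[-2]\to \mathbb{Z}(n+1)^{\mathbb{A}^{N+1}}\to Rj_{*}\mathbb{Z}(n+1)^{U}
\end{equation*}
furnished by localization gives a long exact sequence whose relevant segment is
\begin{equation*}
\HO^{q+1}_{\Zar}(U,\mathbb{Z}(n+1)) \to \HO^{q}_{\Zar}(D,\mathbb{Z}(n)) \to \HO^{q+2}_{\Zar}(\mathbb{A}^{N+1},\mathbb{Z}(n+1)).
\end{equation*}
I would then kill both outer terms in the range $q\geq n+1$ by two standard inputs. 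Homotopy invariance for Bloch's higher Chow groups together with the Nesterenko--Suslin--Totaro theorem yields $\HO^{\bullet}_{\Zar}(\mathbb{A}^{N+1},\mathbb{Z}(n+1)) \simeq \HO^{\bullet}_{\Zar}(k,\mathbb{Z}(n+1))$, which vanishes in degrees exceeding $n+1$. The split $\mathbb{G}_{m}$-formula $\HO^{s}(X\times \mathbb{G}_{m},\mathbb{Z}(m)) \simeq \HO^{s}(X,\mathbb{Z}(m))\oplus \HO^{s-1}(X,\mathbb{Z}(m-1))$, iterated over the $a+1$ copies of $\mathbb{G}_{m}$ and combined with homotopy invariance on the $\mathbb{A}^{N-a}$-factor, expresses $\HO^{s}_{\Zar}(U,\mathbb{Z}(m))$ as a direct sum of terms $\HO^{s-|I|}(k,\mathbb{Z}(m-|I|))$ with $I\subseteq\{0,\ldots,a\}$, each of which vanishes as soon as $s>m$. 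Consequently, for $q\geq n+1$ both outer groups in the displayed segment vanish, forcing $\HO^{q}_{\Zar}(D,\mathbb{Z}(n)) = 0$.

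\textbf{Main obstacle.} The argument carries no substantive difficulty; the one thing to verify is that Geisser's localization triangle applies in this geometric situation, i.e.\ that $\spec(D)\hookrightarrow \mathbb{A}^{N+1}_{k}$ is a closed immersion of pure codimension $1$ between equidimensional schemes essentially of finite type over $k$. This is immediate from the description of $D$ as the quotient by a radical principal ideal whose minimal primes all have height one, so the bulk of the work is the bookkeeping in the long exact sequence and the $\mathbb{G}_{m}$-iteration for $U$.
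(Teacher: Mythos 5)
Your argument is correct, but it takes a genuinely different route from the paper's. The paper proves the vanishing by induction on the number $a+1$ of components: after reducing to $a=N$ by homotopy invariance, it removes the crossing locus $Z=V(T_{a})\cap\bigcup_{i<a}V(T_{i})$ from $\spec(D)$, decomposes the complement into two pieces handled by the inductive hypothesis, and must in addition verify that the boundary map $\HO^{n}_{\Zar}\bigl((k[T_{0},\ldots,T_{a-1}])_{T_{0}\cdots T_{a-1}},\mathbb{Z}(n)\bigr)\to\HO^{n-1}_{\Zar}(Z,\mathbb{Z}(n-1))$ is surjective before the localization sequence yields the claim. You instead apply the localization triangle once, to the equidimensional hypersurface $\spec(D)\hookrightarrow\mathbb{A}^{N+1}_{k}$ with smooth complement $U\simeq\mathbb{G}_{m}^{a+1}\times\mathbb{A}^{N-a}_{k}$, and kill both outer terms by homotopy invariance, the vanishing $\HO^{q}_{\Zar}(k,\mathbb{Z}(m))=0$ for $q>m$, and the $\mathbb{G}_{m}$-decomposition; this eliminates both the induction and the surjectivity verification, and since $U$ is smooth no care is needed in iterating the splitting. (In fact you do not even need the full direct-sum form: the localization sequence for $X\times\{0\}\subset X\times\mathbb{A}^{1}$ already sandwiches $\HO^{s}(X\times\mathbb{G}_{m},\mathbb{Z}(m))$ between two groups that vanish for $s>m$, which is all the argument uses.) What the paper's longer route buys is a template that transfers to the mixed-characteristic analogue (Proposition \ref{VSC} and Proposition \ref{VaniMtM1}), where $C=B[T_{0},\ldots,T_{N}]/(T_{0}\cdots T_{a}-\pi)$ is already regular and one must instead peel off components of the special fiber one at a time; your hypersurface trick has no direct counterpart there, so the author sets up the same inductive mechanism in both places.
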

\begin{proof}\upshape
By \cite[p.781, Corollary 3.5]{Ge}, it suffices to show the statement in the case where $a=N$. 
Then
$\spec(D)$ is isomorphic to
\begin{equation*}
\spec(
k[T_{0}, \cdots, T_{a}]
/(T_{0}\cdots T_{a-1})
)
\cup
\spec(
k[T_{0}, \cdots, T_{a}]
/(T_{a})
).
\end{equation*}
Let
\begin{equation*}
Z=\spec(
k[T_{0}, \cdots, T_{a}]/
(T_{0}\cdots T_{a-1}, T_{a})
)    
\simeq
\spec(
k[T_{0}, \cdots, T_{a-1}]/(T_{0}\cdots T_{a-1})
)
\end{equation*}
and 
\begin{equation*}
U=
\spec(D)
\setminus
Z.
\end{equation*}
Then we have
\begin{align*}
U\simeq
\spec\left(
(
k[T_{0}, \cdots, T_{a}]/(T_{0}\cdots T_{a-1})
)_{T_{a}}
\right)
\oplus
\spec\left(
(
k[T_{0}, \cdots, T_{a-1}]
)_{T_{0}\cdots T_{a-1}}
\right).
\end{align*}
Moreover,  the homomorphism
\begin{equation*}
\HO^{n}_{\Zar}\left(
(k[T_{0}, \cdots, T_{a-1}])_{T_{0}\cdots T_{a-1}},
\mathbb{Z}(n)
\right)
\to
\HO^{n-1}_{\et}(
Z, \mathbb{Z}(n-1)
)
\end{equation*}
is surjective by 
the localization theorem (\cite[p.779, Theorem 3.2]{Ge})
and
\cite[p.781, Corollary 3.5]{Ge}. 
Since we have a commutative diagram
\begin{equation*}
\xymatrix{
\HO^{n}_{\Zar}\left(
(k[T_{0}, \cdots, T_{a-1}])_{T_{0}\cdots T_{a-1}},
\mathbb{Z}(n)
\right)
\ar[r]\ar[d]
&
\HO^{n-1}_{\Zar}(
Z, 
\mathbb{Z}(n-1)
)\ar@{=}[d]
\\
\HO^{n}_{\Zar}(
U,
\mathbb{Z}(n)
)\ar[r]
&
\HO^{n-1}_{\Zar}(
Z, 
\mathbb{Z}(n-1)
),
}    
\end{equation*}
the homomorphism
\begin{equation*}
\HO^{n}_{\Zar}(U,
\mathbb{Z}(n))
\to
\HO^{n-1}_{\Zar}(
Z, \mathbb{Z}(n-1)
)
\end{equation*}
is surjective. So there exists an exact sequence
\begin{align*}
0\to&
\HO^{n+1}_{\Zar}(
D, \mathbb{Z}(n)
)
\to 
\HO^{n+1}_{\Zar}(
U, \mathbb{Z}(n)
)
\to 
\HO^{n}_{\Zar}(
Z, \mathbb{Z}(n-1)
)\\
\to&
\HO^{n+2}_{\Zar}(
D, \mathbb{Z}(n)
)
\to 
\HO^{n+2}_{\Zar}(
U, \mathbb{Z}(n)
)
\to
\cdots
\end{align*}
by the localization theorem (\cite[p.779, Theorem 3.2]{Ge}). 
Moreover, we have exact sequences
\begin{align*}
&\HO^{q}_{\Zar}\left(
k[T_{0}, \cdots, T_{a}]/
(T_{0}\cdots T_{a-1}),
\mathbb{Z}(n)
\right)    
\to
\HO^{q}_{\Zar}\left(
(k[T_{0}, \cdots, T_{a}]/
(T_{0}\cdots T_{a-1}))_{T_{a}},
\mathbb{Z}(n)
\right)   \\  
&\to
\HO^{q-1}_{\Zar}\left(
k[T_{0}, \cdots, T_{a-1}]/
(T_{0}\cdots T_{a-1}),
\mathbb{Z}(n-1)
\right)    
\end{align*}
and
\begin{align*}
&\HO^{q}_{\Zar}\left(
k[T_{0}, \cdots, T_{a-1}],
\mathbb{Z}(n)
\right)
\to
\HO^{q}_{\Zar}\left(
(k[T_{0}, \cdots, T_{a-1}])_{T_0\cdots T_{a-1}},
\mathbb{Z}(n)
\right)  \\
&\to
\HO^{q-1}_{\Zar}\left(
k[T_{0}, \cdots, T_{a-1}]/(T_{0}\cdots T_{a-1}),
\mathbb{Z}(n-1)
\right)
\end{align*}
for any integer $q$. Hence, the statement follows by induction on $a$.
\end{proof}

We use the following results later.

\begin{prop}\upshape\label{pBL}
Let $Y$ be a scheme which is essentially of finite type over the spectrum of a field $k$. Then
we have a quasi-isomorphism
\begin{equation}\label{posBL}
\tau_{\leq n+1}\left(
\mathbb{Z}(n)^{Y}_{\Zar}
\right)
\xrightarrow{\simeq}
\tau_{\leq n+1}\left(
R\epsilon_{*}\mathbb{Z}(n)_{\et}^{Y}
\right)
\end{equation}
for any integer $n$. 
Here $\epsilon: Y_{\et}\to Y_{\Zar}$ is the canonical map of sites.
\end{prop}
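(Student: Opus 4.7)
The statement is Zariski-local on $Y$, so I would verify the quasi-isomorphism stalk by stalk. At a point $y \in Y$, the stalk of $\mathcal{H}^q(\mathbb{Z}(n)_{\Zar}^Y)$ is $\HO^q(\spec A, \mathbb{Z}(n)_{\Zar})$ and the stalk of $R^q\epsilon_*\mathbb{Z}(n)_{\et}^Y$ is $\HO^q_{\et}(\spec A, \mathbb{Z}(n)_{\et})$, where $A = \mathcal{O}_{Y,y}$ is itself essentially of finite type over $k$. Thus it is enough to show that the canonical change-of-topology map
\[
\HO^q(\spec A, \mathbb{Z}(n)_{\Zar}) \longrightarrow \HO^q_{\et}(\spec A, \mathbb{Z}(n)_{\et})
\]
is an isomorphism for all $q \le n+1$, and by a filtered-colimit argument (both sides commute with limits of schemes along affine transition maps) one may further assume that $A$ is the Zariski local ring of a finite-type $k$-scheme at a point.

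I would then induct on $d = \dim A$. The base case $d = 0$ reduces, by nilpotent invariance of both sides in the relevant range of degrees, to $A$ being a field essentially of finite type over $k$; here the desired isomorphism is exactly the Beilinson-Lichtenbaum theorem of Voevodsky and Geisser, cited in this paper as \cite[p.774, Theorem 1.2.2]{Ge} and \cite{V2}. For the inductive step with $d > 0$, let $U \subset \spec A_{\mathrm{red}}$ be the open locus that is smooth over $k$ and $Z = \spec A \setminus U$; then $U$ is (topologically) dense and $\dim Z < d$. Applying the localization theorem \cite[p.779, Theorem 3.2]{Ge} in the Zariski and étale topologies produces a commutative diagram of long exact sequences comparing $\HO^q_Z$, $\HO^q(\spec A, -)$ and $\HO^q(U, -)$ in the two topologies. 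Since $U$ is smooth over $k$, the Beilinson-Lichtenbaum theorem gives the comparison on $\HO^q(U, -)$ for $q \le n+1$, and a five-lemma argument reduces the problem to proving the comparison for the local cohomology terms $\HO^q_Z(\spec A, -)$.

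To control the local-cohomology terms, I would stratify $Z$ by locally closed subschemes smooth over $k$ and use a Gysin/purity spectral sequence (in both topologies) to express each $\HO^q_Z(\spec A, \mathbb{Z}(n))$ in terms of motivic cohomologies $\HO^{q-2c}(Z^{(c)}, \mathbb{Z}(n-c))$ on the smooth strata of codimension $c$. Because $\dim Z^{(c)} < d$, the inductive hypothesis supplies the comparison on every stratum, and reassembling the strata identifies the two local-cohomology groups. The main obstacle is ensuring that the Gysin/purity identifications are compatible with the change-of-topology map; this is delicate in positive characteristic, and would require invoking the Geisser-Levine identification of $\mathbb{Z}/p^r(n)_{\et}$ with the logarithmic de Rham-Witt sheaves together with Gersten-type vanishing results (for example as established in \S\ref{TGer}) to control the $p$-primary part of the local cohomology on singular strata. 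Once this compatibility is in place, the inductive closure goes through and yields the asserted quasi-isomorphism after truncation in degrees $\le n+1$.
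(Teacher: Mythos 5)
There are two genuine gaps here. First, your choice of $U$ as the locus of $\spec A_{\mathrm{red}}$ that is \emph{smooth over $k$} fails when $k$ is imperfect: the smooth locus of a reduced finite-type $k$-scheme need not be dense (it can even be empty, e.g.\ for a regular curve that is nowhere smooth over an imperfect field), so you cannot guarantee $\dim Z < d$ and the induction does not get off the ground. The paper instead takes $U$ to be a dense \emph{regular} open (which exists because $A$ is excellent) and then bridges the gap between ``regular'' and ``smooth'' by Quillen's limit trick: a regular local ring essentially of finite type over a field is a filtered colimit, along flat transition maps, of local rings of smooth algebras over the prime field (which is perfect), so the Beilinson--Lichtenbaum theorem for those smooth models passes to the colimit and gives the comparison on $U$.

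Second, and more seriously, the step you yourself flag as the ``main obstacle'' --- compatibility of the Gysin/purity identifications with the change-of-topology map on the strata of $Z$ --- is exactly the content that has to be proved, and there is no off-the-shelf purity statement for $\mathbb{Z}(n)_{\et}$ over singular closed subschemes (in particular for its $p$-primary part in characteristic $p$) that you can invoke; as written, your argument is circular at this point. The paper's key idea is to avoid any independent \'{e}tale purity input: it compares the Zariski localization triangle $\iota_*\mathbb{Z}(n-c)[-2c]\to\mathbb{Z}(n)\to \epsilon^{*}Rj_*\mathbb{Z}(n)_{\Zar}$ with the \'{e}tale triangle $\iota_*R\iota^!\mathbb{Z}(n)_{\et}\to\mathbb{Z}(n)_{\et}\to Rj_*\mathbb{Z}(n)_{\et}$, uses the already-established comparison on the regular open $U$ to see that the right-hand vertical map is an isomorphism in degrees $\leq n+1$ and injective in degree $n+2$, and then the five lemma \emph{produces} the truncated \'{e}tale purity $\tau_{\leq n+2}\left(\iota_*\mathbb{Z}(n-c)_{\et}[-2c]\right)\simeq\tau_{\leq n+2}\left(\iota_*R\iota^!\mathbb{Z}(n)_{\et}\right)$. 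With that in hand, the inductive hypothesis is applied to $Z$ as a whole (it has strictly smaller dimension), with no stratification into smooth pieces and no appeal to Geisser--Levine or to the Gersten-type results of \S\ref{TGer}. You would need to supply an argument of this kind to close the purity step.
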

\begin{proof}\upshape
Let $A$ be a local ring $\mathcal{O}_{Y, y}$ of $Y$
at a point $y\in Y$. It suffices to prove the quasi-isomorphism (\ref{posBL}) for 
$\spec(A)$. We prove the statement by induction on
$\operatorname{dim}(A)$.

Assume that
\begin{math}
\operatorname{dim}(A)
=0,
\end{math}
that is,
$A$ is a field. Then we have the quasi-isomorphism 
(\ref{posBL}) by \cite[p.774, Theorem 1.2.2]{Ge} and \cite{V2}.

Assume that the quasi-isomorphism (\ref{posBL}) holds for
$\spec(A^{\prime})$
in the case where $A^{\prime}$ is a local ring of 
a scheme of finite type over the spectrum of a field
at a point
and
\begin{math}
\operatorname{dim}(A^{\prime})
<
\operatorname{dim}(A).
\end{math}

Let 
$j: U\to \spec(A)$ be an open immersion of $\spec(A)$
such that $U$ is a regular scheme.
Let $\iota: Z\to \spec(A)$ be a closed subscheme of
codimension $c\geq 1$ with open complement 
$j: U\to \spec(A)$.
Since $A$ is an excellent ring, we are able to
choose such a non-empty open subset $U$ of $\spec(A)$.

Let $A^{\prime\prime}$ be a regular local ring which is essentially finite type
over a field $k$. By Quillen's method (cf. \cite[\S 7, The proof of Theorem 5.11]{Q}),
\begin{align*}
A^{\prime\prime}
=
\lim_{\substack{\to  \\ s}}
A^{\prime\prime}_{s}
\end{align*}
where $A^{\prime\prime}_{s}$ is a local ring of a 
smooth algebra over the prime field and the maps
$A^{\prime\prime}_{s}\to A^{\prime\prime}_{t}$
are flat. So we have a quasi-isomorphism
\begin{equation}\label{UBL}
\mathbb{Z}(n)^{U}_{\Zar}
\xrightarrow{\simeq}
\tau_{\leq n+1}R\epsilon_{*}\mathbb{Z}(n)^{U}_{\et}
\end{equation}
by \cite[p.774, Theorem 1.2.2]{Ge} and \cite{V2}.

In the case where
$U=\spec(A)$, then the quasi-isomorphism (\ref{posBL}) holds. 
So it suffices to prove the quasi-isomorphism (\ref{posBL})
in the case where $U\neq \spec(A)$.

Consider a map of distinguished triangles in the derived category of \'{e}tale
sheaves on $\spec(A)$
\begin{equation}\label{mpdis}
\xymatrix{
\iota_{*}\mathbb{Z}(n-c)[-2c]_{\et} 
\ar[r]\ar[d]
&
\mathbb{Z}(n)_{\et}
\ar[r]\ar@{=}[d]
&
\epsilon^{*}Rj_{*}\mathbb{Z}(n)_{\Zar}
\ar[d]
\\
\iota_{*}R\iota^{!}\mathbb{Z}(n)_{\et}
\ar[r]
&
\mathbb{Z}(n)_{\et}
\ar[r]
&
Rj_{*}\mathbb{Z}(n)_{\et}.
}
\end{equation}
Then the right map
agrees with the composite
\begin{align*}
&\epsilon^{*}Rj_{*}\mathbb{Z}(n)_{\Zar}
\xrightarrow[\simeq]{(*)}
\epsilon^{*}Rj_{*}\left(
\tau_{\leq n+1}R\epsilon_{*}\mathbb{Z}(n)_{\et}
\right)  \\
\to
& \epsilon^{*}Rj_{*}\left(
R\epsilon_{*}\mathbb{Z}(n)_{\et}
\right)
\xleftarrow[\simeq]{(**)}
Rj_{*}\mathbb{Z}(n)_{\et}
\end{align*}
where the maps $(*)$ and $(**)$ are quasi-isomorphisms 
by the quasi-isomorphism (\ref{UBL}) and \cite[p.776, Proposition 2.2. a)]{Ge}. 
Since the homomorphism
\begin{equation*}
\epsilon^{*}R^{q}j_{*}\left(
\tau_{\leq n+1}R\epsilon_{*}\mathbb{Z}(n)_{\et}
\right)
\to
\epsilon^{*}R^{q}j_{*}\left(
R\epsilon_{*}
\mathbb{Z}(n)_{\et}
\right)
\end{equation*}
is an isomorphism for $q\leq n+1$
and injective for $q=n+2$,
the homomorphism
\begin{equation*}
\epsilon^{*}R^{q}j_{*}\mathbb{Z}(n)_{\Zar}
\to
R^{q}j_{*}\mathbb{Z}(n)_{\et}
\end{equation*}
is an isomorphism for $q\leq n+1$ and
injective for $q=n+2$. So we have a quasi-isomorphism
\begin{equation}\label{Gyset}
\tau_{\leq n+2}\left(
\iota_{*}\mathbb{Z}(n-c)_{\et}[-2c]
\right)
\xrightarrow{\simeq}
\tau_{\leq n+2}\left(
\iota_{*}R\iota^{!}\mathbb{Z}(n)_{\et}
\right)
\end{equation}
by the five lemma. Moreover, we consider
a map of distinguished triangles in the derived category of Zariski
sheaves on $\spec(A)$
\begin{equation*}
\xymatrix{
\iota_{*}\mathbb{Z}(n-c)[-2c]_{\Zar}
\ar[r]\ar[d]
&
\mathbb{Z}(n)_{\Zar}
\ar[r]\ar[d]
&
Rj_{*}\mathbb{Z}(n)_{\Zar}
\ar[d]
\\
R\epsilon_{*}\iota_{*}R\iota^{!}\mathbb{Z}(n)_{\et}
\ar[r]
&
R\epsilon_{*}\mathbb{Z}(n)_{\et}
\ar[r]
&
R\epsilon_{*}Rj_{*}\mathbb{Z}(n)_{\et}.
}    
\end{equation*}
Then the left map is the composition of the map
\begin{equation*}
\iota_{*}\mathbb{Z}(n-c)[-2c]_{\Zar}
\to 
\iota_{*}R\epsilon_{*}\mathbb{Z}(n-c)[-2c]_{\et}
\end{equation*}
and the map
\begin{equation*}
R\epsilon_{*}\iota_{*}\mathbb{Z}(n-c)[-2c]_{\et}
\to
R\epsilon_{*}\iota_{*}R\iota^{!}\mathbb{Z}(n)_{\et}
\end{equation*}
which is obtained by applying $R\epsilon_{*}$ to the left map in 
the commutative diagram (\ref{mpdis}).
So we have a quasi-isomorphism
\begin{equation*}
\tau_{\leq n+2}\left(
\iota_{*}\mathbb{Z}(n-c)[-2c]_{\Zar}
\right)
\xrightarrow{\simeq}
\tau_{\leq n+2}\left(
R\epsilon_{*}\iota_{*}R\iota^{!}\mathbb{Z}(n)_{\et}
\right)
\end{equation*}
by the assumption and the quasi-isomorphism (\ref{Gyset}). 
Hence, the quasi-isomorphism (\ref{posBL}) holds for $\spec(R)$ by the five lemma. 
This completes the proof.
\end{proof}
\begin{prop}\upshape\label{constm}
Let $A$ be a local ring of a normal crossing variety over the spectrum of  
a field $k$. Let $n\geq 0$, $l>0$ be integers with $(l, \operatorname{char}(k))=1$.
Then we have isomorphisms
\begin{equation}\label{Vanineg}
\mathcal{H}^{q}(\mathbb{Z}/l(n)_{\Zar}^{\spec(A)})=0    
\end{equation}
for $q<0$ and
\begin{align}\label{Zerods}
\HO^{0}_{\Zar}(
A, 
\mathbb{Z}/l(n)
)  
\xrightarrow{\simeq}
\bigoplus_{x\in\spec(A)^{(0)}}
\HO^{0}_{\Zar}(
\kappa(x),
\mathbb{Z}/l(n)
)
\simeq
\underbrace{
\mu_{l}^{\otimes n}
\oplus\cdots
\oplus
\mu_{l}^{\otimes n}
}_{\#\left((\spec(A))^{(0)}\right) ~ \textrm{times}}.
\end{align}
Moreover, we have an isomorphism
\begin{equation}\label{Vanipos}
\mathcal{H}^{q}(
\mathbb{Z}/l(n)_{\et}^{\spec(A)}
)    
=0
\end{equation}
for $q\geq\#\left((\spec(A))^{(0)}\right)$.
\end{prop}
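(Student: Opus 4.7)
The plan is to reduce to the standard NCV local model via Lemma \ref{limnc} and then combine Proposition \ref{pBL}, Proposition \ref{VaniNGL}, and a descent argument over the stratification of $\spec(A)$ into smooth strata. First, by Lemma \ref{limnc} and the continuity of Bloch's cycle complex under filtered colimits with flat transition maps, I reduce to the case where $A$ is the strict henselization of $\mathcal{O}_{D,0}$ for $D = k[T_0, \ldots, T_N]/(T_0 T_1 \cdots T_a)$ at a point lying on every irreducible component; in particular $\#\spec(A)^{(0)} = a+1$.

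For the negative-degree vanishing (\ref{Vanineg}), I would apply the distinguished triangle
\begin{equation*}
\mathbb{Z}(n)_{\Zar}^{\spec(A)} \xrightarrow{l} \mathbb{Z}(n)_{\Zar}^{\spec(A)} \to \mathbb{Z}/l(n)_{\Zar}^{\spec(A)} \xrightarrow{+1}
\end{equation*}
and the associated long exact sequence of cohomology sheaves. Since Bloch's cycle complex is concentrated in non-negative cohomological degrees, $\mathcal{H}^{q}(\mathbb{Z}(n)_{\Zar}) = 0$ for $q<0$, and the vanishing of $\mathcal{H}^{q}(\mathbb{Z}/l(n)_{\Zar})$ for $q<0$ reduces to the torsion-freeness of $\mathcal{H}^{0}(\mathbb{Z}(n)_{\Zar}^{\spec(A)})$. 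For $n \geq 1$, this group vanishes on local rings (the top higher Chow groups $\mathrm{CH}^{n}(A, 2n)$ vanish), and for $n = 0$ it equals $\mathbb{Z}^{a+1}$, the free abelian group on irreducible components, which is torsion-free.

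For (\ref{Zerods}) and (\ref{Vanipos}), the plan is to invoke the \v{C}ech-type semi-simplicial hypercover
\begin{equation*}
\spec(A) \leftarrow \bigsqcup_{|I|=1} Y_I \leftarrow \bigsqcup_{|I|=2} Y_I \leftarrow \cdots \leftarrow Y_{\{0,\ldots,a\}}
\end{equation*}
where $Y_I = \bigcap_{i \in I} V(T_i)$ is smooth. On each stratum $Y_I$, Beilinson-Lichtenbaum identifies $\mathbb{Z}/l(n)_{\et}^{Y_I}$ with $\mu_l^{\otimes n}$ concentrated in degree zero, and Proposition \ref{pBL} transfers this to the Zariski side within the truncation $\tau_{\leq n+1}$. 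The descent spectral sequence
\begin{equation*}
E_{1}^{p, q} = \bigoplus_{|I|=p+1} \HO^{q}(Y_I, \mathbb{Z}/l(n)) \Rightarrow \HO^{p+q}(\spec(A), \mathbb{Z}/l(n))
\end{equation*}
is then concentrated in $q=0$ with $p$-direction of length $a+1$, which gives (\ref{Vanipos}). For (\ref{Zerods}), the $\mathcal{H}^{0}$ of the Zariski cycle complex is identified via the corresponding total complex, with the direct sum $\bigoplus_{x \in \spec(A)^{(0)}} \mu_l^{\otimes n}$ arising from the decomposition of codim-$n$ cycles on the NCV $\spec(A)$ according to its irreducible components.

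The main obstacle will be justifying the descent quasi-isomorphism between $\mathbb{Z}/l(n)_{\Zar}^{\spec(A)}$ and the total complex of the \v{C}ech system on the smooth strata, and correctly identifying the $\HO^{0}$ as the direct sum indexed by $\spec(A)^{(0)}$ rather than as the (single) equalizer of restriction maps. This requires exploiting the cokernel description of cycles on the NCV from its normalization $\tilde A = \bigsqcup_i A/p_i \to A$, reflected already in the elementary fact that $\mathcal{H}^{0}(\mathbb{Z}(0)_{\Zar}^{Y})$ is $\mathbb{Z}^{\#\mathrm{components}}$ rather than the constant sheaf $\mathbb{Z}$. The argument should proceed by invoking proper descent for Bloch's cycle complex (in the spirit of Geisser-Levine or Voevodsky) together with a direct inspection of the combinatorics of the resolution, much as in \cite[p.727, Proposition 3.2.1]{SaL} for the logarithmic Hodge-Witt resolution.
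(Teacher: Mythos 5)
There is a genuine gap in your treatment of the negative-degree vanishing (\ref{Vanineg}). You deduce it from the Bockstein triangle together with the claim that $\mathcal{H}^{q}(\mathbb{Z}(n)_{\Zar})=0$ for $q<0$ ``since Bloch's cycle complex is concentrated in non-negative cohomological degrees.'' That premise is false: with the normalization $\mathbb{Z}(n)=z^{n}(-,\bullet)[-2n]$ used in \cite{Ge}, the complex is concentrated in degrees $\leq 2n$ and extends arbitrarily far in the negative direction, and $\mathcal{H}^{q}(\mathbb{Z}(n))=\operatorname{CH}^{n}(-,2n-q)$ for $q<0$ (as well as the vanishing of $\mathcal{H}^{0}$ for $n\geq 1$ that you also invoke) is precisely the Beilinson--Soul\'{e} vanishing conjecture, which is open integrally. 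The paper sidesteps this entirely: it never touches the integral complex, but proves the mod-$l$ vanishing directly by induction on $\#\left((\spec(A))^{(0)}\right)$, peeling off one irreducible component $Z_{x}$ at a time via the localization theorem \cite[p.779, Theorem 3.2]{Ge}; on the smooth piece the Beilinson--Lichtenbaum identification $\mathbb{Z}/l(n)\simeq\tau_{\leq n}R\epsilon_{*}\mu_{l}^{\otimes n}$ gives the negative-degree vanishing for free. Your Bockstein route cannot be repaired without resolving (a case of) Beilinson--Soul\'{e}, so this step must be replaced. A related, smaller issue: for the Zariski statements (\ref{Vanineg}) and (\ref{Zerods}) you cannot reduce to the strict henselization of the standard model, since that changes the Zariski stalks; the paper instead works with the honest Zariski stalks via \cite[p.779, Theorem 3.2.b)]{Ge} and reserves strict henselizations for the \'{e}tale statement (\ref{Vanipos}).

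For (\ref{Zerods}) and (\ref{Vanipos}), your \v{C}ech-type descent along the closed cover by irreducible components is a genuinely different route from the paper's, and it is not unreasonable: closed Mayer--Vietoris for the cycle complexes can be extracted from the fact that an irreducible cycle on $Y_{1}\cup Y_{2}$ lies on $Y_{1}$ or on $Y_{2}$. But as you yourself note, the descent quasi-isomorphism for the full semi-simplicial resolution is exactly the point that needs proof, and nothing in the paper's toolkit supplies it; Bloch's complex on singular schemes is a Borel--Moore-type theory whose established exactness property here is localization, not proper or cdh descent. The paper's induction uses only the localization triangle for the pair $(Z_{x}^{\prime},\,U_{x}=Z_{x}^{\prime}\setminus(Z_{x}^{\prime}\cap Z_{x}))$, which is strictly weaker input and closes the induction without any descent statement. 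If you want to keep your architecture you would need to prove the \v{C}ech exactness by hand (inclusion--exclusion on supports), at which point you have essentially reconstructed the paper's componentwise induction in a heavier form.
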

\begin{proof}\upshape
First we prove the isomorphism (\ref{Vanineg}).
By \cite[p.779, Theorem 3.2.b)]{Ge}, we have isomorphisms
\begin{align*}
\HO^{q}_{\Zar}(
A_{x},
\mathbb{Z}/l(n)
) 
\simeq 
\Gamma(
\spec(A_{x}),
\mathcal{H}^{q}(
\mathbb{Z}/l(n)_{\Zar}^{\spec(A)}
)
)
\simeq
\mathcal{H}^{q}(
\mathbb{Z}/l(n)^{\spec(A)}_{\Zar}
)_{x}
\end{align*}
for $x\in \spec(A)$ and any integer $q$ where $A_{x}$
is the local ring of $A$ at a point
$x\in\spec(A)$.

Let $Z_{x}$ be an irreducible component of $\spec(A_{x})$ and 
$U_{x}=\spec(A_{x})\setminus Z_{x}$. Then there exists an exact
sequence
\begin{align*}
\HO^{q}_{\Zar}(
Z_{x},
\mathbb{Z}/l(n)
)    
\to
\HO^{q}_{\Zar}(
A_{x},
\mathbb{Z}/l(n)
)
\to
\HO^{q}_{\Zar}(
U_{x},
\mathbb{Z}/l(n)
)
\end{align*}
by the localization theorem (\cite[p.779, Theorem 3.2]{Ge}). Since we have isomorphisms
\begin{equation*}
\HO^{q}_{\Zar}(
Z_{x},
\mathbb{Z}/l(n)
)
\simeq
\HO^{q}_{\et}(
Z_{x},
\mu_{l}^{\otimes n}
)
=0
\end{equation*}
for $q<0$ by \cite[p.774, Theorem 1.2]{Ge} and \cite{V2},
it suffices to prove that
\begin{equation*}
\HO^{q}_{\Zar}(
U_{x}, 
\mathbb{Z}/l(n)
)    
=0
\end{equation*}
for $q<0$.

Let $Z^{\prime}_{x}$ be the union of the irreducible components of $\spec(A_{x})$
excluding $Z_{x}$. Then $Z^{\prime}_{x}$ and 
$Z^{\prime}_{x}\cap Z_{x}$ are the spectrum of a local ring of a normal crossing
variety over $\spec(k)$ and there exists an exact sequence
\begin{align*}
\HO^{q}_{\Zar}(
Z_{x}^{\prime},
\mathbb{Z}/l(n)
)    
\to
\HO^{q}_{\Zar}(
U_{x},
\mathbb{Z}/l(n)
)
\to
\HO^{q-1}_{\Zar}(
Z_{x}\cap Z_{x}^{\prime},
\mathbb{Z}/l(n-1)
)
\end{align*}
by the localization theorem (\cite[p.779, Theorem 3.2]{Ge}).
Since we have
\begin{align*}
\#\left(
(Z^{\prime}_{x})^{(0)}
\right)    
=
\#\left(
(Z^{\prime}_{x}\cap Z_{x})^{(0)}
\right)
=
\#\left(
(\spec(A_{x}))^{(0)}
\right)-1,
\end{align*}
the isomorphism (\ref{Vanineg}) follows by induction on
$\#\left((\spec(A_{x}))^{(0)}\right)$.

Next we prove the isomorphism (\ref{Zerods}). We also prove the isomorphism (\ref{Zerods})
by induction on 
$\#\left((\spec(A))^{(0)}\right)$.

In the case where 
$\#\left((\spec(A))^{(0)}\right)=1$,
the isomorphism (\ref{Zerods}) follows from \cite[p.774, Theorem 1.2.4]{Ge}
and \cite{V2}.

Assume that the isomorphism (\ref{Zerods}) holds in the case where
$\#\left((\spec(A))^{(0)}\right)\leq s$. Suppose that
$\#\left((\spec(A))^{(0)}\right)=s+1$. Let $\bar{\{y\}}$ be an irreducible component of
$\spec(A)$ and $U_{0}=\spec(A)\setminus\bar{\{y\}}$. Then there exists an exact sequence
\begin{align*}
0\to   
\HO^{0}_{\Zar}(
\bar{\{y\}},
\mathbb{Z}/l(n)
)
\to
\HO^{0}_{\Zar}(
\spec(A),
\mathbb{Z}/l(n)
)
\to
\HO^{0}_{\Zar}(
U_{0},
\mathbb{Z}/l(n)
)
\end{align*}
by the localization theorem (\cite[p.779, Theorem 3.2]{Ge})
and the isomorphism (\ref{Vanineg}). 
Let $Z_{1}$ be the union of the irreducible components
of $\spec(A)$ excluding $\bar{\{y\}}$. Since $Z_{1}\cap\bar{\{y\}}$ is the spectrum of a 
local ring of a normal crossing variety over $\spec(k)$, we have
\begin{equation*}
\HO^{q}_{\Zar}(
Z_{1}\cap\bar{\{y\}},
\mathbb{Z}/l(n-1)
)
=
0
\end{equation*}
for $q<0$ by the isomorphism (\ref{Vanineg}). Moreover, we have
\begin{equation*}
U_{0}
=
Z_{1}\setminus
\left(Z_{1}\cap\bar{\{y\}}\right)
\end{equation*}
and $\#((Z_{1})^{(0)})=s$.
So we have isomorphisms
\begin{equation*}
\HO^{0}_{\Zar}(
Z_{1}, \mathbb{Z}/l(n)
)    
\xrightarrow{\simeq}
\HO^{0}_{\Zar}(
U_{0},
\mathbb{Z}/l(n)
)
\xrightarrow{\simeq}
\bigoplus_{z\in (Z_{1})^{(0)}}
\HO^{0}_{\Zar}(
\kappa(z),
\mathbb{Z}/l(n)
)
\end{equation*}
by the localization theorem (\cite[p.779, Theorem 3.2]{Ge}) and the assumption. 
Moreover,
the composition
\begin{align*}
\HO^{0}_{\Zar}(
\bar{\{z\}},
\mathbb{Z}/l(n)
)    
\to
\HO^{0}_{\Zar}(
A,
\mathbb{Z}/l(n)
)
\to
\HO^{0}_{\Zar}(
U_{0},
\mathbb{Z}/l(n)
)
\to
\HO^{0}_{\Zar}(
\kappa(z),
\mathbb{Z}/l(n)
)
\end{align*}
is an isomorphism for $z\in (Z_{1})^{(0)}$.
So we have the isomorphism (\ref{Zerods}) in the case where
$\#((\spec(A))^{(0)})=s+1$. Hence, the isomorphism (\ref{Zerods}) holds.

Finally, we prove the isomorphism (\ref{Vanipos}). By \cite[Proposition 2.1]{Sak4},
it suffices to prove the isomorphism (\ref{Vanipos}) in the case where 
$\#((\spec(A))^{(0)})<n+1$. Let $A_{\bar{x}}$ be the strict henselian of $A$
at a point $x\in \spec(A)$. If $l^{\prime}$ is a prime number with $(l^{\prime}, \operatorname{char}(k))=1$,
we have $\operatorname{cd}_{l^{\prime}}(A_{\bar{x}})=0$. So we have isomorphisms
\begin{align*}
\HO^{q}_{\et}(
A_{\bar{x}},
\mathbb{Z}/l(n)
)  
\simeq
\Gamma\left(
A_{\bar{x}}, 
\mathcal{H}^{q}(
\mathbb{Z}/l(n)
)_{\et}
\right)
\simeq 
\mathcal{H}^{q}\left(
\mathbb{Z}/l(n)_{\et}
\right)_{\bar{x}}
\end{align*}
for $q>0$. By \cite[p.774, Theorem 1.2.2]{Ge} and \cite{V2}, 
it suffices to prove that
\begin{equation*}
\HO^{q}_{\Zar}(
A_{\bar{x}},
\mathbb{Z}/l(n)
)    
=0
\end{equation*}
for 
\begin{math}
q\geq 
\#((\spec(A_{\bar{x}}))^{(0)})
=
\#((\spec(A_{x}))^{(0)}).
\end{math}
By a similar argument as in the proof of the isomorphism (\ref{Vanineg}), 
the isomorphism (\ref{Vanipos}) holds as follows.

Let $Z_{\bar{x}}$ be an irreducible component of $\spec(A_{\bar{x}})$
and $Z_{\bar{x}}^{\prime}$ the union of the irreducible components of
$\spec(A_{\bar{x}})$ excluding $Z_{\bar{x}}$. Then we have
\begin{align*}
U_{\bar{x}}
:=
\spec(A_{\bar{x}})\setminus
Z_{\bar{x}}
=
Z^{\prime}_{\bar{x}}
\setminus
Z^{\prime}_{\bar{x}}
\cap
Z_{\bar{x}}
\end{align*}
and isomorphisms
\begin{equation*}
\HO^{q}_{\Zar}(
Z_{\bar{x}},
\mathbb{Z}/l(n)
)    
\simeq
\HO^{q}_{\et}(
Z_{\bar{x}},
\mathbb{Z}/l(n)
)    
\simeq
\HO^{q}_{\et}(
Z_{\bar{x}},
\mu_{l}^{\otimes n}
) 
\simeq
0
\end{equation*}
for $q>0$ by \cite[p.774, Theorem 1.2.2 and Theorem 1.2.4]{Ge}, \cite{V2}
and \cite[p.786, Corollary 4.4]{Ge}. 
So we have an isomorphism
\begin{equation*}
\HO^{q}_{\Zar}(
A_{\bar{x}},
\mathbb{Z}/l(n)
)    
\simeq 
\HO^{q}_{\Zar}(
U_{\bar{x}},
\mathbb{Z}/l(n)
) 
\end{equation*}
for $q>0$. Moreover, we have an exact sequence
\begin{align*}
\HO^{q}_{\Zar}(
Z^{\prime}_{\bar{x}},
\mathbb{Z}/l(n)
)    
\to
\HO^{q}_{\Zar}(
U_{\bar{x}}, 
\mathbb{Z}/l(n)
)
\to
\HO^{q-1}_{\Zar}(
Z^{\prime}_{\bar{x}}\cap
Z_{\bar{x}},
\mathbb{Z}/l(n-1)
\end{align*}
for $q>0$ by the localization theorem (\cite[p.779, Theorem 3.2]{Ge}). Since we have
\begin{align*}
\#((Z^{\prime}_{\bar{x}})^{(0)})
=
\#((
Z^{\prime}_{\bar{x}}
\cap
Z_{\bar{x}}
)^{(0)})
=
\#(
(
\spec(A_{\bar{x}})
)^{(0)}
)
-1,
\end{align*}
the isomorphism (\ref{Vanipos}) follows by induction on
$\#((\spec(A_{\bar{x}}))^{(0)})$. This completes the proof.
\end{proof}
\begin{rem}\upshape
Let $A_{\bar{x}}$ be the strict henselization of a normal crossing variety $Y$ over the spectrum of a field $k$
at a point $x\in Y$ and $l$ an integer which is prime to $\operatorname{char}(k)$.

If $\#((\spec(A_{\bar{x}}))^{(0)})<n+1$ and
$s=\#((\spec(A_{\bar{x}}))^{(0)})-1$, then
\begin{equation*}
\mathcal{H}^{s}(
\mathbb{Z}/l(n)_{\et}^{Y}
)_{\bar{x}}
\simeq
\HO^{s}_{\et}(
A_{\bar{x}},
\mathbb{Z}/l(n)
) 
\neq 0.
\end{equation*}
In fact, we have isomorphisms
\begin{equation*}
\HO^{s}_{\et}(
A_{\bar{x}},
\mathbb{Z}/l(n)
)    
\simeq
\HO^{s-1}_{\et}(
Z_{\bar{x}}\cap Z_{\bar{x}}^{\prime},
\mathbb{Z}/l(n-1)
)
\simeq 
\cdots
\simeq
\mu_{l}^{\otimes (n-s)}
\end{equation*}
by the same argument as in the proof of the isomorphism (\ref{Vanipos}).
\end{rem}

\subsection{Mixed characteristic cases}
We compute the motivic cohomology in mixed characteristic.
\begin{prop}\upshape\label{VSC}
Let $B$ be a discrete valuation ring of mixed characteristic $(0, p)$
and 
$\pi$ a prime element of $B$.
Let
\begin{equation*}
C=
B[T_{0}, T_{1}, \cdots, T_{N}]/
(
T_{0}\cdots T_{a}
-\pi
)
\end{equation*}
for 
$0\leq a\leq N$.
Then we have
\begin{equation}\label{Vsg}
\HO^{q}_{\Zar}(
C, \mathbb{Z}(n)
)    
=0
\end{equation}
for $q> n+1$.
Moreover, we have
\begin{equation}\label{mVsg}
\HO^{q}_{\Zar}(
C, \mathbb{Z}/m(n)
)    
=0
\end{equation}
for any integers $m\geq 0$ and $q> n+1$.
\end{prop}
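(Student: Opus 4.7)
The plan is to induct on the parameter $a$ (for any $N \geq a$), applying the localization theorem \cite[p.779, Theorem 3.2]{Ge} twice together with the isomorphism $C_{T_{a}} \simeq C'_{T'_{a}}$ described in the introduction, where $C' = B[T'_{0}, \ldots, T'_{N}]/(T'_{0}\cdots T'_{a-1}-\pi)$. The base case $a = 0$ forces $T_{0} = \pi$, so $C \simeq B[T_{1}, \ldots, T_{N}]$; by homotopy invariance \cite[p.781, Corollary 3.5]{Ge} the computation reduces to $\HO^{q}_{\Zar}(B, \mathbb{Z}(n))$, and since $B$ is a regular scheme of dimension one with $\mathcal{H}^{q}(\mathbb{Z}(n)^{B}_{\Zar}) = 0$ for $q > n$ by \cite[p.786, Corollary 4.4]{Ge}, the Zariski hypercohomology spectral sequence yields the vanishing in degrees $q > n+1$.

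For the inductive step, assume the proposition holds for all parameter values $a' < a$ and any $N' \geq a'$. By the isomorphism
\begin{equation*}
C_{T_{a}} \simeq \bigl(B[T'_{0},\ldots,T'_{N}]/(T'_{0}\cdots T'_{a-1}-\pi)\bigr)_{T'_{a}} = C'_{T'_{a}},
\end{equation*}
it suffices to compute the cohomology of $C'_{T'_{a}}$. I would then apply the localization theorem to the codimension one closed immersion $\spec(C'/(T'_{a})) \hookrightarrow \spec(C')$ with open complement $\spec(C'_{T'_{a}})$. Since $T'_{a}$ does not appear in the defining relation of $C'$, the quotient $C'/(T'_{a}) \simeq B[T'_{0},\ldots,\widehat{T'_{a}},\ldots,T'_{N}]/(T'_{0}\cdots T'_{a-1}-\pi)$ is again of the proposition's form with parameters $(a-1, N-1)$, and $T'_{a}$ is a regular element of $C'$. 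Feeding the inductive hypothesis into the long exact sequence
\begin{equation*}
\cdots \to \HO^{q-2}_{\Zar}(C'/(T'_{a}), \mathbb{Z}(n-1)) \to \HO^{q}_{\Zar}(C', \mathbb{Z}(n)) \to \HO^{q}_{\Zar}(C'_{T'_{a}}, \mathbb{Z}(n)) \to \HO^{q-1}_{\Zar}(C'/(T'_{a}), \mathbb{Z}(n-1)) \to \cdots
\end{equation*}
forces $\HO^{q}_{\Zar}(C'_{T'_{a}}, \mathbb{Z}(n)) = 0$ for $q > n+1$: the middle-left term vanishes by the hypothesis applied to $C'$, and the rightmost term vanishes by the hypothesis applied to $C'/(T'_{a})$ with weight shifted by one (the needed range $q - 1 > n$ is exactly $q > n+1$).

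Next, I would apply the localization theorem a second time to $\spec(C/(T_{a})) \hookrightarrow \spec(C)$ with open complement $\spec(C_{T_{a}})$. One verifies directly that $C$ is regular (in fact a semistable family over $\spec(B)$) and that $T_{a}$ is a regular element, so this is a regular closed immersion of codimension one. Since $T_{a} = 0$ forces $\pi = 0$ in $C$, we have $C/(T_{a}) \simeq k[T_{0},\ldots,\widehat{T_{a}},\ldots,T_{N}]$ with $k = B/\pi B$ the residue field; homotopy invariance reduces the motivic cohomology of this ring to that of the field $k$, giving vanishing in degrees greater than $n-1$ at weight $n-1$. Then in the localization long exact sequence for $C$ both flanking terms vanish for $q > n+1$, completing the integral statement. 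The mod $m$ statement follows from the Bockstein distinguished triangle $\mathbb{Z}(n) \xrightarrow{m} \mathbb{Z}(n) \to \mathbb{Z}/m(n) \to \mathbb{Z}(n)[1]$, since for $q > n+1$ both $\HO^{q}_{\Zar}(C, \mathbb{Z}(n))$ and $\HO^{q+1}_{\Zar}(C, \mathbb{Z}(n))$ vanish. The main bookkeeping challenge is threading the pair of parameters $(a, N)$ through the induction and verifying regularity at each localization step; once this is set up, the three vanishing ranges that appear align cleanly.
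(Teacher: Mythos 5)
Your proposal is correct and follows essentially the same route as the paper: the same substitution $T'_{a-1}\mapsto T_{a-1}T_{a}$ giving $C_{T_{a}}\simeq C'_{T'_{a}}$, the same two applications of the localization theorem (first to $V(T'_{a})\subset\spec(C')$, then to $V(T_{a})\subset\spec(C)$ with $C/(T_{a})\simeq (B/\pi)[T_{0},\dots,\widehat{T_{a}},\dots,T_{N}]$), and the same Bockstein reduction for the mod $m$ statement. The only cosmetic difference is that the paper first invokes homotopy invariance to reduce to $a=N$ and inducts on that single parameter, whereas you carry the pair $(a,N)$ through the induction; the weight and degree bookkeeping matches the paper's.
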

\begin{proof}\upshape
Since we have an exact sequence
\begin{equation*}
\cdots 
\to
\HO^{s}_{\Zar}(C, \mathbb{Z}(n))
\xrightarrow{\times m}
\HO^{s}_{\Zar}(C, \mathbb{Z}(n))
\to
\HO^{s}_{\Zar}(C, \mathbb{Z}/m(n))
\to \cdots
\end{equation*}
for any integer $s$, the equation (\ref{mVsg}) follows from the equation (\ref{Vsg}).
So it suffices to prove the equation (\ref{Vsg}).

If $a<N$, we have an isomorphism
\begin{equation*}
\HO^{q}_{\Zar}(
C,
\mathbb{Z}(n)
)    
\simeq
\HO^{q}_{\Zar}(
B[T_{0}, \cdots, T_{a}]/(
T_{0}\cdots T_{a}-\pi
),
\mathbb{Z}(n)
)
\end{equation*}
by \cite[p.781, Corollary 3.5]{Ge}.
So it suffices to show (\ref{Vsg}) in the case where $a=N$.
We prove the statement by induction on $a=N$.

Let $a=N=0$. Then we have an isomorphism
$C\simeq B$. So we have the equation (\ref{Vsg}) 
by 
\cite[p.786, Corollary 4.4]{Ge}.

Assume that the equation (\ref{Vsg}) holds in the case where $a=N\leq b$.
We have a homomorphism of polynomial rings over $B$
\begin{equation*}
B[T_{0}^{\prime}, \cdots, T_{b}^{\prime}, T_{b+1}^{\prime}]
\to
B[T_{0}, \cdots, T_{b}, T_{b+1}]
\end{equation*}
which sends $T_{b}^{\prime}$ to $T_{b}T_{b+1}$
and sends $T_{r}^{\prime}$ to $T_{r}$ for $r\neq b$.
Then this homomorphism induces an isomorphism
\begin{align}\label{isofo}
&\left(
B[T_{0}^{\prime}, \cdots, T_{b}^{\prime}, T_{b+1}^{\prime}]
/(
T_{0}^{\prime}\cdots 
T_{b}^{\prime}-\pi
)
\right)_{T_{b+1}^{\prime}} \nonumber 
\\
\xrightarrow{\simeq}
&\left(
B[T_{0}, \cdots, T_{b}, T_{b+1}]
/(
T_{0}\cdots 
T_{b}\cdot T_{b+1}-\pi
)
\right)_{T_{b+1}}. 
\end{align}
Moreover, we have an isomorphism
\begin{align*}
&\left(
B[T_{0}^{\prime}, \cdots, T_{b-1}^{\prime}, T_{b}^{\prime}, T_{b+1}^{\prime}]
/(
T_{0}^{\prime}\cdots T_{b-1}^{\prime}
\cdot T_{b}^{\prime}-\pi
)
\right)_{T_{b+1}^{\prime}}  
\\
\simeq
&\bigl(
\left(
B[T_{0}^{\prime}, \cdots, T_{b-1}^{\prime}, T_{b}^{\prime}]
/(
T_{0}^{\prime}\cdots T_{b-1}^{\prime}
\cdot T_{b}^{\prime}-\pi
)
\right)
\otimes_{B}
B[T_{b+1}^{\prime}]
\bigr)_{T_{b+1}^{\prime}}.
\end{align*}
Since there exists an exact sequence
\begin{align*}
&\HO^{q}_{\Zar}\left(
(B[T_{0}^{\prime}, \cdots, T_{b}^{\prime}]/(T_{0}^{\prime}\cdots T_{b}^{\prime}-\pi))
\otimes_{B}B[T_{b+1}^{\prime}],
\mathbb{Z}(n)
\right)  \\  
\to  
&\HO^{q}_{\Zar}(
C_{T_{b+1}},
\mathbb{Z}(n)
) 
\to
\HO^{q-1}_{\Zar}(
B[T_{0}^{\prime}, \cdots, T_{b}^{\prime}]/(T_{0}^{\prime}\cdots T_{b}^{\prime}-\pi
),
\mathbb{Z}(n-1)
)
\end{align*}
for $N=a=b+1$
by the localization theorem (\cite[p.779, Theorem 3.2]{Ge}),
we have 
\begin{equation}\label{CO}
\HO^{q}_{\Zar}
(C_{T_{b+1}},
\mathbb{Z}(n)
)
=
0
\end{equation}
for $N=a=b+1$ and $q>n+1$ 
by 
\cite[p.781, Corollary 3.5]{Ge} 
and the assumption.
Moreover, we have isomorphisms
\begin{align}\label{CC}
\HO_{\Zar}^{q}(
C/(T_{b+1}),
\mathbb{Z}(n-1)
)    
\simeq
\HO^{q}_{\Zar}(
(B/(\pi))[
T_{0}, \cdots, T_{b}
],
\mathbb{Z}(n-1)
)
\simeq
0
\end{align}
for $N=a=b+1$ and $q\geq n$
by \cite[p.781, Corollary 3.5]{Ge}.
Hence we have the equation (\ref{Vsg}) for $q>n+1$ by
(\ref{CO}), (\ref{CC}) and the localization theorem
(\cite[p.779, Theorem 3.2]{Ge}).
This completes the proof.
\end{proof}
\begin{prop}\upshape\label{SBL}
Let $\mathfrak{X}$ be a regular scheme which is essentially of 
finite type over the spectrum of a discrete valuation ring in
mixed characteristic $(0, p)$. Then we have a quasi-isomorphism
\begin{equation}\label{eBL}
\tau_{\leq n+1}\left(
\mathbb{Z}(n)_{\Zar}^{\mathfrak{X}}
\right)
\xrightarrow{\simeq}
\tau_{\leq n+1}\left(
R\epsilon_{*}
\mathbb{Z}(n)_{\et}^{\mathfrak{X}}
\right)
\end{equation}
where $\epsilon: \mathfrak{X}_{\et}\to \mathfrak{X}_{\Zar}$ 
is the canonical map of sites and $\epsilon_{*}$ is the
forgetful functor. Moreover, 
we have a quasi-isomorphism
\begin{equation}\label{aBL}
\tau_{\leq n+1}\left(
\mathbb{Z}(n)_{\Nis}^{\mathfrak{X}}
\right)
\xrightarrow{\simeq}
\tau_{\leq n+1}\left(
R\alpha_{*}
\mathbb{Z}(n)_{\et}^{\mathfrak{X}}
\right)
\end{equation}
where $\alpha: \mathfrak{X}_{\et}\to \mathfrak{X}_{\Nis}$ 
is the canonical map of sites.
\end{prop}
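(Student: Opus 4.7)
The plan is to follow the strategy of Proposition~\ref{pBL} adapted to mixed characteristic. Since the statement is Zariski- (resp.\ Nisnevich-) local on $\mathfrak{X}$, we reduce to proving the quasi-isomorphism at each stalk $A = \mathcal{O}_{\mathfrak{X}, x}$ (resp.\ $\mathcal{O}^{h}_{\mathfrak{X}, x}$) at a point $x \in \mathfrak{X}$, and we proceed by induction on $\dim A$.

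For the base case $\dim A = 0$, $A$ is a field (of characteristic $0$ if $x$ lies on the generic fiber, of characteristic $p$ if it lies on the special fiber), and (\ref{eBL}) is the Beilinson-Lichtenbaum theorem for fields (Geisser-Levine together with the resolution of the Bloch-Kato conjecture, cf.\ \cite[p.774, Theorem 1.2.2]{Ge}, \cite{V2}). For the inductive step, use the excellence of $A$ to choose a dense regular open immersion $j: U \hookrightarrow \spec(A)$ with closed complement $\iota: Z \hookrightarrow \spec(A)$ of codimension $c \geq 1$.

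The main auxiliary input is the Beilinson-Lichtenbaum quasi-isomorphism
\begin{equation*}
\mathbb{Z}(n)^{U}_{\Zar} \xrightarrow{\simeq} \tau_{\leq n+1}\, R\epsilon_{*} \mathbb{Z}(n)^{U}_{\et}
\end{equation*}
for the regular scheme $U$. At points of $U$ lying on the generic fiber $\mathfrak{X} \otimes_{B} K$ this follows from Proposition~\ref{pBL} applied over $K$. At points on the special fiber, we employ a Popescu-type (Quillen-style) approximation to write the stalk as a filtered colimit of localizations of smooth $B$-algebras, and then invoke Geisser's Beilinson-Lichtenbaum theorem for smooth schemes over Dedekind rings (which itself depends on Bloch-Kato at $p$). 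Once this is established, compare the two distinguished triangles
\begin{equation*}
\iota_{*} \mathbb{Z}(n-c)[-2c]_{\et} \to \mathbb{Z}(n)_{\et} \to \epsilon^{*} R j_{*} \mathbb{Z}(n)_{\Zar}
\end{equation*}
and
\begin{equation*}
\iota_{*} R\iota^{!} \mathbb{Z}(n)_{\et} \to \mathbb{Z}(n)_{\et} \to R j_{*} \mathbb{Z}(n)_{\et}
\end{equation*}
exactly as in the proof of Proposition~\ref{pBL}. The BL statement on $U$ forces the right-hand vertical map to be an isomorphism on $R^{q}j_{*}$ for $q \leq n+1$ and injective for $q = n+2$, and the five-lemma then yields a purity-type quasi-isomorphism on $\tau_{\leq n+2}$ between $\iota_{*}\mathbb{Z}(n-c)[-2c]_{\et}$ and $\iota_{*}R\iota^{!}\mathbb{Z}(n)_{\et}$. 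Applying $R\epsilon_{*}$ and comparing with the Zariski localization triangle on $\spec(A)$, the induction hypothesis applied to $Z$ (whose local rings at the image of $x$ have strictly smaller dimension) closes the argument by a final five-lemma application.

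For the Nisnevich version (\ref{aBL}), the same induction works verbatim in the Nisnevich topology: the Nisnevich stalks at $x$ are the henselizations $\mathcal{O}^{h}_{\mathfrak{X}, x}$, which remain regular and essentially of finite type over $B$, so the inductive hypothesis and Geisser's theorem continue to apply; alternatively, one may deduce (\ref{aBL}) from (\ref{eBL}) using the factorization of $\epsilon$ through $\alpha$ and the vanishing of higher Nisnevich cohomology of Zariski local rings. The principal obstacle is the mixed-characteristic Beilinson-Lichtenbaum input on $U$ at special-fiber points: carrying out a clean Popescu/Quillen-type reduction to smooth $B$-algebras and then invoking Geisser's theorem is where all the delicate behavior at the prime $p$ is concentrated, and it is the step that genuinely separates the argument from the equi-characteristic case handled in Proposition~\ref{pBL}.
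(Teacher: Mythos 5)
Your proposal has a genuine gap, concentrated exactly where you yourself locate ``all the delicate behavior at the prime $p$'': the Beilinson--Lichtenbaum input on the dense open $U$ at points of the special fiber. First, the proposed reduction via a Popescu/Quillen-type approximation to localizations of smooth $B$-algebras does not work: N\'{e}ron--Popescu desingularization requires the structure map $B\to A$ to be a \emph{regular homomorphism} (geometrically regular fibers), and a regular local ring $A$ essentially of finite type over $B$ in mixed characteristic typically has a singular special fiber (e.g.\ the local ring at a node of the special fiber of $B[T_{0},T_{1}]/(T_{0}T_{1}-\pi)$, the paper's main case of interest), so $A$ is \emph{not} a filtered colimit of smooth $B$-algebras and Geisser's theorem for smooth schemes over Dedekind rings cannot be invoked this way. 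Second, even granting some form of this input, the induction makes no progress: since $\mathfrak{X}$ is already regular, every dense open $U$ of $\spec(A)$ is a regular scheme essentially of finite type over $B$ whose local rings at special-fiber points are of exactly the same kind as $A$, so ``BL on $U$'' is not a smaller instance of the problem --- the argument is circular. Relatedly, the first of your two triangles, $\iota_{*}\mathbb{Z}(n-c)[-2c]_{\et}\to\mathbb{Z}(n)_{\et}\to\epsilon^{*}Rj_{*}\mathbb{Z}(n)_{\Zar}$, cannot simply be carried over from the equicharacteristic proof of Proposition \ref{pBL} when $Z$ has characteristic $p$: purity for \'{e}tale motivic cohomology along such $Z$ holds only after truncation (this is the content of Proposition \ref{cset}, which in the paper is a \emph{consequence} of Proposition \ref{SBL}, not an input to it).

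The paper's actual proof avoids all of this by a different decomposition: it splits $\mathfrak{X}$ along the closed fiber $i\colon Y\to\mathfrak{X}$, so that both pieces are equicharacteristic --- the generic fiber lives over the characteristic-$0$ field $K$ and the closed fiber over the residue field --- and Proposition \ref{pBL} applies to each (note that \ref{pBL} needs no regularity, which matters since $Y$ is only a normal crossing scheme). The gluing is done by comparing the Zariski localization triangle, where $\mathbb{Z}(n-1)^{Y}_{\Zar}[-2]\simeq i_{*}Ri^{!}\mathbb{Z}(n)^{\mathfrak{X}}_{\Zar}$ is known, with the \'{e}tale side, where the truncated purity quasi-isomorphism $\tau_{\leq n+2}\bigl(R\epsilon_{*}\mathbb{Z}(n-1)^{Y}_{\et}[-2]\bigr)\simeq\tau_{\leq n+2}\,i_{*}Ri^{!}\bigl(R\epsilon_{*}\mathbb{Z}(n)^{\mathfrak{X}}_{\et}\bigr)$ is imported from \cite[p.33, Proposition 2.1]{SM}; the five lemma then gives (\ref{eBL}), and (\ref{aBL}) follows by applying $\beta^{*}$ for $\beta\colon\mathfrak{X}_{\Nis}\to\mathfrak{X}_{\Zar}$. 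That external purity theorem is precisely the mixed-characteristic input your argument is missing and cannot manufacture from the five-lemma comparison you describe.
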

\begin{proof}\upshape
Let $i: Y\to \mathfrak{X}$ be the inclusion of the closed fiber of $\mathfrak{X}$.
Then we have a quasi-isomorphism
\begin{equation*}
\tau_{\leq n+2}\left(
R\epsilon_{*}\mathbb{Z}(n-1)_{\et}^{Y}
[-2]
\right)
\xrightarrow{\simeq}
\tau_{\leq n+2}i_{*}Ri^{!}\left(
R\epsilon_{*}
\mathbb{Z}(n)_{\et}^{\mathfrak{X}}
\right)
\end{equation*}
by \cite[p.33, Proposition 2.1]{SM}. Moreover, we have a quasi-isomorphism
\begin{equation*}
\mathbb{Z}(n-1)_{\Zar}^{Y}
[-2]
\xrightarrow{\simeq}
i_{*}Ri^{!}
\mathbb{Z}(n)_{\Zar}^{\mathfrak{X}}
\end{equation*}
(cf. \cite[p.780, (6)]{Ge}).
Let $Z$ be a scheme which is finite type over the spectrum of a field. Then we have 
a quasi-isomorphism
\begin{equation*}
\tau_{\leq n+1}
\mathbb{Z}(n)_{\Zar}^{Z}
\xrightarrow{\simeq}
\tau_{\leq n+1}
R\epsilon_{*}\mathbb{Z}(n)_{\et}^{Z}
\end{equation*}
by Proposition \ref{pBL}. So the quasi-isomorphism (\ref{eBL}) follows from the five lemma.
Let $\beta: \mathfrak{X}_{\Nis}\to\mathfrak{X}_{\Zar}$ be the canonical map of sites. 
By applying $\beta^{*}$ to the quasi-isomorphism (\ref{eBL}), 
we obtain the quasi-isomorphism (\ref{aBL}). This completes the proof.
\end{proof}
\begin{prop}\upshape\label{cset}
Let $X$ be a regular scheme which is essentially
finite type over the spectrum of a discrete
valuation ring of mixed characteristic $(0, p)$, 
$Z$ an irreducible component of the closed fiber of $X$
and
$i: Z\to X$ the corresponding closed immersion.
Let $n$ be a non-negative integer.
Then we have a quasi-isomorphism
\begin{equation}\label{qiss}
\tau_{\leq n+2}\left(
\mathbb{Z}(n-1)_{\et}[-2]
\right)
\xrightarrow{\simeq}
\tau_{\leq n+2}Ri^{!}\mathbb{Z}(n)_{\et}
\end{equation}
and a quasi-isomorphism
\begin{equation*}
\tau_{\leq n+1}\left(
\mathbb{Z}/m(n-1)_{\et}[-2]
\right)
\xrightarrow{\simeq}
\tau_{\leq n+1}Ri^{!}\mathbb{Z}/m(n)_{\et}
\end{equation*}
for any positive integer $m$.
\end{prop}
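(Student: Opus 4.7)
My plan is to split the problem by primes via Bockstein distinguished triangles and to reduce the integral statement to mod-$m$ coefficients, treating the cases $m = \ell^k$ with $\ell \neq p$ and $m = p^r$ separately. The prime-to-$p$ part is handled by Gabber's absolute purity, while the $p$-primary part is handled by K.~Sato's purity theorem for the $p$-adic \'{e}tale Tate twist $\mathfrak{T}_{r}(n)$, using the identification $\tau_{\leq n+1}\mathbb{Z}/p^{r}(n)^{X}_{\et}\simeq \mathfrak{T}_{r}(n)$ recalled in the introduction.

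For the prime-to-$p$ part, fix $\ell\neq p$. Since $\ell$ is invertible on $X$ and on $Z$ (as $\operatorname{char}(Z)=p$), the Suslin-Voevodsky theorem (\cite[p.774, Theorem 1.2.4]{Ge}) gives quasi-isomorphisms $\mathbb{Z}/\ell^{k}(n)^{X}_{\et}\simeq \mu^{\otimes n}_{\ell^{k}}$ and $\mathbb{Z}/\ell^{k}(n-1)^{Z}_{\et}\simeq \mu^{\otimes (n-1)}_{\ell^{k}}$ in all degrees. Gabber's absolute purity for the codimension-$1$ closed immersion $i\colon Z\to X$ into the regular scheme $X$ then supplies a quasi-isomorphism $Ri^{!}\mu^{\otimes n}_{\ell^{k}}\simeq \mu^{\otimes (n-1)}_{\ell^{k}}[-2]$, yielding the mod-$\ell^{k}$ Gysin quasi-isomorphism in all degrees (in particular in the stated truncated ranges).

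For the $p$-primary part, I would use $\tau_{\leq n+1}\mathbb{Z}/p^{r}(n)^{X}_{\et}\simeq \mathfrak{T}_{r}(n)$ (cf.~\cite[p.209, Remark 7.2]{SaR}, \cite[Proposition 1.5]{Sak4}) together with Sato's purity theorem for $\mathfrak{T}_{r}(n)$ along $i$. Sato's purity produces, in the appropriate truncation, a Gysin quasi-isomorphism whose target on $Z$ is expressed in terms of $\lambda^{n-1}_{Z,r}$ (equivalently $\nu^{n-1}_{Z,r}$ after truncation). On the other side, the Geisser-Levine / Sato description of $\tau_{\leq n-1}\mathbb{Z}/p^{r}(n-1)^{Z}_{\et}$ for schemes of characteristic $p$ matches this description. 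Comparing the two gives the mod-$p^{r}$ quasi-isomorphism in $\tau_{\leq n+1}$. Combining with the prime-to-$p$ case via Chinese remainder gives the mod-$m$ quasi-isomorphism for every positive integer $m$.

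Finally, the integral statement in $\tau_{\leq n+2}$ is deduced from the mod-$m$ statements for all $m$ by the Bockstein triangles and a standard torsion/divisibility analysis: on the cone of the Gysin map, the mod-$m$ vanishing of cohomology in degrees $\leq n+1$ forces the integral cohomology to be torsion-free in degrees $\leq n+2$ and divisible in degrees $\leq n+1$, and the rational Gysin quasi-isomorphism for $\mathbb{Q}(n)_{\et}$ (available on the regular scheme $X$) kills the residual $\mathbb{Q}$-part. The main obstacle is the $p$-primary step: $Z$ is only assumed to be irreducible and of characteristic $p$, not necessarily smooth, so one must invoke Sato's purity for $\mathfrak{T}_{r}(n)$ in a form that covers this generality and carefully track the truncation indices. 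The fact that $Z$ is a Cartier divisor in the regular scheme $X$ (so that a Koszul resolution is available) is the geometric input that makes Sato's Gysin map accessible in this setting.
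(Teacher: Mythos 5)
Your strategy — split by primes, use Gabber's absolute purity for $\ell\neq p$ and Sato's purity for $\mathfrak{T}_{r}(n)$ at $p$, then recover the integral statement by a Bockstein/divisibility argument — is genuinely different from the paper's, but it has a gap that is not merely technical: both of your coefficient identifications on $Z$ require $Z$ to be smooth, whereas the hypothesis only makes $Z$ an irreducible Cartier divisor of characteristic $p$, which can be singular (e.g.\ $X=\spec(\mathbb{Z}_{p}[x,y]/(y^{2}-x^{3}-p))$ is regular of finite type over $\mathbb{Z}_{p}$ and its special fiber $Z=V(p)$ is the irreducible cuspidal cubic). For $\ell\neq p$, the quasi-isomorphism $\mathbb{Z}/\ell^{k}(n-1)_{\et}^{Z}\simeq\mu_{\ell^{k}}^{\otimes(n-1)}$ you quote from \cite[p.774, Theorem 1.2.4]{Ge} is a statement about smooth schemes; for non-smooth schemes the \'{e}tale hypercohomology of Bloch's complex need not be concentrated in degree $0$ (the paper's own Proposition \ref{constm} and the remark following it exhibit nonvanishing higher cohomology sheaves of $\mathbb{Z}/l(n)_{\et}$ on normal crossing varieties), so this step is unjustified. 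The $p$-primary step is worse: $\mathfrak{T}_{r}(n)$ is only defined when the special fiber is a reduced normal crossing divisor (not for an arbitrary regular $X$ essentially of finite type over $B$), Sato's purity computes $Ri^{!}$ along the full special fiber or along regular closed subschemes rather than along an arbitrary irreducible divisor, and the identification of $\tau_{\leq n-1}\mathbb{Z}/p^{r}(n-1)^{Z}_{\et}$ with $\nu^{n-1}_{Z,r}$ is Geisser--Levine, again a smoothness statement. You flag this as ``the main obstacle,'' but it is exactly the content of the proposition, so the proof is incomplete precisely where it matters. (Your final Bockstein-plus-rational argument for passing from mod-$m$ to integral coefficients is fine in principle, but it has nothing to feed on.)

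The paper sidesteps all of this. It proves the integral statement directly (the mod-$m$ statement then follows by the Bockstein triangle, in one degree less of truncation), reduces via stalks at strict henselizations to showing that $\HO^{q-2}(Z_{\et},\mathbb{Z}(n-1))\to\HO^{q}_{Z}(X_{\et},\mathbb{Z}(n))$ is an isomorphism for $q\leq n+2$, and then runs a localization/five-lemma d\'{e}vissage comparing the pair $(Z,X)$ with $(Y,X)$ and $(Y\setminus Z,\,X\setminus Z)$, where $Y$ is the whole closed fiber: the purity isomorphism for the full closed fiber is already available from \cite[p.33, Proposition 2.1]{SM}, and Proposition \ref{pBL} (the equicharacteristic Beilinson--Lichtenbaum comparison, valid for arbitrary schemes essentially of finite type over a field) is what controls the $Rj_{*}$ term on $Y\setminus Z$. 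To salvage your approach you would have to either assume $Z$ regular or import precisely this kind of d\'{e}vissage to reduce the singular case to something known — at which point you have essentially reconstructed the paper's argument.
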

\begin{proof}\upshape
It suffices to show the quasi-isomorphism (\ref{qiss}). 
Let $\mathcal{F}^{\bullet}$ be a bounded below complex on $X_{\et}$.
Then we have an isomorphism
\begin{equation*}
\HO^{q}\left(
(\mathcal{O}_{X, \bar{x}})_{\et},
\mathcal{F}^{\bullet}
\right)
\xrightarrow{\simeq}
\Gamma\left(
(\mathcal{O}_{X, \bar{x}})_{\et},
\mathcal{H}^{q}(\mathcal{F}^{\bullet})
\right)
\end{equation*}
for any integer $q\geq 0$ where
$\mathcal{O}_{X, \bar{x}}$ is the strict henselian of $X$ 
at a point $x\in X$. So it suffices to prove that 
the homomorphism
\begin{equation*}
\HO^{q-2}(
Z_{\et}, 
\mathbb{Z}(n-1)
)    
\xrightarrow{\simeq}
\HO^{q}_{Z}(
X_{\et},
\mathbb{Z}(n)
)
\end{equation*}
is an isomorphism for $q\leq n+2$.

Let $Y$ be the closed fiber of $X$
and $U=X\setminus Z$.
Let $j: W\to Y$ be an open immersion with 
$W=Y\cap U$. Then we have quasi-isomorphisms
\begin{equation*}
\tau_{\leq n}\epsilon^{*}Rj_{*}\mathbb{Z}(n-1)_{\Zar} 
\simeq
\tau_{\leq n}\epsilon^{*}Rj_{*}
\left(
\tau_{\leq n}R\epsilon_{*}\mathbb{Z}(n-1)_{\et}
\right)
\simeq
\tau_{\leq n}Rj_{*}\mathbb{Z}(n-1)_{\et}
\end{equation*}
by Proposition \ref{pBL}. Here 
$\epsilon: W_{\et}\to W_{\Zar}$ is the canonical map
of sites.
So we have a commutative diagram
\footnotesize
\begin{equation*}
\xymatrix{
\cdots
\ar[r]
&
\HO^{q-2}_{\et}(
Z,
\mathbb{Z}(n-1)
)
\ar[r]\ar[d]
&
\HO^{q-2}_{\et}(
Y,
\mathbb{Z}(n-1)
)
\ar[r]\ar[d]
&
\HO^{q-2}_{\et}(
W,
\mathbb{Z}(n-1)
)  \ar[d]
\\
\cdots 
\ar[r]
&
\HO^{q}_{Z}(
X_{\et},
\mathbb{Z}(n)
)
\ar[r]
&
\HO^{q}_{Y}(
X_{\et},
\mathbb{Z}(n)
)
\ar[r]
&
\HO^{q}_{W}(
U_{\et},
\mathbb{Z}(n)
)
}    
\end{equation*}
\normalsize
for $q\leq n+2$ by
\cite[p.776, Proposition 2.2.a)]{Ge}
and Lemma \ref{Gyscom}.
Here the sequences are exact.
Moreover, we have isomorphisms
\begin{align*}
\HO_{\et}^{q-2}(
Y,
\mathbb{Z}(n-1)
)    
\xrightarrow{\simeq}
\HO^{q}_{Y}(
X_{\et},
\mathbb{Z}(n)
)
\end{align*}
and
\begin{align*}
\HO_{\et}^{q-2}(
W,
\mathbb{Z}(n-1)
)    
\xrightarrow{\simeq}
\HO^{q}_{W}(
U_{\et},
\mathbb{Z}(n)
)
\end{align*}
for $q\leq n+2$ by \cite[p.33, Proposition 2.1]{SM}.
Therefore the statement follows from the five lemma.
\end{proof}

\begin{cor}\upshape\label{PurC}
Let $R$ be a local ring of a regular scheme which is finite type 
over the spectrum of a discrete valuation ring of mixed characteristic
$(0, p)$. Let $\mathfrak{m}$ be the maximal ideal of $\spec(R)$,
$i: \spec(R/\mathfrak{m})\to \spec(R)$ the corresponding closed immersion
and $c=\operatorname{dim}(R)$.
Let $n$ be a non-negative integer. 
Then we have a quasi-isomorphism
\begin{equation}\label{purlm}
\tau_{\leq n+2}\left(
\mathbb{Z}(n-c)_{\et}^{\spec(R/\mathfrak{m})}[-2c]
\right)
\xrightarrow{\sim}
\tau_{\leq n+2}
Ri^{!}\mathbb{Z}(n)_{\et}^{\spec(R)}
\end{equation}
and a quasi-isomorphism
\begin{equation*}
\tau_{\leq n+1}\left(
\mathbb{Z}/m(n-c)_{\et}^{\spec(R/\mathfrak{m})}[-2c]
\right)
\xrightarrow{\sim}
\tau_{\leq n+1}
Ri^{!}\mathbb{Z}/m(n)_{\et}^{\spec(R)}
\end{equation*}
for any positive integer $m$.
\end{cor}
\begin{proof}\upshape
It suffices to show the quasi-isomorphism (\ref{purlm}).  
Let $Z$ be an irreducible component of the closed fiber of $X$. 
Put $U=\spec(R)\setminus\spec(R/\mathfrak{m})$
and $W=Z\setminus\spec(R/\mathfrak{m})$. 
Since we have a commutative diagram
\small
\begin{equation*}
\xymatrix{
\cdots\ar[r]
&
\HO^{q-2c}_{\et}(
\kappa(\mathfrak{m}),
\mathbb{Z}(n-c)
)
\ar[r]\ar[d]
&
\HO^{q-2}_{\et}(
Z,
\mathbb{Z}(n-1)
)
\ar[r]\ar[d]
&
\HO^{q-2}_{\et}(
W,
\mathbb{Z}(n-1)
) \ar[d]
\\
\cdots
\ar[r]
&
\HO^{q}_{\mathfrak{m}}(
R_{\et},
\mathbb{Z}(n)
)
\ar[r]
&
\HO^{q}_{Z}(
R_{\et},
\mathbb{Z}(n)
)\ar[r]
&
\HO^{q}_{W}(
U_{\et},
\mathbb{Z}(n)
)
}    
\end{equation*}
\normalsize
for $q\leq n+2$, the statement follows from Proposition \ref{cset}.
\end{proof}
\begin{cor}\upshape
Let $X$ be a regular scheme 
which is
essentially finite type over the
spectrum of a discrete valuation ring of mixed characteristic 
$(0, p)$,
$Z$ an irreducible closed subscheme of $X$
and $i: Z\to X$ the corresponding closed immersion of codimension $c$.
Let $n$ be a non-negative integer.
Then we have a quasi-isomorphism
\begin{equation*}
\tau_{\leq n+1}\left(
\mathbb{Z}/p^{r}(n-c)^{Z}_{\et}[-2c]
\right)
\simeq
\tau_{\leq n+1}\left(
Ri^{!}\mathbb{Z}/p^{r}(n)^{X}_{\et}
\right)
\end{equation*}
for any positive integer $r$.
\end{cor}
\begin{proof}\upshape
It suffices to prove the statement in the case where 
$X$ is the spectrum of a local ring $R$ of a regular scheme
which is finite type over the spectrum of a discrete valuation ring
of mixed characteristic $(0, p)$. We prove the statement by induction on 
$\operatorname{dim}(R)$.

Assume that $\operatorname{dim}(R)\leq 2$.
Then the statement follows from \cite[p.33, Proposition 2.1]{SM}.

Assume that the statement holds 
in the case where $\operatorname{dim}(R)=d\geq 2$.
Suppose that $\operatorname{dim}(R)=d+1$. 
Put $U=\spec(R)\setminus \spec(R/\mathfrak{m})$
and
$W=Z\setminus\spec(R/\mathfrak{m})$.
Since 
$\operatorname{char}(R/\mathfrak{m})=p>0$
and
\begin{equation*}
(n-2d)
-
(n-d-1)
=1-d<0,
\end{equation*}
we have
\begin{equation*}
\HO^{n-2d}_{\et}\left(
R/\mathfrak{m},\mathbb{Z}/p^{r}(n-d-1)
\right)
=0
\end{equation*}
by \cite[Theorem 8.5]{Ge-L}. So we have a commutative diagram
\scriptsize
\begin{equation*}
\xymatrix@C=16pt@R=20pt{
\cdots\ar[r]
&
\HO^{n-2d-1}_{\et}
(
R/\mathfrak{m},
\mathbb{Z}/p^{r}(n-d-1)
)
\ar[r]\ar[d]
&
\HO^{n-2c+1}_{\et}
(
Z,
\mathbb{Z}/p^{r}(n-c)
)
\ar[r]\ar[d]
&
\HO^{n-2c+1}_{\et}
(
W,
\mathbb{Z}/p^{r}(n-c)
)
\ar[r]\ar[d]
&
0  \\
\cdots\ar[r]
&
\HO_{\mathfrak{m}}^{n+1}(
R_{\et},
\mathbb{Z}/p^{r}(n)
)
\ar[r]
&
\HO^{n+1}_{Z}(
R_{\et},
\mathbb{Z}/p^{r}(n)
)
\ar[r]
&
\HO^{n+1}_{W}(
U_{\et},
\mathbb{Z}/p^{r}(n)
).
}    
\end{equation*}
\normalsize
By using the five lemma, 
the statement in the case where $\spec(R)=d+1$
follows from Corollary \ref{PurC} and the assumption of induction. 
This completes the proof.
\end{proof}
\begin{prop}\upshape\label{VaniMtM1}
Let the notations be the same as in Proposition \ref{VSC}. Then we have
\begin{equation*}
\HO^{n+1}_{\Zar}(
C,
\mathbb{Z}/m(n)
)    
=0
\end{equation*}
for any positive number $m$.
\end{prop}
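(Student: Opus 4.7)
The plan is to prove the vanishing by induction on $a$, combining the localization theorem for motivic cohomology with the generalized Beilinson--Lichtenbaum theorems of Propositions~\ref{SBL} and~\ref{cset}. For the base case $a=0$, $C\simeq B[T_{1},\ldots,T_{N}]$ is smooth over $\spec(B)$; applying localization along the closed fiber reduces the desired vanishing to the vanishing of $\HO^{q}_{\Zar}$ in the relevant degrees on $K[T_{1},\ldots,T_{N}]$ and on $(B/\pi)[T_{1},\ldots,T_{N}]$. The closed-fiber contribution is covered by Proposition~\ref{VaniNGL} at $a=0$, while the generic-fiber contribution follows from the classical vanishing of motivic cohomology above the weight on smooth affines over a field. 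The case $m=0$ (integral coefficients) is handled by the same argument, or alternatively by combining the $m\geq 1$ cases with Proposition~\ref{VSC} via a Bockstein.

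For the inductive step, assuming the statement for all $a'<a$, I would apply \cite[p.779, Theorem 3.2]{Ge} to the codimension-one closed immersion $\spec(C/(T_{a}))\hookrightarrow\spec(C)$ with open complement $\spec(C_{T_{a}})$, yielding
\[
\HO^{n}_{\Zar}(C_{T_{a}},\mathbb{Z}/m(n))\xrightarrow{\partial}\HO^{n-1}_{\Zar}(C/(T_{a}),\mathbb{Z}/m(n-1))\to\HO^{n+1}_{\Zar}(C,\mathbb{Z}/m(n))\to\HO^{n+1}_{\Zar}(C_{T_{a}},\mathbb{Z}/m(n)).
\]
The isomorphism~\eqref{isofo} identifies $C_{T_{a}}\simeq (C'\otimes_{B}B[T])_{T}$ with $C'=B[T_{0},\ldots,T_{a-1}]/(T_{0}\cdots T_{a-1}-\pi)$. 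Homotopy invariance for the polynomial factor $B[T]$, a further localization along $\{T=0\}$, and the induction hypothesis applied to $C'$ at weights $n$ and $n-1$ together force $\HO^{n+1}_{\Zar}(C_{T_{a}},\mathbb{Z}/m(n))=0$. What remains is the surjectivity of $\partial$.

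To prove surjectivity of $\partial$, I would translate both sides into \'{e}tale cohomology. Proposition~\ref{SBL} gives $\HO^{n}_{\Zar}(C_{T_{a}},\mathbb{Z}/m(n))\simeq\HO^{n}_{\et}(C_{T_{a}},\mathbb{Z}/m(n))$; since $C/(T_{a})\simeq (B/\pi)[T_{0},\ldots,\widehat{T_{a}},\ldots,T_{N}]$ is smooth over the residue field, Proposition~\ref{pBL} provides $\HO^{n-1}_{\Zar}(C/(T_{a}),\mathbb{Z}/m(n-1))\simeq\HO^{n-1}_{\et}(C/(T_{a}),\mathbb{Z}/m(n-1))$; and Proposition~\ref{cset} identifies the Zariski boundary $\partial$ with the \'{e}tale residue map in these degrees. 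The \'{e}tale surjectivity is then handled by decomposing $m=m'p^{r}$ with $(m',p)=1$: for the prime-to-$p$ part, $\mathbb{Z}/m'(\ast)_{\et}\simeq\mu_{m'}^{\otimes\ast}$ and the Bloch--Kato/Voevodsky theorem guarantees that $\HO^{n-1}_{\et}(C/(T_{a}),\mu_{m'}^{\otimes(n-1)})$ is generated by symbols which are residues of tame symbols on $C_{T_{a}}$; for $m=p^{r}$, the Geisser--Levine identification $\mathbb{Z}/p^{r}(n-1)_{\et}\simeq\nu^{n-1}_{r}[-(n-1)]$ on the smooth char-$p$ scheme $C/(T_{a})$ reduces the target to $d\log$-symbols, each of which lifts from $C_{T_{a}}$, where $T_{a}$ is a unit.

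The main obstacle I anticipate is the naturality check matching the Zariski boundary to the \'{e}tale residue: the quasi-isomorphisms of Propositions~\ref{SBL} and~\ref{cset} hold only after truncation at $n+1$ and $n+2$ respectively, so one must verify that our degrees $n-1$ and $n$ lie comfortably within these ranges and that the Gysin distinguished triangle implementing the localization sequence corresponds on both sites. A secondary concern is the $p$-primary residue when the residue field $B/\pi$ is imperfect, but this is mitigated by the explicit polynomial-ring description of $C/(T_{a})$, which permits a direct symbolic computation with $\nu^{n-1}_{r}$.
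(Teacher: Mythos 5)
Your proposal follows essentially the same route as the paper: reduction to prime $m$ and to $a=N$, induction on $a$ via the localization sequence along $\spec(C/(T_a))$, transfer of the boundary map to \'etale cohomology via Propositions \ref{SBL}, \ref{pBL} and \ref{cset}, and surjectivity of the residue by symbol generation (Bloch--Kato/Voevodsky for the prime-to-$p$ part, Bloch--Gabber--Kato/Geisser--Levine for the $p$-part) together with lifting of units from $B/(\pi)$ to $B$ and the explicit symbol $\{a_0,\dots,T_a\}$. The only cosmetic differences are that the paper verifies the surjectivity through a commutative square into the function fields using the injectivity of $\HO^{n-1}_{\et}(B/(\pi)[T_0,\dots,T_{a-1}],\mathbb{Z}/m(n-1))$ into the cohomology of its fraction field, and that your Bockstein remark for $m=0$ would only yield divisibility rather than vanishing, but the paper's own reduction likewise treats only prime $m$.
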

\begin{proof}\upshape
Since we have an exact sequence
\begin{align*}
\HO^{n+1}_{\Zar}(
C, \mathbb{Z}/m(n)
)    
\to
\HO^{n+1}_{\Zar}(
C, \mathbb{Z}/(m+m^{\prime})(n)
)    
\to
\HO^{n+1}_{\Zar}(
C, \mathbb{Z}/m^{\prime}(n)
)    
\end{align*}
for any positive numbers $m$ and $m^{\prime}$,
it suffices to prove the statement in the case where
$m$ is a prime number. By \cite[p.781, Corollary 3.5]{Ge}, 
it suffices to prove the statement
in the case where
\begin{equation*}
C=
B[T_{0}, \cdots, T_{a}]/
(T_{0}\cdots T_{a}-\pi)
\end{equation*}
for any positive integer $a$. Moreover, we have exact sequences
\begin{align*}
&\HO^{n+1}_{\Zar}(
B[T_{0}^{\prime}, \cdots, T_{a-1}^{\prime}]/(T_{0}^{\prime}\cdots T_{a-1}^{\prime}-\pi)\otimes_{B}B[T_{a}^{\prime}],
\mathbb{Z}/m(n)
)  \\   
\to
&\HO^{n+1}_{\Zar}(
C_{T_{a}}, \mathbb{Z}/m(n)
)
\to
\HO^{n}_{\Zar}(
B[T_{0}^{\prime}, \cdots, T_{a-1}^{\prime}]/(T_{0}^{\prime}\cdots T_{a-1}^{\prime}-\pi),
\mathbb{Z}/m(n-1)
)
\end{align*}
and
\begin{align*}
&\HO^{n}_{\Zar}(
C_{T_{a}}, \mathbb{Z}/m(n)
)    
\to
\HO^{n-1}_{\Zar}\left(
(B/(\pi))[T_{0}, \cdots, T_{a-1}],
\mathbb{Z}/m(n-1)
\right)  \\
\to
&\HO^{n+1}_{\Zar}(
C, \mathbb{Z}/m(n)
)
\to
\HO^{n+1}_{\Zar}(
C_{T_{a}},
\mathbb{Z}/m(n)
)
\end{align*}
by the isomorphism (\ref{isofo}) and the localization theorem (\cite[p.779, Theorem 3.2]{Ge}). 
So, by induction on $a$, 
it suffices to prove that the
homomorphism
\begin{align*}
\HO^{n}_{\Zar}(
C_{T_{a}}, \mathbb{Z}/m(n)
)    
\to
\HO^{n-1}_{\Zar}\left(
(B/(\pi))[T_{0}, \cdots, T_{a-1}],
\mathbb{Z}/m(n-1)
\right) 
\end{align*}
is surjective for any positive integer $a$ and any prime number $m$.
Moreover, it suffices to prove that the
homomorphism
\begin{align}\label{connm}
\HO^{n}_{\et}(
C_{T_{a}}, \mathbb{Z}/m(n)
)    
\to
\HO^{n-1}_{\et}\left(
(B/(\pi))[T_{0}, \cdots, T_{a-1}],
\mathbb{Z}/m(n-1)
\right) 
\end{align}
is surjective for any positive integer $a$ and any prime number $m$
by Proposition \ref{SBL} and Proposition \ref{cset}.

Let $\mathfrak{X}$ be a semistable family over the spectrum of a discrete valuation ring $A$
of mixed-characteristic $(0, p)$. Then we have quasi-isomorphisms
\begin{equation*}
\tau_{\leq n}\left(
\mathbb{Z}/p(n)_{\et}^{\mathfrak{X}}
\right)
\simeq
\mathfrak{T}_{1}(n)^{\mathfrak{X}}
\end{equation*}
and
\begin{equation*}
\tau_{\leq n}\left(
\mathbb{Z}/m(n)_{\et}^{\mathfrak{X}}
\right)
\simeq
\mu_{m}^{\otimes n}
\end{equation*}
in the case where $m$ is a prime number which is prime to 
$\operatorname{char}(B/(\pi))$ by \cite[p.209, Remark 7.2]{SaR}
and \cite[Remark 4.7]{Sak4}.
So we have a commutative diagram
\begin{equation*}
\xymatrix{
\HO^{1}_{\et}(
\mathfrak{X}, 
\mathbb{Z}/m(1)
)
\otimes\cdots\otimes
\HO^{1}_{\et}(
\mathfrak{X}, 
\mathbb{Z}/m(1)
)
\ar[r]^-{\cup}\ar[d]
&
\HO^{n}_{\et}(
\mathfrak{X}, 
\mathbb{Z}/m(n)
)  \ar[d]
\\
\HO^{1}_{\et}(
k(\mathfrak{X}), 
\mathbb{Z}/m(1)
)
\otimes\cdots\otimes
\HO^{1}_{\et}(
k(\mathfrak{X}), 
\mathbb{Z}/m(1)
)
\ar[r]_-{\cup}
&
\HO^{n}_{\et}(
k(\mathfrak{X}), 
\mathbb{Z}/m(n)
) 
}    
\end{equation*}
by \cite[p.538, Proposition 4.2.6]{SaP}. Moreover, we have a commutative diagram
\begin{equation*}
\xymatrix{
\HO^{n}_{\et}(C_{T_{a}},
\mathbb{Z}/m(n)
)
\ar[r]\ar[d]
&
\HO^{n-1}_{\et}\left(
(B/(\pi))[T_{0}, \cdots, T_{a-1}],
\mathbb{Z}/m(n-1)
\right)
\ar@{^{(}-_>}[d]
\\
\HO^{n}_{\et}(
k(C_{T_{a}}),
\mathbb{Z}/m(n)
)
\ar[r]_-{\delta}
&
\HO^{n-1}_{\et}(
k((B/(\pi))[T_{0}, \cdots, T_{a-1}]),
\mathbb{Z}/m(n-1)
)
}    
\end{equation*}
where the right map is injective by 
\cite[p.781, Corollary 3.5]{Ge} and \cite[Corollary 6.9]{Sak5}.
Then an element 
\begin{equation*}
\{a_{0}, \cdots, a_{n-1}, T_{a}\}    
\end{equation*}
$(a_{0}, \cdots, a_{n-1}\in B^{*})$
maps to
\begin{equation*}
\{\bar{a_{0}}, \cdots, \bar{a_{n-1}}\}    
\end{equation*}
via $\delta$. So the homomorphism (\ref{connm}) is surjective by the Bloch-Kato conjecture (\cite{V2})
and \cite[p.113, Theorem (2.1)]{B-K}. This completes the proof.
\end{proof}
\begin{cor}\upshape
Let $B$ be a discrete valuation ring 
and $\pi$ a prime element of $B$.
Let
\begin{equation*}
C=B[T_{0}, T_{1}]/(
T_{0}T_{1}-\pi
)    
\end{equation*}
and $l$ be a positive integer which is
invertible in $A$.
Suppose that $A$ contains $l$-th roots of unity.
Then there exists an exact sequence
\begin{align*}
0\to  
&\HO^{n+1}_{\et}(
C, \mu_{l}^{\otimes n}
)
\to
\displaystyle\bigoplus_{x\in \spec(C)^{(0)}}
\HO^{n+1}_{\et}(
\kappa(x), 
\mu_{l}^{\otimes n}
)
\to
\displaystyle\bigoplus_{x\in \spec(C)^{(1)}}
\HO^{n}_{\et}(
\kappa(x), 
\mu_{l}^{\otimes (n-1)}
)
\\
\to
&\displaystyle\bigoplus_{x\in \spec(C)^{(2)}}
\HO^{n-1}_{\et}(
\kappa(x), 
\mu_{l}^{\otimes (n-2)}
)
\to
0
\end{align*}
for any positive integer $n$.
\end{cor}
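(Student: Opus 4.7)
The strategy is to adapt the argument of Proposition \ref{RGCM} to the global setting of $\spec(C)$ and to the sheaf $\mu_l^{\otimes n}$ (for $l$ invertible in $B$), and then feed in the motivic cohomology vanishings provided by Propositions \ref{VSC} and \ref{VaniMtM1}.

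First, I would note that $\spec(C)$ is a two-dimensional regular scheme essentially of finite type over $\spec(B)$, so Proposition \ref{SBL} in mixed characteristic and Proposition \ref{pBL} in equi-characteristic supply a Beilinson--Lichtenbaum quasi-isomorphism
\begin{equation*}
\tau_{\leq N}\mathbb{Z}/l(N)_{\Zar}^{\spec(C)}
\xrightarrow{\simeq}
\tau_{\leq N} R\epsilon_{*} \mu_l^{\otimes N}
\end{equation*}
for every $N\geq 0$, using that $\mathbb{Z}/l(N)_{\et}\simeq \mu_l^{\otimes N}$ since $l$ is invertible on $C$. Since $B$ contains $l$-th roots of unity, a choice of primitive root yields $\mu_l^{\otimes n}\simeq \mu_l^{\otimes (n+1)}$ as \'{e}tale sheaves on $\spec(C)$. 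Substituting this into the truncation triangle
\begin{equation*}
\tau_{\leq n} R\epsilon_{*} \mu_l^{\otimes n}
\to
\tau_{\leq n+1} R\epsilon_{*} \mu_l^{\otimes n}
\to
R^{n+1}\epsilon_{*} \mu_l^{\otimes n}[-(n+1)]
\to \cdots
\end{equation*}
and running the argument of Proposition \ref{RGCM} verbatim produces a long exact sequence relating $\HO^{*}_{\Zar}(C, \mathbb{Z}/l(n))$, $\HO^{*}_{\Zar}(C, \mathbb{Z}/l(n+1))$, and $\HO^{*}_{\Zar}(C, R^{n+1}\epsilon_{*}\mu_l^{\otimes n})$.

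Second, Propositions \ref{VSC} and \ref{VaniMtM1} yield $\HO^q_{\Zar}(C, \mathbb{Z}/l(N))=0$ for every $N\geq 0$ and $q\geq N+1$. Plugging this into the long exact sequence above for $N\in\{n, n+1\}$ forces
\begin{equation*}
\HO^s_{\Zar}(C, R^{n+1}\epsilon_{*}\mu_l^{\otimes n})=0
\end{equation*}
for all $s\geq 1$ and, together with the Leray spectral sequence for $\epsilon$ combined with analogous vanishings for the lower twists $R^{t}\epsilon_{*}\mu_l^{\otimes n}$ ($t\leq n$), also gives $\HO^0_{\Zar}(C, R^{n+1}\epsilon_{*}\mu_l^{\otimes n})\simeq \HO^{n+1}_{\et}(C, \mu_l^{\otimes n})$.

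Finally, by Gabber's purity the Cousin complex of $R^{n+1}\epsilon_{*}\mu_l^{\otimes n}$ on $\spec(C)_{\Zar}$ has terms matching, on global sections, the Galois cohomology groups of residue fields displayed in the statement. The main obstacle is to verify that this Cousin complex is in fact a flasque resolution of $R^{n+1}\epsilon_{*}\mu_l^{\otimes n}$, i.e.\ that the Gersten conjecture for $\mu_l^{\otimes n}$ holds on every local ring of $\spec(C)$; this is handled by running the same argument on each local ring, using that the vanishing $\HO^q_{\Zar}(\mathcal{O}_{C, x}, \mathbb{Z}/l(N))=0$ for $q\geq N+1$ is inherited from Propositions \ref{VSC} and \ref{VaniMtM1} via the localization theorem and \cite[Proposition 2.1]{Sak4}. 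Taking global sections of the resulting flasque resolution and combining with the vanishing of the second paragraph yields the desired exact sequence.
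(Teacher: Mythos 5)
Your overall architecture matches the paper's: both use the Beilinson--Lichtenbaum comparison (Proposition \ref{SBL}), the vanishing $\HO^{q}_{\Zar}(C,\mathbb{Z}/l(N))=0$ for $q\geq N+1$ from Propositions \ref{VSC} and \ref{VaniMtM1}, the twist-shift $\tau_{\leq N}R\alpha_{*}\mu_{l}^{\otimes n}\simeq \tau_{\leq N}R\alpha_{*}\mu_{l}^{\otimes N}$ coming from the $l$-th roots of unity in $B$, and then global sections of a Cousin-type resolution of $R^{n+1}\alpha_{*}\mu_{l}^{\otimes n}$. (The paper works on the Nisnevich site rather than the Zariski site, which matters below.)

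The genuine gap is in your last step. You correctly identify that the crux is showing the Cousin complex of $R^{n+1}\alpha_{*}\mu_{l}^{\otimes n}$ is a flasque resolution, but your claim that this "is handled by running the same argument on each local ring" does not work: the argument via the truncation triangle and the motivic vanishing only yields \emph{acyclicity}, i.e.\ $\HO^{s}(\mathcal{O}_{C,x}, R^{n+1}\alpha_{*}\mu_{l}^{\otimes n})=0$ for $s\geq 1$. Exactness of the Cousin complex requires in addition the (semi-)purity statements for the local cohomology with supports $\HO^{j}_{y}$ --- in particular, for a codimension-$2$ point $x$, the injectivity of $\HO^{n+1}_{\et}(\mathcal{O}^{h}_{C,x},\mu_{l}^{\otimes n})\to \HO^{n+1}_{\et}(k(C),\mu_{l}^{\otimes n})$ and the identification of $\HO^{2}_{x}$ with $\HO^{n-1}_{\et}(\kappa(x),\mu_{l}^{\otimes(n-2)})$ together with the surjectivity onto it. This is precisely the Gersten conjecture for \'{e}tale cohomology over two-dimensional henselian regular local rings in mixed characteristic, which is a nontrivial theorem that the paper imports as \cite[p.38, Theorem 9]{SM19} to produce the flabby resolution
\begin{equation*}
0
\to
R^{n+1}\alpha_{*}\mu_{l}^{\otimes n}
\to
\bigoplus_{x\in \spec(C)^{(0)}}
(i_{x})_{*}R^{n+1}\alpha_{*}\mu_{l}^{\otimes n}
\to
\cdots
\to
\bigoplus_{x\in \spec(C)^{(2)}}
(i_{x})_{*}R^{n-1}\alpha_{*}\mu_{l}^{\otimes (n-2)}
\to 0;
\end{equation*}
it is not a consequence of the vanishing of motivic cohomology on local rings. (A secondary looseness: Propositions \ref{VSC} and \ref{VaniMtM1} are statements about the global ring $C$, and the paper proves the corresponding local vanishing for two-dimensional henselizations by a separate argument using \cite[p.786, Corollary 4.4]{Ge} and \cite[p.37, Corollary 7]{SM19}, rather than by "inheritance" from the global case.)
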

\begin{proof}\upshape
Let $R$ be 
the henselization of a regular local ring 
which is essentially of finite type over $\spec(B)$. 
Let $\mathfrak{m}$ be the maximal ideal of $R$ and 
$g\in \mathfrak{m}\setminus\mathfrak{m}^{2}$.
Suppose that $\operatorname{dim}(R)=2$.
Then we have
\begin{align*}
\HO^{q-2}_{\Zar}(
R/(g), \mathbb{Z}/l(n-1)
)
=
\HO^{q}_{\Zar}(
R_{g}, \mathbb{Z}/l(n)
)
=0
\end{align*}
for $q\geq n+2$
by \cite[p.786, Corollary 4.4]{Ge} and the vanishing theorem.
So we have 
\begin{align*}
\HO^{q}_{\Zar}(
R, \mathbb{Z}/l(n)
) 
=0
\end{align*}
for $q\geq n+1$ by the localization theorem
(\cite[p.779, Theorem 3.2]{Ge}) and 
\cite[p.37, Corollary 7]{SM19}.
Moreover, we have a quasi-isomorphism
\begin{equation*}
\tau_{\leq n}
\left(
\mathbb{Z}/l(n)_{\Nis}^{\spec(C)}
\right)
\simeq
\tau_{\leq n}
\left(
R\alpha_{*}\mathbb{Z}/l(n)_{\et}^{\spec(C)}
\right)
\end{equation*}
by 
Proposition \ref{SBL}.
Hence we have
a quasi-isomorphism
\begin{align*}
\mathbb{Z}/l(n)_{\Nis}^{\spec(C)}
\simeq
\tau_{\leq n}
R\alpha_{*}\mu_{l}^{\otimes n}
\end{align*}
by \cite[Remark 4.7]{Sak4}. Since $B$ contains $l$-th roots of unity, 
we have quasi-isomorphisms
\begin{align*}
\tau_{\leq N}R\alpha_{*}\mu_{l}^{\otimes n}
\simeq 
\tau_{\leq N}R\alpha_{*}\mu_{l}^{\otimes N}
\simeq
\mathbb{Z}/l(N)_{\Nis}^{\spec(C)}
\end{align*}
for any integer $N\geq 0$ by the above. So we have a distinguished triangle
\begin{align*}
\cdots\to 
\mathbb{Z}/l(n)_{\Nis}^{\spec(C)}
\to
\mathbb{Z}/l(n+1)_{\Nis}^{\spec(C)}
\to
R^{n+1}\alpha_{*}\mu_{l}^{\otimes n}[-(n+1)]
\to 
\cdots.
\end{align*}
Hence we have
isomorphisms
\begin{align*}
\HO^{n+1}_{\et}(
C, \mu_{l}^{\otimes n}
)
\simeq
\HO^{0}_{\Nis}(
C, R\alpha_{*}^{n+1}
\mu_{l}^{\otimes n}
)
&&
\textrm{and}
&&
\HO_{\Nis}^{j}
(C,
R\alpha_{*}^{n+1}\mu_{l}^{\otimes n}
)
\simeq 0
\end{align*}
for $j>0$ by Proposition \ref{VSC} and Proposition \ref{VaniMtM1}.
Moreover, we have a flabby resolution 
\begin{align*}
0
\to
R\alpha_{*}^{n+1}\mu_{l}^{\otimes n}
\to
\displaystyle\bigoplus_{x\in \spec(C)^{(0)}}
(i_{x})_{*}R^{n+1}
\alpha_{*}\mu_{l}^{\otimes n}
\to
\displaystyle\bigoplus_{x\in \spec(C)^{(1)}}
(i_{x})_{*}R^{n}
\alpha_{*}\mu_{l}^{\otimes (n-1)}  \\
\to
\displaystyle\bigoplus_{x\in \spec(C)^{(2)}}
(i_{x})_{*}R^{n-1}
\alpha_{*}\mu_{l}^{\otimes (n-2)}
\to 0
\end{align*}
by 
\cite[p.38, Theorem 9]{SM19}.
Here
we write
$i_{x}$ 
for
the canonical map
$\spec(\kappa(x))\to \spec(C)$.
Therefore the statement follows.
\end{proof}

\subsection{Application}
Let $B$ be a henselian valuation ring of mixed characeristic $(0, p)$,
$\pi$ a prime element of $B$ 
and $C=B[T_{0}, \cdots, T_{N}]/(T_{0}\cdots T_{a}-\pi)$
for $a\leq N$. Then we observe a relation between
$\Gamma\left(C, R^{n+1}\alpha_{*}\mu_{l}^{\otimes n}\right)$
and
$\Gamma\left(C/(\pi), R^{n+1}\alpha_{*}\mu_{l}^{\otimes n}\right)$
for an integer $l$ which is prime to $p$.
\begin{lem}\upshape\label{Vanis}
Let $B$ be a discrete valuation ring and $\pi$ a prime element of $B$.
Let $n\geq 0$ and $m>0$ be integers.
Then we have
\begin{equation*}
\HO^{q}_{\Zar}\left(
(
B[T_{0}, \cdots, T_{N}]/
(
T_{b}\cdots T_{a}-\pi
)
)_{T_{0}\cdots T_{b}},
\mathbb{Z}/m(n)
\right)
=0
\end{equation*}
for integers $a\leq N$, $b<a$ and $q\geq n+1$.
\end{lem}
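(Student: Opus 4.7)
The plan is to apply a symmetry-and-substitution reduction to Propositions \ref{VSC} and \ref{VaniMtM1}, and then strip off the resulting $\mathbb{G}_{m}$-factors via the localization theorem and homotopy invariance.

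First, set $A_{b, a} := B[T_{b}, \ldots, T_{a}]/(T_{b}\cdots T_{a}-\pi)$. Since the defining relation is symmetric in $T_{b}, \ldots, T_{a}$, the swap $T_{b}\leftrightarrow T_{a}$ gives a ring automorphism identifying $(A_{b, a})_{T_{b}}\cong (A_{b, a})_{T_{a}}$. I then reuse verbatim the substitution $T_{a-1}'\mapsto T_{a-1}T_{a}$ ($T_{i}'\mapsto T_{i}$ otherwise) from the proof of Proposition \ref{VSC}, which induces an isomorphism
\begin{equation*}
(A_{b, a})_{T_{a}}\ \cong\ D\otimes_{B}B[T_{a}^{\pm 1}],
\end{equation*}
where $D := B[T_{b}, \ldots, T_{a-2}, T_{a-1}']/(T_{b}\cdots T_{a-2}\cdot T_{a-1}' - \pi)$. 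Combined with the obvious splitting off of the free variables $T_{a+1}, \ldots, T_{N}$ and of the inverted free variables $T_{0}, \ldots, T_{b-1}$, this yields
\begin{equation*}
\bigl(B[T_{0}, \ldots, T_{N}]/(T_{b}\cdots T_{a}-\pi)\bigr)_{T_{0}\cdots T_{b}}\ \cong\ D\otimes_{B}B[T_{0}^{\pm 1}, \ldots, T_{b-1}^{\pm 1}, T_{a}^{\pm 1}]\otimes_{B}B[T_{a+1}, \ldots, T_{N}],
\end{equation*}
and by homotopy invariance (\cite[p.781, Corollary 3.5]{Ge}) I may discard the polynomial factor $B[T_{a+1}, \ldots, T_{N}]$ without changing the Zariski motivic cohomology.

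The ring $D$ is of exactly the shape treated by Propositions \ref{VSC} and \ref{VaniMtM1}, so $\HO^{q}_{\Zar}(D, \mathbb{Z}/m(n))=0$ for every $q\geq n+1$, every $n\geq 0$ and every $m>0$. I then strip off the $b+1$ copies of $\mathbb{G}_{m}$ one at a time using the localization sequence (\cite[p.779, Theorem 3.2]{Ge}) associated with $\mathbb{A}^{1}=\mathbb{G}_{m}\sqcup\{0\}$:
\begin{equation*}
\cdots\to\HO^{q-2}_{\Zar}(Y, \mathbb{Z}/m(n-1))\to\HO^{q}_{\Zar}(Y\otimes_{B}B[T], \mathbb{Z}/m(n))\to\HO^{q}_{\Zar}(Y\otimes_{B}B[T^{\pm 1}], \mathbb{Z}/m(n))\to\HO^{q-1}_{\Zar}(Y, \mathbb{Z}/m(n-1))\to\cdots,
\end{equation*}
in which homotopy invariance identifies the middle term with $\HO^{q}_{\Zar}(Y, \mathbb{Z}/m(n))$. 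Starting from $Y=D$, the inductive hypothesis kills both $\HO^{q}_{\Zar}(Y, \mathbb{Z}/m(n))$ for $q\geq n+1$ and $\HO^{q-1}_{\Zar}(Y, \mathbb{Z}/m(n-1))$ for $q-1\geq n$, so the vanishing propagates to $Y\otimes_{B}B[T^{\pm 1}]$ in the same range, and after $b+1$ iterations one obtains the statement.

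The main obstacle, I expect, is purely bookkeeping: one has to verify that the Ge-style substitution from the proof of Proposition \ref{VSC} (originally set up to invert the newly added variable $T_{b+1}'$) transfers faithfully after first swapping $T_{b}$ and $T_{a}$ by symmetry, and that the weight shift $(n, q)\mapsto(n-1, q-1)$ in the $\mathbb{G}_{m}$ localization sequence keeps both input terms within the range where the inductive vanishing hypothesis applies.
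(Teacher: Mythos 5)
Your proof is correct and uses essentially the same ingredients as the paper's: the multiplicative change of variables $T_{a-1}'\mapsto T_{a-1}T_{a}$ (the paper's isomorphism absorbing one relation variable into an adjacent one using that an inverted variable sits in the relation), homotopy invariance, the localization theorem, and the vanishing of Propositions \ref{VSC} and \ref{VaniMtM1}. The only difference is organizational: the paper interleaves the change of variables and the localization sequence in a single induction on $a$, while you first exhibit the ring as $D\otimes_{B}\mathbb{G}_{m}^{b+1}\otimes_{B}\mathbb{A}^{N-a}$ and then strip the $\mathbb{G}_{m}$-factors, which is a clean repackaging of the same argument.
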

\begin{proof}
We have a homomorphism of polynomial rings over $B$
\begin{align*}
B[T_{0}^{(1)}, 
\cdots, T^{(1)}_{N}]
\to
B[T_{0}, \cdots, T_{N}]
\end{align*}
which sends $T^{(1)}_{b+1}$ to $T_{b}T_{b+1}$ and 
sends $T^{(1)}_{r}$ to $T_{r}$ for $r\neq b+1$.
Then this homomorphism induces an isomorphism
\begin{align}\label{dowb}
&\left(
B[T^{(1)}_{0}, \cdots, T^{(1)}_{N}]/
(T^{(1)}_{b+1}\cdots T^{(1)}_{a}-\pi)
\right)_{T^{(1)}_{0}\cdots T^{(1)}_{b}}
\nonumber
\\
\xrightarrow{\simeq}
&\left(
B[T_{0}, \cdots, T_{N}]/
(T_{b}\cdots T_{a}-\pi)
\right)_{T_{0}\cdots T_{b}}.
\end{align}
Moreover, we have a homomorphism of polynomial rings over $B$
\begin{equation*}
B[T^{(1)}_{0}, \cdots T^{(1)}_{N}] 
\to
B[T^{(2)}_{0}, \cdots T^{(2)}_{N}] 
\end{equation*}
which sends $T^{(1)}_{r}$ to $T^{(2)}_{r}$ 
for $0\leq r\leq b-1$, 
sends $T_{b}^{(1)}$ to $T_{N}^{(2)}$
and
sends $T^{(1)}_{r}$ to $T^{(2)}_{r-1}$
for $b+1\leq r\leq N$. 
Then this homomorphism induces
\begin{align*}
&\left(
B[T^{(1)}_{0}, \cdots, T^{(1)}_{N}]/
(T^{(1)}_{b+1}\cdots T^{(1)}_{a}-\pi)
\right)_{T^{(1)}_{0}\cdots T^{(1)}_{b}}
\\
\xrightarrow{\simeq}
&\left(
B[T^{(2)}_{0}, \cdots, T^{(2)}_{N}]/
(T^{(2)}_{b}\cdots T^{(2)}_{a-1}-\pi)
\right)_{(T^{(2)}_{0}\cdots T^{(2)}_{b-1})\cdot T^{(2)}_{N}}.
\end{align*}
So we have an exact sequence
\begin{align}\label{index}
&\HO^{q+1}_{\Zar}\left(
\left(
B[T^{(2)}_{0}, \cdots, T^{(2)}_{N}]/
(T^{(2)}_{b}\cdots T^{(2)}_{a-1}-\pi)
\right)_{T^{(2)}_{0}\cdots T^{(2)}_{b-1}},
\mathbb{Z}/m(n)
\right)  
\nonumber
\\
\to
&\HO^{q+1}_{\Zar}\left(
\left(
B[T_{0}, \cdots, T_{N}]/
(T_{b}\cdots T_{a}-\pi)
\right)_{T_{0}\cdots T_{b}},
\mathbb{Z}/m(n)
\right)  
\nonumber
\\
\to
&\HO^{q}_{\Zar}\left(
\left(
B[T^{(2)}_{0}, \cdots, T^{(2)}_{N-1}]/
(T^{(2)}_{b}\cdots T^{(2)}_{a-1}-\pi)
\right)_{T^{(2)}_{0}\cdots T^{(2)}_{b-1}},
\mathbb{Z}/m(n-1)
\right) 
\end{align}
for any integer $q$ 
by the localization theorem (\cite[p.779, Theorem 3.2]{Ge}).
By induction on $a$, the statement follows from
the isomorphism (\ref{dowb}) and the exact sequence (\ref{index}).
\end{proof}

Let $B$ be a discrete valuation of mixed characteristic $(0, p)$ and 
$\pi$ a prime element of $B$. Let 
\begin{equation*}
C=
B[
T_{0}, \cdots, T_{N}
]/
(
T_{0}\cdots T_{a}-\pi
)
\end{equation*}
for $0\leq a\leq N$ and
\begin{math}
i: \spec(
C/(\pi)
)
\to
\spec(
C
)
\end{math}
be the closed fiber of 
$\spec(C)$
and
$l$ an integer which is prime to $p$. Then we have the composition
\begin{align}\label{mfmtm}
&\tau_{\leq n+1}(
\mathbb{Z}/l(n)_{\et}^{\spec(C)}
)    
\simeq 
\mu_{l}^{\otimes n}
\to
i_{*}\mu_{l}^{\otimes n}  \nonumber 
\\
\to
&i_{*}\left(
\tau_{\leq 0}(\mathbb{Z}/l(n)_{\et}^{\spec(C/(\pi))}
)
\right)
\to 
i_{*}\mathbb{Z}/l(n)_{\et}^{\spec(C/(\pi))}
\end{align}
by \cite[Remark 4.7]{Sak4} and Proposition \ref{constm}.
\begin{prop}\upshape\label{fscH}
Let the notations be the same as above. Let $n$ be a non-negative integer.
Suppose that $B$ is a henselian discrete valuation ring. 
Then the homomorphism
\begin{align}\label{embaffl}
&\HO^{n+1}_{\et}(
B[T_{0}, \cdots, T_{N}]/(T_{0}\cdots T_{a}-\pi),
\mathbb{Z}/l(n)
)  \nonumber
\\    
\to
&\HO^{n+1}_{\et}\left(
(B/(\pi))[T_{0}, \cdots, T_{N}]/(T_{0}\cdots T_{a}),
\mathbb{Z}/l(n)
\right)
\end{align}
is injective where the homomorphism (\ref{embaffl}) is induced by the composition (\ref{mfmtm}).
\end{prop}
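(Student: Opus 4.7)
The plan is to analyze the map (\ref{embaffl}) via the factorization (\ref{mfmtm}) and proceed by induction on $a$, paralleling the strategy of Propositions \ref{VSC} and \ref{VaniMtM1}. Since $l$ is invertible on $C$, the identification $\tau_{\leq n}(\mathbb{Z}/l(n)^{\spec(C)}_{\et}) \simeq \mu_{l}^{\otimes n}$ (used already in the proof of Proposition \ref{VaniMtM1}) and Proposition \ref{SBL} together reduce the computation of $\HO^{n+1}_{\et}(C, \mathbb{Z}/l(n))$ to a Milnor-K-theoretic piece modulo $l$, while Proposition \ref{constm} controls the target in spite of $C/(\pi)$ being a normal crossing rather than regular scheme.

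In the base case $a = 0$, we have $C \simeq B[T_{1}, \ldots, T_{N}]$ and $C/(\pi) \simeq k[T_{1}, \ldots, T_{N}]$, where $k = B/(\pi)$. By $\mathbb{A}^{N}$-homotopy invariance of \'etale cohomology with $\mu_{l}^{\otimes n}$-coefficients (valid because $l$ is invertible on $B$) and the Suslin-Voevodsky identification $\mathbb{Z}/l(n)_{\et} \simeq \mu_{l}^{\otimes n}$ on regular $\mathbb{Z}[1/l]$-schemes, the map (\ref{embaffl}) becomes the restriction $\HO^{n+1}_{\et}(B, \mu_{l}^{\otimes n}) \to \HO^{n+1}_{\et}(k, \mu_{l}^{\otimes n})$. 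Since $B$ is henselian and $l$ is prime to $\operatorname{char}(k) = p$, this last map is an isomorphism by the classical henselian invariance of \'etale cohomology with coefficients prime to the residue characteristic.

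For the inductive step, I would use the isomorphism (\ref{isofo}) to identify $\spec(C_{T_{a}})$ with a ring involving only $a-1$ semistable components (after absorbing $T_{a}$ as an invertible parameter), so that the inductive hypothesis applies to a suitable sub-diagram. The closed complement $V(T_{a}) \subset \spec(C)$ is the smooth affine $k$-scheme $\spec(k[T_{0}, \ldots, \widehat{T_{a}}, \ldots, T_{N}])$, a codimension-one regular subscheme of the regular ambient $\spec(C)$, so Proposition \ref{cset} gives the Gysin identification for its local cohomology, and Lemma \ref{Vanis} kills the motivic cohomology of $C_{T_{a}}$ in the relevant range. Setting up a commutative ladder comparing the localization long exact sequence for the stratification $\spec(C) \supset V(T_{a})$ with the analogous sequence for $\spec(C/(\pi)) \supset V(T_{a}) \cap V(\pi)$, and running a diagram chase using the inductive hypothesis both on $\spec(C_{T_{a}})$ and on $V(T_{a}) \cap V(\pi)$ (both being handled by the $(a-1)$-case), then yields injectivity.

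The most delicate step I expect is the coherent tracking of identifications in this diagram chase, in particular handling the non-regularity of $C/(\pi)$ via Proposition \ref{constm} and ensuring that the Bloch-Kato-type surjectivity used in the proof of Proposition \ref{VaniMtM1} translates into the appropriate injectivity on our side. The henselian hypothesis on $B$ is essential in the base case and also controls the behavior of specialization maps so that no spurious kernel appears upon reduction modulo $\pi$.
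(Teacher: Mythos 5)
There is a genuine gap in the inductive step. You propose to compare the localization sequence for $\spec(C)\supset V(T_{a})$ with ``the analogous sequence for $\spec(C/(\pi))\supset V(T_{a})\cap V(\pi)$'' and chase a ladder. But in $C=B[T_{0},\dots,T_{N}]/(T_{0}\cdots T_{a}-\pi)$ one has $\pi\in(T_{a})$, so $V(T_{a})\cap V(\pi)=V(T_{a})$, which is an \emph{irreducible component} of $\spec(C/(\pi))$ (codimension $0$), whereas $V(T_{a})\subset\spec(C)$ has codimension $1$. The two localization sequences therefore carry different Tate twists and degree shifts and cannot be compared termwise; moreover the closed immersion $\spec(C/(\pi))\to\spec(C)$ does not induce a map of localization sequences (these are functorial for flat pullback or proper pushforward, not for this mixed situation), and your inductive hypothesis concerns semistable families over $B$, so it does not apply to the $k$-scheme $V(T_{a})\cap V(\pi)$. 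As written, the diagram you want to chase does not exist.

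The paper avoids this by a different maneuver that your proposal is missing: it first shows, by repeated changes of variables and the surjectivity of the boundary maps in the localization sequence (which follows from the vanishing in Lemma \ref{Vanis} together with Proposition \ref{SBL} and Proposition \ref{cset}), that the restriction
\begin{equation*}
\HO^{n+1}_{\et}\bigl(B[T_{0},\dots,T_{N}]/(T_{0}\cdots T_{a}-\pi),\ \mathbb{Z}/l(n)\bigr)
\longrightarrow
\HO^{n+1}_{\et}\bigl(B[R_{0},\dots,R_{N-1}]_{R_{0}\cdots R_{a-1}},\ \mathbb{Z}/l(n)\bigr)
\end{equation*}
is \emph{injective}, i.e.\ it embeds the cohomology of the semistable $C$ into that of a localized polynomial ring over $B$. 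Only then does it reduce modulo $\pi$, proving injectivity of
$\HO^{n+1}_{\et}(B[R]_{R_{0}\cdots R_{a-1}},\mathbb{Z}/l(n))\to\HO^{n+1}_{\et}(B/(\pi)[R]_{R_{0}\cdots R_{a-1}},\mathbb{Z}/l(n))$
by a separate induction on the number of inverted variables, using absolute purity and the henselian isomorphism $\HO^{n}_{\et}(B[R],\mathbb{Z}/l(n-1))\simeq\HO^{n}_{\et}(B/(\pi)[R],\mathbb{Z}/l(n-1))$ on the regular pieces. Your base case $a=0$ is essentially fine, and your appeal to Lemma \ref{Vanis} points in the right direction, but without the intermediate injection into the regular (localized polynomial) situation the direct comparison of $\spec(C)$ with $\spec(C/(\pi))$ does not go through.
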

\begin{proof}\upshape
By the localization theorem (\cite[p.779, Theorem 3.2]{Ge}), Lemma \ref{Vanis} and the isomorphism (\ref{dowb}), the homomorphism 
\begin{align*}
&\HO^{n}_{\Zar}\left(
(B[S_{0}, \cdots, S_{N}]/(S_{b}\cdots S_{a}-\pi))_{S_{0}\cdots S_{b-1}\cdot S_{b}},
\mathbb{Z}/l(n)
\right)
\\
\to
&
\HO^{n-1}_{\Zar}\left(
(B[S_{0}, \cdots, S_{N}]/(S_{b}, \pi))_{S_{0}\cdots S_{b-1}},
\mathbb{Z}/l(n-1)
\right)
\end{align*}
is surjective. So the homomorphism
\begin{align*}
&\HO^{n+1}_{\et}\left(
(B[S_{0}, \cdots, S_{N}]/(S_{b}\cdots S_{a}-\pi))_{S_{0}\cdots S_{b-1}},
\mathbb{Z}/l(n)
\right)  
\\
\to
&\HO^{n+1}_{\et}\left(
(
B[S_{0}, \cdots, S_{N}]/(S_{b}\cdots S_{a}-\pi))_{S_{0}\cdots S_{b-1}\cdot S_{b}},
\mathbb{Z}/l(n)
\right)
\end{align*}
is injective by Proposition \ref{SBL} and Proposition \ref{cset}. 
Moreover, we have a homomorphism of polynomial ring over $B$
\begin{align*}
B[S^{\prime}_{0}, \cdots, S^{\prime}_{N}] 
\to
B[S_{0}, \cdots, S_{N}]
\end{align*}
which $S_{r}^{\prime}$ sends to $S_{r}$ for $r\neq b+1$ and
$S_{b+1}^{\prime}$ sends to $S_{b}S_{b+1}$. Then this homomorphism induces an isomorphism
\begin{align*}
\left(
B[S^{\prime}_{0}, \cdots, S^{\prime}_{N}]/
(S^{\prime}_{b+1}\cdots S^{\prime}_{a}-\pi)
\right)_{S^{\prime}_{0}\cdots S^{\prime}_{b}}
\xrightarrow{\simeq}
\left(
B[S_{0}, \cdots, S_{N}]/
(S_{b}\cdots S_{a}-\pi)
\right)_{S_{0}\cdots S_{b}}
\end{align*}
and so the homomorphism
\begin{align*}
&\HO^{n+1}_{\et}\left(
\left(B[S_{0}, \cdots, S_{N}]/(S_{b}\cdots S_{a}-\pi)\right)_{S_{0}\cdots S_{b-1}},
\mathbb{Z}/l(n)
\right)  \\
\to
&
\HO^{n+1}_{\et}\left(
\left(
B[S^{\prime}_{0}, \cdots, S^{\prime}_{N}]/
(S^{\prime}_{b+1}\cdots S^{\prime}_{a}-\pi)
\right)_{S^{\prime}_{0}\cdots S^{\prime}_{b}},
\mathbb{Z}/l(n)
\right)
\end{align*}
is injective. Moreover, we have a homomorphism
of polynomial rings over $B$
\begin{align*}
B[S_{0}, \cdots, S_{N}]
\to
B[S^{\prime}_{0}, \cdots, S^{\prime}_{N}]
\end{align*}
which $S_{r}$ sends to $S^{\prime}_{r}$ for $0\leq r\leq a-1$,
$S_{a}$ sends to $S^{\prime}_{N}+\pi$ and 
$S_{r}$ sends to $S^{\prime}_{r-1}$ for $a+1\leq r\leq N$.
Then this homomorphism induces an isomorphism
\begin{align*}
\left(
B[S_{0}\cdots S_{N}]
/(S_{a}-\pi)
\right)_{S_{0}\cdots S_{a-1}}
\xrightarrow{\simeq}
\left(
B[S^{\prime}_{0}\cdots S^{\prime}_{N}]
/(S^{\prime}_{N})
\right)_{S^{\prime}_{0}\cdots S^{\prime}_{a-1}}.
\end{align*}
So we have an injective homomorphism
\begin{align*}
&\HO^{n+1}_{\et}\left(
B[T_{0}, \cdots, T_{N}]/(T_{0}\cdots T_{a}-\pi),
\mathbb{Z}/l(n)
\right)  \\
\to
&\HO^{n+1}_{\et}\left(
B[R_{0}, \cdots, R_{N-1}]_{R_{0}\cdots R_{a-1}},
\mathbb{Z}/l(n)
\right)
\end{align*}
where $R_{0}, \cdots, R_{N-1}$ are indeterminates over $B$.
Since we have a commutative diagram
\footnotesize
\begin{equation*}
\xymatrix{
\HO^{n+1}_{\et}(
B[T_{0}, \cdots, T_{N}]/(T_{0}\cdots T_{a}-\pi),
\mathbb{Z}/l(n)
)
\ar[r]\ar[d]
&
\HO^{n+1}_{\et}(
B[R_{0}, \cdots, R_{N-1}]_{R_{0}\cdots R_{a-1}},
\mathbb{Z}/l(n)
)
\ar[d]
\\
\HO^{n+1}_{\et}\left(
(B/(\pi))[T_{0}, \cdots, T_{N}]/(T_{0}\cdots T_{a}),
\mathbb{Z}/l(n)
\right)
\ar[r]
&
\HO^{n+1}_{\et}\left(
(B/(\pi))[R_{0}, \cdots, R_{N-1}]_{R_{0}\cdots R_{a-1}},
\mathbb{Z}/l(n)
\right)
}    
\end{equation*}
\normalsize
and the upper map is injective, it suffices to prove that the right map
\begin{align}\label{intcet}
&\HO^{n+1}_{\et}\left(
B[R_{0}, \cdots, R_{N-1}]_{R_{0}\cdots R_{a-1}},
\mathbb{Z}/l(n)
\right)  \nonumber \\
\to
&\HO^{n+1}_{\et}\left(
(B/(\pi))[R_{0}, \cdots, R_{N-1}]_{R_{0}\cdots R_{a-1}},
\mathbb{Z}/l(n)
\right)
\end{align}
is injective.

By the localization theorem (\cite[p.779, Theorem 3.2]{Ge}), \cite[p.781, Corollary 3.5]{Ge} 
and \cite[p.786, Corollary 4.4]{Ge}, 
we have
\begin{equation*}
\HO^{n+1}_{\Zar}\left(
(B/(\pi))[R_{0}, \cdots R_{N-1}]_{R_{0}\cdots R_{b}},
\mathbb{Z}/l(n)
\right)
=0
\end{equation*}
for $0\leq b\leq a-1$ and so the homomorphism
\begin{align*}
\HO^{n+1}_{\et}\left(
(B/(\pi))[
R_{0}, \cdots, R_{N}
]_{R_{0}\cdots R_{b}},
\mathbb{Z}/l(n)
\right)    
\to
\HO^{n+1}_{\et}\left(
(B/(\pi))[
R_{0}, \cdots, R_{N}
]_{R_{0}\cdots R_{b+1}},
\mathbb{Z}/l(n)
\right)
\end{align*}
is injective for $0\leq b\leq a-1$ by the same argument as above.
Let $E^{M}_{b}$ be a ring which is isomorphic to
$B[R_{0}, \cdots, R_{M}]_{R_{0}\cdots R_{b}}$.
Then we have a commutative diagram
\scriptsize
\begin{equation*}
\xymatrix{
&
\HO^{n+1}_{\et}(
E^{N-1}_{b-1},
\mathbb{Z}/l(n)
)
\ar[r]\ar[d]
&
\HO^{n+1}_{\et}(
E^{N-1}_{b},
\mathbb{Z}/l(n)
)
\ar[r]\ar[d]
&
\HO^{n}_{\et}(
E^{N-2}_{b-1},
\mathbb{Z}/l(n-1)
) 
\ar[d]
\\
0\ar[r]
&
\HO^{n+1}_{\et}(
E^{N-1}_{b-1}/(\pi),
\mathbb{Z}/l(n)
)
\ar[r]
&
\HO^{n+1}_{\et}(
E^{N-1}_{b}/(\pi),
\mathbb{Z}/l(n)
)
\ar[r]
&
\HO^{n}_{\et}(
E^{N-2}_{b-1}/(\pi),
\mathbb{Z}/l(n-1)
)
}    
\end{equation*}
\normalsize
by \cite[p.774, Theorem 1.2.4]{Ge} and the absolute purity conjecture (\cite{G}) where the sequences are exact. 
Since $A$ is a henselian discrete valuation ring, the homomorphism
\begin{equation*}
\HO^{n}_{\et}(
B[R_{0}, \cdots, R_{M}],
\mathbb{Z}/l(n-1)
)
\to
\HO^{n}_{\et}\left(
(B/(\pi))[R_{0}, \cdots, R_{M}],
\mathbb{Z}/l(n-1)
\right)
\end{equation*}
is an isomorphism
by \cite[p.774, Theorem 1.2.3]{Ge},
\cite[p.240, VI, Corollary 4.20]{M}
and the isomorphism (\ref{isoAI}).
Hence the homomorphism (\ref{intcet})
is injective by induction on $b$. This completes the proof.
\end{proof}
\begin{lem}\upshape\label{inVaniTr}
Let $\mathfrak{X}$ be a semistable family over the spectrum of 
a Dedekind domain
and 
$\alpha: \mathfrak{X}_{\et}\to \mathfrak{X}_{\Nis}$
the canonical map of sites.
Let $m$ be any positive integer.
Then the homomorphism
\begin{align}\label{ftrt}
\HO^{n+2}_{\Nis}\left(
\mathfrak{X}, 
\tau_{\leq n}(R\alpha_{*}\mathbb{Z}/m(n)_{\et})
\right)
\to
\HO^{n+2}_{\Nis}(
\mathfrak{X},
\mathbb{Z}/m(n)
)
\end{align}
is injective. Here the homomorphism (\ref{ftrt}) is induced by the composite
\begin{align*}
\tau_{\leq n}(R\alpha_{*}\mathbb{Z}/m(n)_{\et})
\xleftarrow{\simeq}
\tau_{\leq n}(
\mathbb{Z}/m(n)_{\Nis}
)
\to
\mathbb{Z}/m(n)_{\Nis}
\end{align*}
where the first map is a quasi-isomorphism by Proposition \ref{SBL}.
\end{lem}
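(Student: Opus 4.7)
The plan is to reduce the injectivity statement, through two successive truncation triangles, to the surjectivity of an edge map in a Leray spectral sequence, and then to argue that surjectivity using the vanishing results on motivic cohomology of the semistable local model.

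First, the mod $m$ analog of Proposition \ref{SBL}, namely the quasi-isomorphism $\tau_{\leq n+1}\mathbb{Z}/m(n)_{\Nis} \simeq \tau_{\leq n+1}R\alpha_{*}\mathbb{Z}/m(n)_{\et}$, follows from Proposition \ref{SBL} by a five-lemma argument applied to the short exact sequence $0\to \mathbb{Z}(n)\xrightarrow{m}\mathbb{Z}(n)\to \mathbb{Z}/m(n)\to 0$. Using this, I factor the map in the lemma as
\[
\tau_{\leq n}R\alpha_{*}\mathbb{Z}/m(n)_{\et}\xleftarrow{\simeq}\tau_{\leq n}\mathbb{Z}/m(n)_{\Nis}\to \tau_{\leq n+1}\mathbb{Z}/m(n)_{\Nis}\to \mathbb{Z}/m(n)_{\Nis}.
\]
The cofiber of the last arrow is $\tau_{\geq n+2}\mathbb{Z}/m(n)_{\Nis}$, whose cohomology sheaves live in degrees $\geq n+2$; hence the hypercohomology spectral sequence gives $\HO^{n+1}_{\Nis}(\mathfrak{X},\tau_{\geq n+2}\mathbb{Z}/m(n)_{\Nis})=0$, so the induced map $\HO^{n+2}_{\Nis}(\mathfrak{X},\tau_{\leq n+1}\mathbb{Z}/m(n)_{\Nis})\to \HO^{n+2}_{\Nis}(\mathfrak{X},\mathbb{Z}/m(n))$ is injective.

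It remains to prove injectivity of $\HO^{n+2}_{\Nis}(\mathfrak{X},\tau_{\leq n}\mathbb{Z}/m(n)_{\Nis})\to \HO^{n+2}_{\Nis}(\mathfrak{X},\tau_{\leq n+1}\mathbb{Z}/m(n)_{\Nis})$. The cofiber of this arrow is $\mathcal{H}^{n+1}(\mathbb{Z}/m(n)_{\Nis})[-(n+1)]$, and by Proposition \ref{SBL} this cohomology sheaf is canonically $R^{n+1}\alpha_{*}\mathbb{Z}/m(n)_{\et}$. The associated long exact sequence reduces the injectivity to the surjectivity of the connecting map
\[
\phi\colon \HO^{n+1}_{\Nis}(\mathfrak{X},\tau_{\leq n+1}\mathbb{Z}/m(n)_{\Nis})\to \HO^{0}(\mathfrak{X},R^{n+1}\alpha_{*}\mathbb{Z}/m(n)_{\et}).
\]
The same degree-counting (applied now to $\tau_{\geq n+2}R\alpha_{*}\mathbb{Z}/m(n)_{\et}$), combined with Proposition \ref{SBL}, identifies the source of $\phi$ with $\HO^{n+1}_{\et}(\mathfrak{X},\mathbb{Z}/m(n))$, and makes $\phi$ the edge map from the Leray spectral sequence
\[
E^{p,q}_{2}=\HO^{p}_{\Nis}(\mathfrak{X},R^{q}\alpha_{*}\mathbb{Z}/m(n)_{\et})\Rightarrow \HO^{p+q}_{\et}(\mathfrak{X},\mathbb{Z}/m(n)).
\]

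The hard part is thus to show that $\phi$ is surjective, equivalently that every outgoing differential $d_{r}\colon E^{0,n+1}_{r}\to E^{r,n+2-r}_{r}$ ($r\geq 2$) vanishes. This is where the semistable assumption must be exploited: $\mathfrak{X}$ is \'etale-locally of the form $\spec(B[T_{0},\dots,T_{N}]/(T_{0}\cdots T_{a}-\pi))$, so Proposition \ref{VaniMtM1} supplies $\HO^{n+1}_{\Zar}(C,\mathbb{Z}/m(n))=0$ for the local model, while Proposition \ref{VSC} gives the higher Zariski vanishing $\HO^{q}_{\Zar}(C,\mathbb{Z}/m(n))=0$ for $q>n+1$. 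Combined with the identification $R^{q}\alpha_{*}\mathbb{Z}/m(n)_{\et}\simeq \mathcal{H}^{q}(\mathbb{Z}/m(n)_{\Nis})$ for $q\leq n+1$ from Proposition \ref{SBL}, these constrain the targets of the differentials, and a Nisnevich-stalkwise analysis in the spirit of Proposition \ref{fscH}—writing a Nisnevich-local section of $R^{n+1}\alpha_{*}\mathbb{Z}/m(n)_{\et}$ as a coherent family of \'etale classes on \'etale neighbourhoods of the points of $\mathfrak{X}$ and showing, via the local vanishing, that such a family glues to a global \'etale cohomology class on $\mathfrak{X}$—will produce the required vanishing of the higher differentials, completing the proof.
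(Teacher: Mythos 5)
Your reduction has the right shape but stalls exactly where the real content of the lemma lies, and the step you would need is both unproved and misidentified. The paper's proof rests on a single input that your argument never invokes: for a semistable family one has $\HO^{n+1}_{\Zar}(\mathcal{O}^{h}_{\mathfrak{X},x},\mathbb{Z}/m(n))=0$ (this is \cite[p.37, Corollary 7]{SM19} together with \cite[Proposition 4.5]{Sak4}), i.e. the Nisnevich sheaf $\mathcal{H}^{n+1}(\mathbb{Z}/m(n)_{\Nis})$ vanishes, so that $\tau_{\leq n+1}(\mathbb{Z}/m(n)_{\Nis})\simeq\tau_{\leq n}(R\alpha_{*}\mathbb{Z}/m(n)_{\et})$. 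Granting this, the map (\ref{ftrt}) is identified with $\HO^{n+2}_{\Nis}(\mathfrak{X},\tau_{\leq n+1}\mathbb{Z}/m(n)_{\Nis})\to\HO^{n+2}_{\Nis}(\mathfrak{X},\mathbb{Z}/m(n)_{\Nis})$, and your Step A (the cofibre $\tau_{\geq n+2}$ contributes nothing to $\HO^{n+1}$) finishes the proof; your Step B becomes vacuous because its cofibre is zero.

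Two concrete problems with what you wrote instead. First, the claimed mod-$m$ analogue $\tau_{\leq n+1}(\mathbb{Z}/m(n)_{\Nis})\simeq\tau_{\leq n+1}(R\alpha_{*}\mathbb{Z}/m(n)_{\et})$ does not follow from Proposition \ref{SBL} by the five lemma: an isomorphism on $\mathcal{H}^{n+1}$ of the mod-$m$ complexes requires control of the map $\mathcal{H}^{n+2}(\mathbb{Z}(n)_{\Nis})\to R^{n+2}\alpha_{*}\mathbb{Z}(n)_{\et}$, which a $\tau_{\leq n+1}$-statement does not give; the five lemma only yields the comparison through degree $n$. Moreover, the conclusion would identify $\mathcal{H}^{n+1}(\mathbb{Z}/m(n)_{\Nis})$ with the unramified-cohomology sheaf $R^{n+1}\alpha_{*}\mathbb{Z}/m(n)_{\et}$, which is in general nonzero, whereas the correct statement here is that $\mathcal{H}^{n+1}(\mathbb{Z}/m(n)_{\Nis})=0$. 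Second, having made that identification, you reduce everything to the surjectivity of the edge map $\phi$ onto $\HO^{0}(\mathfrak{X},R^{n+1}\alpha_{*}\mathbb{Z}/m(n)_{\et})$ --- the assertion that every unramified class on $\mathfrak{X}$ comes from a global \'{e}tale class --- and the final paragraph only announces that a stalkwise analysis ``will produce'' the required vanishing of differentials; no argument is given, and none of Propositions \ref{VSC}, \ref{VaniMtM1} or \ref{fscH} (which concern the affine local models $B[T_{0},\dots,T_{N}]/(T_{0}\cdots T_{a}-\pi)$, not an arbitrary semistable $\mathfrak{X}$ over a Dedekind domain) yields it. The missing idea is the local vanishing $\HO^{n+1}_{\Zar}(\mathcal{O}^{h}_{\mathfrak{X},x},\mathbb{Z}/m(n))=0$, which makes the obstruction group zero rather than something to be surjected onto.
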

\begin{proof}\upshape
By \cite[p.37, Corollary 7]{SM19} and \cite[Proposition 4.5]{Sak4},
we have a quasi-isomorphism
\begin{equation*}
\tau_{\leq n+1}\left(
\mathbb{Z}/m(n)_{\Nis}^{\mathfrak{X}}
\right)
\xrightarrow{\simeq}
\tau_{\leq n}\left(
R\alpha_{*}\mathbb{Z}/m(n)_{\et}^{\mathfrak{X}}
\right).
\end{equation*}
Hence the statement follows.
\end{proof}
\begin{cor}\upshape\label{inclaffmt}
Let $B$ be a henselian discrete valuation ring of mixed characteristic $(0, p)$ and
$\pi$ a prime element of $B$. Let $n$ be a non-negative integer and 
$l$ an integer which is prime to $p$.
Let 
\begin{equation*}
C=
B[T_{0}, \cdots, T_{N}]/
(T_{0}\cdots T_{a}-\pi)
\end{equation*}
for 
$0\leq a\leq N$
and
$\alpha: \spec(C)_{\et}\to \spec(C)_{\Nis}$
the canonical map of sites. Then the homomorphism
\begin{equation*}
\Gamma\left(
\spec(C),
R^{n+1}\alpha_{*}\mathbb{Z}/l(n)_{\et}
\right)
\to
\Gamma\left(
\spec(C/(\pi)),
R^{n+1}\alpha_{*}\mathbb{Z}/l(n)_{\et}
\right)
\end{equation*}
is injective where the homomorphism is induced by (\ref{mfmtm}).
\end{cor}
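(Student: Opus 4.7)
The plan is to reduce the corollary to Proposition \ref{fscH} by establishing, for both $\spec(C)$ and $\spec(C/(\pi))$, an isomorphism
\[\HO^{n+1}_{\et}(\spec(C),\mathbb{Z}/l(n)) \xrightarrow{\simeq} \Gamma\bigl(\spec(C), R^{n+1}\alpha_{*}\mathbb{Z}/l(n)_{\et}\bigr),\]
natural in a way that intertwines the map (\ref{embaffl}) of Proposition \ref{fscH} with the map of Corollary \ref{inclaffmt}. Injectivity of the latter then follows at once from the injectivity of (\ref{embaffl}).

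Since $C$ is a semistable family and hence regular, Proposition \ref{SBL} supplies the quasi-isomorphism $\tau_{\leq n+1}(\mathbb{Z}/l(n)_{\Nis}^{C}) \simeq \tau_{\leq n+1}(R\alpha_{*}\mathbb{Z}/l(n)_{\et}^{C})$, and for $\spec(C/(\pi))$ (a normal crossing variety over $B/(\pi)$) the analogous identification on the Nisnevich side is supplied by Proposition \ref{pBL}. To establish the required identification, I would apply $R\Gamma_{\Nis}$ to the distinguished triangle
\[\tau_{\leq n}(R\alpha_{*}\mathbb{Z}/l(n)_{\et}) \to \tau_{\leq n+1}(R\alpha_{*}\mathbb{Z}/l(n)_{\et}) \to R^{n+1}\alpha_{*}\mathbb{Z}/l(n)_{\et}[-(n+1)] \xrightarrow{+1}\]
and, in parallel, to $\tau_{\leq n+1}(R\alpha_{*}) \to R\alpha_{*} \to \tau_{\geq n+2}(R\alpha_{*})$. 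Since $R\Gamma_{\Nis}$ of $\tau_{\geq n+2}(R\alpha_{*})$ is concentrated in degrees $\geq n+2$, the second triangle gives $\HO^{n+1}_{\Nis}(C,\tau_{\leq n+1}(R\alpha_{*})) \cong \HO^{n+1}_{\et}(C,\mathbb{Z}/l(n))$, while the first yields the exact sequence
\[\HO^{n+1}_{\Nis}(C,\tau_{\leq n}(R\alpha_{*})) \to \HO^{n+1}_{\et}(C,\mathbb{Z}/l(n)) \to \Gamma(C, R^{n+1}\alpha_{*}\mathbb{Z}/l(n)_{\et}) \to \HO^{n+2}_{\Nis}(C,\tau_{\leq n}(R\alpha_{*})).\]
Truncating the Proposition \ref{SBL} quasi-isomorphism identifies $\tau_{\leq n}(R\alpha_{*}\mathbb{Z}/l(n)_{\et}^{C}) \simeq \tau_{\leq n}(\mathbb{Z}/l(n)_{\Nis}^{C})$, and the outer groups can then be expressed in terms of Nisnevich hypercohomology of $\mathbb{Z}/l(n)_{\Nis}^{C}$ by means of the triangle $\tau_{\leq n}(\mathbb{Z}/l(n)_{\Nis}) \to \mathbb{Z}/l(n)_{\Nis} \to \tau_{\geq n+1}(\mathbb{Z}/l(n)_{\Nis})$. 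The vanishings $\HO^{q}_{\Zar}(C,\mathbb{Z}/l(n))=0$ for $q \geq n+1$ from Propositions \ref{VSC} and \ref{VaniMtM1}, transferred to the Nisnevich topology via the comparison embedded in Proposition \ref{SBL}, should force the outer groups to vanish and yield the sought isomorphism; the same strategy with Proposition \ref{pBL} in place of Proposition \ref{SBL} handles $\spec(C/(\pi))$.

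The main obstacle will be the vanishing of $\HO^{n+2}_{\Nis}(C,\tau_{\leq n}(R\alpha_{*}\mathbb{Z}/l(n)_{\et}^{C}))$: the Zariski--Nisnevich comparison of Proposition \ref{SBL} extends cleanly only up to degree $n+1$, while $\tau_{\geq n+1}(\mathbb{Z}/l(n)_{\Nis}^{C})$ still contributes through $\mathcal{H}^{n+1}(\mathbb{Z}/l(n)_{\Nis}^{C}) \simeq R^{n+1}\alpha_{*}\mathbb{Z}/l(n)_{\et}^{C}$, precisely the sheaf at issue. A careful spectral-sequence bookkeeping around this residual term, or a complementary stalkwise argument using that the Nisnevich stalks of $\spec(C)$ are henselian local rings of a semistable family to which Proposition \ref{fscH} generalizes, should be needed to close the gap.
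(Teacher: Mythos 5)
Your overall strategy coincides with the paper's: reduce to Proposition \ref{fscH} by showing that $\HO^{n+1}_{\et}(C,\mathbb{Z}/l(n))\to\Gamma(\spec(C),R^{n+1}\alpha_{*}\mathbb{Z}/l(n)_{\et})$ and its analogue for $C/(\pi)$ are isomorphisms, via the four-term exact sequence coming from the truncation triangle. The treatment of $C/(\pi)$ (Proposition \ref{pBL} plus the vanishing of Proposition \ref{VaniNGL}) and of the left-hand obstruction group $\HO^{n+1}_{\Nis}(C,\tau_{\leq n}(R\alpha_{*}\mathbb{Z}/l(n)_{\et}))$ (Propositions \ref{SBL} and \ref{VaniMtM1}) go through as you describe.

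However, the step you flag as "the main obstacle" is a genuine gap that your argument does not close: you need
\begin{equation*}
\HO^{n+2}_{\Nis}\bigl(C,\tau_{\leq n}(R\alpha_{*}\mathbb{Z}/l(n)_{\et})\bigr)=0,
\end{equation*}
and, as you correctly observe, Proposition \ref{SBL} identifies $\tau_{\leq n}(R\alpha_{*}\mathbb{Z}/l(n)_{\et})$ only with $\tau_{\leq n}(\mathbb{Z}/l(n)_{\Nis})$, which leaves the contribution of $\mathcal{H}^{n+1}(\mathbb{Z}/l(n)_{\Nis}^{C})$ unaccounted for; spectral-sequence bookkeeping alone will not make it disappear. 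The paper closes this with Lemma \ref{inVaniTr}, whose content is precisely the stalkwise input you gesture at: for a semistable family one has the stronger quasi-isomorphism $\tau_{\leq n+1}(\mathbb{Z}/l(n)_{\Nis}^{C})\xrightarrow{\simeq}\tau_{\leq n}(R\alpha_{*}\mathbb{Z}/l(n)_{\et}^{C})$, because $\mathcal{H}^{n+1}(\mathbb{Z}/l(n)_{\Nis})$ vanishes on the henselian local rings of $C$ by \cite[Proposition 4.5]{Sak4} together with \cite[p.37, Corollary 7]{SM19}. Granting this, the map $\HO^{n+2}_{\Nis}(C,\tau_{\leq n}(R\alpha_{*}\mathbb{Z}/l(n)_{\et}))\to\HO^{n+2}_{\Nis}(C,\mathbb{Z}/l(n))$ is injective (its cone is $\tau_{\geq n+2}(\mathbb{Z}/l(n)_{\Nis})$), and the target vanishes by Proposition \ref{VSC}. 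Without citing or reproving this degree-$(n+1)$ local vanishing, your proof of the isomorphism (\ref{GBoS}) — and hence of the corollary — is incomplete.
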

\begin{proof}\upshape
By Proposition \ref{fscH}, it suffices to show that 
the homomorphisms
\begin{equation}\label{GBoS}
\HO^{n+1}_{\et}(
C,
\mathbb{Z}/l(n)
)    
\xrightarrow{\simeq}
\Gamma\left(
\spec(C),
R^{n+1}\alpha_{*}
\mathbb{Z}/l(n)_{\et}
\right)
\end{equation}
and
\begin{equation}\label{GBoN}
\HO^{n+1}_{\et}(
C/(\pi),
\mathbb{Z}/l(n)
)    
\xrightarrow{\simeq}
\Gamma\left(
\spec(C/(\pi)),
R^{n+1}\alpha_{*}
\mathbb{Z}/l(n)_{\et}
\right)
\end{equation}
are isomorphisms. Since the sequence
\begin{align*}
&\HO^{n+1}_{\Nis}\left(
C,
\tau_{\leq n}(
R\alpha_{*}\mathbb{Z}/l(n)_{\et}
)
\right)
\to
\HO^{n+1}_{\et}(
C,
\mathbb{Z}/l(n)
)
\\
\to
&\Gamma\left(
\spec(C),
R^{n+1}\alpha_{*}\mathbb{Z}/l(n)_{\et}
\right) 
\to 
\HO^{n+2}_{\Nis}\left(
C,
\tau_{\leq n}(
R\alpha_{*}\mathbb{Z}/l(n)_{\et}
)
\right)
\end{align*}
is exact, the isomorphism (\ref{GBoS}) follows 
from Proposition \ref{VaniMtM1}, Lemma \ref{inVaniTr} and Proposition \ref{VSC}.
Since we have a quasi-isomorphism
\begin{align*}
\tau_{\leq n}\left(
R\alpha_{*}\mathbb{Z}/l(n)_{\et}^{\spec(C/(\pi))}
\right)
\simeq
\mathbb{Z}/l(n)_{\Nis}^{\spec(C/(\pi))}
\end{align*}
by Proposition \ref{pBL} and \cite[Proposition 2.1]{Sak4},
the isomorphism (\ref{GBoN})
follows from  Proposition \ref{VaniNGL}.
This completes the proof.
\end{proof}
\begin{lem}\upshape\label{injpr}
Let $A$ be a henselian local ring with 
$\operatorname{dim}(A)=1$ and $l$ an integer which is prime to $\operatorname{char}(A)$.  
Suppose that the maximal ideal of $A$ is principal. Then the homomorphism
\begin{equation*}
\HO^{n+1}_{\et}(A, \mu_{l}^{\otimes n})
\to
\HO^{n+1}_{\et}(
\kappa(x),
\mu_{l}^{\otimes n}
)
\end{equation*}
is injective for any integer $n\geq 0$ and $x\in \spec(A)^{(0)}$.
\end{lem}
\begin{proof}\upshape
Consider a commutative diagram
\begin{equation*}
\xymatrix{
\HO^{n+1}_{\et}(
A,
\mu_{l}^{\otimes n}
)
\ar[r]\ar[d]
&
\HO^{n+1}_{\et}(
\kappa(x),
\mu_{l}^{\otimes n}
)
\ar@{=}[d]
\\
\HO^{n+1}_{\et}(
A/x,
\mu_{l}^{\otimes n}
)
\ar[r]
&
\HO^{n+1}_{\et}(
\kappa(x),
\mu_{l}^{\otimes n}
)
}    
\end{equation*}
where $x\in \spec(A)^{(0)}$. Since $A/x$ is a discrete valuation ring 
by \cite[p.78, Theorem 11.1]{Ma}, 
the bottom map
is injective by \cite[Theorem B.2.1 and Examples B.1.1.(2)]{C-H-K}. 
Moreover, the left map is an isomorphism 
by \cite[p.777, The proof of Proposition 2.2.b)]{Ge}.
So the upper map is injective. This completes the proof.
\end{proof}
\begin{lem}\upshape\label{nrhenij}
Let $B$ be a discrete valuation ring and $\pi$ a prime element of $B$. Let $R$ 
be the henselization of the local ring of
\begin{math}
C=B[T_{0}, \cdots, T_{N}]
/(T_{0}\cdots T_{a}-\pi T_{N}^{b})    
\end{math}
at a point $x$ of $\spec(C)$
and $l$ an integer which is prime to $\operatorname{char}(B)$.
Then the homomorphism
\begin{equation}\label{homnr}
\HO^{n}_{\et}(R, \mu_{l}^{\otimes n})
\to
\displaystyle\bigoplus_{x\in \spec(R)^{(0)}}
\HO^{n}_{\et}(
\kappa(x),
\mu_{l}^{\otimes n}
)
\end{equation}
is injective for any integer $n\geq 0$.
\end{lem}
\begin{proof}\upshape
Let $\mathfrak{p}\in \spec(B[T_{0}, \cdots, T_{N}])$ be
the image of $x\in\spec(C)$ under
$\spec(C)\to \spec(B[T_{0}, \cdots, T_{N}])$.
Then 
\begin{math}
T_{0}\cdots T_{a}-\pi T_{N}^{b}\in \mathfrak{p}.    
\end{math}

First we assume that $T_{N}\notin \mathfrak{p}$. We have a homomorphism of polynomial rings over 
$B$
\begin{equation*}
B[T_{0}, \cdots, T_{N}]
\to
B[T_{0}^{\prime}, \cdots, T_{N}^{\prime}]
\end{equation*}
which sends $T_{i}$ to $T_{i}^{\prime}$ for $i\neq 0$
and
sends $T_{0}$ to $T_{0}^{\prime}(T_{N}^{\prime})^{b}$.
Then this homomorphism induces an isomorphism
\begin{equation*}
\left(
B[T_{0}, \cdots, T_{N}]/
(T_{0}\cdots T_{a}
-\pi T_{N}^{b})
\right)_{T_{N}}
\simeq 
\left(
B[
T_{0}^{\prime},
\cdots,
T_{N}^{\prime}]/
(T^{\prime}_{0}\cdots T_{a}^{\prime}-\pi)
\right)_{T_{N}^{\prime}}
\end{equation*}
and so $R$ is the henselization of the local ring of a semistable family over $\spec(B)$.
By \cite[p.35, Proposition 5]{SM19}, the homomorphism (\ref{homnr}) is injective in the case where 
$T_{N}\notin\mathfrak{p}$.

Next we assume that $T_{N}\in \mathfrak{p}$. Then $T_{i}\in \mathfrak{p}$ for some $i$
($0\leq i\leq a$) and we may assume that $T_{0}\in \mathfrak{p}$.
So we have $(T_{0}, T_{N})\subset \mathfrak{p}$.
Since $T_{1}, \cdots, T_{a}$ are invertible elements of
\begin{equation*}
(B[T_{0}, \cdots, T_{N}]_{\mathfrak{p}})_{(T_{0}, T_{N})}
=
B[T_{0}, \cdots, T_{N}]_{(T_{0}, T_{N})},    
\end{equation*}
so $T_{1}, \cdots, T_{a}$ are also invertible elements of
\begin{equation*}
(B[T_{0}, \cdots, T_{N}]_{\mathfrak{p}}^{h})_{(T_{0}, T_{N})}^{h}   
\end{equation*}
where $D^{h}_{\mathfrak{q}}$ is the henselization of the local ring of 
a ring $D$ at a point $\mathfrak{q}\in \spec(D)$.
Hence
\begin{align*}
\left(
T_{N}, T_{0}\cdots T_{a}-\pi T_{N}^{b}
\right)
=
\left(
T_{N}, T_{0}\cdots T_{a}
\right)
=
(T_{0}, T_{N})
\end{align*}
in 
\begin{math}
(B[T_{0}, \cdots, T_{N}]_{\mathfrak{p}}^{h})_{(T_{0}, T_{N})}^{h}    
\end{math}
and the maximal ideal of 
\begin{equation*}
E=
\left(
B[T_{0}, \cdots, T_{N}]_{\mathfrak{p}}^{h}
/(T_{0}\cdots T_{a}-\pi T_{N}^{b})
\right)_{(T_{0}, T_{N})}^{h}
\end{equation*}
is principal. 
Since the residue field 
$\kappa((T_{0}, T_{N}))$
of $(T_{0}, T_{N})\in \spec(R)$
agrees with $E/(T_{0}, T_{N})$,
we have a commutative diagram
\begin{equation*}
\xymatrix{
\HO^{n+1}_{\et}(R, \mu_{l}^{\otimes n})
\ar[r]\ar[d]
&
\HO^{n+1}_{\et}(
E, 
\mu_{l}^{\otimes n}
)  \ar[d]
\\
\HO^{n+1}_{\et}(
R^{\prime},
\mu_{l}^{\otimes n}
)
\ar[r]
&
\HO^{n+1}_{\et}(
\kappa((T_{0}, T_{N})),
\mu_{l}^{\otimes n}
)
}    
\end{equation*}
where $R^{\prime}=R/(T_{0}, T_{N})$ and so $R^{\prime}$ is 
the henselization of the local ring of 
$B[T_{1}, \cdots, T_{N-1}]$.
Since the left map is an isomorphism 
by 
the isomorphism (\ref{isoAI})
and 
the bottom map is injective by \cite[p.35, Proposition 5]{SM19},
the upper map is injective. Moreover, the homomorphism
\begin{equation*}
\HO^{n+1}_{\et}(
E, 
\mu_{l}^{\otimes n}
)
\to 
\displaystyle\bigoplus_{x\in \spec(E)^{(0)}}
\HO^{n+1}_{\et}(
\kappa(x),
\mu_{l}^{\otimes n}
)
\end{equation*}
is injective by Lemma \ref{injpr}. 
Hence, the homomorphism (\ref{homnr}) is injective
in the case where $T_{N}\in \mathfrak{p}$.
This completes the proof.
\end{proof}
\begin{cor}\upshape\label{injproEx}
Let $B$ be a henselian discrete valuation ring of mixed characteristic $(0, p)$
and $\pi$ a prime element of $B$. Let
\begin{equation*}
\mathfrak{X}
=
\operatorname{Proj}\left(
B[T_{0}, \cdots, T_{N+1}]/
(T_{0}\cdots
T_{a}-\pi T_{N+1}^{a+1}
)
\right)
\end{equation*}
for $0\leq a\leq N$ and
$Y$ the closed fiber of $\mathfrak{X}$. 
Then the homomorphism
\begin{equation}\label{probnc}
\Gamma\left(
\mathfrak{X},
R^{n+1}\alpha_{*}\mu_{l}^{\otimes n}
\right)
\to
\Gamma\left(
Y,
R^{n+1}\alpha_{*}\mu_{l}^{\otimes n}
\right)
\end{equation}
is injective for any integers $n\geq 0$ and $l>0$ with
$(l, p)=1$. Here 
$\alpha: \mathfrak{X}_{\et}\to \mathfrak{X}_{\Nis}$
is the canonical map of sites.
\end{cor}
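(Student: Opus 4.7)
The strategy is to reduce the global injectivity on the projective scheme $\mathfrak{X}$ to the affine case handled in Corollary~\ref{inclaffmt} via the standard open cover. Since $R^{n+1}\alpha_{*}\mu_{l}^{\otimes n}$ is a Nisnevich (hence Zariski) sheaf, the global-section presheaf is separated, so the restriction to the standard open affine cover $\mathfrak{X} = \bigcup_{j=0}^{N+1} U_{j}$ (with $U_{j} = D_{+}(T_{j})$) yields an injection
\begin{equation*}
\Gamma(\mathfrak{X}, R^{n+1}\alpha_{*}\mu_{l}^{\otimes n})
\hookrightarrow
\prod_{j=0}^{N+1}\Gamma(U_{j}, R^{n+1}\alpha_{*}\mu_{l}^{\otimes n}),
\end{equation*}
and similarly for $Y$. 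A routine diagram chase then reduces the problem to proving that the map
\begin{equation*}
\Gamma(U_{j}, R^{n+1}\alpha_{*}\mu_{l}^{\otimes n})
\to
\Gamma(U_{j}\cap Y, R^{n+1}\alpha_{*}\mu_{l}^{\otimes n})
\end{equation*}
is injective for every $j \in \{0, 1, \ldots, N+1\}$.

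Next I would dehomogenize each $U_{j}$. Setting $T_{N+1}=1$ identifies $U_{N+1}$ with $\spec(B[T_{0},\ldots,T_{N}]/(T_{0}\cdots T_{a}-\pi))$, which is exactly the ring $C$ appearing in Corollary~\ref{inclaffmt}, whose closed fiber is $U_{N+1}\cap Y$; thus that corollary applies verbatim and settles the case $j=N+1$. For $j \neq N+1$, dehomogenization produces the slightly more general semistable relation
\begin{equation*}
X_{0}\cdots X_{b} - \pi U^{b+1} = 0,
\end{equation*}
with $b=a$ when $j>a$ and $b=a-1$ when $j\leq a$, together with extra polynomial variables coming from the remaining homogeneous coordinates. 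To handle these pieces I would follow the template of Corollary~\ref{inclaffmt}, verifying that each of its ingredients (Propositions~\ref{VSC}, \ref{VaniMtM1}, \ref{SBL}, \ref{cset} and~\ref{fscH}) goes through for this generalized form; the extra polynomial variables are stripped off by homotopy invariance \cite[p.781, Corollary 3.5]{Ge}. The key manoeuvre is the change of variable $X'_{b} := X_{b}/U^{b+1}$, which after inverting $U$ turns the localization into a Laurent polynomial extension of the algebra $C$ already controlled by Corollary~\ref{inclaffmt}, while the contribution supported on $\{U=0\}$ is captured through the localization sequence \cite[p.779, Theorem 3.2]{Ge} together with Proposition~\ref{VaniNGL} applied to the closed fiber.

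The main obstacle is the last step: the vanishing results of Propositions~\ref{VSC} and \ref{VaniMtM1} are formulated for the standard semistable ring $B[T_{0},\ldots,T_{N}]/(T_{0}\cdots T_{a}-\pi)$, whereas the dehomogenized pieces $U_{j}$ for $j \neq N+1$ have an extra $U^{b+1}$ factor on the right-hand side. One must therefore extend these vanishing statements to rings of the form $B[X_{0},\ldots,X_{b},U,Y_{1},\ldots,Y_{r}]/(X_{0}\cdots X_{b}-\pi U^{b+1})$. Once that extension is in place, the formal scheme-theoretic structure is identical to the argument establishing Corollary~\ref{inclaffmt}, and the required injectivity assembles over the cover to give the statement. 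I expect Remark~\ref{comia} to record precisely these adjustments.
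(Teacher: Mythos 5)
Your overall strategy --- restrict to the standard affine cover and invoke Corollary~\ref{inclaffmt} chart by chart --- is not the route the paper takes, and as written it has a genuine gap that the paper's argument is specifically designed to avoid. The paper uses only the \emph{single} chart $D(T_{N+1})=\spec(C)$ with $C=B[S_{0},\ldots,S_{N}]/(S_{0}\cdots S_{a}-\pi)$: by \cite[p.35, Proposition 5]{SM19} the adjunction map
\begin{equation*}
R^{n+1}\alpha_{*}\mu_{l}^{\otimes n}
\to
j_{*}j^{*}R^{n+1}\alpha_{*}\mu_{l}^{\otimes n}
\end{equation*}
is injective for the dense open immersion $j:D(T_{N+1})\to\mathfrak{X}$, i.e.\ this Nisnevich sheaf has no sections supported on a nowhere-dense closed subset. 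Hence $\Gamma(\mathfrak{X},R^{n+1}\alpha_{*}\mu_{l}^{\otimes n})\hookrightarrow\Gamma(\spec(C),R^{n+1}\alpha_{*}\mu_{l}^{\otimes n})$ already, the composite with $\Gamma(\spec(C),-)\to\Gamma(\spec(C/(\pi)),-)$ is injective by Corollary~\ref{inclaffmt}, and since that composite factors through $\Gamma(Y,-)$ the claim follows. No other chart is ever touched.

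The gap in your version is exactly the point you flag as ``the main obstacle'' and then defer: for $j\neq N+1$ the dehomogenized charts are of the form $B[X_{0},\ldots,X_{b},U,\ldots]/(X_{0}\cdots X_{b}-\pi U^{b+1})$, and none of Propositions~\ref{VSC}, \ref{VaniMtM1} or \ref{fscH} applies to these rings. Your proposed change of variable $X_{b}'=X_{b}/U^{b+1}$ only controls the locus where $U$ is invertible; the contribution of $\{U=0\}$ is a non-reduced, non-semistable closed subscheme (the relation degenerates to $X_{0}\cdots X_{b}=0$ over all of $B$, not over the residue field), so Proposition~\ref{VaniNGL} does not apply to it either, and the localization-sequence bookkeeping you sketch is not actually carried out. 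Until those extended vanishing statements are proved, the injectivity on the charts $j\neq N+1$ is unestablished and the proof is incomplete. (Also note that Remark~\ref{comia} does not record chart adjustments; it only identifies $\Gamma(Y,i^{*}R^{n+1}\alpha_{*}\mu_{l}^{\otimes n})$ with $\Gamma(Y,R^{n+1}\alpha_{*}\mu_{l}^{\otimes n})$ via \cite[p.776, Proposition 2.2.b)]{Ge}.) The lesson is that the separatedness of the global-section functor over a cover is a much weaker tool here than the statement that the sheaf injects into its pushforward from a single dense open.
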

\begin{proof}\upshape
Let 
\begin{math}
j: D(T_{N+1})\to  
\mathfrak{X}
\end{math}
be an open immersion of $\mathfrak{X}$ such that
\begin{align*}
D(T_{N+1})
:=
\{
\mathfrak{p}\in \mathfrak{X}|
T_{N+1}\notin\mathfrak{p}
\}
=\spec(
B[S_{0}, \cdots, S_{N}]/
(S_{0}\cdots S_{a}-\pi)
).
\end{align*}
Then the homomorphism
\begin{equation*}
R^{n+1}\alpha_{*}\mu_{l}^{\otimes n}
\to 
j_{*}j^{*}R^{n+1}\alpha_{*}\mu_{l}^{\otimes n}
\end{equation*}
is injective by Lemma \ref{nrhenij}. So the homomorphism
\begin{align}\label{inclpta}
\Gamma\left(
\mathfrak{X},
R^{n+1}\alpha_{*}\mu_{l}^{\otimes n}
\right)
\to
\Gamma\left(
\spec(C),
R^{n+1}\alpha_{*}\mu_{l}^{\otimes n}
\right)
\end{align}
is injective where
\begin{equation*}
C=
B[S_{0}, \cdots, S_{N}]/
(S_{0}\cdots S_{a}-\pi).    
\end{equation*}
Then the composite of the homomorphism (\ref{inclpta}) and the homomorphism
\begin{align*}
\Gamma\left(
\spec(C),
R^{n+1}\alpha_{*}\mathbb{Z}/l(n)
\right)
\to
\Gamma\left(
\spec(C/(\pi)),
R^{n+1}\alpha_{*}\mathbb{Z}/l(n)
\right)
\end{align*}
agrees with the composite of the homomorphism (\ref{probnc}) and
the composition
\footnotesize
\begin{align*}
\Gamma\left(
Y, 
R^{n+1}\alpha_{*}
\mu_{l}^{\otimes n}
\right) 
\to
\Gamma\left(
\spec(C/(\pi)),
R^{n+1}\alpha_{*}
\mu_{l}^{\otimes n}
\right)
\to
\Gamma\left(
\spec(C/(\pi)),
R^{n+1}\alpha_{*}
\mathbb{Z}/l(n)
\right)
\end{align*}
\normalsize
(cf. Proposition \ref{constm}).
Hence the statement follows from Corollary \ref{inclaffmt}.
\end{proof}
\begin{rem}\upshape\label{comia}
Let the notations be the same as above and
$i: Y\to \mathfrak{X}$
the inclusion of the closed fiber. Then we have an isomorphism
\begin{equation*}
\Gamma\left(
Y,
i^{*}R^{n+1}\alpha_{*}\mu^{\otimes n}_{l}
\right)
\xrightarrow{\simeq}
\Gamma\left(
Y,
R^{n+1}\alpha_{*}\mu^{\otimes n}_{l}
\right)
\end{equation*}
by \cite[p.776, Proposition 2.2.b)]{Ge}.
\end{rem}

\section{Proper base change theorem}\label{TPrb}

\subsection{A generalization of Artin's theorem}

In this subsection, we prove a generalization of Artin's theorem (\cite[p.98, Th\'{e}or\`{e}me (3.1)]{Gr}).
For a scheme $X$, $\alpha: X_{\et}\to X_{\Nis}$ denotes the canonical map of sites.

\begin{lem}\upshape\label{1BUis}
Let $Y$ be a one dimensional scheme. Then we have an isomorphism
\begin{equation*}
\HO^{2}_{\et}(
Y, \mathbb{G}_{m}
)
\xrightarrow{\simeq}
\Gamma\left(
Y,
R^{2}\alpha_{*}\mathbb{G}_{m}
\right).
\end{equation*}
\end{lem}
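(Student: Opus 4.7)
The plan is to use the Leray spectral sequence for the canonical map of sites $\alpha: Y_{\et} \to Y_{\Nis}$ applied to $\mathbb{G}_m$:
$$E_2^{p,q} = \HO^p_{\Nis}(Y, R^q\alpha_* \mathbb{G}_m) \;\Longrightarrow\; \HO^{p+q}_{\et}(Y, \mathbb{G}_m).$$
The map in the statement is the edge homomorphism $\HO^2_{\et}(Y,\mathbb{G}_m) \to E_2^{0,2} = \Gamma(Y, R^2\alpha_*\mathbb{G}_m)$, so it suffices to show that the other contributing terms on the diagonal $p+q=2$ and in the neighborhood vanish.

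First I would show $R^1\alpha_*\mathbb{G}_m = 0$. Its stalk at a point $y \in Y$ is $\HO^1_{\et}(\mathcal{O}^h_{Y,y}, \mathbb{G}_m) = \operatorname{Pic}(\mathcal{O}^h_{Y,y})$ by Hilbert 90, and the Picard group of a (henselian) local ring vanishes. Consequently $E_2^{p,1} = 0$ for all $p$. Next I would use the fact that the Nisnevich cohomological dimension of a Noetherian scheme is bounded above by its Krull dimension (a theorem of Nisnevich); since $\operatorname{dim}(Y) = 1$, this gives $\HO^p_{\Nis}(Y, \mathcal{F}) = 0$ for all Nisnevich sheaves $\mathcal{F}$ and $p \geq 2$. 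In particular, $E_2^{p,0} = \HO^p_{\Nis}(Y, \alpha_*\mathbb{G}_m) = \HO^p_{\Nis}(Y, \mathbb{G}_m) = 0$ for $p \geq 2$.

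Combining these vanishings, the only nonzero $E_2$-term on the diagonal $p + q = 2$ is $E_2^{0,2}$. Since $E_2^{0,2}$ sits at the corner of the spectral sequence, the differentials entering it, $d_r: E_r^{-r, r+1} \to E_r^{0,2}$, are automatically zero, while outgoing differentials $d_r: E_r^{0,2} \to E_r^{r, 3-r}$ land in groups that vanish by the cohomological dimension bound (for $r \geq 2$). Therefore $E_\infty^{0,2} = E_2^{0,2}$, and the edge map
$$\HO^2_{\et}(Y, \mathbb{G}_m) \twoheadrightarrow E_\infty^{0,2} = \Gamma(Y, R^2\alpha_*\mathbb{G}_m)$$
is surjective. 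It is also injective because the kernel is filtered by $E_\infty^{p, 2-p}$ for $p \geq 1$, all of which vanish by the above.

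There is no serious obstacle here; the argument is purely formal once one invokes Hilbert 90 for the vanishing of $R^1\alpha_*\mathbb{G}_m$ and the Nisnevich cohomological dimension bound. The only point requiring any care is the Nisnevich dimension bound, which for a general one-dimensional scheme (not necessarily regular or separated) should be cited from Nisnevich's original work or a standard reference.
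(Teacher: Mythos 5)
Your proposal is correct and is essentially the same argument as the paper's: both rest on the vanishing $R^{1}\alpha_{*}\mathbb{G}_{m}=0$ (the paper cites the proof of \cite[p.124, III, Lemma 4.10]{M}, which is exactly your Hilbert 90 plus triviality of the Picard group of a henselian local ring) and on the Nisnevich cohomological dimension bound $\operatorname{cd}_{\Nis}(Y)\leq\operatorname{dim}(Y)=1$ from \cite[pp.279--280, 1.32. Theorem]{Ni}. The paper packages this via the distinguished triangle $\tau_{\leq 0}R\alpha_{*}\mathbb{G}_{m}\to R\alpha_{*}\mathbb{G}_{m}\to\tau_{\geq 2}R\alpha_{*}\mathbb{G}_{m}$ rather than the Leray spectral sequence, but the two formulations are interchangeable here.
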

\begin{proof}\upshape
By \cite[p.124, III, Proposition 4.9]{M}, we have an isomorphism
\begin{equation*}
R^{1}\alpha_{*}\mathbb{G}_{m}
\simeq 
0
\end{equation*}
and so we have a distinguished triangle
\begin{align*}
\cdots 
\to
\tau_{\leq 0}R\alpha_{*}\mathbb{G}_{m}^{Y}
\to
R\alpha_{*}\mathbb{G}_{m}^{Y}
\to
\tau_{\geq 2}R\alpha_{*}\mathbb{G}_{m}^{Y}
\to\cdots.
\end{align*}
By \cite[pp.279--280, 1.32. Theorem]{Ni}, 
we have an isomorphism
\begin{equation*}
\HO^{q}_{\Nis}(
Y, 
\tau_{\leq 0}R\alpha_{*}\mathbb{G}_{m}
)    
=0
\end{equation*}
for $q\geq 2$. Hence the statement follows.
\end{proof}
\begin{thm}\upshape(cf. Artin's theorem \cite[p.98, Th\'{e}or\`{e}me (3.1)]{Gr})\label{BrAr}
Let $B$ be a henselian excellent discrete valuation ring,
$\mathfrak{X}$ a two dimensional regular scheme which is proper and flat over 
$\spec(B)$ and $Y$ the closed fiber of $\mathfrak{X}$. 

Then the canonical map
\begin{align*}
\Gamma\left(
\mathfrak{X}, 
R^{2}\alpha_{*}\mathbb{G}_{m}
\right)
\to
\Gamma\left(
Y,
R^{2}\alpha_{*}\mathbb{G}_{m}
\right)
\end{align*}
is an isomorphism. 
\end{thm}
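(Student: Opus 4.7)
The plan is to combine Lemma \ref{1BUis} with Artin's original comparison theorem for Brauer groups \cite[p.98, Th\'{e}or\`{e}me (3.1)]{Gr} via the Leray spectral sequence for the morphism of sites $\alpha$. First, since the closed fiber $Y$ is one dimensional, Lemma \ref{1BUis} yields an identification
\[
\Gamma\left(Y, R^{2}\alpha_{*}\mathbb{G}_{m}\right) \simeq \HO^{2}_{\et}(Y, \mathbb{G}_{m}),
\]
so the target of the canonical map in the theorem is simply the classical \'{e}tale $\HO^{2}$ of the closed fiber.

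Next, I would analyze the source via the Leray spectral sequence
\[
E_{2}^{p, q} = \HO^{p}_{\Nis}\left(\mathfrak{X}, R^{q}\alpha_{*}\mathbb{G}_{m}\right) \Rightarrow \HO^{p+q}_{\et}(\mathfrak{X}, \mathbb{G}_{m}).
\]
As in the proof of Lemma \ref{1BUis}, one has $R^{1}\alpha_{*}\mathbb{G}_{m} = 0$. Since $\operatorname{dim}(\mathfrak{X}) = 2$, Nisnevich's theorem on cohomological dimension (cf.~\cite[pp.279--280, 1.32. Theorem]{Ni}) forces $\HO^{q}_{\Nis}(\mathfrak{X}, \mathcal{F}) = 0$ for all $q > 2$ and every abelian Nisnevich sheaf $\mathcal{F}$; in particular $\HO^{3}_{\Nis}(\mathfrak{X}, \mathbb{G}_{m}) = 0$. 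Examining the differentials into and out of the $E_{r}^{0, 2}$ term, one finds $E_{\infty}^{0, 2} = \Gamma(\mathfrak{X}, R^{2}\alpha_{*}\mathbb{G}_{m})$ together with a natural edge surjection
\[
\pi: \HO^{2}_{\et}(\mathfrak{X}, \mathbb{G}_{m}) \twoheadrightarrow \Gamma\left(\mathfrak{X}, R^{2}\alpha_{*}\mathbb{G}_{m}\right).
\]
Notably, no vanishing of $\HO^{2}_{\Nis}(\mathfrak{X}, \mathbb{G}_{m})$ is required.

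I would then exploit functoriality to observe that the composition
\[
\HO^{2}_{\et}(\mathfrak{X}, \mathbb{G}_{m}) \xrightarrow{\pi} \Gamma\left(\mathfrak{X}, R^{2}\alpha_{*}\mathbb{G}_{m}\right) \to \Gamma\left(Y, R^{2}\alpha_{*}\mathbb{G}_{m}\right) \simeq \HO^{2}_{\et}(Y, \mathbb{G}_{m})
\]
agrees with the restriction map induced by the closed immersion $i: Y \to \mathfrak{X}$. Under the hypotheses ($B$ henselian excellent, $\mathfrak{X}$ proper over $\spec(B)$), Artin's theorem \cite[p.98, Th\'{e}or\`{e}me (3.1)]{Gr} asserts that this restriction is an isomorphism. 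A short diagram chase then finishes the argument: surjectivity of the middle arrow follows from surjectivity of the composition, and any element in its kernel lifts via $\pi$ to a class whose image in $\HO^{2}_{\et}(Y, \mathbb{G}_{m})$ vanishes, hence is already zero in $\HO^{2}_{\et}(\mathfrak{X}, \mathbb{G}_{m})$ by Artin, so the middle map is injective as well.

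The main obstacle is the compatibility asserted in the third paragraph: one must rigorously verify that the Leray edge map $\pi$ on $\mathfrak{X}$, followed by pullback to $Y$, matches the restriction map composed with the edge map of Lemma \ref{1BUis}. This is formal given the naturality of the Leray spectral sequence under the base change $i: Y \to \mathfrak{X}$, but requires careful bookkeeping of the comparison between the Nisnevich and \'{e}tale sites of $\mathfrak{X}$ and $Y$.
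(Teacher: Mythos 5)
Your argument is correct, and it follows the paper's proof in all essential respects: both reduce the target to $\HO^{2}_{\et}(Y,\mathbb{G}_{m})$ via Lemma \ref{1BUis}, both exploit $R^{1}\alpha_{*}\mathbb{G}_{m}=0$ together with the bound $\operatorname{cd}_{\Nis}\leq\operatorname{dim}=2$ from \cite[pp.279--280, 1.32. Theorem]{Ni}, and both conclude with Artin's theorem. The one genuine difference is how the source is handled. The paper proves that the edge map $\HO^{2}_{\et}(\mathfrak{X},\mathbb{G}_{m})\to\Gamma(\mathfrak{X},R^{2}\alpha_{*}\mathbb{G}_{m})$ is an \emph{isomorphism}: surjectivity from the vanishing of $\HO^{3}_{\Nis}(\mathfrak{X},\tau_{\leq 0}R\alpha_{*}\mathbb{G}_{m})$, and injectivity by comparing with the generic point and invoking $\HO^{2}_{\et}(\mathfrak{X},\mathbb{G}_{m})\hookrightarrow\HO^{2}_{\et}(k(\mathfrak{X}),\mathbb{G}_{m})$ for the regular scheme $\mathfrak{X}$ (\cite[p.145, IV, Corollary 2.6]{M}); the theorem is then immediate. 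You instead use only the edge \emph{surjection} and recover both injectivity and surjectivity of the canonical map by a diagram chase against Artin's isomorphism. Your chase is valid (any kernel element lifts along the surjection $\pi$ to a class killed by restriction to $Y$, hence zero by Artin), and it buys a small economy: you never use the regularity of $\mathfrak{X}$ at this step. The compatibility you flag at the end --- that the edge maps for $\mathfrak{X}$ and $Y$ commute with restriction --- is indeed needed, but it is equally needed (implicitly) in the paper's proof, since the paper also identifies the canonical map of the theorem with the Brauer-group restriction through the two edge isomorphisms; it is formal from the naturality of $R\alpha_{*}$ under the closed immersion $i$ (cf. \cite[p.776, Proposition 2.2]{Ge}). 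So your proposal is a correct, mildly streamlined variant of the paper's argument.
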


\begin{proof}\upshape
By the same argument as in the proof of Lemma \ref{1BUis}, we have a distinguished triangle 
\begin{align*}
\cdots 
\to
\tau_{\leq 0}R\alpha_{*}\mathbb{G}_{m}^{\mathfrak{X}}
\to
R\alpha_{*}\mathbb{G}_{m}^{\mathfrak{X}}
\to
\tau_{\geq 2}R\alpha_{*}\mathbb{G}_{m}^{\mathfrak{X}}
\to\cdots.
\end{align*}
Moreover, we have a commutative diagram
\begin{equation*}
\xymatrix{
\HO^{2}_{\et}(
\mathfrak{X},
\mathbb{G}_{m}
)
\ar[r]\ar[d]
&
\Gamma\left(
\mathfrak{X},
R^{2}\alpha_{*}
\mathbb{G}_{m}
\right)
\ar[d]
\\
\HO^{2}_{\et}(
k(\mathfrak{X}),
\mathbb{G}_{m}
)
\ar[r]
&
\Gamma\left(
k(\mathfrak{X}),
R^{2}\alpha_{*}\mathbb{G}_{m}
\right).
}    
\end{equation*}
Then the left map is injective by \cite[p.145, IV, Corollary 2.6]{M}. 
Moreover, the bottom map is an isomorphism. 
So the homomorphism
\begin{equation}\label{fBtUn}
\HO^{2}_{\et}(
\mathfrak{X},
\mathbb{G}_{m}
)    
\to
\Gamma\left(
\mathfrak{X},
R^{2}\alpha_{*}\mathbb{G}_{m}
\right)
\end{equation}
is injective. Moreover, we have
\begin{equation*}
\HO^{3}_{\Nis}(
\mathfrak{X},
\tau_{\leq 0}
R\alpha_{*}\mathbb{G}_{m}
)  
=0
\end{equation*}
by \cite[pp.279--280, 1.32. Theorem]{Ni}. So the homomorphism (\ref{fBtUn}) is an isomorphism. 
Hence the statement follows from \cite[p.98, Th\'{e}or\`{e}me (3.1)]{Gr} and Lemma \ref{1BUis}.
\end{proof}
\begin{lem}\upshape\label{PTj}
Let $B$ be a local ring and 
$X$ a scheme over $\spec(B)$. 
Let $i: Y\to X$ be the inclusion of the closed fiber of $X$
with open complement
$j: U\to X$
and 
$\mathcal{F}^{\bullet}$ a bounded below complex of 
sheaves on $X_{\et}$.
Then we have a spectral sequence
\begin{align}\label{NEsp}
E^{s, t}_{2}=
\HO^{s}_{\Nis}(
X, 
j_{!}j^{*}R^{t}\alpha_{*}\mathcal{F}^{\bullet}
)
\Rightarrow
E^{s+t}
=
\HO^{s+t}_{\et}(
X,
j_{!}j^{*}\mathcal{F}^{\bullet}
).
\end{align}
Suppose that $B$ is a henselian local ring, $X$ is proper over $\spec(B)$
and $\mathcal{H}^{t}(\mathcal{F}^{\bullet})$ are torsion sheaves
for any integer $t$. 
Then we have
\begin{align}\label{prbz}
E^{n}
=
\HO^{n}_{\et}(
X, 
j_{!}j^{*}\mathcal{F}^{\bullet}
)
=0
\end{align}
\end{lem}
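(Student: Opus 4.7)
The plan is to treat the two assertions of the lemma separately: first, the construction of the spectral sequence, and second, under the added hypotheses, the vanishing of its abutment. Both steps rest on standard machinery together with the proper base change theorem cited in the paper.

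For the spectral sequence, I would begin from the Leray hypercohomology spectral sequence associated to the change-of-sites morphism $\alpha\colon X_{\et}\to X_{\Nis}$ applied to the bounded-below complex $j_{!}j^{*}\mathcal{F}^{\bullet}$, namely
\begin{equation*}
\HO^{s}_{\Nis}\bigl(X, R^{t}\alpha_{*}(j_{!}j^{*}\mathcal{F}^{\bullet})\bigr)
\Rightarrow
\HO^{s+t}_{\et}(X, j_{!}j^{*}\mathcal{F}^{\bullet}).
\end{equation*}
Since $j$ is an open immersion, the open-immersion base change gives $j^{*}R\alpha_{*}\simeq R(\alpha|_{U})_{*}j^{*}$, and I would then identify $R^{t}\alpha_{*}(j_{!}j^{*}\mathcal{F}^{\bullet})$ with $j_{!}j^{*}R^{t}\alpha_{*}\mathcal{F}^{\bullet}$ by inspection of Nisnevich stalks: at a point of $U$ both sides reduce to $(R^{t}\alpha_{*}\mathcal{F}^{\bullet})_{y}$, and at a point of $Y$ the right-hand side vanishes by the definition of $j_{!}$, giving the $E_{2}$ term as stated.

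For the vanishing, the crucial observation is that the spectral sequence itself plays no role; the conclusion follows directly from the proper base change theorem and the open--closed distinguished triangle
\begin{equation*}
j_{!}j^{*}\mathcal{F}^{\bullet}\to\mathcal{F}^{\bullet}\to i_{*}i^{*}\mathcal{F}^{\bullet}\xrightarrow{+1}
\end{equation*}
in $D^{+}(X_{\et})$. Taking $R\Gamma_{\et}(X,-)$ produces a long exact sequence in \'{e}tale hypercohomology. By the proper base change theorem (Milne, \'{E}tale Cohomology, VI, Corollary 2.3) applied to the proper morphism $X\to\spec(B)$ with $B$ henselian local and to the bounded-below complex of torsion sheaves $\mathcal{F}^{\bullet}$, the canonical pull-back
\begin{equation*}
\HO^{n}_{\et}(X, \mathcal{F}^{\bullet})\xrightarrow{\simeq}\HO^{n}_{\et}(Y, i^{*}\mathcal{F}^{\bullet})=\HO^{n}_{\et}(X, i_{*}i^{*}\mathcal{F}^{\bullet})
\end{equation*}
is an isomorphism for every integer $n$. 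The long exact sequence therefore forces $\HO^{n}_{\et}(X, j_{!}j^{*}\mathcal{F}^{\bullet})=0$ for all $n$, as required.

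The hardest part of the argument is the $E_{2}$ identification in the spectral sequence construction, specifically verifying $R^{t}\alpha_{*}\circ j_{!}\simeq j_{!}\circ R^{t}\alpha_{*}$ for the open immersion $j$; careful stalk analysis at points of $Y$ is needed, since a priori the left-hand side is a kind of compactly supported \'{e}tale cohomology on an \'{e}tale neighbourhood. The vanishing assertion, by contrast, is a short and transparent consequence of the distinguished triangle together with proper base change, and this is the decisive ingredient for the downstream application to Theorem \ref{Nisproper}.
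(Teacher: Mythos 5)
Your treatment of the vanishing statement (\ref{prbz}) is correct and essentially the argument the paper gives: the paper runs the hypercohomology spectral sequence $E_{2}^{s,t}=\HO^{s}_{\et}(X, j_{!}j^{*}\mathcal{H}^{t}(\mathcal{F}^{\bullet}))\Rightarrow \HO^{s+t}_{\et}(X,j_{!}j^{*}\mathcal{F}^{\bullet})$ and quotes the sheaf-level vanishing \cite[p.224, VI, Corollary 2.7]{M}, which is exactly what your localization triangle plus proper base change proves; the two differ only in packaging.

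The gap is in the construction of the spectral sequence (\ref{NEsp}), namely in the identification $R^{t}\alpha_{*}(j_{!}j^{*}\mathcal{F}^{\bullet})\simeq j_{!}j^{*}R^{t}\alpha_{*}\mathcal{F}^{\bullet}$. Your ``inspection of Nisnevich stalks'' only checks that the right-hand side vanishes at points of $Y$; it does not show that the left-hand side does. The Nisnevich stalk of $R^{t}\alpha_{*}(j_{!}j^{*}\mathcal{F}^{\bullet})$ at $y\in Y$ is $\HO^{t}_{\et}(\spec(\mathcal{O}^{h}_{X,y}), j_{!}j^{*}\mathcal{F}^{\bullet})$, and its vanishing is not formal: it is equivalent to the assertion that the restriction $\HO^{t}_{\et}(\mathcal{O}^{h}_{X,y},\mathcal{F}^{\bullet})\to\HO^{t}_{\et}(Y\times_{X}\spec(\mathcal{O}^{h}_{X,y}), i^{*}\mathcal{F}^{\bullet})$ is an isomorphism, i.e., to an affine (henselian) analogue of proper base change. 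You explicitly flag this as ``the hardest part'' but do not supply it, so the proof of the first assertion is incomplete. This is precisely the input the paper takes from \cite[p.776, Proposition 2.2 a)]{Ge} and \cite[p.777, the proof of Proposition 2.2 b)]{Ge}, in the form of a quasi-isomorphism $i_{*}i^{*}R\alpha_{*}\mathcal{F}^{\bullet}\simeq R\alpha_{*}i_{*}i^{*}\mathcal{F}^{\bullet}$; combined with the exactness of $i_{*}$, $i^{*}$, $j_{!}$, $j^{*}$ and the triangle $j_{!}j^{*}\to\operatorname{id}\to i_{*}i^{*}$, this yields $j_{!}j^{*}R\alpha_{*}\mathcal{F}^{\bullet}\simeq R\alpha_{*}j_{!}j^{*}\mathcal{F}^{\bullet}$ and hence the stated $E_{2}$-term. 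To close the gap you must either cite such a result or prove the henselian base-change isomorphism directly, and you should be aware that it is not available for arbitrary sheaves without some hypothesis of torsion type.
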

\begin{proof}\upshape
By \cite[p.776, Proposition 2.2]{Ge},
we have a quasi-isomorphism
\begin{equation*}
i_{*}i^{*}R\alpha_{*}\mathcal{F}^{\bullet} 
\simeq 
R\alpha_{*}i_{*}i^{*}\mathcal{F}^{\bullet}.
\end{equation*}
Since $i_{*}$, $i^{*}$, $j_{!}$ and $j^{*}$
are exact (cf.\cite[p.76, II, Proposition 3.14 (b)]{M}), 
we have 
a distinguished triangle
\begin{align*}
\cdots\to 
j_{!}j^{*}\mathcal{G}^{\bullet}
\to
\mathcal{G}^{\bullet}
\to
i_{*}i^{*}\mathcal{G}^{\bullet}
\to\cdots
\end{align*}
for a bounded below complex $\mathcal{G}^{\bullet}$
of sheaves on $X_{\et}$.
So we have
a quasi-isomorphism
\begin{equation*}
j_{!}j^{*}R\alpha_{*}\mathcal{F}^{\bullet}
\simeq 
R\alpha_{*}j_{!}j^{*}\mathcal{F}^{\bullet}
\end{equation*}
and the spectral sequence (\ref{NEsp}).

Suppose that $B$ is a henselian local ring, $X$ is proper over $\spec(B)$
and $\mathcal{H}^{t}(\mathcal{F}^{\bullet})$ are torsion sheaves
for any integer $t$.
Then we have the equation (\ref{prbz})
by a spectral sequence
\begin{align*}
E^{s, t}_{2}
=
\HO^{s}_{\et}(
X,
j_{!}j^{*}\mathcal{H}^{t}(\mathcal{F}^{\bullet})
) 
\Rightarrow
E^{s+t}
=
\HO^{s+t}_{\et}(
X,
j_{!}j^{*}
\mathcal{F}^{\bullet}
)
\end{align*}
and \cite[p.224, VI, Corollary 2.7]{M}.
\end{proof}
\begin{lem}\upshape\label{HrG}
Let $B$ be a discrete valuation ring of mixed characteristic $(0, p)$, $X$
a semistable family over $\spec(B)$ and $Y$ the closed fiber of $X$. 
Suppose that $\operatorname{dim}(X)=2$
and $B$ contains $p$-th roots of unity. Then we have
\begin{align*}
\HO^{s}_{\Nis}(
X, R^{t}\alpha_{*}\mathfrak{T}_{1}(n)
)
=
\dfrac{\operatorname{Ker}\left(
\displaystyle
\bigoplus_{x\in X^{(s)}}\HO^{s+t}_{x}(X, \mathfrak{T}_{1}(n))
\to
\bigoplus_{x\in X^{(s+1)}}\HO^{s+t+1}_{x}(X, \mathfrak{T}_{1}(n))
\right)}
{\operatorname{Im}
\left(
\displaystyle
\bigoplus_{x\in X^{(s-1)}}\HO^{s+t-1}_{x}(X, \mathfrak{T}_{1}(n))
\to
\bigoplus_{x\in X^{(s)}}\HO^{s+t}_{x}(X, \mathfrak{T}_{1}(n))
\right)
}
\end{align*}
and
\begin{align*}
\HO^{s}_{\Nis}(
Y, 
R^{t}\alpha_{*}\lambda_{1}^{n}
)    
=
\dfrac{\operatorname{Ker}\left(
\displaystyle
\bigoplus_{y\in Y^{(s)}}\HO^{s+t}_{y}(Y, \lambda_{1}^{n}
)
\to
\bigoplus_{y\in Y^{(s+1)}}\HO^{s+t+1}_{y}(Y, \lambda_{1}^{n}
)
\right)}
{\operatorname{Im}
\left(
\displaystyle
\bigoplus_{y\in Y^{(s-1)}}\HO^{s+t-1}_{y}(Y, \lambda_{1}^{n})
\to
\bigoplus_{y\in Y^{(s)}}\HO^{s+t}_{y}(Y, \lambda_{1}^{n})
\right)
}
\end{align*}
for any non-negative integers $s$, $t$ and $n$.
\end{lem}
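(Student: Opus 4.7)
The plan is to realise the complex of local cohomology supports on the right-hand side as the global sections of a Cousin-type Nisnevich-flabby resolution of $R^{t}\alpha_{*}\mathfrak{T}_{1}(n)$ on $\mathfrak{X}_{\Nis}$ (and analogously for $R^{t}\alpha_{*}\lambda_{1}^{n}$ on $Y_{\Nis}$). First I would set
\begin{equation*}
\mathcal{C}^{p}:=\bigoplus_{x\in \mathfrak{X}^{(p)}}(i_{x})_{*}\HO^{p+t}_{x}(\mathfrak{X}_{\et},\mathfrak{T}_{1}(n))
\end{equation*}
and consider the Cousin complex of Nisnevich sheaves
\begin{equation*}
0\to R^{t}\alpha_{*}\mathfrak{T}_{1}(n)\to \mathcal{C}^{0}\to \mathcal{C}^{1}\to \mathcal{C}^{2}\to \cdots .
\end{equation*}
I would then verify exactness at each Nisnevich stalk. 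For $\xi\in \mathfrak{X}$ with henselization $R=\mathcal{O}^{h}_{\mathfrak{X},\xi}$, the stalk of $R^{t}\alpha_{*}\mathfrak{T}_{1}(n)$ is $\HO^{t}_{\et}(R,\mathfrak{T}_{1}(n))$ and the stalk of $\mathcal{C}^{p}$ is $\bigoplus_{x\in \spec(R)^{(p)}}\HO^{p+t}_{x}(R_{\et},\mathfrak{T}_{1}(n))$. Since $\dim(R)\leq 2$, the required exactness is precisely the content of Corollary \ref{2GerT}.

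Next I would show that every term $\mathcal{C}^{p}$ has vanishing higher Nisnevich cohomology. This is the standard Nisnevich flabbiness underlying the coniveau construction (compare \cite[Part 1, \S 1]{C-H-K}): a skyscraper of the form $(i_{x})_{*}M$ is supported on $\overline{\{x\}}$, and the sections over any Nisnevich open of $\mathfrak{X}$ admit the obvious surjective restriction maps, so it is Nisnevich-flabby. Granting this, the displayed complex is a flabby resolution of $R^{t}\alpha_{*}\mathfrak{T}_{1}(n)$, whence
\begin{equation*}
\HO^{s}_{\Nis}\bigl(\mathfrak{X},R^{t}\alpha_{*}\mathfrak{T}_{1}(n)\bigr)= H^{s}\bigl(\Gamma(\mathfrak{X},\mathcal{C}^{\bullet})\bigr),
\end{equation*}
which is visibly the quotient displayed in the statement, with the image term interpreted as $0$ for $s=0$ since $\mathfrak{X}^{(-1)}=\emptyset$.

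The second identity follows by exactly the same template, with $(\mathfrak{X},\mathfrak{T}_{1}(n))$ replaced by the normal crossing variety $(Y,\lambda_{1}^{n})$. The pointwise exactness of the Cousin complex at every Nisnevich stalk of $Y$ is now provided by Theorem \ref{Gtcl}, which covers precisely the henselizations of local rings of a normal crossing variety in positive characteristic.

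The main obstacle I anticipate is the Nisnevich-acyclicity of the terms of the resolution: although for the étale topology this is the standard content of the coniveau construction in \cite[Part 1, \S 1]{C-H-K}, transporting the argument to the Nisnevich topology requires identifying Nisnevich stalks with henselizations and verifying the flabbiness condition directly in that setting. Once this bookkeeping is secured, the lemma follows immediately from the stalkwise exactness delivered by Corollary \ref{2GerT} and Theorem \ref{Gtcl}.
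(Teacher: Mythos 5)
Your proposal is correct and follows essentially the same route as the paper, which proves the lemma by the argument of \cite[Lemma 5.3]{Sak5}: the Cousin (coniveau) complex in the Nisnevich topology has acyclic terms, and its stalkwise exactness at the henselizations $\mathcal{O}^{h}_{\mathfrak{X},\xi}$ and $\mathcal{O}^{h}_{Y,\eta}$ is exactly what Corollary \ref{2GerT} and Theorem \ref{Gtcl} supply. The only bookkeeping point is that Corollary \ref{2GerT} is stated for $\operatorname{dim}(R)=2$, so the stalks at points of codimension $0$ and $1$ must be handled by the corresponding lower-dimensional cases, as the paper also implicitly does.
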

\begin{proof}\upshape
By the same argument as in the proof of \cite[Lemma 5.3]{Sak5},
the statement follows from Corollary \ref{2GerT} and Theorem \ref{Gtcl}.
\end{proof}
\begin{thm}\upshape\label{Nisproper}
Let $B$ be a henselian excellent discrete valuation ring of mixed characteristic 
$(0, p)$
and $\mathfrak{X}$ a semistable family and proper over $\spec(B)$.
Let $i: Y\to \mathfrak{X}$ be the inclusion of 
the closed fiber of $\mathfrak{X}$.
Suppose that
$\operatorname{dim}(\mathfrak{X})=2$
and
$B$ contains $p$-th roots of unity.
Then the homomorphism
\begin{equation}\label{Nisis}
\HO^{s}_{\Nis}(\mathfrak{X}, 
R^{t}\alpha_{*}\mathfrak{T}_{1}(n)
)
\xrightarrow{\simeq}
\HO^{s}_{\Nis}(Y, 
i^{*}R^{t}\alpha_{*}\mathfrak{T}_{1}(n)
)
\end{equation}
is an isomorphism for integers $s\geq 0$ and $t\geq 2$. 
Moreover, we have an isomorphism
\begin{equation}\label{NisUr}
\HO^{s}_{\Nis}(\mathfrak{X}, 
R^{n+1}\alpha_{*}\mathfrak{T}_{1}(n)
)
\xrightarrow{\simeq}
\HO^{s}_{\Nis}
(Y, R^{1}\alpha_{*}\lambda_{1}^{n}
)
\end{equation}
for integers $s\geq 0$ and $n\geq 1$. Thus, the sequence
\begin{align}\label{NisHM}
0\to  
\displaystyle\bigoplus_{x\in \mathfrak{X}^{(0)}}
\HO_{x}^{n+r}(\mathfrak{X}_{\et}, \mathfrak{T}_{1}(n))
\to
\displaystyle\bigoplus_{x\in \mathfrak{X}^{(1)}}
\HO_{x}^{n+r+1}(\mathfrak{X}_{\et}, \mathfrak{T}_{1}(n))
\nonumber
\\
\to
\displaystyle\bigoplus_{x\in \mathfrak{X}^{(2)}}
\HO_{x}^{n+r+2}(\mathfrak{X}_{\et}, \mathfrak{T}_{1}(n))
\to
0
\end{align}
is exact for integers $n\geq 1$ and $r\geq 2$.
\end{thm}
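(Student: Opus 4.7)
The plan is to establish the two isomorphism statements (\ref{Nisis}) and (\ref{NisUr}) first, and then deduce the exactness of the Cousin sequence (\ref{NisHM}) via Lemma \ref{HrG}. The main difficulty lies in proving (\ref{Nisis}); the other parts are more formal consequences.

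For (\ref{Nisis}), I would begin with the short exact sequence of Nisnevich sheaves
\begin{equation*}
0 \to j_{!}j^{*}R^{t}\alpha_{*}\mathfrak{T}_{1}(n) \to R^{t}\alpha_{*}\mathfrak{T}_{1}(n) \to i_{*}i^{*}R^{t}\alpha_{*}\mathfrak{T}_{1}(n) \to 0,
\end{equation*}
which reduces (\ref{Nisis}) to the vanishing
\begin{equation*}
\HO^{s}_{\Nis}(\mathfrak{X}, j_{!}j^{*}R^{t}\alpha_{*}\mathfrak{T}_{1}(n)) = 0 \qquad (s \geq 0,\ t \geq 2).
\end{equation*}
The spectral sequence of Lemma \ref{PTj} with $\mathcal{F}^{\bullet} = \mathfrak{T}_{1}(n)$ has vanishing abutment $\HO^{s+t}_{\et}(\mathfrak{X}, j_{!}j^{*}\mathfrak{T}_{1}(n)) = 0$, by the second half of Lemma \ref{PTj} (i.e.\ the proper base change theorem applied to the $p$-torsion complex $\mathfrak{T}_{1}(n)$). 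To promote this into termwise vanishing of the $E_{2}$-page, I would use Lemma \ref{HrG} to rewrite $\HO^{s}_{\Nis}(\mathfrak{X}, j_{!}j^{*}R^{t}\alpha_{*}\mathfrak{T}_{1}(n))$ as Cousin-type homology supported on the generic fiber $U$, where $\mathfrak{T}_{1}(n)|_{U} \simeq \mu_{p}^{\otimes n}$, and then invoke the local-global principle of \cite[Theorem 1.7]{Sak4} together with classical purity for $\mu_{p}^{\otimes n}$ on the regular $K$-scheme $U$ (this is where the hypothesis $\mu_{p} \subset B$ enters).

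For (\ref{NisUr}), the defining distinguished triangle
\begin{equation*}
i_{*}\nu_{Y,1}^{n-1}[-n-1] \to \mathfrak{T}_{1}(n) \to \tau_{\leq n}Rj_{*}\mu_{p}^{\otimes n} \to
\end{equation*}
of Definition \ref{DefT}, together with Proposition \ref{jtxiso}, identifies $i^{*}R^{n+1}\alpha_{*}\mathfrak{T}_{1}(n)$ with $R^{1}\alpha_{*}\lambda_{1}^{n}$ as Nisnevich sheaves on $Y$; combined with (\ref{Nisis}) at $t = n+1 \geq 2$, this yields (\ref{NisUr}). For the exactness of (\ref{NisHM}), Lemma \ref{HrG} identifies the homologies at positions $s = 0, 1, 2$ of the complex with $\HO^{s}_{\Nis}(\mathfrak{X}, R^{n+r}\alpha_{*}\mathfrak{T}_{1}(n))$. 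Since $r \geq 2$ forces $n+r \geq n+2 \geq 2$, (\ref{Nisis}) reduces the problem to the vanishing of $\HO^{s}_{\Nis}(Y, i^{*}R^{n+r}\alpha_{*}\mathfrak{T}_{1}(n))$ for $s = 0, 1, 2$; the same analysis as for (\ref{NisUr}) expresses the sheaves in play in terms of higher $R^{\bullet}\alpha_{*}\lambda_{1}^{n}$, and Theorem \ref{Gtcl} combined with Proposition \ref{hvan} then supplies the required vanishing on the one-dimensional scheme $Y$.

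The main obstacle is the promotion step in (\ref{Nisis}): the spectral sequence of Lemma \ref{PTj} alone only guarantees vanishing of the abutment, which (given $\dim \mathfrak{X} = 2$) would at best yield that $d_{2}: E_{2}^{0,t} \to E_{2}^{2,t-1}$ is an isomorphism and $E_{2}^{1,t} = 0$, rather than the stronger termwise vanishing needed for the long exact sequence to collapse. Getting around this is exactly where the explicit Cousin-complex description of Lemma \ref{HrG} and the local-global principle of \cite[Theorem 1.7]{Sak4} must be combined, with the hypothesis on $p$-th roots of unity guaranteeing that $\mathfrak{T}_{1}(n)|_{U}$ is simply $\mu_{p}^{\otimes n}$ and hence that purity applies.
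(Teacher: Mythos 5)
Your proposal follows essentially the same route as the paper: the same reduction of (\ref{Nisis}) to the vanishing of $\HO^{s}_{\Nis}(\mathfrak{X}, j_{!}j^{*}R^{t}\alpha_{*}\mathfrak{T}_{1}(n))$ via the short exact sequence of Nisnevich sheaves, the same proper-base-change spectral sequence with vanishing abutment, and the same diagnosis that the only non-formal input is the $s=0$ vanishing, supplied by the Cousin description of Lemma \ref{HrG}, the local-global principle of \cite[Theorem 1.7]{Sak4}, and the twisting permitted by $\mu_{p}\subset B$. The remaining steps (deducing (\ref{NisUr}) from the stalkwise identification $i^{*}R^{n+1}\alpha_{*}\mathfrak{T}_{1}(n)\simeq R^{1}\alpha_{*}\lambda_{1}^{n}$ and killing $i^{*}R^{t}\alpha_{*}\mathfrak{T}_{1}(n)$ for $t\geq n+2$ by cohomological dimension to get (\ref{NisHM})) likewise match the paper's argument.
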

\begin{proof}\upshape
Assume that the isomorphism (\ref{Nisis}) holds. 
By \cite[Theorem 1.2 and Theorem 1.4]{Sak4}, 
we have an isomorphism
\begin{equation}\label{isoTla}
i^{*}R^{n+1}\alpha_{*}\mathfrak{T}_{1}(n)
\simeq
R^{1}\alpha_{*}\lambda_{1}^{n}
\end{equation}
for any integer $n\geq 0$. So the isomorphism (\ref{NisUr}) follows from
the isomorphism (\ref{Nisis}). Moreover, we have an isomorphism
\begin{equation*}
i^{*}R^{t}\alpha_{*}\mathfrak{T}_{1}(n)
\simeq 0
\end{equation*}
for $t\geq n+2$ by the analog results of 
\cite[p.69, II, Theorem 3.2]{M} and 
\cite[p.88, III, Theorem 1.15]{M} 
for the Nisnevich sheaves 
and \cite[Expos\'{e} X, Th\'{e}or\`{e}me 5.1]{SGA4}. So we
have an isomorphism
\begin{equation*}
\HO^{s}_{\Nis}(
\mathfrak{X}, 
R^{t}\alpha_{*}\mathfrak{T}_{1}(n)
)    
\simeq
0
\end{equation*}
for integers $s\geq 0$ and $t\geq n+2$ by the isomorphism (\ref{Nisis}). So the sequence (\ref{NisHM}) is exact by Lemma \ref{HrG}. 
Hence it suffices to prove the isomorphism (\ref{Nisis}).

Let $j: U\to \mathfrak{X}$ be the inclusion of the generic fiber of $\mathfrak{X}$. 
Since $j^{*}$ is exact by \cite[p.76, II, Proposition 3.14 (b)]{M}, 
we have isomorphisms
\begin{equation*}
j^{*}R^{t}\alpha_{*}\mathfrak{T}_{1}(n)
\simeq
R^{t}(j^{*}\alpha_{*})\mathfrak{T}_{1}(n)
\simeq
R^{t}(\alpha_{*}j^{*})\mathfrak{T}_{1}(n)
\simeq
R^{t}\alpha_{*}\mu_{p}^{\otimes n}
\end{equation*}
by \cite[p.776, Proposition 2.2 a)]{Ge}. So the sequence
\begin{equation*}
0\to    
j_{!}R^{t}\alpha_{*}\mu_{p}^{\otimes n}
\to
R^{t}\alpha_{*}\mathfrak{T}_{1}(n)
\to
i_{*}i^{*}
R^{t}\alpha_{*}\mathfrak{T}_{1}(n)
\to 0
\end{equation*}
is exact.
Hence the isomorphism (\ref{Nisis})
follows from the isomorphism
\begin{equation}\label{VanNic}
\HO^{s}_{\Nis}
(
\mathfrak{X}, 
j_{!}R^{t}\alpha_{*}\mu_{p}
)
\simeq 0    
\end{equation}
for any integers $s\geq 0$ and $t\geq 2$. Hence it suffices to prove the isomorphism (\ref{VanNic}).

By \cite[p.279--280, 1.32. Theorem]{Ni}, we have an isomorphism (\ref{VanNic}) for $s\geq 3$.

By \cite[p.31, Theorem 1.2]{SM} (or \cite[Theorem 1.4]{Sak4}), we have an exact sequence
\begin{align*}
0\to   
\HO^{1}_{\et}(
\kappa(x),
\lambda_{1}^{n}
)
\to
\HO^{n+1}_{\et}(
k(\mathcal{O}_{\mathfrak{X}, x}^{h}),
\mathfrak{T}_{1}(n)
)
\to
\HO^{n+2}_{x}(
\mathfrak{X},
\mathfrak{T}_{1}(n)
)
\end{align*}
for $x\in \mathfrak{X}^{(1)}\cap Y$.
Here $\mathcal{O}_{\mathfrak{X}, x}^{h}$ is the henselization of
the local ring $\mathcal{O}_{\mathfrak{X}, x}$ of $\mathfrak{X}$ at $x\in \mathfrak{X}^{(1)}\cap Y$
and $k(\mathcal{O}^{h}_{\mathfrak{X}, x})$ is the fraction field of
$\mathcal{O}^{h}_{\mathfrak{X}, x}$. So the homomorphism
\begin{equation*}
\HO^{0}_{\Nis}(
\mathfrak{X}, 
R^{n+1}\alpha_{*}\mathfrak{T}_{1}(n)
)    
\to
\HO^{0}_{\Nis}(
Y,
R^{1}\alpha_{*}\lambda_{1}^{n}
)
\end{equation*}
is injective for $n\geq 1$ by Lemma \ref{HrG} and
\cite[Theorem 1.7]{Sak4}.
Hence we have an isomorphism (\ref{VanNic}) for $s=0$ and
$t\geq 2$ by the isomorphism (\ref{isoTla}).

Moreover,
we have an isomorphism
\begin{equation}\label{pbc}
\HO_{\et}^{s}(\mathfrak{X}, j_{!}j^{*}
\mathfrak{T}_{1}(n)
)
\simeq
0
\end{equation}
for any integer $s$ by Lemma \ref{PTj}. Since we have an isomorphism (\ref{VanNic}) for 
$s\neq 1, 2$ and $t\geq 2$ by the above, we have an isomorphism (\ref{VanNic})
for $s=1, 2$ and $t\geq 2$ by the spectral sequence (\ref{NEsp}) and the isomorphism (\ref{pbc}).
This completes the proof.

\end{proof}
\begin{rem}\upshape\label{Expgl}
Let the notations be the same as in Theorem \ref{Nisproper}.
\begin{enumerate}
\item 
Let $l$ be an integer which is prime to $p$. Then the natural map
\begin{align*}
\HO^{n+1}_{\et}(
k(\mathfrak{X}),
\mu_{l}^{\otimes n}
)    
\to
\displaystyle
\bigoplus_{
\mathfrak{p}\in \mathfrak{X}^{(1)}\setminus Y^{(0)}}
\HO^{n}_{\et}(
\kappa(\mathfrak{p}),
\mu_{l}^{\otimes (n-1)}
)
\oplus
\displaystyle
\bigoplus_{
\mathfrak{p}\in Y^{(0)}}
\HO^{n+1}_{\et}(
k(\tilde{\mathcal{O}_{\mathfrak{X}, \mathfrak{p}}}),
\mu_{l}^{\otimes n}
)
\end{align*}
is injective (cf. \cite[The remark below Theorem 1.7]{Sak4}). 
Here $\tilde{\mathcal{O}_{\mathfrak{X}, \mathfrak{p}}}$
is the henselization of a local ring
$\mathcal{O}_{\mathfrak{X}, \mathfrak{p}}$
of $\mathfrak{p}$ in $\mathfrak{X}$.
So
we have an isomorphism
\begin{equation*}
\HO^{s}_{\Nis}(
\mathfrak{X},
R^{t}\alpha_{*}\mu_{l}^{\otimes n}
)    
\xrightarrow{\simeq}
\HO^{s}_{\Nis}(Y,
i^{*}R^{t}
\alpha_{*}\mu_{l}^{\otimes n}
)
\end{equation*}
for integers $s\geq 0$ and $t\geq 2$ by
the same argument as in the proof of Theorem \ref{Nisproper}. 
Since we have quasi-isomorphisms
\begin{equation*}
\tau_{\leq n+1}\left(
\mathbb{Z}/p^{r}(n)^{\mathfrak{X}}_{\et}
\right)
\simeq 
\mathfrak{T}_{r}(n)
\end{equation*}
and
\begin{equation*}
\tau_{\leq n+1}\left(
\mathbb{Z}/l(n)^{\mathfrak{X}}_{\et}
\right)
\simeq 
\mu_{l}^{\otimes n}
\end{equation*}
for an integer with $(l, p)=1$
by (\cite[p.209, Remark 7.2]{SaR}, \cite[Proposition 4.5]{Sak4}) and \cite[Remark 4.7]{Sak4},
we have an isomorphism
\begin{equation}\label{proetmt}
\HO^{s}_{\Nis}(
\mathfrak{X},
R^{n+1}\alpha_{*}\mathbb{Z}/m(n)_{\et}
)    
\xrightarrow{\simeq}
\HO^{s}_{\Nis}(Y,
i^{*}R^{n+1}\alpha_{*}\mathbb{Z}/m(n)_{\et}
)   
\end{equation}
for integers $n\geq 1$, $s\geq 0$ and a prime number $m$
in the case where $B$ contains $m$-th roots of unity.
\item By \cite[Proposition 4.2]{Sak4} and \cite[p.35, Proposition 5]{SM19}, 
there exists an exact sequence
\begin{align*}
0\to 
R^{n+1}\alpha_{*}\mathbb{Z}/m(n)^{\mathfrak{X}}_{\et}
\to
R^{n+2}\alpha_{*}\mathbb{Z}(n)^{\mathfrak{X}}_{\et}
\xrightarrow{\times m}
R^{n+2}\alpha_{*}\mathbb{Z}(n)^{\mathfrak{X}}_{\et}
\end{align*}
for any integers $n\geq 0$ and $m>0$.
Moreover, we have a quasi-isomorphism
\begin{equation*}
\mathbb{Z}(1)_{\et}^{\mathfrak{X}}
\simeq
\mathbb{G}_{m}[-1]
\end{equation*}
by \cite[Lemma 11.2]{L}. 
Thus, we have an isomorphism
\begin{equation*}
i^{*}R^{2}\alpha_{*}\mathbb{G}_{m}^{\mathfrak{X}}
\simeq
R^{2}\alpha_{*}\mathbb{G}_{m}^{Y}
\end{equation*}
by
\cite[p.776, Proposition 2.2.b)]{Ge} and
\cite[p.148, IV, Corollary 2.13]{M}.
So we can regard the isomorphism (\ref{proetmt}) for $s=0$ and $n=1$
as a generalization of Theorem \ref{BrAr}.
\end{enumerate}
\end{rem}
\subsection{Questions}

In this subsection, we raise questions (see Question \ref{Qpr} and Question \ref{QG})
which relate to Kato conjecture (cf. \cite{K}, \cite{K-S}).

\begin{lem}\upshape\label{vnc}
Let $X$ be an integral scheme and $\mathcal{F}$ 
a constant sheaf on $X_{\Nis}$.
Then we have
\begin{equation*}
\HO^{q}_{\Nis}(
X, \mathcal{F}
) 
=0
\end{equation*}
for $q>0$.
\end{lem}
\begin{proof}\upshape
Since $\mathcal{F}$ is a constant sheaf, we have an isomorphism
\begin{equation*}
\mathcal{F}
\simeq
(i_{\eta})_{*}(i_{\eta})^{*}\mathcal{F}
\end{equation*}
where $i_{\eta}: \spec(\kappa(\eta))\to X$
is the generic point. Since 
$(i_{\eta})^{*}\mathcal{F}$ is a flabby sheaf, 
the statement follows from
\cite[p.89, III, Lemma 1.19]{M}.    
\end{proof}
\begin{lem}\upshape\label{smdf}
Let $X$ be a smooth scheme over the spectrum of a regular ring $B$ with 
$\operatorname{dim}(B)\leq 1$. 
Then 
we have
\begin{equation}\label{eqeg}
\HO^{1}_{\et}(
X,
\mathbb{Q}/\mathbb{Z}
)    
=
\Gamma\left(
X,
R^{1}\alpha_{*}\mathbb{Q}/\mathbb{Z}
\right)
\end{equation}
where $\alpha: X_{\et}\to X_{\Nis}$ is 
the canonical map of sites.
Moreover, the sequence  
\begin{equation}\label{exdf}
0\to 
\HO^{1}_{\et}(X, \mathbb{Q}/\mathbb{Z})
\to
\bigoplus_{x\in X^{(0)}}
\HO^{1}_{x}(
X_{\et},
\mathbb{Q}/\mathbb{Z}
)
\to
\bigoplus_{x\in X^{(1)}}
\HO^{2}_{x}(
X_{\et},
\mathbb{Q}/\mathbb{Z}
)
\end{equation}
is exact.
\end{lem}
\begin{proof}\upshape
Since we have a distinguished triangle
\begin{equation*}
\cdots\to  
\mathbb{Q}/\mathbb{Z}
\to 
R\alpha_{*}\mathbb{Q}/\mathbb{Z}
\to
\tau_{\geq 1}\alpha_{*}\mathbb{Q}/\mathbb{Z}
\to\cdots,
\end{equation*}
we have the isomorphism (\ref{eqeg}) by Lemma \ref{vnc}.
So the sequence (\ref{exdf}) is exact by \cite[p.30, Theorem 1.1]{SM}. 
This completes the proof.
\end{proof}
\begin{prop}\upshape
Let $\mathfrak{X}$ be a proper and smooth scheme
over the spectrum of a henselian discrete valuation ring
and $i: Y\to \mathfrak{X}$
the inclusion of the closed fiber. Then the homomorphism
\begin{equation*}
\Gamma\left(
\mathfrak{X},
R^{1}\alpha_{*}\mathbb{Q}/\mathbb{Z}
\right)
\to
\Gamma\left(
Y,
i^{*}R^{1}\alpha_{*}\mathbb{Q}/\mathbb{Z}
\right)
\end{equation*}
is an isomorphism.
\end{prop}
\begin{proof}
The statement follows from the proper base change theorem (cf. \cite[p.224, VI, Corollary 2.7]{M}) and Lemma \ref{smdf}.    
\end{proof}
\begin{Qu}\upshape\label{Qpr}
Let $\mathfrak{X}$ be a regular scheme which is proper and flat over the spectrum of   
a henselian discrete valuation ring $B$ and 
$i: Y\to \mathfrak{X}$ the closed fiber of $\mathfrak{X}$. Then
\begin{description}
\item{(a)} When is the homomorphism
\begin{equation}\label{prbm}
\Gamma\left(
\mathfrak{X},
R^{n+1}\alpha_{*}\mathbb{Q}/\mathbb{Z}(n)_{\et}
\right)
\to
\Gamma\left(
Y,
i^{*}R^{n+1}\alpha_{*}\mathbb{Q}/\mathbb{Z}(n)_{\et}
\right)
\end{equation}
an isomorphism ?
\item{(b)} If $\mathfrak{X}$ is smooth over $\spec(B)$, then
the homomorphism (\ref{prbm}) is an isomorphism ?
\end{description}
\end{Qu}
\begin{rem}\upshape
Let $\mathcal{O}_{K}$ be the ring of integers of a $p$-adic field $K$,
$\mathfrak{X}$ a $d$-dimensional regular scheme which is proper and smooth over
$\spec(\mathcal{O}_{K})$ and $i: Y\to \mathfrak{X}$ the inclusion of the closed fiber of $\mathfrak{X}$. 
Then
\begin{equation*}
i^{*}R^{d+1}\alpha_{*}\mathbb{Q}/\mathbb{Z}(d)^{\mathfrak{X}}_{\et}
=
R^{d+1}\alpha_{*}\mathbb{Q}/\mathbb{Z}(d)^{Y}_{\et}=0   
\end{equation*}
and so we have
\begin{equation*}
\Gamma\left(
Y,
i^{*}R^{d+1}\alpha_{*}
\mathbb{Q}/\mathbb{Z}(d)_{\et}
\right)
=0
\end{equation*}
where 
$\alpha: \mathfrak{X}_{\et}\to \mathfrak{X}_{\Nis}$
is the canonical map of sites and
$\alpha_{*}$ is the forgetful functor. 
On the other hand, if Kato conjecture holds 
(cf.\cite[p.125, Conjecture 0.2]{K-S}, \cite[p.125, Theorem 0.4]{K-S}),
then
\begin{equation*}
\Gamma\left(
\mathfrak{X},
R^{d+1}\alpha_{*}
\mathbb{Q}/\mathbb{Z}(d)_{\et}
\right)
=0
\end{equation*}
by \cite[p.51, Theorem 4.6]{SM}.
\end{rem}

For a regular scheme of finite type over
the ring of integers of a number field,
we have the following conjectures:

\begin{con}\upshape(\cite[Conjecture 2.1]{Sai})\label{ConjF}
For a regular scheme $X$ of finite type over  
$\mathbb{F}_{p}$ or $\mathbb{Z}$,
$\HO_{\Zar}^{q}(X, \mathbb{Z}(r))$
is finitely generated.
\end{con}
\begin{con}\upshape(Lichtenbaum) (cf. \cite[p.1625, Conjecture 4.12]{Ge19})\label{ConjL}
If $X$ is regular and proper over the ring of integers of a number field, then the 
groups 
$\HO^{i}_{\et}(X, \mathbb{Z}(n))$
are finitely generated for $i\leq 2n$,
finite for $i=2n+1$,
and of cofinite type for $i\geq 2n+2$.
\end{con}

A consequence of Conjectures \ref{ConjF} and \ref{ConjL} will be
observed later.

\begin{lem}\upshape\label{Asvan}
Let $X$ be a regular scheme 
which is an essentially of finite type over
the spectrum of a Dedekind domain. If
$\mathcal{H}^{n+1}(\mathbb{Z}/m(n)_{\Nis}^{X})=0$
for any positive integer $m$, 
then the sequence
\begin{align*}
0\to   
R^{n+1}\alpha_{*}\mathbb{Z}/m(n)_{\et}^{X}
\to
R^{n+1}\alpha_{*}\mathbb{Q}/\mathbb{Z}(n)_{\et}^{X}
\xrightarrow{\times m}
R^{n+1}\alpha_{*}\mathbb{Q}/\mathbb{Z}(n)_{\et}^{X}
\end{align*}
is exact.
\end{lem}
\begin{proof}\upshape
Since
\begin{equation*}
\mathbb{Q}/\mathbb{Z}
=
\displaystyle\lim_{\xrightarrow[m]{}}
\frac{1}{m}\mathbb{Z}/\mathbb{Z}
=
\displaystyle\lim_{\xrightarrow[m]{}}
\mathbb{Z}/m\mathbb{Z}
\end{equation*}
where $m$ runs over all positive integers,
we have an exact sequence
\begin{equation*}
0\to 
\mathbb{Z}/m\mathbb{Z}
\to
\mathbb{Q}/\mathbb{Z}
\xrightarrow{\times m}
\mathbb{Q}/\mathbb{Z}
\to 
0
\end{equation*}
and a distinguished triangle
\begin{equation*}
\cdots\to 
\mathbb{Z}/m(n)^{X}_{\Nis}
\to
\mathbb{Q}/\mathbb{Z}(n)^{X}_{\Nis}
\xrightarrow{\times m}
\mathbb{Q}/\mathbb{Z}(n)^{X}_{\Nis}
\to 
\cdots
\end{equation*}
for a positive integer $m$. So we have an exact sequence
\begin{equation*}
\mathcal{H}^{n}(\mathbb{Z}/m(n)_{\Nis}^{X})
\to 
\mathcal{H}^{n}(\mathbb{Q}/\mathbb{Z}(n)_{\Nis}^{X})
\xrightarrow{\times m}
\mathcal{H}^{n}(\mathbb{Q}/\mathbb{Z}(n)_{\Nis}^{X})
\to 
0
\end{equation*}
by the assumption. Moreover, we have an isomorphism
\begin{equation*}
\mathcal{H}^{n}(
\mathbb{Q}/\mathbb{Z}(n)_{\Nis}^{X}
)
\simeq 
R^{n}\alpha_{*}
\mathbb{Q}/\mathbb{Z}(n)_{\Nis}^{X}
\end{equation*}
by Proposition \ref{SBL}. Hence the statement follows.
\end{proof}
\begin{rem}\upshape\label{ReSem}
In the case where $X$ is a semistable family over
the spectrum of a Dedekind domain, we have
\begin{equation*}
\mathcal{H}^{n+1}(
\mathbb{Z}/m(n)_{\Nis}^{X}
)    
=0
\end{equation*}
for any positive integer $m$ (cf. \cite[Proposition 4.5]{Sak4}, \cite[Remark 4.7]{Sak4}).
If $X$ is smooth over the spectrum of a Dedekind domain, we have
\begin{align*}
\mathcal{H}^{s}(
\mathbb{Z}(n)^{X}_{\Nis}
)
=
\mathcal{H}^{s}(
\mathbb{Z}/m(n)^{X}_{\Nis}
)
=0
\end{align*}
for $s\geq n+1$
and any positive integer $m$ by \cite[p.786, Corollary 4.4]{Ge}.
\end{rem}

We have the following as an application of Conjectures \ref{ConjF}
and \ref{ConjL}.

\begin{prop}\upshape\label{Appconj}
Let $\mathfrak{X}$ be a semistable family over $\spec(\mathbb{Z})$ and $n$ a positive integer.
Assume that Conjecture \ref{ConjF} and Conjecture \ref{ConjL} hold. Moreover, assume that 
\begin{equation*}
\mathcal{H}^{n+2}(\mathbb{Z}(n)_{\Nis}^{\mathfrak{X}})
=
\mathcal{H}^{n+2}(\mathbb{Z}/m(n)_{\Nis}^{\mathfrak{X}})
=0
\end{equation*}
for any positive integer $m$.
Then 
\begin{equation}\label{QZu}
\Gamma\left(
\mathfrak{X},
R\alpha_{*}^{n+1}
\mathbb{Q}/\mathbb{Z}(n)_{\et}
\right)
=
\Gamma\left(
\mathfrak{X},
R\alpha_{*}^{n+2}
\mathbb{Z}(n)_{\et}
\right)
\end{equation}
is finite and 
\begin{align}\label{glger}
\Gamma\left(
\mathfrak{X},
R\alpha_{*}^{n+1}
\mathbb{Q}/\mathbb{Z}(n)_{\et}
\right)
=
\operatorname{Ker}
\left(
\HO^{n+1}_{\et}(
k(\mathfrak{X}),
\mathbb{Q}/\mathbb{Z}(n)
)
\to
\bigoplus_{x\in \mathfrak{X}^{(1)}}
\HO^{n+2}_{x}(
\mathfrak{X}_{\et},
\mathbb{Q}/\mathbb{Z}(n)
)
\right).
\end{align}
\end{prop}
\begin{proof}
First we prove the isomorphism (\ref{QZu}). 
Let $i_{\eta}: \spec(\kappa(\eta))\to \mathfrak{X}$ be the generic point.  
Consider a commutative diagram
\begin{equation}\label{comNqz}
\xymatrix{
R^{n+1}\alpha_{*}
\mathbb{Q}/\mathbb{Z}(n)_{\et}
\ar[r]\ar[d]
&
R^{n+2}\alpha_{*}
\mathbb{Z}(n)_{\et}
\ar[d]
\\
R^{n+1}\alpha_{*}
R(i_{\eta})_{*}
\mathbb{Q}/\mathbb{Z}(n)_{\et}
\ar[r]
&
R^{n+2}\alpha_{*}
R(i_{\eta})_{*}
\mathbb{Z}(n)_{\et}.
}
\end{equation}
Then the stark of 
\begin{math}
R^{n+1}\alpha_{*}\mathbb{Q}/\mathbb{Z}(n)_{\et}    
\end{math}
at $x\in \mathfrak{X}$ is
\begin{math}
\HO^{n+1}_{\et}(\mathcal{O}_{\mathfrak{X}, x}^{h}, \mathbb{Q}/\mathbb{Z}(n))
\end{math}
and the stark of
\begin{math}
R^{n+1}\alpha_{*}
R(i_{\eta})_{*}\mathbb{Q}/\mathbb{Z}(n)_{\et}
\end{math}
at $x\in \mathfrak{X}$ is
\begin{math}
\HO^{n+1}_{\et}(
k(\mathcal{O}^{h}_{\mathfrak{X}, x}),
\mathbb{Z}(n)
).    
\end{math}
Here $\mathcal{O}_{\mathfrak{X}, x}^{h}$ is the henselization of
the local ring 
$\mathcal{O}_{\mathfrak{X}, x}$ 
of $\mathfrak{X}$ at $x\in \mathfrak{X}$
and $k(\mathcal{O}^{h}_{\mathfrak{X}, x})$ is the fraction field
of $\mathcal{O}_{\mathfrak{X}, x}^{h}$.
So the left map is injective by \cite[Proposition 4.2]{Sak4} and \cite[p.35, Proposition 5]{SM19}.
Since
\begin{equation*}
\HO^{s}_{\et}(
k(\mathcal{O}^{h}_{\mathfrak{X}, x}),
\mathbb{Q}(n)
)    
=
\HO^{s}_{\Zar}(
k(\mathcal{O}^{h}_{\mathfrak{X}, x}),
\mathbb{Q}(n)
) 
=0
\end{equation*}
for $s\geq n+1$
by \cite[p.781, Proposition 3.6]{Ge},
the bottom map is an isomorphism and so the upper map is injective.
Since we have
\begin{equation*}
\HO^{s}_{\Zar}(
\mathcal{O}^{h}_{\mathfrak{X}, x},
\mathbb{Q}/\mathbb{Z}(n)
)    
=
\lim_{\substack{
\to \\ m}}
\HO^{s}_{\Zar}(
\mathcal{O}^{h}_{\mathfrak{X}, x},
\mathbb{Z}/m(n)
)    
\end{equation*}
and the sequence
\begin{equation*}
\HO^{s}_{\Zar}(
\mathcal{O}^{h}_{\mathfrak{X}, x},
\mathbb{Z}(n)
)
\to
\HO^{s}_{\Zar}(
\mathcal{O}^{h}_{\mathfrak{X}, x},
\mathbb{Q}(n)
)
\to
\HO^{s}_{\Zar}(
\mathcal{O}^{h}_{\mathfrak{X}, x},
\mathbb{Q}/\mathbb{Z}(n)
)
\end{equation*}
is exact for any integer $s$, we have 
\begin{equation*}
R^{n+2}\alpha_{*}\mathbb{Q}(n)_{\et}
=
\mathcal{H}^{n+2}(
\mathbb{Q}(n)_{\Nis}
)
=0
\end{equation*}
by the assumption. Hence the upper map in the commutative diagram (\ref{comNqz})
is surjective and so is an isomorphism.
Therefore we have the isomorphism (\ref{QZu}).

Next we prove that 
\begin{math}
\Gamma\left(
\mathfrak{X},
R^{n+2}\alpha_{*}
\mathbb{Z}(n)_{\et}
\right)
\end{math}
is finite.
Since we have a quasi-isomorphism
\begin{equation}\label{BLN12}
\tau_{\leq n+1}R\alpha_{*}\mathbb{Z}(n)_{\et}
\simeq 
\tau_{\leq n+2}\mathbb{Z}(n)_{\Nis}
\end{equation}
by Proposition \ref{SBL} and the assumption, the homomorphism
\begin{equation*}
\HO^{n+3}_{\Nis}(
\mathfrak{X},
\tau_{\leq n+1}R\alpha_{*}
\mathbb{Z}(n)
)
\to
\HO^{n+3}_{\Nis}(
\mathfrak{X},
\mathbb{Z}(n)
)
\end{equation*}
is injective and we have
\begin{equation*}
\HO^{n+3}_{\Zar}(
\mathfrak{X},
\mathbb{Z}(n)
)
\simeq
\HO^{n+3}_{\Nis}(
\mathfrak{X},
\mathbb{Z}(n)
)
\end{equation*}
by \cite[p.781, Proposition 3.6]{Ge}.
Moreover, the sequence
\begin{align*}
\HO^{n+2}_{\et}(
\mathfrak{X},
\mathbb{Z}(n)
)    
\to
\Gamma\left(
\mathfrak{X},
R\alpha_{*}^{n+2}
\mathbb{Z}(n)_{\et}
\right)
\to
\HO^{n+3}_{\Nis}(
\mathfrak{X},
\tau_{\leq n+1}R\alpha_{*}
\mathbb{Z}(n)_{\et}
)
\end{align*}
is exact. So 
\begin{math}
\Gamma\left(\mathfrak{X}, R^{n+2}\alpha_{*}\mathbb{Z}(n)_{\et}\right)
\end{math}
is finitely generated 
over $\mathbb{Z}$ by Conjecture \ref{ConjF} and Conjecture \ref{ConjL}. 
Moreover, 
\begin{math}
\Gamma\left(\mathfrak{X}, R^{n+2}\alpha_{*}\mathbb{Z}(n)_{\et}\right)
\end{math}
is a torsion group by the isomorphism (\ref{QZu}). Hence 
\begin{math}
\Gamma\left(\mathfrak{X}, R^{n+2}\alpha_{*}\mathbb{Z}(n)_{\et}\right)
\end{math}
is finite.

Finally, we prove the isomorphism (\ref{glger}).
By Lemma \ref{Asvan} and Remark \ref{ReSem}, it suffices to show that the sequence
\begin{align}\label{lgerN}
0
&\to
\HO^{n+1}_{\et}(R, \mathbb{Z}/m(n))
\to
\displaystyle\bigoplus_{x\in \spec(R)^{(0)}}
\HO^{n+1}_{x}(
R_{\et},
\mathbb{Z}/m(n)
) \nonumber
\\
&\to
\displaystyle\bigoplus_{x\in \spec(R)^{(1)}}
\HO^{n+2}_{x}(
R_{\et},
\mathbb{Z}/m(n)
)
\end{align}
is exact 
for a prime power $m$
where $R$ is the henselization of the local ring 
$\mathcal{O}_{\mathfrak{X}, x}$ of $\mathfrak{X}$ at a point $x\in \mathfrak{X}$.
Moreover, it suffices to show the exactness of (\ref{lgerN}) in the case where $\operatorname{char}(R)=(0, p)$.
Note that we have a quasi-isomorphism
\begin{equation*}
\tau_{\leq n+2}\left(
\mathbb{Z}/m(n)_{\et}^{\spec(R)}
\right)
\simeq 
\begin{cases}
\mathfrak{T}_{r}(n)
&
\text{for $m=p^{r}$}
\\
\mu_{m}^{\otimes n}
&
\text{for $(m, p)=1$}
\end{cases}
\end{equation*}
by \cite[p.209, Remark 7.2]{SaR}, \cite[Remark 4.7]{Sak4} and the assumption.

We prove the exactness of (\ref{lgerN}) by induction on $\operatorname{dim}(R)$. 

Suppose that $\operatorname{dim}(R)=1$. Then the sequence (\ref{lgerN}) is exact by
\cite[p.30, Theorem 1.1]{SM}.

Assume that the sequence (\ref{lgerN}) is exact in the case where 
$\operatorname{dim}(R)\leq s$. Suppose that $\operatorname{dim}(R)=s+1$.
Let $Z$ be an irreducible component of the closed fiber of $\spec(R)$.
Then we have a spectral sequence
\begin{equation*}
E^{u, v}_{1}
=
\displaystyle
\bigoplus_{x\in \spec(R)^{(u)}\cap Z}
\HO^{u+v}_{x}(
R_{\et},
\mathbb{Z}/m(n)
)
\Rightarrow
E^{u+v}
=
\HO^{u+v}_{Z}(
R_{\et},
\mathbb{Z}/m(n)
)
\end{equation*}
(cf. \cite[The proof of Proposition 3.8]{Sak4}). Since we have
\begin{equation*}
\HO^{n+2}_{x}(
R_{\et}, 
\mathbb{Z}/m(n)
) 
\simeq 
\HO^{n+2-2u}_{\et}
(
\kappa(x),
\mathbb{Z}/m(n-u)
)
\end{equation*}
for 
$x\in\spec(R)^{(u)}\cap Z$ 
by \cite[p.540, Theorem 4.4.7]{SaP} and \cite{G},
we have
\begin{equation*}
E^{u, v}_{\infty}=E^{u, v}_{2}=0    
\end{equation*}
for $u+v=n+2$ and $u\geq 2$ by
\cite[p.600, Theorem 4.1]{Sh} and \cite[Corollary 2.2.2]{C-H-K}. 
So the homomorphism
\begin{equation*}
\HO^{n+2}_{Z}(
R_{\et},
\mathbb{Z}/m(n)
)
\to 
\displaystyle
\bigoplus_{x\in \spec(R)^{(1)}\cap Z}
\HO^{n+2}_{x}(
R_{\et},
\mathbb{Z}/m(n)
)
\end{equation*}
is injective.
Hence the sequence
\begin{align}\label{preex}
0\to  
\HO^{n+1}_{\et}(
R,
\mathbb{Z}/m(n)
)
\to
\HO^{n+1}_{\et}(
U,
\mathbb{Z}/m(n)
)
\to 
\displaystyle
\bigoplus_{x\in \spec(R)^{(1)}\cap Z}
\HO^{n+2}_{x}(
R_{\et},
\mathbb{Z}/m(n)
)
\end{align}
is exact 
by \cite[Proposition 4.2]{Sak4}
and \cite[p.35, Proposition 5]{SM19}
where $U=\spec(R)\setminus Z$. 
By \cite[Proposition 4.5]{Sak4}, 
\cite[p.37, Corollary 7]{SM19}
and the assumption, 
we have an isomorphism
\begin{equation*}
\HO^{s}_{\Zar}(R, \mathbb{Z}/m(n))=0    
\end{equation*}
for $n+1\leq s\leq n+2$.
Since $Z$ is the spectrum of a regular local ring of
positive characteristic, 
we have an  isomorphism
\begin{equation*}
\HO^{s}_{\Zar}(Z,
\mathbb{Z}/m(n)
)
=0
\end{equation*}
for $s \geq n+1$.
So we have an isomorphism
\begin{equation*}
\HO^{s}_{\Zar}(U, \mathbb{Z}/m(n))
=0
\end{equation*}
for $n+1\leq s\leq n+2$. Hence
we have an isomorphism
\begin{equation}\label{etagG}
\HO^{n+1}_{\et}(
U, 
\mathbb{Z}/m(n)
)    
=
\Gamma\left(
U,
R^{n+1}\alpha_{*}
\mathbb{Z}/m(n)_{\et}
\right)
\end{equation}
by the quasi-isomorphism (\ref{BLN12}). 
Since $\operatorname{dim}(U)=s$, we have
\begin{align}\label{GagK}
&\Gamma\left(
U,
R^{n+1}\alpha_{*}
\mathbb{Z}/m(n)_{\et}
\right)  \nonumber  \\
=
&\operatorname{Ker}\left(
\displaystyle\bigoplus_{x\in U^{(0)}}
\HO^{n+1}_{x}(
U_{\et},
\mathbb{Z}/m(n)
)
\to
\displaystyle\bigoplus_{x\in U^{(1)}}
\HO^{n+2}_{x}(
U_{\et},
\mathbb{Z}/m(n)
)
\right)
\end{align}
by the assumption of induction. Hence the sequence (\ref{lgerN}) is exact 
by (\ref{preex}), (\ref{etagG}) and (\ref{GagK}).
This completes the proof.

\end{proof}
\begin{prop}\upshape\label{hass1}
Let $X$ be a smooth scheme over $\spec(\mathbb{Q})$. Then the canonical map
\begin{equation}\label{lgf1}
\HO^{1}_{\et}(
k(X),
\mathbb{Q}/\mathbb{Z}
)    
\to 
\prod_{x\in X^{(1)}}
\HO^{1}_{\et}(
k(
\mathcal{O}_{X, x}^{h}),
\mathbb{Q}/\mathbb{Z}
) 
\end{equation}
is injective where $\mathcal{O}_{X, x}^{h}$
is the henselization of a local ring $\mathcal{O}_{X, x}$.
\end{prop}
\begin{proof}\upshape
Since we have an isomorphism
\begin{equation*}
\HO^{1}_{\et}(
\mathcal{O}^{h}_{X, x},
\mathbb{Q}/\mathbb{Z}
)
\simeq 
\HO^{1}_{\et}(
\kappa(x),
\mathbb{Q}/\mathbb{Z}
),
\end{equation*}
the kernel of the canonical map (\ref{lgf1}) agrees with the kernel of
\begin{equation*}
\HO^{1}_{\et}(X, \mathbb{Q}/\mathbb{Z})
\to
\prod_{x\in X^{(1)}}
\HO^{1}_{\et}(
\kappa(x),
\mathbb{Q}/\mathbb{Z}
)
\end{equation*}
by Lemma \ref{smdf}. Since $\mathbb{Q}$ is Hilbertian, 
the statement follows from \cite[p.53, Remark 3]{W}.
\end{proof}

By Proposition \ref{Appconj} and Proposition \ref{hass1}, 
we are able to raise the following questions:

\begin{Qu}\upshape\label{QG}
Let $\mathfrak{X}$ be a regular scheme which is proper and flat
over $\spec(\mathbb{Z})$. Then
\begin{description}
\item{(a)} Is the kernel of the canonical map
\begin{equation}\label{glgnh}
\HO^{n+1}_{\et}(
k(\mathfrak{X}),
\mathbb{Q}/\mathbb{Z}(n)
)    
\to
\prod_{x\in \mathfrak{X}^{(1)}}
\HO^{n+1}_{\et}(
k(\mathcal{O}_{\mathfrak{X}, x}^{h}),
\mathbb{Q}/\mathbb{Z}(n)
)
\end{equation}
finite ?
\item{(b)} When is the kernel of the canonical map (\ref{glgnh}) trivial ?
\end{description}
\end{Qu}

Question \ref{QG} relates to Kato conjecture as follows:

\begin{rem}\upshape
Let $\mathfrak{X}$ be a $d$-dimensional regular scheme which 
is proper and flat over 
the spectrum of the ring of integers of a number field $K$. 
Assume that $K$ is totally imaginary. 
Let $\mathcal{O}_{\mathfrak{X}, x}^{h}$ be the henselization of
the local ring $\mathcal{O}_{\mathfrak{X}, x}$ of $\mathfrak{X}$ at 
a point $x\in\mathfrak{X}^{(1)}$.
Then we have isomorphisms
\begin{align*}
\HO^{d+1}_{\et}(
k(\mathcal{O}_{\mathfrak{X}, x}^{h}),
\mathbb{Q}/\mathbb{Z}(d)
)    
\simeq 
\HO^{d+2}_{x}(
(\mathcal{O}_{\mathfrak{X}, x})_{\et},
\mathbb{Q}/\mathbb{Z}(d)
)
\simeq 
\HO^{d}_{\et}(
\kappa(x),
\mathbb{Q}/\mathbb{Z}(d-1)
)
\end{align*}
(cf.\cite[p.31, Theorem 1.2]{SM}, \cite[Proposition 4.8]{Sak5}). 
So, if Kato conjecture (cf.\cite[p.125, Conjecture 0.3]{K-S}) holds, 
then the canonical map
\begin{equation*}
\HO^{d+1}_{\et}(
k(\mathfrak{X}),
\mathbb{Q}/\mathbb{Z}(d)
)    
\to
\displaystyle\bigoplus_{x\in \mathfrak{X}^{(1)}}
\HO^{d+1}_{\et}(
k(\mathcal{O}_{\mathfrak{X}, x}^{h}),
\mathbb{Q}/\mathbb{Z}(d)
)
\end{equation*}
is injective. 
\end{rem}
\end{document}